\newcommand{\leqnomode}{\tagsleft@true}
\newcommand{\reqnomode}{\tagsleft@false}
\date{}
\def\dis{\displaystyle}
\def\nd{\noindent}
\def\thend{\rule{3mm}{3mm}}
\def\t{\mathsf{t}}
\def\s{\mathsf{s}}
\newtheorem{theorem}{Theorem}[section]
\newtheorem{cor}{Corollary}[section]
\newtheorem{prop}{Proposition}[section]
\newtheorem{lem}{Lemma}[section]
\newtheorem{proposition}{Proposition}[section]
\newtheorem{lemma}{Lemma}[section]
\newtheorem{rmk}{Remark}[section]
\newcommand{\Int}{\displaystyle\int_{\Rn}}
\newcommand{\Rn}{\mathbb{R}^N}
\newcommand{\R}{\mathbb{R}}
\newcommand{\N}{\mathcal{N}_{\lambda,\mu}}
\begin{document}
\title[L. R. S. de Assis, M. L. M. Carvalho, Edcarlos D. Silva, A. Salort]{Superlinear fractional $\Phi$-Laplacian type problems via the nonlinear Rayleigh quotient with two parameters}
\vspace{1cm}

\author{L. R. S. De Assis}
\address{Lázaro Rangel Silva de Assis \newline  Universidade Federal de Pernambuco, Dmat, Recife-PE, Brazil}
\email{\tt lazaro.assis@ufpe.br}

\author{Edcarlos D. Silva}
\address{Ecarlos D. Silva \newline Universidade Federal de Goias, IME, Goi\^ania-GO, Brazil }
\email{\tt  edcarlos@ufg.br.com}

\author{M. L. M. Carvalho}
\address{M. L. M. Carvalho \newline Universidade Federal de Goias, IME, Goi\^ania-GO, Brazil }
\email{\tt marcos$\_$leandro$\_$carvalho@ufg.br}

\author[Ariel Salort]{A. Salort}
\address{A. Salort \newline Universidad CEU San Pablo, Urbanización Montepríncipe,
s.n. 28668, Madrid, Spain}
\email{\tt amsalort@gmail.com, ariel.salort@ceu.es}

\subjclass[2010]{35A01 ,35A15,35A23,35A25} 

\keywords{Fractional $\Phi$-Laplacian, Double parameters, Superlinear nonlinearities, Nehari method, Nonlinear Rayleigh quotient}
\thanks{The first author was partially supported by CNPq with grants 309026/2020-2 and 429955/2018-9. The second author was partially supported by CNPq with grant 309026/2020-2}

\begin{abstract}
In this work, we establish the existence and multiplicity of weak solutions for nonlocal elliptic problems driven by the fractional $\Phi$-Laplacian operator, in the presence of a sign-indefinite nonlinearity. More specifically, we investigate the following nonlocal elliptic problem:
\begin{equation*}
\left\{\begin{array}{rcl}
(-\Delta_\Phi)^s u  +V(x)u & = & \mu a(x)|u|^{q-2}u-\lambda |u|^{p-2}u \mbox{ in }\,    \mathbb{R}^N,  \\
u\in W^{s,\Phi}(\mathbb{R}^N),&&
\end{array}
\right.
\end{equation*}
where $s \in (0,1), N \geq 2$ and $\mu, \lambda >0$. Here, the potentials $V, a : \mathbb{R}^N \to \mathbb{R}$ satisfy some suitable hypotheses.
Our main objective is to determine sharp values for the parameters $\lambda > 0$ and $\mu > 0$ where the Nehari method can be effectively applied. To achieve this, we utilize the nonlinear Rayleigh quotient along with a detailed analysis of the fibering maps associated with the energy functional. Additionally, we study the asymptotic behavior of the weak solutions to the main problem as $\lambda \to 0$ or $\mu \to +\infty$.
\end{abstract}

\maketitle


\section{Introduction}

In the present work, we consider nonlocal elliptic problems driven by the fractional $\Phi$-Laplacian defined in the whole space, where the nonlinearity is superlinear at infinity and at the origin. More precisely, we study the following nonlinear fractional elliptic problem:
\begin{equation}\tag{$\mathcal{P}_{\lambda,\mu}$}\label{eq1}
\left\{\begin{array}{rcl}
(-\Delta_\Phi)^s u  +V(x)\varphi(|u|)u & \!\!\!=\!\!\! & \mu a(x)|u|^{q-2}u-\lambda |u|^{p-2}u \mbox{ in }\,    \mathbb{R}^N,  \\
u\in W^{s,\Phi}(\mathbb{R}^N),&&
\end{array}
\right.
\end{equation}
where $s\in (0,1)$, $1<\ell \leq m < q < p < \ell_s^*= N\ell/ (N-\ell s)$, $N\geq1$ and $\mu, \lambda >0$. Furthermore, we assume that $V : \mathbb{R}^N \rightarrow \mathbb{R}$ is a continuous function and $a : \mathbb{R}^N \rightarrow \mathbb{R}$ is non-negative measurable function satisfying some suitable hypotheses. Throughout this work $(-\Delta_\Phi)^s$ is the nonlinear fractional $\Phi$-Laplacian operator introduced in \cite{FBS}. Namely, $(-\Delta_\Phi)^s$ can be  defined as follows:
\begin{align*}
(-\Delta_\Phi)^s u(x)&:=\,\text{p.v.}\,\int_{\R^n} \varphi(|D_s u(x,y)|)D_s u(x,y)\, \frac{dy}{|x-y|^{N+s}},
\end{align*}
where $D_su(x,y):=\frac{u(x)-u(y)}{|x-y|^s}$ denotes the $s-$H\"older quotient. Notice that $\Phi\colon\R \to \R$ is an even function which is given by 
\begin{equation}\label{N-funtion}
\Phi(t)=\int_{0}^{t}\varphi(s)s \, ds.
\end{equation}
Later on, we shall discuss some assumptions on $\Phi$. This operator can be identified as the Fr\'echet derivative of modulars functions defined in the appropriated fractional Orlicz-Sobolev type space $W^{s,\Phi}(\R^n)$, see Section \ref{spaces}.

In recent years, considerable attention has been given to nonlocal elliptic problems involving the fractional $p$-Laplacian operator have been considered for many kinds of nonlinearities. In particular, when $p=2$, it is obtained the fractional Laplacian, operator that arises naturally in various physical phenomena such as flame propagation, anomalous diffusion, chemical reactions of liquids, population and fluid dynamics, water waves, crystal dislocations, nonlocal phase transitions, and nonlocal minimal surfaces. For more details on this subject, we refer the reader to \cite{Di Nezza,bisci,l1,l2} and references therein. The motivation for considering the fractional $\Phi$-Laplacian operator in this manuscript stems from the fact that it generalizes fractional $p$-Laplacian type operators when taking $\Phi(t)=t^p$, $p>1$.

Due to the non-standard geowth nature of the fractional $\Phi$-Laplacian operator, the classical fractional Sobolev space is not suitable to study \eqref{eq1}. Hence, we need to work in the general setting of the fractional Orlicz-Sobolev spaces. To the best of our knowledge, those spaces were introduced in \cite{FBS} where was studied nonlocal elliptic equations driven by fractional $\Phi$-Laplacian operator. In that work, the authors considered also the convergence of the modulars in the spirit of the celebrated result of Bourgain-Brezis-Mironescu \cite{BBM}.
For more details on these spaces and further applications involving the fractional $\Phi$-Laplace operator, we refer the readers \cite{ACPS,Bahrouni-Emb,S. Bahrouni et.al,SCDAB}. In those researches, were extended and complemented the theory of the fractional Sobolev spaces. More precisely, fundamental results were proved such as the embedding theorems and topological and qualitative properties allowing many applications taking into account variational methods. 

It is worth mention that many special types of nonlinear elliptic problems with general nonlinearities have also been studied by several researchers in the last decades. In this line of research, the seminal work of Ambrosetti-Rabinowitz \cite{ambrosetti} was the trigger for the development of extensive literature related to the existence, nonexistence, and multiplicity of solutions for this class of problems.

In the celebrated work \cite{rabino0} the author studied the problem \eqref{eq1} in a bounded domain $\Omega \subset \mathbb{R}^N$ for the Laplacian operator, that is, for the local case $ s = 1$ and $\varphi\equiv 1$. To be more precise, the author used the Mountain Pass Theorem combined with minimization arguments and some truncation techniques in nonlinearities. Under these conditions, in that work was proved the existence of at least two weak solutions whenever the parameters $ \mu=\lambda$ and $\lambda> 0$ are large enough. In the important works \cite{wang,bl1,bl2}, the authors also studied the previous problem with general nonlinearities $f_{\lambda,\mu}$ taking into account the superlinear case $\lambda=0$ and $\mu>0$. For more results on this subject, we refer the interested reader to \cite{faraci,silva}.

We also point out that several results regarding the solvability of local elliptic problems involving non-standard growth operators and more general class of nonlinear terms can be found in the paper\cite{CSG1,CSGS1,SRS1,SM1,MST1,yavdat2,SCRL,CSG,SCGG}. For instance, in \cite{SCRL,CSG,SCGG} the authors considered quasilinear problems driven by the $\Phi$-Laplacian operator with concave-convex nonlinearities. In the papers \cite{CSG1,CSGS1,SRS1}, the authors studied semilinear/superlinear problems defined in the whole space $\mathbb{R}^N$ considering subcritical and nonlocal nonlinearities with some parameters. Similar results for nonlocal elliptic problems setting in bounded domains and in the whole space $\R^N$  have also been widely considered by several researchers in the last years. In \cite{chang,pala,felmer, secchi,secchii},
by using the mountain pass theorem, Nehari manifold and other suitable minizations methods, the authors established several results on existence, nonexistence and multiplicity as well as asymptotic behavior of solutions for fractional Laplacian type problems under some suitable conditions on the powers $p,q$ and the parameters $\lambda$ and $\mu$. 

Regarding the study of problem \eqref{eq1} involving the classical fractional operator, we would like to present the very recent work by Silva et al. \cite{SCGS}. More precisely,  the authors considered the following class of problem
\begin{equation}\label{eq1ex}
(-\Delta)^s u  +V(x)u =  \mu a(x)|u|^{q-2}u-\lambda|u|^{p-2}u \mbox{ in }\,    \mathbb{R}^N,  
\end{equation}
where $s\in(0,1), s<N/2, N\geq1$ and $\mu,\lambda>0$. By considering suitable assumptions on the potentials $ V,a:\R^n \to \R$ and following the approaches of Nehari method and nonlinear Rayleigh quotient employed in \cite{yavdat1}, the authors found sharp conditions on the parameters $\lambda$ and $\mu$ for which the problem \eqref{eq1ex} admits at least two nontrivial solutions. Furthermore, they proved a nonexistence result under some appropriate conditions on $\lambda>0$ and $\mu>0$. 

On the other hand, great attention has been devoted to the study of solutions for equations driven by fractional $(p,q)$-Laplacian type operator. For example, the existence and multiplicity of nontrivial solutions, ground state and nodal solutions, and other qualitative properties of solutions were investigated in \cite{SOG1,AAI,Ambrosio-Radulescu1,Zhang-Tang-Radulescu} by using some topological and variational arguments. In the context of the fractional $\Phi$-Laplacian type problems, we mention the interesting works \cite{Azroul et al. eigenvalue,A. Bahrouni et al. non-variat.,Bonder et al.,Salort eigenvalue,SV1,MO,OSM}. 
Under these conditions,  we are led to the following natural question: How does the appearance of fractional $\Phi$-Laplacian operator affect the existence, multiplicity, and asymptotic behavior of solutions for problem \eqref{eq1}?

Motivated by the above question and a very recent trend in the fractional framework, the main goal in the present work is to investigate the existence and multiplicity of solutions for problem \eqref{eq1}. To this end, we use the Nehari method together with the Nonlinear Rayleigh quotient employed by recent works of Silva et al. \cite{SCGS} to find sharp conditions on the parameters $\lambda$ and $\mu$ in order to guarantee the existence of weak solutions in the $\mathcal{N}_{\lambda,\mu}^-$ and $\mathcal{N}_{\lambda,\mu}^+$. 
Given that we are considering the existence and multiplicity of solutions to superlinear elliptic problems with two parameters involving the fractional $\Phi$-Laplacian operator, a problem that, to the best of our knowledge, has not been addressed in the literature before, our work thus extends and complements the aforementioned results.

It is important to emphasize that when addressing quasilinear nonlocal elliptic problems such as the fractional $\Phi$-Laplacian operator via variational methods, several technical difficulties appear. The first one comes from the loss of homogeneity on the left side for the Problem \eqref{eq1} which is inherited from the fractional $\Phi$-Laplacian operator. This fact implies that we can not establish explicitly the extreme for the fibering map of the nonlinear Rayleigh quotient. 
Furthermore, an extra difficulty arises when dealing with the coercivity of the energy functional associated with the problem restricted to the Nehari sets $\mathcal{N}_{\lambda,\mu}^{\pm}$. These kinds of difficulties are overcome by using some precise assumptions on $\Phi$. The second difficulty arises from the fact that the nonlinearity $f_{\lambda,\mu}(x,t) = \mu a(x)|t|^{q-2}t-\lambda |t|^{p-2}t, x \in \mathbb{R}^N, t \in \mathbb{R}$ is a sign-changing function which does not satisfy the well-known Ambrosetti-Rabinowitz condition. This fact does not allow us to conclude in general that any Palais-Smale sequence is bounded. However, using the Nehari method, we are able to prove some fine estimates showing that a suitable minimization problem has a solution. Here, the main difficulty is to prove that there exists a real number $\mu_n(\lambda)>0$ such that the Nehari manifold $\mathcal{N}_{\lambda,\mu}$ is empty for each $\mu <\mu_n(\lambda)$ and $\lambda>0$. Another obstacle is that the Nehari manifold can be splited as $\mathcal{N}_{\lambda,\mu}=\mathcal{N}_{\lambda,\mu}^{+}\cup\mathcal{N}_{\lambda,\mu}^{0}\cup\mathcal{N}_{\lambda,\mu}^{-}$ with $\mathcal{N}_{\lambda,\mu }^{0}$ nonempty for each $\mu\geq \mu_n(\lambda)$ and $\lambda>0$. Hence, we have that $\overline{\mathcal{N}_{\lambda, \mu}^{\pm}} = \mathcal{N}_{\lambda, \mu}^{\pm}\cup \mathcal{N}_{\lambda, \mu}^0$ (see Lemma \ref{nnfechada}), that is, the Nehari sets $\mathcal{N}_{\lambda,\mu}^{- }$ and $\mathcal{N}_{\lambda,\mu}^+$ are not necessarily closed. Therefore, the minimizing sequences for the energy functional associated with the problem \eqref{eq1} restricted to $\mathcal{N}_{\lambda,\mu}^{\pm}$ can strongly converge to a function that belongs to $ \mathcal{N}_{\lambda,\mu}^0$. In this case, the Lagrange Multipliers Theorem is no longer applicable. To address these challenges, we adopt a strategy based on the arguments presented in \cite{SCGS}. More precisely, we use the Nehari method \cite{nehari1,nehari2} combined with the fibering map introduce by \cite{Pokhozhaev} and the nonlinear Rayleigh Quotient method \cite{yavdat1,yavdat2} to prove the existence of the parameters $\lambda_\ast, \lambda^\ast >0$ in such a way that, for each $\mu>\mu_n( \lambda)$ and
$\lambda\in(0,\lambda_\ast)$ or $\lambda \in (0,\lambda^\ast)$, the minimizing functions for the energy functional restricted to $\mathcal{N}_{\lambda,\mu}^{\pm}$ does not belong to $\mathcal{N}_{\lambda,\mu}^0$. This statement provides us at least two weak solutions due to the fact that the Nehari sets $\mathcal{N}_{\lambda,\mu}^{\pm}$ are natural constraints.

\subsection{Assumptions and statement of the main theorems}
As mentioned in the introduction, following \cite{SCGS}, we consider the existence and multiplicity of nontrivial weak solutions for the Problem \eqref{eq1} for suitable parameters $\lambda > 0$ and $\mu > 0$. The main idea is to ensure sharp conditions on the parameters $\lambda$ and $\mu$ such that the Nehari method can be applied.  For this, throughout this work we assume the following hypotheses: 
\begin{enumerate}[label=$(\varphi_1)$,ref=$(\varphi_1)$]
\item $\varphi: (0,+\infty) \rightarrow (0,+\infty)$ is a $C^1$-function and $t\varphi(t)\to 0$, as $t\to 0$ and $t\varphi(t)\to+\infty$, as $t\to+\infty$\label{Hip1};
\end{enumerate}

\begin{enumerate}[label=$(\varphi_2)$,ref=$(\varphi_2)$]
\item $t\mapsto t\varphi(t)$ is strictly increasing in $(0, +\infty)$;\label{Hip2}
\end{enumerate}

\begin{enumerate}[label=$(\varphi_3)$,ref=$(\varphi_3)$]
\item It holds that \label{Hip3}
$$
-1<\ell-2:= \inf_{t > 0}\frac {(\varphi(t)t)''t}{(\varphi(t)t)'}\leq\sup_{t > 0}\frac {(\varphi(t)t)''t}{(\varphi(t)t)'}=: m-2<\infty;
$$
\end{enumerate}

\begin{enumerate}[label=$(\varphi_4)$,ref=$(\varphi_4)$]
\item The following embedding conditions hold \label{Hip4}
$$\int_0^1 \left(\frac{t}{\Phi(t)}\right)^{\frac{s}{N-s}} \, dt< +\infty \qquad \mbox{and} \quad \int_1^{+\infty} \left(\frac{t}{\Phi(t)}\right)^{\frac{s}{N-s}} \, dt=+\infty;$$
\end{enumerate}
In order to apply the nonlinear Rayleigh quotient method in our framework, we assume the following hypotheses:
\begin{enumerate}[label=$(H_1)$,ref=$(H_1)$]
\item  It holds that $\mu,\lambda>0$, $\ell\leq m < q < p < \ell_s^*= N\ell/ (N-\ell s)$ and $m(q-\ell)< p(q-m)$;\label{Hi1}
\end{enumerate}

\begin{enumerate}[label=$(H_2)$,ref=$(H_2)$]
\item It holds that $a \in L^{r}(\mathbb{R}^N)$ with $r = (p/q)'= p/(p-q)$ and $a(x) > 0$ a.e. in $x \in \R^N$;\label{Hi2}
\end{enumerate}

\begin{rmk}\label{achado}
Under assumptions the $(\varphi_1)-(\varphi_3)$ we prove that the function
$$
t\mapsto \frac{(2-q)\varphi(t) + \varphi'(t)t}{t^{p-2}}
$$
is strictly increasing for all $t>0$. This fact implies that we can apply the nonlinear Raleigh quotient due to the fact that the map $t \mapsto R_n(tu), u \neq 0$ has a unique critical point, see Proposition \ref{propRn2} ahead. Under these conditions, we are able to prove that any function in a cone set has projection in the Nehari set. 
\end{rmk}

Furthermore, since we study \eqref{eq1} by using variational methods, in  the spirit of \cite{wang}, we assume that the potential $V:\mathbb{R}^N\to \mathbb{R}$ is a continuous function satisfying:
\begin{enumerate}[label=$(V_0)$,ref=$(V_0)$]
\item There exists a constant $V_0>0$ such that $V(x)\geq V_0 \,\, \mbox{for all} \,\, x \in \mathbb{R}^N.$\label{Hiv0}

\end{enumerate}

\begin{enumerate}[label=$(V_1)$,ref=$(V_1)$]
\item For each $M > 0$, it holds that the set $\{x\in \Rn:V(x)\leq M\}$ has finite Lebesgue measure.\label{Hiv1}
\end{enumerate}
Due to the presence of the potential $V$, our working space is given by
$$
X:= \left\{u\in W^{s,\Phi}(\mathbb{R}^N): \int_{\R^N} V(x)\Phi(u)\,dx<+\infty \right\}
$$
which is endowed with the Luxemburg norm as follows:
$$
\|u\|:= \inf\left\lbrace \lambda>0: \mathcal{J}_{s,\Phi,V}\left(\frac{u}{\lambda}\right)\leq 1 \right\rbrace. 
$$
Recall also that $W^{s,\Phi}(\R^N)$ is a fractional Orlicz-Sobolev type space, see Section \ref{spaces}. Notice also that the modular function $\mathcal{J}_{s,\Phi,V}: X \to \R$ is provided in the following form
$$
\mathcal{J}_{s,\Phi,V}(u):= \iint_{\R^{N}\times\R^N} \Phi(D_su) \, d\nu + \int_{\R^N}V(x)\Phi(u)\,dx,
$$
where the Borel measure $\nu$ is defined by $d\nu= \frac{dxdy}{|x-y|^N}$.

It is important to stress that $X$ is a reflexive Banach space, see \cite{Bahrouni-Emb}. Furthermore, the energy functional $\mathcal{I}_{\lambda, \mu}: X \rightarrow \mathbb{R}$ associated to Problem \eqref{eq1} is given by 
\begin{equation}\label{functional}
\mathcal{I}_{\lambda,\mu}(u)=\mathcal{J}_{s,\Phi,V}(u)-\dfrac{\mu}{q}\|u\|_{q,a}^q+\dfrac{\lambda}{p}\|u\|_p^p, \; u\in X,
\end{equation}
where 
$$\|u\|_{q,a}^q=\Int a(x)|u|^qdx \;\;\mbox{and}\;\;\|u\|_p^{p}=\Int |u|^pdx, \; u \in X.$$
Under our hypotheses, by using some standard calculations, we show that $\mathcal{I}_{\lambda,\mu}$ belongs to $C^2(X, \mathbb{R})$ for all $\lambda > 0$ and $\mu > 0$. Notice also that the Fr\'echet derivative $ \mathcal{I}_{\lambda,\mu}' \colon X \to X^\ast$ is given by
\begin{equation*}
\mathcal{I}_{\lambda,\mu}'(u) v=\mathcal{J}_{s,\Phi,V}'(u)v-\mu\Int a(x)|u|^{q-2}u v\, dx+\lambda\Int|u|^{p-2}u v\, dx, \;\; \mbox{for all}\;\; u,v\in X
\end{equation*}
where 
$$
\mathcal{J}_{s,\Phi,V}'(u)v = \iint_{\R^{N}\times\R^N} \varphi(|D_su|) D_su D_sv \, d\nu + \int_{\R^N} \varphi(|u|)u v \, dx, \;\; \mbox{for all}\;\; u,v\in X.
$$
Furthermore, a function $u  \in X$ is said to be a critical point for the functional $\mathcal{I}_{\lambda,\mu}$ if and only if $u$ is a weak solution for  the elliptic Problem \eqref{eq1}. In fact, a function $u \in X$ is a weak solution for Problem \eqref{eq1} whenever 
\begin{equation*}
\mathcal{J}_{s,\Phi,V}'(u)v-\mu\Int a(x)|u|^{q-2}uv\, dx+\lambda\Int|u|^{p-2}uv\, dx = 0 \;\; \mbox{for all} \;\; v\in X.
\end{equation*}

Now, by using the same ideas introduced in \cite{nehari1,nehari2}, we consider the Nehari set associated our main Problem \eqref{eq1} as follows:
\begin{equation}\label{nehari}
\mathcal{N}_{\lambda,\mu}:=\{ u \in X\setminus \{0\} : \mathcal{I}_{\lambda,\mu}'(u)u=0 \}
=\left\{ u\in  X\setminus \{0\}: \mathcal{J}_{s,\Phi,V}'(u)u+\lambda\|u\|_p^p=\mu \|u\|_{q,a}^q\right\}.
\end{equation}
Under these conditions, by using the same ideas employed in \cite{tarantello}, we can split the Nehari set $\mathcal{N}_\lambda$ into three disjoint subsets in the following way:
\begin{eqnarray*}
\mathcal{N}^+_{\lambda,\mu}&=&\{u\in\mathcal{N}_{\lambda,\mu}: \mathcal{I}''_{\lambda,\mu}(u)(u,u)>0 \}\label{n+}\\ 
\mathcal{N}^-_{\lambda,\mu}&=&\{u\in\mathcal{N}_{\lambda,\mu}: \mathcal{I}''_{\lambda,\mu}(u)(u,u)<0 \}\label{n-}\\
\mathcal{N}^0_{\lambda,\mu}&=&\{u\in\mathcal{N}_{\lambda,\mu}: \mathcal{I}''_{\lambda,\mu}(u)(u,u)=0\}\label{n0}.
\end{eqnarray*}
The main feature in the present work is to find weak solutions for our main problem using for the following minimization problems:
\begin{equation}\label{ee1}
\mathcal{E}_{\lambda,\mu}^-:=\inf\{\mathcal{I}_{\lambda,\mu}(u): u\in\mathcal{N}_{\lambda,\mu}^-\}
\end{equation}
and 
\begin{equation}\label{ee2}
\mathcal{E}_{\lambda,\mu}^+:=\inf\{\mathcal{I}_{\lambda,\mu}(u): u\in\mathcal{N}_{\lambda,\mu}^+\}.
\end{equation}
Namely, we shall prove that $\mathcal{E}_{\lambda,\mu}^-$ and $\mathcal{E}_{\lambda,\mu}^+$ are attained by some specific functions. 

In order to apply the nonlinear Rayleigh quotient we also need to consider auxiliary sets. Firstly, we define the following set
\begin{equation}\label{conjE}
\mathcal{E}_{\lambda,\mu} =\left\{ u\in  X\setminus \{0\}:\mathcal{I}_{\lambda,\mu}(u)=0\right\}. 
\end{equation}
In the sequel, we introduce the nonlinear generalized Rayleigh quotients which have been extensively explored in the last years, see \cite{yavdat0,yavdat1,yavdat2,CSG1,CSGS1,SOG1,SRS1}. More specifically, we define the functionals $R_n,R_e: X\setminus\{0\} \to \mathbb{R}$ associated with the parameter $\mu>0$ in the following form 
\begin{equation}\label{Rn}
R_{n}(u):=R_{n,\lambda}(u)=\dfrac{\mathcal{J}_{s,\Phi,V}'(u)u +\lambda||u||^p_p}{||u||_{q,a}^q},\quad \mbox{for}\; u \in X \setminus \{0\}, \; \lambda>0
\end{equation}
and
\begin{equation}\label{Re}
R_{e}(u):={R}_{e,\lambda}(u)=\dfrac{\mathcal{J}_{s,\Phi,V}(u)+\frac{\lambda}{p}||u||_p^p}{\frac{1}{q}||u||_{q,a}^q}, \quad \mbox{for}\; u \in X \setminus \{0\}, \; \lambda>0.
\end{equation} 
The sets given in \eqref{nehari} and \eqref{conjE} are linked to the nonlinear generalized Rayleigh quotients. Namely, given $u\in X\setminus\{0\}$, we have the following assertions: 
\begin{equation}\label{a0i}
u \in \mathcal{N}_{\lambda, \mu} \, \, \, \mbox{if and only if} \,\, \, \mu = R_{n}(u)
\end{equation}
and
\begin{equation}\label{a11i}
u\in\mathcal{E}_{\lambda,\mu} \, \, \, \mbox{if and only if} \,\, \, \mu = R_{e}(u).
\end{equation} 
Similarly, we also consider the following definition
\begin{equation}\label{mun}
\mu_n(\lambda):=\inf_{u\in X\setminus\{0\}}\inf_{t>0}R_n(tu)\qquad\mbox{and}\qquad\mu_e(\lambda):=\inf_{u\in X\setminus\{0\}}\inf_{t>0}R_e(tu).
\end{equation}
It is worthwhile to mention that under our assumptions, the functional $R_{n}$ belong to $C^1(X\setminus \{0\}, \mathbb{R})$ and $R_{e}$ belong to $C^2(X\setminus \{0\}, \mathbb{R})$ for each $\lambda>0$. This can be verified using standard arguments and the Sobolev embeddings, see Section \ref{spaces}, and the fact that $m < q < p < \ell^*_s$.

Now we are in a position to present our main results. Firstly, taking into account that $\mathcal{I}_{\lambda, \mu}$ is bounded from below in $\mathcal{N}_{\lambda,\mu}^-$, we can consider the minimization problem given in \eqref{ee1}. Thus, our first main result can be stated as follows:

\begin{theorem}\label{theorem1}
Assume that \ref{Hip1}-\ref{Hip4}, \ref{Hi1}-\ref{Hi2}, \ref{Hiv0} and \ref{Hiv1} hold. Then, for each $\lambda>0$ we have that $0<\mu_n(\lambda)<\mu_e(\lambda)<+\infty$ and there exists $\lambda_* > 0$ such that the Problem \eqref{eq1} admits at least one weak solution $u_{\lambda,\mu} \in \mathcal{N}_{\lambda,\mu}^-$ for each $\lambda \in (0, \lambda_*)$ and $\mu > \mu_n(\lambda)$. Moreover, the following statements are satisfied: 
\begin{itemize}
\item[(i)] $\mathcal{E}_{\lambda,\mu}^-=\mathcal{I}_\lambda(u_{\lambda,\mu})=\inf_{w \in \mathcal{N}_{\lambda,\mu}^-}\mathcal{I}_{\lambda,\mu}(w)$;
\item[(ii)] There exists $D_\mu>0$ such that $\mathcal{E}_{\lambda,\mu}^-\geq D_\mu$. 
\end{itemize}    
\end{theorem}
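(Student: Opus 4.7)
The plan is to combine the Nehari decomposition $\mathcal{N}_{\lambda,\mu}=\mathcal{N}_{\lambda,\mu}^{+}\cup \mathcal{N}_{\lambda,\mu}^{0}\cup \mathcal{N}_{\lambda,\mu}^{-}$ with the nonlinear Rayleigh quotient machinery of \cite{SCGS,yavdat1,yavdat2}, adapted to the fractional $\Phi$-Laplacian via the modular inequality $\ell\,\Phi(t)\leq \varphi(t)t^{2}\leq m\,\Phi(t)$ implied by \ref{Hip3}. I split the argument in four blocks.

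First, I would establish $0<\mu_n(\lambda)<\mu_e(\lambda)<+\infty$ by studying the fibers $t\mapsto R_n(tu)$ and $t\mapsto R_e(tu)$ for fixed $u\in X\setminus\{0\}$. Assumption \ref{Hi2} together with Hölder, the Sobolev embeddings $X\hookrightarrow L^{p}(\mathbb{R}^N)\cap L^{q}_{a}(\mathbb{R}^N)$ (guaranteed by \ref{Hip4} and \ref{Hiv0}), and the modular bound above show that $R_n(tu),R_e(tu)\to+\infty$ both as $t\to 0^{+}$ and $t\to+\infty$, so each fiber attains a unique positive minimum; uniqueness follows from Remark \ref{achado}, i.e.\ from Proposition \ref{propRn2}. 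Testing $R_e$ on a fixed bump function yields $\mu_e(\lambda)<+\infty$, while the modular-Sobolev estimates bound $R_n(tu)$ from below and give $\mu_n(\lambda)>0$. The strict inequality $\mu_n(\lambda)<\mu_e(\lambda)$ is obtained by comparing the critical scales $t_n(u),t_e(u)$ on each fiber: if both minima coincided at the same scale on the same ray, the Nehari identity and the energy identity would force $q\mathcal{J}_{s,\Phi,V}(u)=\mathcal{J}'_{s,\Phi,V}(u)u$, contradicting $\mathcal{J}'_{s,\Phi,V}(u)u\leq m\,\mathcal{J}_{s,\Phi,V}(u)$ with $m<q$.

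Next, I would derive coercivity of $\mathcal{I}_{\lambda,\mu}$ on $\mathcal{N}_{\lambda,\mu}^{-}$ and the lower bound (ii). Using the Nehari identity one rewrites
\[
\mathcal{I}_{\lambda,\mu}(u)=\mathcal{J}_{s,\Phi,V}(u)-\tfrac{1}{q}\mathcal{J}'_{s,\Phi,V}(u)u+\lambda\!\left(\tfrac{1}{p}-\tfrac{1}{q}\right)\|u\|_p^p,
\]
and, invoking $\mathcal{J}'_{s,\Phi,V}(u)u\leq m\,\mathcal{J}_{s,\Phi,V}(u)$ with $m<q$ together with the Hölder-Sobolev chain $\|u\|_p^p\leq C\|a\|_r^{-1}\|u\|_{q,a}^{q}\cdot(\cdots)$ and the algebraic balance encoded in $m(q-\ell)<p(q-m)$ from \ref{Hi1}, the negative $\lambda$-term is controlled by the modular; coercivity on $\mathcal{N}_{\lambda,\mu}^{-}$ follows. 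For (ii), the sign $\mathcal{I}''_{\lambda,\mu}(u)(u,u)<0$ on $\mathcal{N}_{\lambda,\mu}^{-}$ forces the scale of any $u\in\mathcal{N}_{\lambda,\mu}^{-}$ to exceed the Nehari scale of its ray, hence $\mu<R_e(u)$; combined with the strict gap $\mu<\mu_e(\lambda)$ and the uniform lower bound on $\|u\|_{q,a}^{q}$ on $\mathcal{N}_{\lambda,\mu}^{-}$, this yields $\mathcal{I}_{\lambda,\mu}(u)\geq D_\mu>0$.

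Third, take a minimizing sequence $(u_n)\subset \mathcal{N}_{\lambda,\mu}^{-}$ for $\mathcal{E}^{-}_{\lambda,\mu}$. Coercivity gives boundedness in $X$; reflexivity (see \cite{Bahrouni-Emb}) yields $u_n\rightharpoonup u_{\lambda,\mu}$ weakly in $X$, and \ref{Hiv1} supplies the compact embedding $X\hookrightarrow L^{p}(\mathbb{R}^N)\cap L^{q}_{a}(\mathbb{R}^N)$, so $u_n\to u_{\lambda,\mu}$ strongly in those spaces. The uniform lower bound on $\|u_n\|_{q,a}$ in $\mathcal{N}_{\lambda,\mu}^{-}$ gives $u_{\lambda,\mu}\neq 0$, and weak lower semicontinuity of $\mathcal{J}_{s,\Phi,V}$ yields $\mathcal{I}_{\lambda,\mu}(u_{\lambda,\mu})\leq \mathcal{E}^{-}_{\lambda,\mu}$. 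The last and hardest step, where $\lambda_\ast$ is finally pinned down, consists in excluding $u_{\lambda,\mu}\in \mathcal{N}_{\lambda,\mu}^{0}$, an a priori possibility since $\overline{\mathcal{N}_{\lambda,\mu}^{-}}\subseteq \mathcal{N}_{\lambda,\mu}^{-}\cup \mathcal{N}_{\lambda,\mu}^{0}$ by Lemma \ref{nnfechada}. Here I would characterize $\mathcal{N}_{\lambda,\mu}^{0}$ as the set of fiber-minimizers of $R_n(t\cdot)$ and use the gap $\mu_n(\lambda)<\mu_e(\lambda)$ to deduce a positive lower bound $\inf_{\mathcal{N}_{\lambda,\mu}^{0}}\mathcal{I}_{\lambda,\mu}>\mathcal{E}^{-}_{\lambda,\mu}$ whenever $\lambda<\lambda_\ast$, with $\lambda_\ast>0$ computed explicitly from $\mu_n(\lambda),\mu_e(\lambda)$, $\|a\|_r$ and the Sobolev/modular constants; this contradicts the minimizing property and forces $u_{\lambda,\mu}\in \mathcal{N}_{\lambda,\mu}^{-}$, proving (i). Since $\mathcal{N}_{\lambda,\mu}^{-}$ is a natural constraint (the Nehari functional has non-degenerate derivative at $u_{\lambda,\mu}$ because $\mathcal{I}''_{\lambda,\mu}(u_{\lambda,\mu})(u_{\lambda,\mu},u_{\lambda,\mu})<0$), the Lagrange multiplier vanishes and $u_{\lambda,\mu}$ is a weak solution of \eqref{eq1}. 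The main obstacle throughout is the non-homogeneity of $(-\Delta_\Phi)^s$, which prevents any closed-form identification of the fiber critical points and obliges one to control them only through the bounds coming from \ref{Hip3} and the sharp algebraic condition $m(q-\ell)<p(q-m)$.
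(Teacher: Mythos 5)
Your plan follows the same global architecture as the paper (nonlinear Rayleigh quotient $R_n$, $R_e$; Nehari decomposition; exclusion of $\mathcal{N}_{\lambda,\mu}^0$ for small $\lambda$; natural-constraint argument), but several of the central steps as you have sketched them either contain an incorrect computation or leave a genuine hole. I point out the three most serious ones.

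\textbf{The strict inequality $\mu_n(\lambda)<\mu_e(\lambda)$.} You claim that if the two fiber minima coincided at the same scale on some ray, the Nehari identity and the energy identity would force $q\,\mathcal{J}_{s,\Phi,V}(u)=\mathcal{J}'_{s,\Phi,V}(u)u$, contradicting $\mathcal{J}'_{s,\Phi,V}(u)u\leq m\,\mathcal{J}_{s,\Phi,V}(u)$. This computation is wrong. If some $v=t_0u$ lies in $\mathcal{N}_{\lambda,\mu}$ and $\mathcal{I}_{\lambda,\mu}(v)=0$, then subtracting $q$ times the energy identity from the Nehari identity gives
\[
q\,\mathcal{J}_{s,\Phi,V}(v)-\mathcal{J}'_{s,\Phi,V}(v)v=\lambda\Bigl(1-\tfrac{q}{p}\Bigr)\|v\|_p^p>0,
\]
which is entirely compatible with $\mathcal{J}'_{s,\Phi,V}(v)v\leq m\,\mathcal{J}_{s,\Phi,V}(v)<q\,\mathcal{J}_{s,\Phi,V}(v)$. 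No contradiction arises. The actual mechanism used in the paper is the exact fiber identity $R_n(tu)-R_e(tu)=\tfrac{t}{q}\tfrac{d}{dt}R_e(tu)$ (Lemma~\ref{monotonic}), which yields $\t(u)<\s(u)$ and hence $\Lambda_n(u)=R_n(\t(u)u)<R_n(\s(u)u)=R_e(\s(u)u)=\Lambda_e(u)$ pointwise in $u$, and the conclusion follows because $\mu_e$ is attained. Your sketch has no substitute for this identity.

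\textbf{The uniform lower bound $\mathcal{E}_{\lambda,\mu}^-\geq D_\mu$.} Your argument for (ii) uses $\mathcal{I}_{\lambda,\mu}(u)=\tfrac{R_e(u)-\mu}{q}\|u\|_{q,a}^q$ together with ``the strict gap $\mu<\mu_e(\lambda)$'' and a uniform lower bound on $\|u\|_{q,a}^q$. This only works when $\mu<\mu_e(\lambda)$, whereas the theorem asserts the bound for every $\mu>\mu_n(\lambda)$, a range that contains $\mu_e(\lambda)$ and everything beyond it. The paper's Proposition~\ref{limitacao} avoids this restriction entirely: it uses $\mathcal{I}_{\lambda,\mu}(u)=\mathcal{I}_{\lambda,\mu}(u)-\tfrac{1}{q}\mathcal{I}'_{\lambda,\mu}(u)u$ together with the constraint $\mathcal{I}''_{\lambda,\mu}(u)(u,u)\leq 0$ (expressed via \eqref{impor}) and \ref{Hip3} to bound $\lambda(p-q)\|u\|_p^p$ by $m(q-\ell)\,\mathcal{J}_{s,\Phi,V}(u)$, leading to $\mathcal{I}_{\lambda,\mu}(u)\geq\tfrac{p(q-m)-m(q-\ell)}{pq}\,\mathcal{J}_{s,\Phi,V}(u)$ and then to $D_\mu>0$ via the distance-from-zero estimate on $\mathcal{N}_{\lambda,\mu}$. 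The condition $m(q-\ell)<p(q-m)$ from \ref{Hi1} is what makes this coefficient positive. Your sketch does not reach this estimate.

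\textbf{Strong convergence of the minimizing sequence.} From weak lower semicontinuity of $\mathcal{J}_{s,\Phi,V}$ you obtain only $\mathcal{I}_{\lambda,\mu}(u_{\lambda,\mu})\leq\mathcal{E}_{\lambda,\mu}^-$. This alone does not imply $u_{\lambda,\mu}\in\mathcal{N}_{\lambda,\mu}$ (the quantity $\mathcal{J}'_{s,\Phi,V}(u)u$ is not weakly continuous), so you cannot yet invoke $\overline{\mathcal{N}_{\lambda,\mu}^-}=\mathcal{N}_{\lambda,\mu}^-\cup\mathcal{N}_{\lambda,\mu}^0$ to locate the weak limit, and in particular you cannot conclude $\mathcal{E}_{\lambda,\mu}^-=\mathcal{I}_{\lambda,\mu}(u_{\lambda,\mu})$. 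The paper's Proposition~\ref{converg} closes this gap by a genuine argument: assuming $u_k\not\to u_{\lambda,\mu}$ strongly, the weak lower semicontinuity of $\Lambda_n$ and of $t\mapsto R_n(tu)$ gives $\Lambda_n(u_{\lambda,\mu})<\mu$, so one can project $u_{\lambda,\mu}$ onto $\mathcal{N}_{\lambda,\mu}^-$ at some scale $\t_\mu^-(u_{\lambda,\mu})<1$, and the strict monotonicity of the fibering map on $(0,1)$ produces a competitor with energy strictly below $\mathcal{E}_{\lambda,\mu}^-$, a contradiction. Some version of this projection-and-comparison argument (or an equivalent $(S_+)$-type argument) is indispensable and is absent from your sketch.

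Beyond these, the exclusion step (pinning down $\lambda_\ast$) in your sketch is phrased as ``deduce $\inf_{\mathcal{N}^0}\mathcal{I}_{\lambda,\mu}>\mathcal{E}_{\lambda,\mu}^-$'', which is equivalent in spirit to what the paper does, but the quantitative mechanism (the two size estimates on $\|u_{\lambda,\mu}\|$ that collide when $\lambda$ is small, one from $\mathcal{E}_{\lambda,\mu}^-\leq\mathcal{E}_{\lambda_0,\mu_0}^-$ and one from the $\mathcal{N}^0$ constraint via $\|u\|^p\geq\tfrac{q-m}{\lambda(p-q)S_p^p}\min\{\|u\|^\ell,\|u\|^m\}$) needs to be written out before one can claim $\lambda_\ast>0$ exists.
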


Next, we consider the minimization problem given in \eqref{ee2}.
It is worthwhile to mention that a ground state solution is a nontrivial solution which has the minimal energy level among any other nontrivial solutions. Hence, we stated our next main result in the following form:

\begin{theorem}\label{theorem2}
Assume that \ref{Hip1}-\ref{Hip4}, \ref{Hi1}-\ref{Hi2}, \ref{Hiv0} and \ref{Hiv1} hold. Then, there exists $\lambda^* > 0$ such that the Problem \eqref{eq1} admits at least one weak solution $v_{\lambda,\mu} \in \mathcal{N}_{\lambda,\mu}^{+}$ if one of the following conditions is satisfied: 
\begin{itemize}
\item[(i)]  $\lambda \in  (0, \lambda^*)$ and $\mu \in (\mu_n(\lambda), \mu_e(\lambda))$;	
\item[(ii)] $\lambda > 0$ and $\mu \in [\mu_e(\lambda), \infty)$;
\item[(iii)] $\lambda >0$ and $\mu \in (\mu_e(\lambda)-\varepsilon, \mu_e(\lambda))$, where $\varepsilon>0$ is small enough.
\end{itemize}
Furthermore, the weak solution $v_{\lambda,\mu}$ is a ground state solution with the following properties: 
\begin{itemize}
\item[(i)] For each $\mu\in (\mu_n(\lambda),\mu_e(\lambda))$ we obtain that $\mathcal{I}_{\lambda,\mu}(v_{\lambda,\mu})>0$.
\item[(ii)] For $\mu=\mu_e(\lambda)$ we have that $\mathcal{I}_\lambda(v_{\lambda,\mu})=0$.
\item[(iii)] For each $\mu>\mu_e(\lambda)$ we have also that $\mathcal{I}_\lambda(v_{\lambda,\mu})<0$.
\end{itemize}
\end{theorem}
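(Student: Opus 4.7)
The plan is to realize $v_{\lambda,\mu}$ as a minimizer of $\mathcal{I}_{\lambda,\mu}$ over $\mathcal{N}_{\lambda,\mu}^+$ and then exploit the natural-constraint structure to conclude it is a critical point. Throughout, the key tool is the correspondence \eqref{a0i}--\eqref{a11i} between $\mathcal{N}_{\lambda,\mu}$ (resp. $\mathcal{E}_{\lambda,\mu}$) and the level sets of $R_n$ (resp. $R_e$), together with the uniqueness of the critical point of $t \mapsto R_n(tu)$ guaranteed by Remark \ref{achado}. The first step is to verify that $\mathcal{N}_{\lambda,\mu}^+ \neq \emptyset$ in each of the regimes (i)--(iii): in all three cases $\mu > \mu_n(\lambda)$, so for each $u \in X\setminus\{0\}$ the horizontal line at height $\mu$ meets the graph of $t \mapsto R_n(tu)$ in two points, a smaller $t_+(u)$ (landing in $\mathcal{N}_{\lambda,\mu}^+$) and a larger $t_-(u)$ (landing in $\mathcal{N}_{\lambda,\mu}^-$).

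The second step is to establish that $\mathcal{I}_{\lambda,\mu}$ is bounded from below on $\mathcal{N}_{\lambda,\mu}^+$. Using the Nehari identity to substitute out the $\mu$-term yields
\[
\mathcal{I}_{\lambda,\mu}(u) = \mathcal{J}_{s,\Phi,V}(u) - \tfrac{1}{q}\mathcal{J}'_{s,\Phi,V}(u)u + \lambda\bigl(\tfrac{1}{p}-\tfrac{1}{q}\bigr)\|u\|_p^p.
\]
The first two terms combine, via the two-sided bound $\ell\Phi(t) \leq \varphi(t)t^2 \leq m\Phi(t)$ coming from \ref{Hip3} together with the assumption $q > m$ of \ref{Hi1}, into a positive multiple of $\mathcal{J}_{s,\Phi,V}(u)$. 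The balance condition $m(q-\ell) < p(q-m)$ of \ref{Hi1} then absorbs the negative $\|u\|_p^p$ contribution via the embedding $X \hookrightarrow L^p(\R^N)$ and the modular-norm relations, delivering coercivity and a uniform lower bound. In regime (i), the sharper inequality $R_e(u) \geq \mu_e(\lambda) > \mu$ from \eqref{mun} and \eqref{a11i} upgrades this to $\mathcal{I}_{\lambda,\mu}(u) > 0$ on all of $X\setminus\{0\}$.

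The third step is the direct method. A minimizing sequence $(u_n) \subset \mathcal{N}_{\lambda,\mu}^+$ for $\mathcal{E}_{\lambda,\mu}^+$ is bounded by coercivity, so along a subsequence $u_n \rightharpoonup v$ weakly in $X$. Hypothesis \ref{Hiv1} combined with \ref{Hi2} provides the compact embedding $X \hookrightarrow L^q(\R^N, a\,dx)$, so $\|u_n\|_{q,a}^q \to \|v\|_{q,a}^q$; together with the weak lower semicontinuity of $\mathcal{J}_{s,\Phi,V}$ and a uniform lower bound on $\|u_n\|_{q,a}$ coming from the Nehari identity and $\mu_n(\lambda)>0$, this forces $v \neq 0$ and in fact $u_n \to v$ strongly in $X$. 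Consequently $v \in \overline{\mathcal{N}_{\lambda,\mu}^+} = \mathcal{N}_{\lambda,\mu}^+ \cup \mathcal{N}_{\lambda,\mu}^0$ (Lemma \ref{nnfechada}) with $\mathcal{I}_{\lambda,\mu}(v) = \mathcal{E}_{\lambda,\mu}^+$.

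The main obstacle, as emphasized in the introduction, is to exclude $v \in \mathcal{N}_{\lambda,\mu}^0$. This is precisely the role of the thresholds $\lambda^*$ and of the position of $\mu$ relative to $\mu_e(\lambda)$. If $v$ were in $\mathcal{N}_{\lambda,\mu}^0$, then the two fibering projections $t_\pm(v)$ would coincide, forcing $\mu$ to equal a specific function of $v$ involving both $\mathcal{J}_{s,\Phi,V}$ and $\|v\|_p$; this identity is to be contradicted by $\lambda < \lambda^*$ in case (i) and by the chosen positioning of $\mu$ in cases (ii) and (iii). Once $v \in \mathcal{N}_{\lambda,\mu}^+$ is secured, the natural-constraint principle (Lagrange multipliers, using $\mathcal{I}''_{\lambda,\mu}(v)(v,v) > 0$) yields $\mathcal{I}'_{\lambda,\mu}(v) = 0$, so $v_{\lambda,\mu} := v$ is a weak solution of \eqref{eq1}. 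The sign assertions are then read off from \eqref{a11i}: in case (i) one has $\mu < R_e(v)$ and hence positive energy; for $\mu = \mu_e(\lambda)$ one produces a competitor realizing $R_e = \mu$, forcing $\mathcal{E}_{\lambda,\mu}^+ = 0$; for $\mu > \mu_e(\lambda)$ one can project $t_+$ into the region where $R_e < \mu$, giving strictly negative energy. Finally, $v_{\lambda,\mu}$ is a ground state because $\mathcal{E}_{\lambda,\mu}^+ \leq \mathcal{E}_{\lambda,\mu}^-$ and every nontrivial weak solution of \eqref{eq1} lies in $\mathcal{N}_{\lambda,\mu}$.
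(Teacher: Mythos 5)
Your proposal follows the same high-level architecture as the paper (direct method on $\mathcal{N}_{\lambda,\mu}^+$, exclusion of $\mathcal{N}_{\lambda,\mu}^0$, natural-constraint principle, ground-state comparison), but there are two genuine gaps and one factual slip.

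First, the labeling of the two fibering roots is reversed: Proposition \ref{compar} gives $\t_\mu^-(u) < \t(u) < \t_\mu^+(u)$ where the \emph{smaller} root $\t_\mu^-(u)$ is a local maximum of $\gamma_u$ (hence $\t_\mu^-(u)u \in \mathcal{N}_{\lambda,\mu}^-$) and the \emph{larger} root $\t_\mu^+(u)$ is a local minimum ($\t_\mu^+(u)u \in \mathcal{N}_{\lambda,\mu}^+$). You have the correspondence backwards, which matters for the subsequent geometric reasoning (e.g., where to project to get lower energy).

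Second, and more seriously, the strong convergence of the minimizing sequence is asserted but not proved. You write that compactness of the $L^q(a\,dx)$ embedding together with weak lower semicontinuity of $\mathcal{J}_{s,\Phi,V}$ ``forces $u_n \to v$ strongly.'' That inference fails: $\mathcal{N}_{\lambda,\mu}^+$ is not weakly closed, so you cannot conclude that the weak limit $v$ achieves $\mathcal{E}_{\lambda,\mu}^+$ merely from wlsc, and without that equality there is no mechanism to upgrade weak to strong convergence. The paper's Proposition \ref{converg1} handles this by a contradiction argument: if the convergence were not strong, the strict inequality $\Lambda_n(v) < \mu$ (from strict wlsc) would allow one to use the fibering projections of $v$ to produce a point of $\mathcal{N}_{\lambda,\mu}^+$ with energy strictly below $\mathcal{E}_{\lambda,\mu}^+$, which is impossible. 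Some argument of this type, or an $(S_+)$-style lemma (as in Proposition \ref{S+}, used elsewhere in the paper), is indispensable here.

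Third, the exclusion of $v \in \mathcal{N}_{\lambda,\mu}^0$ is described only schematically (``this identity is to be contradicted by $\lambda < \lambda^*$ \ldots and by the chosen positioning of $\mu$''), which does not constitute a proof and also misattributes the mechanisms to the wrong cases. In case (ii) of the theorem ($\mu \ge \mu_e(\lambda)$) the exclusion is a sign comparison: by Proposition \ref{l1123} one has $\mathcal{E}_{\lambda,\mu}^+ \le 0$, whereas the lower bound of Proposition \ref{limitacao} gives $\inf_{w\in\mathcal{N}_{\lambda,\mu}^0}\mathcal{I}_{\lambda,\mu}(w) > 0$, so they cannot coincide. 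In case (i) ($\lambda < \lambda^*$, $\mu_n < \mu < \mu_e$) one argues as in Proposition \ref{l112b}: membership in $\mathcal{N}_{\lambda,\mu}^0$ forces a lower bound on $\|v\|$ of size $\lambda^{-1/(p-\ell)}$ or $\lambda^{-1/(p-m)}$, while the energy bound $\mathcal{E}_{\lambda,\mu}^+ \le \mathcal{E}_{\lambda_0,\mu_0}^+$ (Corollary \ref{l44eu}) bounds $\|v\|$ from above uniformly in $\lambda$, a contradiction for small $\lambda$. In case (iii) one needs the quantitative two-sided estimates showing $\mathcal{E}_{\lambda,\mu}^+ < C_{\mu_e(\lambda)} < \mathcal{E}_{\lambda,\mu}^0$ for $\mu$ in a small left-neighborhood of $\mu_e(\lambda)$. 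Finally, the ground-state claim $\mathcal{E}_{\lambda,\mu}^+ \le \mathcal{E}_{\lambda,\mu}^-$ is asserted without justification; the paper obtains it by projecting the $\mathcal{N}^-$ solution along its fibering map to the other root, where the energy strictly decreases.
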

As a consequence, by using Theorems \ref{theorem1} and \ref{theorem2}, we obtain the following result
\begin{cor}\label{cor}
Assume that \ref{Hip1}-\ref{Hip4}, \ref{Hi1}-\ref{Hi2}, \ref{Hiv0} and \ref{Hiv1} hold. Suppose also that $\lambda \in (0, \min(\lambda_*, \lambda^*))$ and $\mu>\mu_n(\lambda)$. Then, the Problem \eqref{eq1} admits at least two weak solutions. 
\end{cor}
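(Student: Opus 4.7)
The strategy is to combine Theorem \ref{theorem1} and Theorem \ref{theorem2} in a case analysis on $\mu$, and to observe that the Nehari sets $\mathcal{N}_{\lambda,\mu}^{+}$ and $\mathcal{N}_{\lambda,\mu}^{-}$ are disjoint by construction, since the sign of the quadratic form $\mathcal{I}''_{\lambda,\mu}(u)(u,u)$ distinguishes them. Thus a solution produced in $\mathcal{N}_{\lambda,\mu}^{-}$ cannot coincide with a solution produced in $\mathcal{N}_{\lambda,\mu}^{+}$, and producing one solution in each set at the same pair $(\lambda,\mu)$ suffices.

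First I would fix $\lambda\in (0,\min(\lambda_{\ast},\lambda^{\ast}))$ and $\mu>\mu_n(\lambda)$ as in the statement. Since $\lambda<\lambda_{\ast}$, Theorem \ref{theorem1} applies directly and yields a weak solution $u_{\lambda,\mu}\in\mathcal{N}_{\lambda,\mu}^{-}$ of problem \eqref{eq1}, together with the variational characterization $\mathcal{I}_{\lambda,\mu}(u_{\lambda,\mu})=\mathcal{E}_{\lambda,\mu}^{-}\geq D_{\mu}>0$.

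Next I would produce a second solution in $\mathcal{N}_{\lambda,\mu}^{+}$ by splitting on the range of $\mu$ prescribed in Theorem \ref{theorem2}. If $\mu\in(\mu_n(\lambda),\mu_e(\lambda))$, then since $\lambda<\lambda^{\ast}$, item (i) of Theorem \ref{theorem2} gives a solution $v_{\lambda,\mu}\in\mathcal{N}_{\lambda,\mu}^{+}$ with $\mathcal{I}_{\lambda,\mu}(v_{\lambda,\mu})>0$. If $\mu\in[\mu_e(\lambda),+\infty)$, then item (ii) of Theorem \ref{theorem2} applies for every $\lambda>0$, and in particular for our $\lambda$, furnishing $v_{\lambda,\mu}\in\mathcal{N}_{\lambda,\mu}^{+}$ with $\mathcal{I}_{\lambda,\mu}(v_{\lambda,\mu})\leq 0$. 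The two cases exhaust the interval $(\mu_n(\lambda),+\infty)$ (using that $\mu_n(\lambda)<\mu_e(\lambda)<+\infty$, guaranteed by Theorem \ref{theorem1}), and in both cases the solution $v_{\lambda,\mu}$ lies in $\mathcal{N}_{\lambda,\mu}^{+}$.

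Finally I would conclude by noting that $\mathcal{N}_{\lambda,\mu}^{+}\cap\mathcal{N}_{\lambda,\mu}^{-}=\emptyset$, so $u_{\lambda,\mu}\neq v_{\lambda,\mu}$, which produces the required two distinct weak solutions to \eqref{eq1}. There is really no serious obstacle here: the entire content has been packed into the two theorems being combined. The only subtle point worth double-checking is that the hypothesis $\lambda<\min(\lambda_{\ast},\lambda^{\ast})$ simultaneously activates both Theorem \ref{theorem1} and part (i) of Theorem \ref{theorem2} when $\mu<\mu_e(\lambda)$, while the range $\mu\geq \mu_e(\lambda)$ is covered by part (ii) without any further smallness restriction on $\lambda$; hence the assumption $\lambda<\min(\lambda_{\ast},\lambda^{\ast})$ together with $\mu>\mu_n(\lambda)$ is sufficient uniformly in $\mu$.
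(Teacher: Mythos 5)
Your proof is correct and follows the same approach as the paper, which simply invokes Theorems \ref{theorem1} and \ref{theorem2} and notes the conclusion is immediate. Your added case split on $\mu$ relative to $\mu_e(\lambda)$ and the explicit remark that $\mathcal{N}_{\lambda,\mu}^{+}\cap\mathcal{N}_{\lambda,\mu}^{-}=\emptyset$ are reasonable elaborations of the same argument.
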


Finally, we prove a nonexistence result for Problem \eqref{eq1}. 
\begin{theorem}\label{theorem4}
Assume that \ref{Hip1}-\ref{Hip4}, \ref{Hi1}-\ref{Hi2}, \ref{Hiv0} and \ref{Hiv1} hold. Suppose also that $\lambda>0$ and $\mu \in (-\infty, \mu_n(\lambda))$. Then, the Problem \eqref{eq1} does not admit any nontrivial solution. 
\end{theorem}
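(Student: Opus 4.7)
The plan is direct: any nontrivial weak solution $u$ of \eqref{eq1} necessarily lies on the Nehari set $\mathcal{N}_{\lambda,\mu}$, and by the equivalence in \eqref{a0i} this forces $\mu=R_n(u)$. Since $\mu_n(\lambda)$ is defined in \eqref{mun} as a global infimum of $R_n$ along rays $t\mapsto tu$, one automatically obtains $\mu\geq \mu_n(\lambda)$, which directly contradicts the standing hypothesis $\mu<\mu_n(\lambda)$.

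To carry out this argument, the first step is to suppose $u\in X\setminus\{0\}$ is a weak solution and test the weak formulation against $v=u$, obtaining
$$
\mathcal{J}_{s,\Phi,V}'(u)u+\lambda\|u\|_p^p=\mu\|u\|_{q,a}^q.
$$
By hypothesis \ref{Hi2} we have $a>0$ almost everywhere in $\mathbb{R}^N$, and since $u\not\equiv 0$ this implies $\|u\|_{q,a}^q>0$. Dividing the identity by $\|u\|_{q,a}^q$ and comparing with \eqref{Rn} yields $\mu=R_n(u)$. The second step is simply to apply the definition \eqref{mun}: taking $w=u$ in the outer infimum and $t=1$ in the inner infimum,
$$
\mu_n(\lambda)=\inf_{w\in X\setminus\{0\}}\inf_{t>0}R_n(tw)\;\leq\;\inf_{t>0}R_n(tu)\;\leq\;R_n(u)=\mu,
$$
in direct contradiction with $\mu<\mu_n(\lambda)$.

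There is essentially no genuine obstacle in this proof: it is a clean corollary of the Rayleigh-quotient characterization of $\mathcal{N}_{\lambda,\mu}$ recorded in \eqref{a0i}. What one needs for the statement not to be vacuous is that $0<\mu_n(\lambda)<+\infty$, but this is already established in Theorem \ref{theorem1}. The single technical point deserving a line of justification is the positivity of $\|u\|_{q,a}^q$, which follows from \ref{Hi2} together with the fact that a nonzero element of $X$ cannot vanish almost everywhere on $\mathbb{R}^N$; with this remark in hand the argument reduces to the one-line contradiction above.
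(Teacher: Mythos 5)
Your proof is correct and follows the same underlying idea as the paper, which simply appeals to Lemma~\ref{naovazio} (the statement that $\mathcal{N}_{\lambda,\mu}$ is empty when $\mu<\mu_n(\lambda)$), whose own proof is the very computation you spell out: a nontrivial solution lies on $\mathcal{N}_{\lambda,\mu}$, hence $\mu=R_n(u)\geq\mu_n(\lambda)$ by the definition of $\mu_n(\lambda)$ as an infimum. Your version is just more explicit than the paper's one-line reference, and it correctly notes the needed positivity $\|u\|_{q,a}^q>0$ from \ref{Hi2}.
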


We remark that under our hypotheses we can also consider the limit case $s=1$, which means that our results still true for the local problem driven by the (local) $\Phi$-Laplacian operator. 

\noindent Examples of $N-$functions verifying our hypotheses include:
\begin{itemize}
\item[i)] $\Phi(t) = |t|^p/p$, $t>0$, where $p \in (1, N)$. This gives the well-known fractional $p$-Laplace operator;

\item[ii)] $\Phi(t) = |t|^p/p + |t|^q/q$, $t>0$, where $1 < p < q < N$. This gives the non-homogeneous operator known as fractional $(p,q)$- Laplacian;

\item[iii)] $\Phi(t) = |t|^p \ln (1 + |t|), t > 0$ for any $p>1$,
\end{itemize}
where in all the cases the fractional parameter $s$ belongs to $(0,1]$.

\subsection{Outline}
This work is organized as follows: In the forthcoming section, we collect some preliminary results and we establish some notation that will be used throughout this work.
In section \ref{section2}, we establish some results concerning  the Nehari method and the nonlinear Rayleigh Quotient method for our main problem. 
In section \ref{section3} it is proved our main results by analyzing the energy levels for each minimizer in the Nehari manifolds $\mathcal{N}_{\lambda,\mu}^{\pm}$. 
Section \ref{section4} is devoted to the asymptotic behavior of solutions obtained in  Theorems \ref{theorem1} and \ref{theorem2}.  Finally, in Section \ref{section5}, the cases in which the parameters $\lambda$ and $\mu$ are equal to $\lambda_\ast, \lambda^\ast$ and $\mu_n$, respectively, are studied.

\section{A brief review about fractional Orlicz-Sobolev and variational setting}\label{spaces}

In this section we recall some preliminary concepts about the fractional Orlicz-Sobolev spaces which will be used throughout this work. For a more complete discussion about this subject, we refer the readers to \cite{Adams, ACPS, RR,HH,FBS,DNBS}. The starting point of the theory of these spaces is the notion of $N$-functions. Notice that a continuous function $\Phi:\mathbb{R}\rightarrow\mathbb{R}$ is said to be a {\it $N$-function (or Orlicz function)} if it satisfies the following conditions:
\begin{itemize}
\item[(i)] $\Phi$ is convex and even;
\item[(ii)] $\Phi(t)>0$, for all $t>0$;	
\item[(iii)] $\displaystyle\lim_{t\rightarrow 0}\frac{\Phi(t)}{t}=0$;
\item[(iv)] $\displaystyle\lim_{t\rightarrow +\infty}\frac{\Phi(t)}{t}=+\infty$;
\end{itemize}
Recall also that an $N$-function satisfies the {\it $\Delta_{2}$-condition} if there exists $K>0$ such that
\[
\Phi(2t)\leq K\Phi(t), \quad \mbox{for all} \hspace{0,2cm} t\geq0.
\]
In light of assumptions \ref{Hip1}-\ref{Hip2},  the function $\Phi$ defined in \eqref{N-funtion} is a $N$-function. Moreover, taking into account \ref{Hip3}, we see that
\begin{equation}\label{delta2}
1<\ell\leq \frac{\varphi(|t|)t^2}{\Phi(t)}\leq m<+\infty, \quad \mbox{for all} \hspace{0,2cm} t\neq 0.
\end{equation}
Hence, $\Phi$ mentioned above satisfy the $\Delta_{2}$-condition, see for instance \cite{PKJF}. On the other hand, the function $\widetilde{\Phi}: \mathbb{R} \rightarrow [0, +\infty)$ defined by the following Legendre's transformation
$$
\widetilde{\Phi}(t)=\max_{s\geq0} \{ts - \Phi(s)\}, \quad \mbox{for all} \hspace{0,2cm} t\in \mathbb{R}
$$
denotes the {\it conjugated $N$-function } of $\Phi$. Moreover, assuming that $\Phi$ is a $N$-function where
$$
\Phi(t)=\int_0^{t} s\varphi(s) \, ds,
$$
we obtain that $\widetilde{\Phi}$ can be rewritten as follows
$$\widetilde{\Phi}(t)=\int_{0}^{t} s \widetilde{\varphi} (s) \, ds.$$
Here we observe also that 
$$\widetilde{\varphi}(t):=\sup\{s: s\varphi(s)\leq t\}, \quad \mbox{for all} \hspace{0,2cm} t\geq 0.$$
Furthermore, for any fixed open set $\Omega \subseteq \R^N$, $s\in(0,1)$ and a $N$-function $\Phi$, we recall that the {\it Fractional Orlicz-Sobolev space} $W^{s,\Phi}(\Omega)$ is defined by
$$
W^{s,\Phi}(\Omega)=\left\lbrace u \in L^{\Phi}(\Omega): \mathcal{J}_{s,\Phi}\left(\frac{u}{\lambda}\right)< \infty\; \mbox{for some}\; \lambda>0  \right\rbrace, 
$$
where the usual {\it Orlicz space} $L^{\Phi}(\Omega)$ is defined as
$$
L^{\Phi}(\Omega)=\left\lbrace u: \Omega \to \R \; \mbox{measurable}: \mathcal{J}_{\Phi}\left(\frac{u}{\lambda}\right)< \infty\; \mbox{for some}\; \lambda>0  \right\rbrace.
$$
Here the modular functions are determined in the following form:
$$
\mathcal{J}_{s,\Phi}(u)= \iint_{\R^N\times\R^{N}} \Phi(D_su(x,y)) \, d\nu,  \qquad \mbox{and} \qquad\mathcal{J}_{\Phi}(u):=\int_\Omega \Phi(u(x))\,dx,
$$
where the $s$-{\it H\"older quotien}t $D_s u$ and the singular measure $\nu$ are defined as
$$
D_su(x,y):=\frac{u(x)-u(y)}{|x-y|^s}, \qquad d\nu(x,y) := \frac{dxdy}{|x-y|^n}.
$$
These spaces are endowed with the {\it Luxemburg norms}
$$
\|u\|_{L^\Phi(\Omega)}:=\inf\left\{ \lambda>0\colon \mathcal{J}_{\Phi}\left(\frac{u}{\lambda}\right)\leq 1\right\}, \qquad 
\|u\|_{W^{s,\Phi}(\Omega)}:=\|u\|_{L^\Phi(\Omega)} + [u]_{W^{s,\Phi}(\R^N)},
$$
with the corresponding $(s,\Phi)${\it -Gagliardo seminorm}
$$
[u]_{W^{s,\Phi}(\R^N)}:=\inf\left\{ \lambda>0\colon \mathcal{J}_{s,\Phi} \left(\frac{u}{\lambda}\right)\,dx\leq 1\right\}.
$$
It follows that a function $u\in L^{\Phi}(\Omega)$ belongs to $W^{s,\Phi}(\Omega)$ if and only if $D_su \in  L^{\Phi}(\R^N\times\R^N, d\nu)$ and $[ u ]_{W^{s,\Phi}(\R^N)}=\|D_su\|_{ L^{\Phi}(\R^{2N},d\nu)}$.

It is important to emphasize that assumption \ref{Hip3} implies that $\Phi$ and $\widetilde{\Phi}$ satisfy the $\Delta_2$-condition, see \eqref{delta2}. Hence, $W^{s,\Phi}(\Omega)$ is a reflexive and separable Banach space. Moreover, the space $C_0^\infty(\R^N)$ is dense in $W^{s,\Phi}(\R^N)$. On this property, we refer the reader to \cite[Proposition 2.11]{FBS} and \cite[Proposition 2.9]{DNBS}.
We also highlight that the fractional Orlicz-Sobolev space is the appropriate space in order to study non-local elliptic problems driven by {\it fractional $\Phi$-Laplacian operator} given by 
$$
(-\Delta_\Phi)^s u(x) = \text{p.v.}\,\iint_{\R^N \times \R^N} \varphi(|D_s u(x,y)|) D_su(x,y)\, \frac{dy}{|x-y|^{N+s}}.
$$
Recall also that p.v. is a general abbreviation used in the principle value sense. According to \cite{FBS}, under assumptions \textsc{\ref{Hip1}-\ref{Hip3}}, this operator is well defined between $W^{s,\Phi}(\R^N)$ and it is topological dual space $W^{-s,\widetilde{\Phi}}(\R^N)$. Recall also the following representation formula
$$
\begin{aligned}
\langle (-\Delta_\Phi)^s u,v\rangle = \mathcal{J}_{s,\Phi}'(u)v = \iint_{\R^N\times \R^N} \varphi\left(|D_su|\right) D_su\, D_sv \, d\nu, \;\; \mbox{for all} \;\; u,v \in W^{s,\Phi}(\R^N),
\end{aligned} 
$$
where $\left\langle\cdot, \cdot \right\rangle$ denote the duality pairing between $W^{s,\Phi}(\R^N)$ and $W^{-s,\widetilde{\Phi}}(\R^N)$.

Now, we shall prove some results that are related to norms and modulars, see \cite[Lemmas 2.1]{Fuk1} and  \cite[Lemma 3.1]{S. Bahrouni et.al}. More precisely, we shall consider the following result:
\begin{lem}\label{lema_naru}
Assume that \ref{Hip1}-\ref{Hip3} hold and let
$$
\xi^-(t)=\min\{t^\ell,t^m\}\;\;\mbox{and}\;\;\xi^+(t)=\max\{t^\ell,t^m\},\;\; t\geq 0.
$$
Then, the following estimates hold:
$$
\xi^-(t)\Phi(\rho)\leq\Phi(\rho t)\leq \xi^+(t)\Phi(\rho), \quad \rho, t> 0;
$$
$$
\xi^-(\|u\|_{\Phi})\leq \mathcal{J}_\Phi(u)\leq \xi^+(\|u\|_{\Phi}), \quad u\in L_{\Phi}(\Omega);\quad 
\xi^-([u]_{W^{s,\Phi}(\R^N)})\leq \mathcal{J}_{s,\Phi}(u)\leq \xi^+([u]_{W^{s,\Phi}(\R^N)}), \quad u\in W^{s,\Phi}(\Omega).
$$
\end{lem}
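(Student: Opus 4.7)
The plan is to first establish the pointwise estimate and then pass to the modular-seminorm comparisons by a scaling argument that uses the Luxemburg norm and the $\Delta_2$-condition.

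\textbf{Step 1 (differential inequality for $\Phi$).} From \eqref{N-funtion} we have $\Phi'(t)=\varphi(t)t$ for $t>0$, so that hypothesis \ref{Hip3} rewrites (as already noted in \eqref{delta2}) as
$$
\ell \leq \frac{t\Phi'(t)}{\Phi(t)} \leq m, \quad t>0.
$$
Equivalently, $\ell/t \leq (\ln \Phi)'(t) \leq m/t$ on $(0,+\infty)$.

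\textbf{Step 2 (pointwise estimate).} Fix $\rho>0$ and set $g(\tau):=\Phi(\rho\tau)$ for $\tau>0$. Since the quantity $t\Phi'(t)/\Phi(t)$ is scale-invariant under $t\mapsto \rho t$, the same bounds in Step 1 apply to $g$, i.e.\ $\ell/\tau \leq (\ln g)'(\tau) \leq m/\tau$. Integrating this inequality from $1$ to $t$ when $t\geq 1$, and from $t$ to $1$ when $0<t\leq 1$, yields in both cases
$$
\min\{t^\ell,t^m\}\,g(1) \;\leq\; g(t) \;\leq\; \max\{t^\ell,t^m\}\,g(1),
$$
which is precisely $\xi^-(t)\Phi(\rho)\leq \Phi(\rho t)\leq \xi^+(t)\Phi(\rho)$.

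\textbf{Step 3 (modular vs.\ Luxemburg norm).} Let $u\in L^\Phi(\Omega)\setminus\{0\}$ and set $\lambda_0:=\|u\|_{L^\Phi(\Omega)}$. Under the $\Delta_2$-condition satisfied by $\Phi$, the Luxemburg norm is attained, so $\mathcal{J}_\Phi(u/\lambda_0)=1$. Applying Step 2 pointwise with $\rho=|u(x)|/\lambda_0$ and $t=\lambda_0$ gives
$$
\xi^-(\lambda_0)\,\Phi\!\left(\tfrac{u(x)}{\lambda_0}\right) \;\leq\; \Phi(u(x)) \;\leq\; \xi^+(\lambda_0)\,\Phi\!\left(\tfrac{u(x)}{\lambda_0}\right).
$$
Integrating over $\Omega$ and using $\mathcal{J}_\Phi(u/\lambda_0)=1$ produces $\xi^-(\|u\|_\Phi)\leq \mathcal{J}_\Phi(u)\leq \xi^+(\|u\|_\Phi)$.

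\textbf{Step 4 (Gagliardo version).} The same argument, now with $\rho=|D_s u(x,y)|/\lambda_1$ where $\lambda_1=[u]_{W^{s,\Phi}(\R^N)}$ and $t=\lambda_1$, combined with integration against the Borel measure $d\nu$ over $\R^N\times \R^N$, yields the analogous bound for $\mathcal{J}_{s,\Phi}$. Again, attainment $\mathcal{J}_{s,\Phi}(u/\lambda_1)=1$ for $u\neq 0$ follows from the $\Delta_2$-condition and the definition of $[\cdot]_{W^{s,\Phi}(\R^N)}$ as a Luxemburg-type seminorm.

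The only delicate point is Step 2, where one must track the direction of the inequality according to $t\lessgtr 1$; everything else is a direct consequence of the pointwise estimate and the fact that $\Delta_2$ guarantees that the infimum in the definition of the Luxemburg norm is realized, so that the modular of the normalized function equals $1$.
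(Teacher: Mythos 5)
Your proof is correct, and it follows the standard route: the paper does not actually present a proof of Lemma~\ref{lema_naru}, it only cites \cite[Lemma 2.1]{Fuk1} and \cite[Lemma 3.1]{S. Bahrouni et.al}, and the argument in those references is essentially the one you give (integrate the logarithmic-derivative bound $\ell/t\le (\ln\Phi)'(t)\le m/t$ to get the pointwise two-sided estimate, then normalize by the Luxemburg norm). One small wording point worth tightening: the inequality $\mathcal{J}_\Phi(u/\|u\|_\Phi)\le 1$ holds in general (monotone convergence); what the $\Delta_2$-condition actually buys you is the reverse inequality $\mathcal{J}_\Phi(u/\|u\|_\Phi)\ge 1$, because $\Delta_2$ guarantees finiteness of the modular for $\lambda$ slightly below $\|u\|_\Phi$ and hence continuity of $\lambda\mapsto\mathcal{J}_\Phi(u/\lambda)$ at $\|u\|_\Phi$, which is precisely what you need for the lower bound $\xi^-(\|u\|_\Phi)\le\mathcal{J}_\Phi(u)$. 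With that clarification the proof is complete and matches the cited sources.
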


In the sequel, we will define some functions that play a key role when dealing with embedding results of fractional Orlicz-Sobolev spaces. Firstly, we observe that assuming that \ref{Hip4} holds, the {\it optimal $N$-function (or Sobolev's critical function of $\Phi$)} for the continuous embedding is defined as
$$
\Phi_\ast(t)=(\Phi\circ H^{-1})(t), \;\; \mbox{for all} \;\; t\geq 0.
$$
where
$$
H(t)=\left(\int_{0}^{t}\left(\frac{\tau}{\Phi(\tau)}\right)^{\frac{s}{N-s}}\, d\tau\right)^{\frac{N-s}{N}}, \;\; \mbox{for all} \;\; t\geq 0.
$$
Notice also that $\Phi_\ast$ is extended to $\R$ by $\Phi_\ast(t)=\Phi_\ast(-t)$ for $t<0$. Hence, proceeding as was done in \cite[Lemmas 4.3 and 4.5]{MO}, we obtain the following growth behavior:

\begin{lem}\label{lem-naru-crit}
Assume that \ref{Hip1}-\ref{Hip4} hold. Assume that
\[
\xi_\ast^{-}(t)=\min\{t^{\ell_s^\ast}, t^{m_s^\ast}\} \quad \mbox{and} \quad \xi_\ast^{+}(t)=\max\{t^{\ell_s^\ast}, t^{m_s^\ast}\}, \quad \mbox{for} \hspace{0,2cm} t\geq0,
\]
where $\ell, m \in (1,N/s)$, $\ell_s^\ast=\frac{N\ell}{N-s\ell}$ and $m_s^\ast=\frac{Nm}{N-sm}$.
Then, $\Phi_\ast$ satisfies the following estimates:
$$
\ell_s^\ast \leq \frac{t\Phi_\ast'(t)}{\Phi_\ast(t)}\leq m_s^\ast;
$$
$$
\xi_\ast^-(t)\Phi_\ast(\rho)\leq\Phi_\ast(\rho t)\leq \xi_\ast^+(t)\Phi(\rho), \quad \rho, t> 0;
\xi_\ast^-(\|u\|_{\Phi})\leq J_{\Phi_\ast}(u)\leq \xi_\ast^+(\|u\|_{\Phi}), \quad u\in L^{\Phi_\ast}(\R^N).
$$
\end{lem}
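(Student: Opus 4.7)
My plan is to prove the three items as a chain, with the bulk of the work concentrated in the Simonenko-type bound (i). Once the pointwise estimate $\ell_s^\ast \leq t\Phi_\ast'(t)/\Phi_\ast(t) \leq m_s^\ast$ is available, items (ii) and (iii) follow by the same routine machinery that converts the index bound \eqref{delta2} into Lemma \ref{lema_naru}. The paper signals exactly this by referring to \cite[Lemmas 4.3 and 4.5]{MO}.

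For (i) I would unfold the definition $\Phi_\ast = \Phi \circ H^{-1}$. Setting $\tau := H^{-1}(t)$, $k := s/(N-s)$ and $G(\tau) := H(\tau)^{N/(N-s)} = \int_0^\tau (\sigma/\Phi(\sigma))^k\, d\sigma$, a direct differentiation gives $H'(\tau) = \tfrac{N-s}{N}\, G(\tau)^{-s/N}(\tau/\Phi(\tau))^k$, so that, combined with $\Phi'(\tau) = \varphi(\tau)\tau$, one obtains the identity
\[
\frac{t\,\Phi_\ast'(t)}{\Phi_\ast(t)} \;=\; \frac{N}{N-s}\cdot\frac{\tau\Phi'(\tau)}{\Phi(\tau)}\cdot\frac{G(\tau)}{\tau(\tau/\Phi(\tau))^k}.
\]
By \eqref{delta2} the middle factor lies in $[\ell,m]$, so the task reduces to sandwiching the last factor between $(N-s)/(N-s\ell)$ and $(N-s)/(N-sm)$. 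I would achieve this by exploiting \ref{Hip3}, which forces $\Phi(\sigma)/\sigma^\ell$ to be non-decreasing and $\Phi(\sigma)/\sigma^m$ to be non-increasing; substituting the corresponding pointwise bounds inside the integrand of $G(\tau)$ reduces the matter to elementary integrals $\int_0^\tau \sigma^{-k(\ell-1)}\,d\sigma$ and its $m$-analogue, which converge precisely because $\ell,m\in(1,N/s)$ gives $k(\ell-1),k(m-1)<1$. After simplification the ratios $(N-s)/(N-s\ell)$ and $(N-s)/(N-sm)$ emerge, and multiplying by $\tfrac{N}{N-s}$ and by $\ell$ (resp.\ $m$) yields exactly $\ell_s^\ast$ and $m_s^\ast$.

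Items (ii) and (iii) are then standard. From (i), $t\mapsto \Phi_\ast(t)/t^{\ell_s^\ast}$ is non-decreasing and $t\mapsto \Phi_\ast(t)/t^{m_s^\ast}$ is non-increasing, so a case split on $t\geq 1$ versus $0<t<1$ produces $\xi_\ast^-(t)\Phi_\ast(\rho)\leq \Phi_\ast(\rho t)\leq \xi_\ast^+(t)\Phi_\ast(\rho)$ (I read the $\Phi(\rho)$ on the right of the stated display as a typo for $\Phi_\ast(\rho)$). For (iii), pick $u\neq 0$ and set $\lambda:=\|u\|_{L^{\Phi_\ast}}$; the $\Delta_2$-condition for $\Phi_\ast$, itself a direct consequence of (i), forces $J_{\Phi_\ast}(u/\lambda)=1$, and applying (ii) pointwise with $\rho=|u(x)|/\lambda$ and $t=\lambda$ and integrating yields $\xi_\ast^-(\lambda)\leq J_{\Phi_\ast}(u)\leq \xi_\ast^+(\lambda)$ (so the norm in the paper's statement is to be read as $\|u\|_{L^{\Phi_\ast}}$).

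The main obstacle is the reduction of $G(\tau)/(\tau(\tau/\Phi(\tau))^k)$ to the constants $(N-s)/(N-s\ell)$ and $(N-s)/(N-sm)$. This is where the subcritical hypothesis $\ell,m\in(1,N/s)$ enters in an essential way—it is exactly what makes the auxiliary integrals converge—and where the algebraic simplification $\tfrac{N}{N-s}\cdot \ell \cdot \tfrac{N-s}{N-s\ell}=\ell_s^\ast$ (and its $m$-twin) bakes the fractional Sobolev exponents into the conclusion.
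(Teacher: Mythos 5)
Your reconstruction is correct, and it fills in exactly what the paper leaves implicit: the paper does not actually prove Lemma \ref{lem-naru-crit} but merely cites \cite[Lemmas 4.3 and 4.5]{MO}, so there is no ``paper proof'' to compare against beyond that delegation. Your chain of reasoning is the standard one and it closes all the necessary gaps. The pivotal identity
\[
\frac{t\,\Phi_\ast'(t)}{\Phi_\ast(t)} \;=\; \frac{N}{N-s}\cdot\frac{\tau\Phi'(\tau)}{\Phi(\tau)}\cdot\frac{G(\tau)}{\tau(\tau/\Phi(\tau))^k},\qquad \tau=H^{-1}(t),\ k=\tfrac{s}{N-s},
\]
checks out, and your sandwich of the third factor between $\tfrac{N-s}{N-s\ell}$ and $\tfrac{N-s}{N-sm}$ via the monotonicity of $\sigma\mapsto\Phi(\sigma)/\sigma^\ell$ (non-decreasing) and $\sigma\mapsto\Phi(\sigma)/\sigma^m$ (non-increasing) is exactly right: substituting into the integrand of $G$ yields $\int_0^\tau \sigma^{-k(\ell-1)}\,d\sigma=\tfrac{\tau^{1-k(\ell-1)}}{1-k(\ell-1)}$ with $1-k(\ell-1)=\tfrac{N-s\ell}{N-s}>0$ because $\ell<N/s$, and the $\tau$-powers cancel cleanly. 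The two lower (respectively upper) bounds for the two factors can legitimately be multiplied since all quantities are positive, and the algebra $\tfrac{N}{N-s}\cdot\ell\cdot\tfrac{N-s}{N-s\ell}=\ell_s^\ast$ (with the $m$-twin) is correct. Your deduction of (ii) from (i) by the monotonicity of $\Phi_\ast(t)/t^{\ell_s^\ast}$ and $\Phi_\ast(t)/t^{m_s^\ast}$, and of (iii) from (ii) by normalizing with the Luxemburg norm and invoking the $\Delta_2$-condition for $\Phi_\ast$ (itself a consequence of the upper index bound $m_s^\ast<\infty$), is the routine argument. You are also right that the paper's display contains two typos: the right-hand $\Phi(\rho)$ should read $\Phi_\ast(\rho)$, and $\|u\|_{\Phi}$ in the modular inequality should read $\|u\|_{L^{\Phi_\ast}}$.
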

Under the assumptions \ref{Hip1}-\ref{Hip4}, we have the following optimal fractional Orlicz-Sobolev embedding proved in \cite[Theorem 6.1]{ACPS}.

\begin{prop}
Assume that \ref{Hip1}-\ref{Hip4} hold. Then, the continuous embedding $W^{s,\Phi}(\R^N)\hookrightarrow L^{\Phi_\ast}(\R^N)$ holds. Moreover, this property is optimal in the sense that if the embedding holds for a $N$-function $\Psi$, then the space $L^{\Phi_\ast}(\R^N)$ is continuously embedded into $L^{\Psi}(\R^N)$.
\end{prop}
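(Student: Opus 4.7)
The plan is to obtain the embedding in two steps --- a Pólya--Szegő-type reduction to radially symmetric profiles, followed by a one-dimensional weighted Hardy estimate that forces the critical function to be $\Phi_\ast$ --- and then to read off optimality from a scaling family of test functions.

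First, for any $u\in W^{s,\Phi}(\R^N)$ let $u^\ast$ denote its symmetric decreasing rearrangement. I would establish the fractional Orlicz Pólya--Szegő principle
$$
\mathcal{J}_{s,\Phi}(u^\ast)\le \mathcal{J}_{s,\Phi}(u),
$$
by writing $\Phi(t)=\int_0^t \varphi(\tau)\tau\,d\tau$ as a superposition of elementary pieces and applying the Brascamp--Lieb--Luttinger rearrangement inequality to the kernel $|x-y|^{-N}$, which is itself symmetric decreasing. The $\Delta_2$-property guaranteed by \ref{Hip3} (through Lemma \ref{lema_naru}) lets us pass from the modular inequality to the norm inequality $[u^\ast]_{W^{s,\Phi}(\R^N)}\le [u]_{W^{s,\Phi}(\R^N)}$, and the same monotonicity for the Luxemburg norm on $L^\Phi$ gives $\|u^\ast\|_{L^\Phi}=\|u\|_{L^\Phi}$. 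Hence it suffices to prove the embedding for radially decreasing $u=u^\ast$.

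Next, for $u=u^\ast$ I pass to its radial profile $f(r):=u^\ast(|x|)$. Using polar coordinates and a change of variables comparing $|x-y|$ to $|r-\rho|$ on shells, the Gagliardo modular admits the lower bound
$$
\mathcal{J}_{s,\Phi}(u^\ast)\ge c_{N}\int_{0}^{\infty}\int_{0}^{\infty}\Phi\!\Bigl(\tfrac{|f(r)-f(\rho)|}{|r-\rho|^s}\Bigr)\frac{(r\rho)^{N-1}}{(r+\rho)^{N}}\,dr\,d\rho,
$$
which, by the standard chain of weighted 1D reductions (Maz'ya--Cianchi type), dominates a one-dimensional modular of the Hardy transform of $f'$. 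At this stage the choice of the weight $(\tau/\Phi(\tau))^{s/(N-s)}$ in the definition of $H$ is precisely what makes the weighted Hardy inequality scale-invariant: applying it produces
$$
\int_{\R^N}\Phi_{\ast}(u^\ast(x))\,dx\le C\,\mathcal{J}_{s,\Phi}(u^\ast),
$$
where $\Phi_\ast=\Phi\circ H^{-1}$. The two integrability conditions in \ref{Hip4} are used here: the one at $0$ makes $H$ well defined and strictly increasing near $0$, while the one at $\infty$ gives $H(\infty)=\infty$, so $\Phi_\ast$ is a bona fide $N$-function. Switching back from modulars to Luxemburg norms via Lemma \ref{lem-naru-crit} yields $\|u^\ast\|_{L^{\Phi_\ast}}\le C\|u^\ast\|_{W^{s,\Phi}}$, which combined with Step~1 proves the continuous embedding.

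For the optimality clause, suppose $W^{s,\Phi}(\R^N)\hookrightarrow L^{\Psi}(\R^N)$ for some $N$-function $\Psi$. I would insert the radial test family $u_\lambda(x):=H^{-1}(\lambda\chi_B(x))$ (suitably mollified on a ball $B$ of radius $R$) into the embedding inequality and read off, after exploiting scaling both as $R\to 0^+$ and $R\to\infty$, that $\Psi(\rho)\le C\,\Phi_\ast(C\rho)$ for all $\rho>0$; by the standard domination-equivalence for Orlicz spaces this is exactly $L^{\Phi_\ast}(\R^N)\hookrightarrow L^{\Psi}(\R^N)$. The main technical obstacle in this program is Step~1: the fractional Orlicz Pólya--Szegő inequality is not automatic from classical Brascamp--Lieb--Luttinger because the nonlinearity $\Phi$ intertwines with the singular kernel, and one must justify the truncation/approximation carefully using the $\Delta_2$-condition \ref{Hip3}; everything after symmetrization is a rigorous but essentially mechanical weighted 1D computation calibrated by \ref{Hip4}.
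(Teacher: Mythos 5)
The paper does not prove this proposition: it is stated as imported directly from \cite[Theorem~6.1]{ACPS}, so there is no internal argument to compare yours against. Judged on its own, your proposal is a credible blueprint and two of its ingredients are genuinely available in the cited literature (a fractional Orlicz P\'olya--Szeg\H{o} principle for the modular $\mathcal{J}_{s,\Phi}$ is in \cite{DNBS}, and the weight $(\tau/\Phi(\tau))^{s/(N-s)}$ in the definition of $H$ is indeed the calibration that makes the relevant weighted Hardy inequality work). However, your route diverges from the one in \cite{ACPS} in a substantive way: their proof does \emph{not} pass through symmetrization of $u$ to $u^\ast$, but instead estimates the Gagliardo modular from below directly in terms of a one-dimensional difference modular of the decreasing rearrangement $u^\ast(s)$, and then applies reduction principles for rearrangement-invariant spaces together with a Hardy-type inequality. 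This matters because the symmetrization path you propose requires the fractional P\'olya--Szeg\H{o} principle at the modular level, plus a nontrivial polar-coordinate lower bound whose constant and exact kernel you leave unjustified (the passage from $\iint \Phi(|D_s u^\ast|)\,d\nu$ to the stated $(r\rho)^{N-1}/(r+\rho)^N$ kernel is not ``essentially mechanical''). Your optimality argument is also sketchier than what \cite{ACPS} does: inserting $H^{-1}(\lambda\chi_B)$ and scaling $R$ is the right intuition, but on the whole of $\R^N$ one must control both the near-zero and near-infinity regimes of $\Psi$ versus $\Phi_\ast$ simultaneously, and the two conditions in \ref{Hip4} enter precisely here; without spelling that out the domination $\Psi(\rho)\le C\Phi_\ast(C\rho)$ for all $\rho>0$ is asserted rather than proved. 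In short: your outline is directionally sound and closer in spirit to the Cianchi-style local Orlicz--Sobolev theory, but it is not a complete proof, and it is not the same route as the reference the paper actually invokes.
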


Recall also that $\Psi$ is stronger (resp. essentially stronger) than $\Phi$, in short we write $\Phi < \Psi$ (resp. $\Phi \ll\Psi$), if 
$$\Phi(t)\leq \Psi(kt), \;\; t\geq t_0,$$
for some (respectively for all) $k>0$ and $t_0 > 0$ (resp. depending on $k > 0$). It is important to mention that  $\Phi \ll\\
\Psi$ is equivalent to the following condition
$$
\lim_{t \to +\infty} \frac{\Phi(kt)}{\Psi(t)}=0, \;\; \mbox{for all}\;\; k>0.
$$

As we mentioned in the introduction, the suitable workspace to study Problem \eqref{eq1} variationally is the weighted fractional Orlicz-Sobolev space defined as follows
$$
X:=\left\{u\in W^{s,\Phi}(\mathbb{R}^{N}):\int_{\R^N} V(x)\Phi(u)\;\mathrm{d}x<\infty\right\},
$$
equipped with the following norm
$$\|u\|_V=\|u\|_{V,\Phi} + [u]_{W^{s,\Phi}(\R^N)},$$
where
$$\|u\|_{V,\Phi}=\inf\left\{\lambda>0\colon \int_{\mathbb{R}^N}V(x)\Phi\left(\frac{u(x)}{\lambda}\right)\, dx\leq 1\right\}.$$

It is important to recall that $\|\cdot\|_V$ is equivalent to Luxemburg's norm given by
$$
\|u\|:= \inf\left\lbrace \lambda>0: \mathcal{J}_{s,\Phi,V}\left(\frac{u}{\lambda}\right)\leq 1 \right\rbrace, 
$$
where $\mathcal{J}_{s,\Phi,V}: X \to \R$ is defined by
$$
\mathcal{J}_{s,\Phi,V}(u)= \iint_{\R^{N}\times\R^N} \Phi(D_su) \, d\nu + \int_{\R^N}V(x)\Phi(u)\,dx.
$$
In light of \cite[Lemma 13]{Bahrouni-Emb}, a version of Lemma \ref{lema_naru} for $\mathcal{J}_{s,\Phi,V}$ can be stated as follows:

\begin{lem}\label{lemanaruV}
Assume that \ref{Hip1}-\ref{Hip3} and \ref{Hiv0} hold. Then,
$$
\xi^-(\|u\|)\leq \mathcal{J}_{s,\Phi,V}(u)\leq \xi^+(\|u\|),~u\in X.$$
\end{lem}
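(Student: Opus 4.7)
The plan is to reduce the modular estimate to the pointwise scaling inequality for $\Phi$ supplied by Lemma \ref{lema_naru}, using the definition of the Luxemburg norm together with the $\Delta_2$-condition inherited from hypothesis \ref{Hip3}. The argument parallels \cite[Lemma 2.1]{Fuk1} but has to be carried out for the composite modular $\mathcal{J}_{s,\Phi,V}$, which couples a Gagliardo-type double integral with a weighted integral on $\R^N$.

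First, I dispose of the trivial case $u=0$, where both sides vanish. For $u\neq 0$ set $\rho:=\|u\|>0$. Using that $\Phi$ satisfies the $\Delta_2$-condition, see \eqref{delta2}, the modular $\lambda\mapsto \mathcal{J}_{s,\Phi,V}(u/\lambda)$ is continuous on $(0,+\infty)$. Combined with monotonicity in $\lambda$, this forces the equality $\mathcal{J}_{s,\Phi,V}(u/\rho)=1$ at the infimum defining the Luxemburg norm. This identity is the pivot of the whole proof.

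Next, I apply the scaling inequality from Lemma \ref{lema_naru}, namely $\xi^-(\rho)\Phi(\sigma)\leq \Phi(\rho\sigma)\leq \xi^+(\rho)\Phi(\sigma)$ for all $\rho,\sigma>0$, with $\sigma=|D_s u(x,y)|/\rho$ and $\sigma=|u(x)|/\rho$ respectively. Integrating the two resulting pointwise inequalities against $d\nu$ and against $V(x)\,dx$, and adding them up, I obtain
$$
\xi^-(\rho)\,\mathcal{J}_{s,\Phi,V}(u/\rho)\ \leq\ \mathcal{J}_{s,\Phi,V}(u)\ \leq\ \xi^+(\rho)\,\mathcal{J}_{s,\Phi,V}(u/\rho).
$$
Since $\mathcal{J}_{s,\Phi,V}(u/\rho)=1$ by the previous step and $\rho=\|u\|$, this yields exactly $\xi^-(\|u\|)\leq \mathcal{J}_{s,\Phi,V}(u)\leq \xi^+(\|u\|)$, as required.

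The main technical obstacle I anticipate is justifying the equality $\mathcal{J}_{s,\Phi,V}(u/\|u\|)=1$: it is here that one must invoke the $\Delta_2$-condition to turn the infimum into an actual minimum. Once this is in place, the positivity assumption \ref{Hiv0} only enters through the fact that $V(x)\Phi(u(x))\geq 0$ a.e., so that the two summands in $\mathcal{J}_{s,\Phi,V}$ can be treated independently by the same scaling bound without sign cancellations; everything else is a routine pointwise-to-integral transfer of Lemma \ref{lema_naru}.
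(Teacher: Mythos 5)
Your proof is correct, and it takes essentially the same route the paper relies on (the paper simply cites \cite[Lemma~13]{Bahrouni-Emb} for this statement, which is exactly this argument): reduce to the pointwise scaling bound of Lemma~\ref{lema_naru}, integrate term by term using $V\geq 0$, and invoke $\mathcal{J}_{s,\Phi,V}(u/\|u\|)=1$, which follows from the $\Delta_2$-condition via continuity and monotonicity of $\lambda\mapsto\mathcal{J}_{s,\Phi,V}(u/\lambda)$.
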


Recently, Silva et al. \cite{SCDAB} established some continuous and compact embedding results for the space $X$.  These results are summarized in the following propositions:
\begin{prop}
Assume that \ref{Hip1}-\ref{Hip4}, \ref{Hiv0} and \ref{Hiv1} hold. Then, the embedding $X \hookrightarrow L^{\Phi}(\R^N)$ is compact.
\end{prop}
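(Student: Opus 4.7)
The plan is to adapt the classical Bartsch--Wang concentration scheme to the fractional Orlicz setting. Given a bounded sequence $\{u_n\}\subset X$, reflexivity of $X$ yields, up to a subsequence, $u_n\rightharpoonup u$ in $X$. Invoking a local Rellich--Kondrachov-type theorem for $W^{s,\Phi}$ on each ball $B_R$ (available in the references \cite{ACPS,Bahrouni-Emb,FBS}) and a diagonal extraction, I may further assume $u_n\to u$ a.e. on $\mathbb{R}^N$ and $u_n\to u$ in $L^\Phi(B_R)$ for every $R>0$. The whole difficulty is then to control the tail integral
\begin{equation*}
\int_{\mathbb{R}^N\setminus B_R}\Phi(u_n-u)\,dx
\end{equation*}
uniformly in $n$, and to combine this with the local convergence to conclude $\mathcal{J}_\Phi(u_n-u)\to 0$; since $\Phi$ satisfies $\Delta_2$, this gives norm convergence in $L^\Phi(\mathbb{R}^N)$.

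Fix $\varepsilon>0$. First, since $\|u_n-u\|\leq C$, Lemma \ref{lemanaruV} gives $\mathcal{J}_{s,\Phi,V}(u_n-u)\leq C'$. Splitting the tail along the level sets of $V$, the ``heavy-potential'' part is handled by a trivial bound:
\begin{equation*}
\int_{\{V>M\}}\Phi(u_n-u)\,dx\leq \frac{1}{M}\int_{\mathbb{R}^N}V(x)\Phi(u_n-u)\,dx\leq \frac{C'}{M},
\end{equation*}
which is less than $\varepsilon/3$ once $M$ is chosen large enough, independently of $n$. On the complement, hypothesis $(V_1)$ ensures $|\{V\leq M\}|<\infty$, so $|\{V\leq M\}\cap(\mathbb{R}^N\setminus B_R)|\to 0$ as $R\to\infty$.

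The last piece requires uniform integrability of $\Phi(u_n-u)$ on sets of small Lebesgue measure, and this is the main obstacle. To obtain it I would use the continuous critical embedding $X\hookrightarrow W^{s,\Phi}(\mathbb{R}^N)\hookrightarrow L^{\Phi_*}(\mathbb{R}^N)$ together with the polynomial sandwich implied by $(\varphi_3)$ and Lemma \ref{lem-naru-crit}, namely $\Phi(t)\leq C(|t|^\ell+|t|^m)$ and $\{u_n-u\}$ bounded in $L^{\ell_s^*}(\mathbb{R}^N)\cap L^{m_s^*}(\mathbb{R}^N)$. A direct H\"older inequality then yields, for any measurable $E\subset\mathbb{R}^N$ of finite measure,
\begin{equation*}
\int_E \Phi(u_n-u)\,dx\leq C\bigl(|E|^{1-\ell/\ell_s^*}+|E|^{1-m/m_s^*}\bigr),
\end{equation*}
uniformly in $n$, which is less than $\varepsilon/3$ once $|E|$ is small. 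Applying this with $E=\{V\leq M\}\setminus B_R$ for $R$ large takes care of the remaining tail piece, while the integral over $B_R$ is less than $\varepsilon/3$ for $n$ large by Step~1. Summing the three contributions gives $\mathcal{J}_\Phi(u_n-u)<\varepsilon$ eventually, hence $u_n\to u$ in $L^\Phi(\mathbb{R}^N)$ by the $\Delta_2$-property.

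The delicate point is really the uniform integrability step: in a pure Lebesgue setting this would be Vitali's theorem, but here one must marry the Orlicz modular with the sharper Lebesgue information coming from the critical embedding. The $\Delta_2$-condition and the growth bounds of $(\varphi_3)$ are precisely what make the Orlicz-to-Lebesgue comparison tractable.
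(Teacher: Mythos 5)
The paper itself does not prove this proposition but cites it to \cite{SCDAB}, so your argument must stand on its own. The overall Bartsch--Wang-type decomposition — weak limit, a.e.\ and local $L^\Phi$ convergence on balls, and a tail estimate split between $\{V>M\}$ (handled by $\tfrac1M\int V\Phi(u_n-u)\leq C/M$) and $\{V\le M\}\setminus B_R$ (handled via $(V_1)$ and uniform integrability) — is the correct strategy for a weighted space of this type, and the $\{V>M\}$ and $B_R$ pieces are handled correctly.

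The genuine gap is exactly where you point to it: the uniform-integrability estimate. You claim $\{u_n-u\}$ is bounded in $L^{\ell_s^*}(\mathbb{R}^N)\cap L^{m_s^*}(\mathbb{R}^N)$. The $L^{m_s^*}$ half of this is false and cannot be derived from anything available. Since $m_s^*>\ell_s^*$, membership in $L^{m_s^*}$ is a \emph{super}-critical integrability statement; the critical embedding only provides $W^{s,\Phi}(\mathbb{R}^N)\hookrightarrow L^{\Phi_*}(\mathbb{R}^N)$, and on the large set $\{|u|\geq 1\}$ the lower bound $\Phi_*(t)\geq \Phi_*(1)t^{\ell_s^*}$ of Lemma~\ref{lem-naru-crit} controls only $|u|^{\ell_s^*}$, not $|u|^{m_s^*}$ (indeed $\Phi_*(t)\leq \Phi_*(1)t^{m_s^*}$ there, so the inequality goes the wrong way). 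Consequently the Hölder bound $\int_E|u_n-u|^m\leq\|u_n-u\|_{m_s^*}^m|E|^{1-m/m_s^*}$ has no basis. The repair is cheap, but you must make it: use a single exponent $r$ with $m<r\leq\ell_s^*$. Boundedness of $\{u_n-u\}$ in $L^r(\mathbb{R}^N)$ for any such $r$ does hold — either invoke the continuous embedding $X\hookrightarrow L^r(\mathbb{R}^N)$, $r\in[m,\ell_s^*)$, stated in Lemma~\ref{compact1}, or obtain $L^{\ell_s^*}$-boundedness directly by splitting $\{|u_n-u|\geq1\}$ (controlled by $\Phi_*$) from $\{|u_n-u|<1\}$ (where $|u_n-u|^{\ell_s^*}\leq|u_n-u|^m\leq C\,\Phi(u_n-u)$). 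Then, since $\Phi(t)\leq\Phi(1)\max\{|t|^\ell,|t|^m\}$ by Lemma~\ref{lema_naru}, Hölder with the single exponent $r$ gives
\begin{equation*}
\int_E\Phi(u_n-u)\,dx\leq C\bigl(|E|^{1-\ell/r}+|E|^{1-m/r}\bigr),
\end{equation*}
uniformly in $n$, and since $\ell\leq m<r$ both exponents of $|E|$ are positive. With this correction the three-piece estimate closes as you describe and the proof is valid.
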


\begin{prop}\label{compact}
Assume that \ref{Hip1}-\ref{Hip4}, \ref{Hiv0} and \ref{Hiv1} hold. Suppose also that $\Phi < \Psi \ll \Phi_\ast$ and at least one of the following conditions are satisfied:
\begin{itemize}
\item[(i)] The following limit holds
$$
\limsup_{|t|\to 0}\frac{\Psi(|t|)}{\Phi(|t|)}<+\infty.
$$
\item[(ii)] The function $\Psi$ satisfies $\Delta_{2}$-condition and there are a $N$-function $R$ and $b\in(0,1)$ such that $\Psi \circ R< \Phi_\ast$ and
$$
\Psi(\widetilde{R}(|t|^{1-b}))\leq C \Phi(|t|) \;\; \mbox{for}\;\; |t|\leq 1,
$$
where $\widetilde{R}$ is the conjugate function of $R$.
\end{itemize}
Then, the space $X$ is compactly embedded into $L^{\Phi}(\R^N)$.
\end{prop}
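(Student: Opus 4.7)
The plan is to prove the compactness by combining the already-established compact embedding $X \hookrightarrow\hookrightarrow L^\Phi(\R^N)$ from the previous proposition with the continuous embedding $X \hookrightarrow L^{\Phi_\ast}(\R^N)$, and to interpolate between them through the growth comparisons between $\Psi$, $\Phi$ and $\Phi_\ast$. (The stated conclusion ``$L^\Phi(\R^N)$'' appears to be a misprint for $L^\Psi(\R^N)$, since otherwise the statement is subsumed by the preceding proposition and the hypotheses (i)--(ii) on $\Psi$ play no role; I proceed accordingly.) First I would take a bounded sequence $(u_n) \subset X$ and, using reflexivity of $X$, extract $u_n \rightharpoonup u$ weakly in $X$. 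The previous compact embedding yields $u_n \to u$ strongly in $L^\Phi(\R^N)$, hence $\mathcal{J}_\Phi(w_n) \to 0$ with $w_n := u_n - u$; the optimal fractional Sobolev embedding supplies a uniform bound $\mathcal{J}_{\Phi_\ast}(w_n) \leq M$. By the $\Delta_2$-control of Lemma \ref{lema_naru}, convergence in the Luxemburg norm of $L^\Psi$ reduces to showing $\mathcal{J}_\Psi(w_n) \to 0$.

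Under hypothesis (i), I would fix $\varepsilon > 0$ and split
$$\mathcal{J}_\Psi(w_n) = \int_{\{|w_n| \leq \delta\}} \Psi(|w_n|)\,dx + \int_{\{\delta < |w_n| \leq K\}} \Psi(|w_n|)\,dx + \int_{\{|w_n| > K\}} \Psi(|w_n|)\,dx,$$
for small $\delta > 0$ and large $K > 0$. The assumption $\limsup_{|t|\to 0}\Psi(t)/\Phi(t)<\infty$ yields $\Psi(t)\leq C\Phi(t)$ for $|t|\leq \delta$, bounding the first piece by $C\mathcal{J}_\Phi(w_n) \to 0$. On the intermediate annulus $\Psi$ and $\Phi$ are comparable up to a constant, so this piece also vanishes as a consequence of the $L^\Phi$-convergence. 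Finally, $\Psi \ll \Phi_\ast$ gives, for every $\eta > 0$, a threshold $K_\eta$ beyond which $\Psi(t) \leq \eta \Phi_\ast(t)$; choosing $\eta$ small first and then $K = K_\eta$ makes the tail $\leq \eta M$ arbitrarily small.

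Under hypothesis (ii), the refinement uses Young's inequality for the $N$-function $R$ and its conjugate $\widetilde R$, namely $|t| \leq R(|t|^b) + \widetilde R(|t|^{1-b})$ for $b \in (0,1)$. Applying $\Psi$ and invoking its $\Delta_2$-property to split the resulting sum, the two stated growth relations come into play: $\Psi \circ R < \Phi_\ast$ controls the $R$-piece on large values by means of the uniform $L^{\Phi_\ast}$-bound, while $\Psi(\widetilde R(|t|^{1-b}))\leq C\Phi(|t|)$ for $|t|\leq 1$ controls the $\widetilde R$-piece on small values by $\mathcal{J}_\Phi(w_n)\to 0$. Splitting the domain at $\{|w_n| = 1\}$ and combining the two pointwise estimates then delivers $\mathcal{J}_\Psi(w_n) \to 0$.

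The main obstacle is hypothesis (ii): producing from the Young decomposition a genuinely vanishing integral bound requires delicate use of convexity, of the $\Delta_2$-constant of $\Psi$ in order to absorb the factor coming from Young's inequality, and of the complementary regimes $|t|\leq 1$ versus $|t|>1$ appearing in the two growth hypotheses. By contrast, hypothesis (i) is settled by the transparent three-level split, after which the remaining manipulations (comparing modulars and norms, reflexivity, and the boundedness in $L^{\Phi_\ast}$) are routine within the fractional Orlicz--Sobolev framework reviewed in the preceding subsection.
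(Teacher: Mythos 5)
You are right that the stated conclusion ``$L^{\Phi}(\mathbb{R}^N)$'' must be a misprint for $L^{\Psi}(\mathbb{R}^N)$; otherwise the proposition coincides with the preceding (unnumbered) compactness statement and the hypotheses on $\Psi$ are vacuous. Be aware also that the paper does not actually prove this proposition: it is imported (together with the one before it) from \cite{SCDAB}, so there is no in-paper argument against which to compare your sketch. Judged on its own terms, your architecture is the correct one --- extract $u_n\rightharpoonup u$, use the preceding compact embedding $X\hookrightarrow\hookrightarrow L^{\Phi}(\mathbb{R}^N)$ to get $\mathcal{J}_\Phi(w_n)\to 0$ for $w_n=u_n-u$, use $X\hookrightarrow L^{\Phi_\ast}(\mathbb{R}^N)$ for a uniform bound $\mathcal{J}_{\Phi_\ast}(w_n)\le M$, and then show $\mathcal{J}_\Psi(w_n)\to 0$ --- and your treatment of hypothesis (i) is complete (the middle annulus $\{\delta<|w_n|\le K\}$ is even easier than you make it: its measure is at most $\mathcal{J}_\Phi(w_n)/\Phi(\delta)\to 0$ while the integrand there is bounded by $\Psi(K)$).

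Under hypothesis (ii), however, your sketch leaves a genuine gap. After Young's inequality and the $\Delta_2$-split of $\Psi$, on the set $\{|w_n|\le 1\}$ you must estimate both
\begin{equation*}
\int_{\{|w_n|\le 1\}}\Psi\bigl(R(|w_n|^b)\bigr)\,dx
\qquad\text{and}\qquad
\int_{\{|w_n|\le 1\}}\Psi\bigl(\widetilde R(|w_n|^{1-b})\bigr)\,dx.
\end{equation*}
The second vanishes by the hypothesis $\Psi(\widetilde R(|t|^{1-b}))\le C\Phi(|t|)$ on $|t|\le 1$; the first, though, is not controlled by anything you invoke. The relation $\Psi\circ R<\Phi_\ast$ governs large arguments only ($\Psi(R(t))\le\Phi_\ast(kt)$ for $t\ge t_0$, in the paper's notation), so it says nothing on $\{|w_n|\le\delta\}$ for small $\delta$, and hypothesis (ii) contains no comparison of $\Psi\circ R$ with $\Phi$ near the origin. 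A power-law check shows the bound is genuinely lossy: with $\Phi(t)=t^p$, $\Psi(t)=t^r$, $R(t)=t^a$, hypothesis (ii) is satisfiable with $a$ near $1$ and $b$ near $0$, whence $\Psi(R(|w_n|^b))=|w_n|^{abr}$ with $abr<p$, and $\mathcal{J}_\Phi(w_n)\to 0$ together with $\mathcal{J}_{\Phi_\ast}(w_n)\le M$ does not force $\int|w_n|^{abr}\,dx\to 0$ (while $\int|w_n|^{r}\to 0$ does still follow by $L^p$--$L^{p^*}$ interpolation, which shows the claim is true but your upper bound is too weak to detect it). Closing the gap requires a multiplicative interpolation --- typically a H\"older estimate in the Orlicz pair $(R,\widetilde R)$ producing a product of a vanishing and a bounded modular --- rather than the additive Young split; this is exactly the step you describe as ``delicate'' and then skip. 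A smaller omission: under (i) nothing in the hypotheses forces $\Psi$ to satisfy $\Delta_2$, which you implicitly use when passing from modular convergence to convergence in the Luxemburg norm.
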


As a consequence, by using Proposition \ref{compact} and Lemmas \ref{lema_naru} and \ref{lem-naru-crit}, we can prove the following result:
\begin{lem}\label{compact1}
Assume that \ref{Hip1}-\ref{Hip4}, \ref{Hiv0} and \ref{Hiv1} hold. Then, the Sobolev embedding $X\hookrightarrow L^{r}(\R^n)$ is continuous for all $r\in [m,\ell_s^\ast)$ and compact for all $r\in (m, \ell_s^\ast)$.
\end{lem}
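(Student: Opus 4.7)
My plan is to derive both statements from the modular estimates of Lemmas \ref{lema_naru}--\ref{lem-naru-crit}, the optimal embedding $X\hookrightarrow L^{\Phi_\ast}(\R^N)$, and Proposition \ref{compact} applied to the power $N$-function $\Psi(t):=|t|^r$.

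First, specializing Lemma \ref{lema_naru} at $\rho=1$ yields $\Phi(t)\geq \Phi(1)|t|^m$ for $|t|\leq 1$; analogously, Lemma \ref{lem-naru-crit} gives $\Phi_\ast(t)\geq \Phi_\ast(1)|t|^{\ell_s^\ast}$ for $|t|\geq 1$. Splitting $\R$ into $\{|t|\leq 1\}\cup\{|t|>1\}$ and using $|t|^r\leq |t|^m$ on the first set (valid since $r\geq m$) together with $|t|^r\leq |t|^{\ell_s^\ast}$ on the second (valid since $r<\ell_s^\ast$), I obtain the pointwise comparison
$$
|t|^r \;\leq\; C\bigl(\Phi(t)+\Phi_\ast(t)\bigr),\qquad t\in\R.
$$
Integrating and then invoking Lemma \ref{lemanaruV} to dominate $\int\Phi(u)\,dx\leq \mathcal{J}_{s,\Phi,V}(u)\leq \xi^+(\|u\|)$, combined with the optimal continuous embedding $X\hookrightarrow L^{\Phi_\ast}(\R^N)$ and Lemma \ref{lem-naru-crit} to dominate $\int\Phi_\ast(u)\,dx\leq \xi_\ast^+(C\|u\|)$, I get $\|u\|_r^r \leq C'\bigl(\xi^+(\|u\|)+\xi_\ast^+(C\|u\|)\bigr)$. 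This is the required continuous embedding for every $r\in[m,\ell_s^\ast)$.

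For the compact embedding on the open range $(m,\ell_s^\ast)$, I would apply Proposition \ref{compact} with $\Psi(t):=|t|^r$, which is an $N$-function since $r>1$. The three hypotheses to verify are: (a) $\Phi<\Psi$, which follows from $\Phi(t)\leq \Phi(1)|t|^m$ at infinity combined with $r\geq m$; (b) $\Psi\ll \Phi_\ast$, since $\Psi(kt)/\Phi_\ast(t)\leq k^r|t|^{r-\ell_s^\ast}/\Phi_\ast(1)\to 0$ as $|t|\to\infty$ for every $k>0$, using $r<\ell_s^\ast$; and (c) condition (i) of Proposition \ref{compact}, namely $\limsup_{|t|\to 0}\Psi(|t|)/\Phi(|t|)<+\infty$, which is immediate from $\Phi(t)\geq \Phi(1)|t|^m$ near the origin and $r\geq m$. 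Proposition \ref{compact} then delivers the compact embedding $X\hookrightarrow L^{\Psi}(\R^N)=L^r(\R^N)$.

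I expect the main obstacle to be the bookkeeping of the four Matuszewska--Orlicz indices $\ell,m,\ell_s^\ast,m_s^\ast$: the lower and upper envelopes of $\Phi$ and $\Phi_\ast$ swap at $t=1$, so each $L^r$-bound has to be paired with the correct regime (using $\Phi$ near the origin and $\Phi_\ast$ at infinity, with indices precisely opposite to what one might naively expect). Once this bookkeeping is settled, the hypotheses of Proposition \ref{compact} reduce to the two inequalities $r\geq m$ and $r<\ell_s^\ast$, and the proof is complete.
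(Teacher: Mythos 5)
Your proof is correct and follows precisely the route the paper indicates, namely Proposition \ref{compact} applied to $\Psi(t)=|t|^r$ together with the modular estimates of Lemmas \ref{lema_naru} and \ref{lem-naru-crit}. The only cosmetic slip is the step $\int_{\R^N}\Phi(u)\,dx\leq\mathcal{J}_{s,\Phi,V}(u)$, which should carry the factor $1/V_0$ coming from \ref{Hiv0} (since $\mathcal{J}_{s,\Phi,V}$ contains $\int V(x)\Phi(u)\,dx$ rather than $\int\Phi(u)\,dx$); this has no effect on the argument.
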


Finally, proceeding as in \cite[Theorem 3.14]{AACS}, we have the following result:
\begin{prop}\label{S+}
Assume that \ref{Hip1}-\ref{Hip4}, \ref{Hiv0} and \ref{Hiv1} hold. Then, $\mathcal{J}_{s,\Phi,V}'\colon X \to X'$ satisfies the $(S_+)$-property, that is,  if for a given $(u_{k})_{n\in\mathbb{N}}\subset X$ satisfying $u_k\rightharpoonup u$ weakly in $X$ and
$$\limsup_{k\rightarrow\infty}\langle \mathcal{J}_{s,\Phi,V}'(u_k), u_k-u\rangle\leq 0,$$
there holds $u_k\rightarrow u$ strongly in $X$.
\end{prop}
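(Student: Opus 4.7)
The plan is to combine the convexity of $\Phi$, the compact embedding of $X$ into Lebesgue spaces, and a Brezis--Lieb type decomposition of the modular $\mathcal{J}_{s,\Phi,V}$ in order to promote modular convergence to strong convergence.

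First, I would exploit the convexity of $\Phi$ (which holds since $\Phi'(t)=t\varphi(|t|)$ is nondecreasing by \ref{Hip2}) to obtain the pointwise supporting inequality
$$
\Phi(b)-\Phi(a)\geq a\,\varphi(|a|)(b-a),\qquad a,b\in\R.
$$
Applying this $\nu$-a.e.\ with $a=D_su_k(x,y)$, $b=D_su(x,y)$, and a.e.\ with $a=u_k(x)$, $b=u(x)$, then integrating against $d\nu$ and $V(x)\,dx$ respectively and adding, yields
$$
\mathcal{J}_{s,\Phi,V}(u_k)-\mathcal{J}_{s,\Phi,V}(u)\leq \langle \mathcal{J}_{s,\Phi,V}'(u_k),\,u_k-u\rangle.
$$
Taking $\limsup$ and invoking the hypothesis, $\limsup_k\mathcal{J}_{s,\Phi,V}(u_k)\leq \mathcal{J}_{s,\Phi,V}(u)$. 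Since $\mathcal{J}_{s,\Phi,V}$ is convex and strongly continuous on $X$ (the continuity being a consequence of $\Delta_2$ and Lemma \ref{lemanaruV}), it is weakly lower semicontinuous, so $u_k\rightharpoonup u$ gives the reverse inequality $\liminf_k\mathcal{J}_{s,\Phi,V}(u_k)\geq \mathcal{J}_{s,\Phi,V}(u)$. Combining, $\mathcal{J}_{s,\Phi,V}(u_k)\to \mathcal{J}_{s,\Phi,V}(u)$.

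Next I would extract almost everywhere convergence of the integrands. By Lemma \ref{compact1} the embedding $X\hookrightarrow L^r(\Rn)$ is compact for $r\in(m,\ell_s^\ast)$, so along a subsequence (still denoted $u_k$) one has $u_k\to u$ pointwise a.e.\ in $\Rn$, which in turn forces $D_su_k\to D_su$ for $\nu$-almost every $(x,y)\in\Rn\times\Rn$. Because $\Phi\in\Delta_2$ (see \eqref{delta2}) and $\sup_k\mathcal{J}_{s,\Phi,V}(u_k)<\infty$ (from the boundedness of $\|u_k\|$ and Lemma \ref{lemanaruV}), the Brezis--Lieb lemma in Orlicz spaces applies separately on $(\Rn\times\Rn,d\nu)$ and on $(\Rn,V(x)\,dx)$. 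Adding the two decompositions,
$$
\mathcal{J}_{s,\Phi,V}(u_k)-\mathcal{J}_{s,\Phi,V}(u_k-u)-\mathcal{J}_{s,\Phi,V}(u)\longrightarrow 0.
$$
Combined with the modular convergence established above, this yields $\mathcal{J}_{s,\Phi,V}(u_k-u)\to 0$. Finally, Lemma \ref{lemanaruV} provides $\xi^-(\|u_k-u\|)\leq \mathcal{J}_{s,\Phi,V}(u_k-u)$, hence $\|u_k-u\|\to 0$ along the extracted subsequence. A standard Urysohn-type argument (every subsequence has a further subsequence converging to the same limit $u$) promotes this to convergence of the entire sequence.

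The main obstacle is the Brezis--Lieb decomposition for the nonlocal Gagliardo modular on $(\Rn\times\Rn,d\nu)$; classically it follows from Fatou's lemma applied to a suitable convexity estimate, provided one has $\nu$-a.e.\ convergence of the integrands together with uniform boundedness of their $\Phi$-modulars, both secured above. The only other care point is the weak lower semicontinuity of $\mathcal{J}_{s,\Phi,V}$, which is routine from strong continuity plus convexity under $(\varphi_1)$--$(\varphi_3)$ and \ref{Hiv0}.
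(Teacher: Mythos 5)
Your proposal is correct in structure; note that the paper offers no proof of this proposition at all --- it is delegated to \cite[Theorem 3.14]{AACS} --- so you are supplying the argument from scratch rather than matching an in-paper one. The route you take (convexity of $\Phi$ giving $\mathcal{J}_{s,\Phi,V}(u_k)-\mathcal{J}_{s,\Phi,V}(u)\le\langle\mathcal{J}_{s,\Phi,V}'(u_k),u_k-u\rangle$, weak lower semicontinuity of the convex modular for the reverse inequality, a Brezis--Lieb decomposition to pass from $\mathcal{J}_{s,\Phi,V}(u_k)\to\mathcal{J}_{s,\Phi,V}(u)$ to $\mathcal{J}_{s,\Phi,V}(u_k-u)\to 0$, and Lemma \ref{lemanaruV} to upgrade that to $\|u_k-u\|\to 0$, followed by a Urysohn argument) is a standard and valid path to $(S_+)$ in non-standard growth settings. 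An alternative often used in this literature, and quite possibly the one in the cited reference, first shows $\langle\mathcal{J}_{s,\Phi,V}'(u_k)-\mathcal{J}_{s,\Phi,V}'(u),u_k-u\rangle\to 0$ via monotonicity and then uses a.e.\ convergence together with a Vitali-type uniform integrability argument on the resulting nonnegative integrands; your path trades Vitali for Brezis--Lieb, which is a fair exchange but pushes the technical weight onto a lemma not stated in the paper.

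Three points should be made explicit in a polished write-up, though none is a genuine gap. (i) The Orlicz-modular Brezis--Lieb lemma on a general measure space (you need it on $(\R^{2N},d\nu)$ and on $(\R^N,V\,dx)$) requires a citation or a short proof; the key estimate is $|\Phi(a+b)-\Phi(a)|\le\varepsilon\Phi(a)+C_\varepsilon\Phi(b)$ for every $\varepsilon>0$, which one obtains from \ref{Hip3} and Lemma \ref{lema_naru} by treating $|b|\le\delta|a|$ via the bound $\Phi'(t)\le m\Phi(t)/t$ and $|b|>\delta|a|$ via $\Delta_2$. (ii) The implication from $u_k\to u$ a.e.\ in $\R^N$ to $D_su_k\to D_su$ $\nu$-a.e.\ in $\R^{2N}$ deserves a line: $\nu$ is absolutely continuous with respect to $2N$-dimensional Lebesgue measure off the Lebesgue-null diagonal, so the exceptional set in $\R^N$ produces a $\nu$-null product set. (iii) Strong continuity of $\mathcal{J}_{s,\Phi,V}$ on $X$ does not follow from Lemma \ref{lemanaruV}, which only gives two-sided norm--modular bounds; it is the standard $\Delta_2$ continuity of modulars and should be cited as such --- or you may bypass it entirely by obtaining $\liminf_k\mathcal{J}_{s,\Phi,V}(u_k)\ge\mathcal{J}_{s,\Phi,V}(u)$ directly from Fatou's lemma along the a.e.\ convergent subsequence, which you have in any case already extracted.
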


Throughout this work we use the following notation:

\begin{itemize}
\item The norm in $L^{p}(\mathbb{R}^N)$ and $L^{\infty}(\mathbb{R}^N)$, will be denoted respectively by $\|\cdot\|_{p}$ and $\|\cdot\|_{\infty},  p \in [1, \infty)$.
\item We also write $||u||_{q,a}^q:= \Int a(x)|u|^qdx$ for each $u \in X$.
\item $S_r$ denotes the best constant for the continuous embedding $X\hookrightarrow L^r(\mathbb{R}^N)$ for each $r \in [\ell, \ell_s^*)$.
\end{itemize}

\section{The Nehari and nonlinear Rayleigh Quotient Methods}\label{section2}
In this section, we follow some ideas discussed in \cite{SCGS}. The main goal here is to ensure the existence of weak solutions for our main problem using the Nehari method together with the nonlinear Rayleigh quotient. In order to do that, we also consider the well-known fibration method. The Nehari manifold has an intrinsic connection with the behavior of the so-called fibering map $\gamma_u: [0,\infty) \to \R$ defined by 
$$
\gamma_u(t)=\mathcal{I}_{\lambda,\mu}(tu),
$$
for each $u\in X\setminus\{0\}$ fixed. Under our assumptions, $\gamma_u \in C^2(0,\infty)$ and $u \in \mathcal{N}_{\lambda,\mu}$ if and only if $\gamma_u'(1)=0$. More generally, we obtain that $t u \in \mathcal{N}_{\lambda, \mu}$ if and only if $\gamma_u'(t) = 0$ where $t > 0$. Therefore, the geometric analysis of the fibering maps plays a key role in our arguments.  For more details on this subject, we recommend the reader to the works  \cite{Pokhozhaev,Pohozaev,brow0,brow1}.

At this stage, we present useful results related to the energy functional $\mathcal{I}_{\lambda,\mu}$ and the nonlinear Rayleigh quotient. Firstly, we mention that the functional $\mathcal{I}_{\lambda, \mu}$ is   $C^2$ class due to the fact that $m < q < p < \ell^*_s$. Furthermore, we obtain that 
\begin{equation}\label{segunda}
\mathcal{I}''_{\lambda, \mu} (u)(u,u) = \mathcal{J}_{s,\Phi,V}''(u)(u,u) - \mu (q - 1)\|u\|_{a,q}^q + \lambda (p -1)\|u\|_p^p, \,\; u\in X,
\end{equation}
where
$$
\begin{aligned}
\mathcal{J}_{s,\Phi,V}''(u)(u,u) &= \iint_{\R^{N}\times\R^N}\left[ \varphi'(|D_su|) |D_su|^3 + \varphi(|D_su|) |D_su|^2\right] d\nu  + \int_{\R^N} \left[ \varphi'(|u|) |u|^3 + \varphi(|u|) |u|^2\right] dx.   
\end{aligned}
$$
It is important to observe that for $u \in \mathcal{N}_{\lambda,\mu}$, using \eqref{nehari}, we obtain that
\begin{equation}\label{impor}
\begin{aligned}
\mathcal{I}''_{\lambda, \mu} (u)(u,u) &= \iint_{\R^{N}\times\R^N}\left[ \varphi'(|D_su|) |D_su|^3 + (2-q) \varphi(|D_su|) |D_su|^2\right] d\nu\\
&  \quad + \int_{\R^{N}} V(x)\left[ \varphi'(|u|) |u|^3 + (2-q) \varphi(|u|) |u|^2\right] dx + \lambda (p - q) \|u\|_p^p
\end{aligned} 
\end{equation}
or equivalently
\begin{equation}\label{impor2}
\begin{aligned}
\mathcal{I}''_{\lambda, \mu} (u)(u,u)& =\iint_{\R^{N}\times\R^N}\left[ \varphi'(|D_su|) |D_su|^3 +(2-p) \varphi(|D_su|) |D_su|^2\right] d\nu\\
& \quad + \int_{\R^{N}} V(x)\left[ \varphi'(|u|) |u|^3 + (2-p) \varphi(|u|) |u|^2\right]dx + (p-q)\mu\|u\|_{a,q}^q. 
\end{aligned}
\end{equation}
From now on, we consider a study on the behavior of the fibering maps associated with functional $R_n$ and $R_e$. It is important to emphasize that, in the spirit of work \cite{CSG1,SCGS}, a challenge in the present work is to consider implicity the critical values of the fibering maps due to the non-homogeneity for the functionals $R_n$ and $R_e$. In order to overcome this obstacle, we shall employ the assumption \ref{Hi2} to prove that the functions $t\mapsto R_n(tu)$ and $t\mapsto R_e(tu)$ admits exactly one critical point for each $u\in X\setminus\{0\}$ fixed. In fact, we can prove the following result:

\begin{prop}\label{propRn1}
Assume that \ref{Hip1}-\ref{Hip3}, \ref{Hi1}-\ref{Hi2} and \ref{Hiv0} hold. Let $u\in X\setminus\{0\}$ be fixed. Then, the fibering map $t\mapsto R_n(tu)$ satisfies the following properties:
\begin{itemize}
\item[(i)] There holds
$$
\lim_{t \to 0^+} \frac{R_n(tu)}{t^{m-q}}>0 \quad\mbox{and}\quad \lim_{t \to 0^+} \frac{\frac{d}{dt}R_n(tu)}{t^{m-q-1}}<0. 
$$
\item[(ii)] There holds
$$
\lim_{t \to +\infty} \frac{R_n(tu)}{t^{p-q}}>0 \quad\mbox{and}\quad \lim_{t \to \infty} \frac{\frac{d}{dt}R_n(tu)}{t^{p-q-1}}>0. 
$$

\end{itemize}
\end{prop}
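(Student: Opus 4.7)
Fix $u\in X\setminus\{0\}$ and set
$$
F(t):=\mathcal{J}_{s,\Phi,V}'(tu)(tu),\quad G:=\mathcal{J}_{s,\Phi,V}'(u)u>0,\quad A:=\|u\|_{q,a}^q>0,\quad B:=\|u\|_p^p>0,
$$
so that $R_n(tu)=(F(t)+\lambda t^pB)/(t^qA)$. The plan is to deduce from \ref{Hip3} two scaling estimates for $F$ and then extract the asserted asymptotics purely by comparing power exponents under \ref{Hi1}. The preliminary fact needed is that the function $g(r):=\varphi(r)r^2=r\Phi'(r)$ satisfies $\ell\,g(r)\le rg'(r)\le m\,g(r)$ for every $r>0$; this follows from the identity $rg'(r)/g(r)=1+r\Phi''(r)/\Phi'(r)$ together with the standard consequence $\ell-1\le r\Phi''(r)/\Phi'(r)\le m-1$ of \ref{Hip3}. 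Logarithmic integration (in the spirit of Lemma \ref{lema_naru}) and insertion into the double integral defining $F$ produce
$$
\min\{t^\ell,t^m\}\,G\le F(t)\le \max\{t^\ell,t^m\}\,G,\qquad \ell F(t)\le tF'(t)\le mF(t),\qquad t>0.
$$

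For item (i), rewrite
$$
\frac{R_n(tu)}{t^{m-q}}=\frac{F(t)}{t^m A}+\lambda\,\frac{t^{p-m}B}{A}.
$$
For $t\le 1$ the lower bound $F(t)\ge t^m G$ gives $F(t)/t^m\ge G$, while the second summand vanishes as $t\to 0^+$ because $p>m$ by \ref{Hi1}; hence $\liminf_{t\to 0^+}R_n(tu)/t^{m-q}\ge G/A>0$. For the derivative, a direct computation yields
$$
\frac{d}{dt}R_n(tu)=\frac{tF'(t)-qF(t)+\lambda(p-q)t^pB}{t^{q+1}A}.
$$
Using $tF'(t)\le mF(t)$ together with $m-q<0$, the numerator is bounded above by $(m-q)F(t)+\lambda(p-q)t^pB\le (m-q)t^m G+\lambda(p-q)t^p B$ for $t\le 1$; dividing by $t^{q+1}A$ and then by $t^{m-q-1}$ gives
$$
\frac{1}{t^{m-q-1}}\frac{d}{dt}R_n(tu)\le\frac{(m-q)G}{A}+\lambda(p-q)\frac{t^{p-m}B}{A}\longrightarrow\frac{(m-q)G}{A}<0,
$$
which yields the claimed negativity as $t\to 0^+$.

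For item (ii), the upper bound $F(t)\le t^m G$ (valid for $t\ge 1$) combined with $m<p$ gives $F(t)/t^p\to 0$, so
$$
\frac{R_n(tu)}{t^{p-q}}=\frac{F(t)}{t^p A}+\lambda\,\frac{B}{A}\;\longrightarrow\;\lambda\,\frac{\|u\|_p^p}{\|u\|_{q,a}^q}>0.
$$
The derivative is handled identically: the estimate $|tF'(t)-qF(t)|\le (q-\ell)F(t)\le (q-\ell)t^m G$ shows that once divided by $t^{q+1}A\cdot t^{p-q-1}=t^p A$, the contribution of $tF'(t)-qF(t)$ is of order $t^{m-p}\to 0$, leaving the limit $\lambda(p-q)\|u\|_p^p/\|u\|_{q,a}^q>0$.

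The main technical obstacle lies in the preliminary step, namely extracting the first-order growth bounds $\ell g(r)\le rg'(r)\le mg(r)$ from the second-order hypothesis \ref{Hip3} and propagating them into the double-integral setting of $F$. Once these scaling inequalities are established, the four limits reduce to routine exponent comparisons, with the ordering $\ell\le m<q<p$ in \ref{Hi1} playing the decisive role in assigning the correct sign to each limit.
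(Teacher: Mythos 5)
Your proposal is correct and follows essentially the same strategy as the paper: both reduce the problem to first-order growth estimates derived from \ref{Hip3} (your $\ell g(r)\le rg'(r)\le mg(r)$ is exactly the paper's estimate $(\ell-1)\varphi(t)t\le t(\varphi(t)t)'\le(m-1)\varphi(t)t$ after the change of notation $g(r)=r\Phi'(r)$), and then compare exponents using the ordering $\ell\le m<q<p$ from \ref{Hi1}. Abstracting the modular into $F(t)=\mathcal{J}_{s,\Phi,V}'(tu)(tu)$ and working with the scaling inequalities $\min\{t^\ell,t^m\}G\le F(t)\le\max\{t^\ell,t^m\}G$ and $\ell F(t)\le tF'(t)\le mF(t)$ is a slightly cleaner packaging of the same estimates the paper carries out integrand by integrand.
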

\begin{proof}
$(i)$ First, observe that 
$$
R_n(tu)=\displaystyle\frac{t^{2-q} \left( \iint_{\R^{N}\times\R^N} \varphi(t|D_su|) |D_su|^2 \, d\nu + \int_{\R^N} V(x)\varphi(t|u|)|u|^2 \, dx \right) +\lambda t^{p-q}\|u\|_p^p }{\|u\|_{q,a}^q}.
$$
As a consequence, we infer that
$$
\frac{R_n(tu)}{t^{m-q}}=\displaystyle\frac{t^{2-m} \left( \iint_{\R^{N}\times\R^N} \varphi(t|D_su|) |D_su|^2 \, d\nu + \int_{\R^N} \varphi(t|u|)|u|^2 \, dx \right) +\lambda t^{p-m}\|u\|_p^p }{\|u\|_{q,a}^q}.
$$
This assertion implies that
$$
\begin{aligned}
\lim_{t \to 0^+} \frac{R_n(tu)}{t^{m-q}}&\geq \lim_{t \to 0^+} \frac{t^{2-m} \left( \iint_{\R^{N}\times\R^N} \varphi(t|D_su|) |D_su|^2 \, d\nu + \int_{\R^N} V(x)\varphi(t|u|)|u|^2 \, dx \right)}{\|u\|_{q,a}^q}.
\end{aligned}
$$
The last inequality together with \ref{Hip3}, Lemma \ref{lema_naru} and Lemma \ref{lemanaruV} imply that
$$
\begin{aligned}
\lim_{t \to 0^+} \frac{R_n(tu)}{t^{m-q}}&\geq \lim_{t \to 0^+} \frac{t^{-m} \left( \iint_{\R^{N}\times\R^N} \Phi(t|D_su|)\, d\nu + \int_{\R^N} V(x)\Phi(t|u|)\, dx \right)}{\|u\|_{q,a}^q} \geq \lim_{t \to 0^+} \frac{\mathcal{J}_{s,\Phi,V}(u)}{\|u\|_{q,a}^q}>0.
\end{aligned}
$$
Now, we observe that
\begin{equation}\label{dRn}
\begin{aligned}
\frac{d}{dt}R_n(tu)&=\frac{(2-q)t^{1-q} \left( \iint_{\R^{N}\times\R^N} \varphi(t|D_su|) |D_su|^2 \, d\nu + \int_{\R^N} V(x)\varphi(t|u|)|u|^2 \, dx \right)}{\|u\|_{q,a}^q}\\
& \quad+ \frac{t^{2-q}\left(\iint_{\R^{N}\times\R^N} \varphi'(t|D_su|) |D_su|^3 d\nu + \int_{\R^N} V(x)\varphi'(t|u|) |u|^3\, dx\right)+\lambda(p-q)t^{p-q-1}\|u\|_p^p}{\|u\|_{q,a}^q}.
\end{aligned}
\end{equation}	 
Then,
$$
\begin{aligned}
\frac{\frac{d}{dt}R_n(tu)}{t^{m-q-1}}&=\frac{(2-q)t^{2-m} \left( \iint_{\R^{N}\times\R^N} \varphi(t|D_su|) |D_su|^2 \, d\nu + \int_{\R^N} V(x)\varphi(t|u|)|u|^2 \, dx \right)}{\|u\|_{q,a}^q}\\
& \quad+ \frac{t^{3-m}\left(\iint_{\R^{N}\times\R^N} \varphi'(t|D_su|) |D_su|^3 d\nu + \int_{\R^N} V(x)\varphi'(t|u|) |u|^3\, dx\right)+\lambda(p-q)t^{p-m}\|u\|_p^p}{\|u\|_{q,a}^q}.
\end{aligned}
$$	
Using the assumption \ref{Hip3},  we deduce that  
$$
\begin{aligned}
\frac{\frac{d}{dt}R_n(tu)}{t^{m-q-1}}&\leq\frac{(2-q)t^{2-m} \left( \iint_{\R^{N}\times\R^N} \varphi(t|D_su|) |D_su|^2 \, d\nu + \int_{\R^N} V(x)\varphi(t|u|)|u|^2 \, dx \right)}{\|u\|_{q,a}^q}\\
& \quad+ \frac{t^{2-m}(m-2)\left( \iint_{\R^{N}\times\R^N} \varphi(t|D_su|) |D_su|^2 \, d\nu + \int_{\R^N} V(x)\varphi(t|u|)|u|^2 \, dx \right)+\lambda(p-q)t^{p-m}\|u\|_p^p}{\|u\|_{q,a}^q}\\
& \leq \frac{\ell(m-q) t^{-m} \left( \iint_{\R^{N}\times\R^N} \Phi(t|D_su|)\, d\nu + \int_{\R^N} V(x)\Phi(t|u|)\, dx \right)+\lambda(p-q)t^{p-m}\|u\|_p^p}{\|u\|_{q,a}^q}.
\end{aligned}
$$	
Therefore, the last inequality jointly with Lemma \ref{lema_naru} and \ref{Hi1} give us
$$
\lim_{t \to 0^+} \frac{\frac{d}{dt}R_n(tu)}{t^{m-q-1}}\leq \ell(m-q) \frac{\mathcal{J}_{s,\Phi,V}(u)}{\|u\|_{q,a}^q} +  \lim_{t \to 0^+} \frac{\lambda(p-q)t^{p-m}\|u\|_p^p}{\|u\|_{q,a}^q}= \ell(m-q) \frac{\mathcal{J}_{s,\Phi,V}(u)}{\|u\|_{q,a}^q}<0.
$$
This ends the proof of item $(i)$.

$(ii)$ Similarly, taking into account \ref{Hip3}, we obtain that
$$
\lim_{t \to +\infty} \frac{R_n(tu)}{t^{p-q}} \geq 	\lim_{t \to +\infty} \frac{t^{-p} \left( \iint_{\R^{N}\times\R^N} \Phi(t|D_su|)\, d\nu + \int_{\R^N} V(x)\Phi(t|u|)\, dx \right)}{\|u\|_{q,a}^q} +\frac{\lambda\|u\|_p^{p}}{\|u\|_{q,a}^q}.
$$
Then, by Lemma \ref{lema_naru} and \ref{Hi1}, we conclude that
$$
\lim_{t \to +\infty} \frac{R_n(tu)}{t^{p-q}} \geq 	\frac{\lambda\|u\|_p^{p}}{\|u\|_{q,a}^q} + \lim_{t \to +\infty} \frac{t^{\ell-p} \mathcal{J}_{s,\Phi, V}(u)}{\|u\|_{q,a}^q} = \frac{\lambda\|u\|_p^{p}}{\|u\|_{q,a}^q} >0.
$$
Finally, using the expression \eqref{dRn} and proceeding as above, we infer that
$$
\lim_{t \to +\infty} \frac{\frac{d}{dt}R_n(tu)}{t^{p-q-1}}\geq \frac{\lambda(p-q)\|u\|_p^{p}}{\|u\|_{q,a}^q} + \lim_{t \to +\infty}\frac{(\ell-q) t^{\ell-q}\mathcal{J}_{s,\Phi,V}(u)}{\|u\|_{q,a}^q} = \frac{\lambda(p-q)\|u\|_p^{p}}{\|u\|_{q,a}^q}>0.
$$
This finishes the proof.
\end{proof}

\begin{lemma}\label{Sinc}
Assume that \ref{Hip3} holds. Then, the function
$$
t\mapsto \frac{(2-q)\varphi(t) + \varphi'(t)t}{t^{p-2}}
$$
is strictly increasing for all $t>0$. .
\end{lemma}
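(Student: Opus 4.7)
My plan is to compute $g'(t)$ where $g(t) := [(2-q)\varphi(t) + \varphi'(t)t]/t^{p-2}$, express it cleanly in terms of the auxiliary function $\psi(t) := \varphi(t) t$ and its derivatives, and then deduce strict positivity of $g'(t)$ by combining the structural bounds encoded in \ref{Hip3} with the numerical restriction $m(q-\ell) < p(q-m)$ from \ref{Hi1}. Since $\psi'(t) = \varphi(t) + \varphi'(t)t$, the numerator of $g$ can first be rewritten as $\psi'(t) - (q-1)\psi(t)/t$, which yields the compact identity
\[
t^{p-1} g(t) = t\psi'(t) - (q-1)\psi(t).
\]
Differentiating both sides in $t$ and multiplying by $t$ produces
\[
t^{p} g'(t) = t^2 \psi''(t) + (3 - p - q)\, t \psi'(t) + (p-1)(q-1)\,\psi(t),
\]
so the task reduces to showing that this trinomial is strictly positive for every $t > 0$.

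I would then divide through by $\psi(t) > 0$ and set $\alpha(t) := \psi'(t)t/\psi(t)$, so that the goal becomes
\[
\frac{\psi''(t) t^2}{\psi(t)} + (3 - p - q)\,\alpha(t) + (p-1)(q-1) > 0.
\]
Rewritten in terms of $\psi$, hypothesis \ref{Hip3} reads $\ell - 2 \leq \psi''(t)t/\psi'(t) \leq m - 2$; multiplying through by $\alpha(t) > 0$ gives the pointwise lower bound $\psi''(t)t^2/\psi(t) \geq (\ell - 2)\alpha(t)$. Moreover, the same one-step integration argument used to pass from \ref{Hip3} to \eqref{delta2}, but applied at the level of $\psi'$ rather than $\psi$, furnishes the integrated bound $\ell - 1 \leq \alpha(t) \leq m - 1$. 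Since $\ell \leq m < q < p$ by \ref{Hi1}, we have $\ell + 1 - p - q < 0$, so plugging both bounds into the previous display yields
\[
\frac{t^{p} g'(t)}{\psi(t)} \geq (\ell + 1 - p - q)(m - 1) + (p-1)(q-1).
\]

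The last step is purely algebraic: a direct expansion simplifies the right-hand side above to
\[
p(q - m) - m(q - \ell) + (m - \ell),
\]
and this is strictly positive, since by \ref{Hi1} we have $p(q-m) - m(q-\ell) > 0$ while $m - \ell \geq 0$. This gives $g'(t) > 0$ for every $t > 0$, as desired. The main obstacle is really a bookkeeping one: recognizing that the somewhat unusual numerical condition $m(q-\ell) < p(q-m)$ imposed in \ref{Hi1} is tailored exactly to compensate for the worst-case scenario $\alpha(t) \equiv m-1$ in the bound above, which is presumably the reason this inequality was built into the hypotheses.
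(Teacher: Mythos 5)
Your proof is correct, and it follows the same essential strategy as the paper: rewrite the numerator in terms of $\psi(t)=\varphi(t)t$, invoke the pointwise bound $\ell-2\leq t\psi''/\psi'\leq m-2$ of \ref{Hip3} and its integrated form $\ell-1\leq t\psi'/\psi\leq m-1$ (the paper's \eqref{est1-phi3'}), and then optimize over the resulting linear expression. Your framing via $\alpha(t)=t\psi'(t)/\psi(t)$ makes the optimization particularly transparent, and your final lower bound $(\ell+1-p-q)(m-1)+(p-1)(q-1)=p(q-m)-m(q-\ell)+(m-\ell)$ is algebraically correct and positive under \ref{Hi1}.

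It is worth pointing out, though, that your bookkeeping actually repairs a computational slip in the paper's own proof. In its last chain of inequalities the paper further weakens $(\ell-q)(m-1)$ to $(\ell-q)(p-1)$ and then asserts
$$
(\ell-q)(p-1)+(p-1)(q-1)-(p-1)(m-1)=(q-m)(p-m)>0,
$$
but the left-hand side simplifies to $(p-1)(\ell-m)$, which is $\leq 0$ whenever $\ell\leq m$. So the extra weakening destroys the bound and the stated identity is false. Your version stops one step earlier, keeps $(\ell-q)(m-1)$, and correctly evaluates the coefficient as $p(q-m)-m(q-\ell)+(m-\ell)$, whose positivity follows cleanly from $m(q-\ell)<p(q-m)$ and $\ell\leq m$. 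One small caveat you already implicitly noticed and handled: although the lemma as stated only cites \ref{Hip3}, both your argument and the paper's unavoidably use the ordering $\ell\leq m<q<p$ and in fact the quantitative condition $m(q-\ell)<p(q-m)$ from \ref{Hi1}; your remark that \ref{Hi1} is tailored to neutralize the worst case $\alpha\equiv m-1$ is accurate and arguably makes the dependence on that hypothesis clearer than the original.
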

\begin{proof}
Firstly, by \ref{Hip3}, we have that $(\ell-2)(\varphi(t)t)'\leq (\varphi(t)t)'' t\leq (m-2)(\varphi(t)t)'$.
Then, by using integration by parts, we obtain that
\begin{equation}\label{est1-phi3'}
(\ell -1)\varphi(t)t \leq t(\varphi(t)t)' \leq (m-1)\varphi(t)t.
\end{equation}
Now, we define the following auxiliary function:
$$
\Theta(t)=\frac{(2-q)\varphi(t)+\varphi'(t)t}{t^{p-2}}, \quad t>0.
$$
Note that $\varphi'(t)t=(\varphi(t)t)'-\varphi(t)$. Then, we can rewrite $\Theta(t)$ as
$$
\Theta(t)=\frac{(1-q)\varphi(t)t+(\varphi(t)t)'t}{t^{p-1}}, \quad t>0.
$$
It follows from an simple calculation and \ref{Hip3} that
$$
\begin{aligned}
  \Theta'(t)&=\frac{t^{p-1}\left[(1-q)(\varphi(t)t)'+(\varphi(t)t)''t + (\varphi(t)t)'\right] - (p-1)t^{p-2}\left[(1-q)\varphi(t)t+(\varphi(t)t)'t\right]}{t^{2(p-1)}}\\
  &=
  \frac{t^{p-1}}{t^{2(p-1)}}\left[(2-q)(\varphi(t)t)'+(\varphi(t)t)''t +(p-1)(q-1)\varphi(t) - (p-1)(\varphi(t)t)'\right]\\
  &\geq
  \frac{t^{p-1}}{t^{2(p-1)}}\left[(\ell-q)(\varphi(t)t)' +(p-1)(q-1)\varphi(t) - (p-1)(\varphi(t)t)'\right].
\end{aligned}
$$
Since $1<\ell\leq m<q<p$, we conclude from inequality \eqref{est1-phi3'} that
$$
\begin{aligned}
  \Theta'(t) &\geq
  \frac{t^{p-1}}{t^{2(p-1)}}\left[(\ell-q)(m -1)\varphi(t) +(p-1)(q-1)\varphi(t) - (p-1)(m -1)\varphi(t)\right]\\
  & \geq \frac{t^{p-1}}{t^{2(p-1)}}\left[(\ell-q)(p-1)\varphi(t) +(p-1)(q-1)\varphi(t) - (p-1)(m -1)\varphi(t)\right]\\
  &= \frac{t^{p-1}}{t^{2(p-1)}}[(q-m)(p-m)]\varphi(t)>0. 
\end{aligned}
$$
Here, we used that $(\ell-q)(p -1)+(p-1)(q-1) - (p-1)(m -1)= (q-m)(p-m)>0$. Therefore, $\Theta$ is strictly increasing. This finishes the proof. 
\end{proof}

Now, by using the previous results, we shall prove the following important tool:


\begin{proposition}\label{propRn2}
Assume that \ref{Hip1}-\ref{Hip3}, \ref{Hi1}-\ref{Hi2}, \ref{Hiv0} and \ref{Hiv1} hold. Then, for each $u\in X\setminus\{0\}$ and $\lambda>0$, there exists a unique $\mathsf{t}(u):=\t_{\lambda}(u)>0$ satisfying
\begin{equation}\label{pcRn}
\frac{d}{dt} R_n(tu)=0 \quad \mbox{for} \quad t=\mathsf{t}(u).
\end{equation}
\end{proposition}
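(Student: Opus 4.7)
Fix $u \in X\setminus\{0\}$ and $\lambda>0$. The plan is to show that the critical points of $t\mapsto R_n(tu)$ coincide with the zeros of a strictly monotone auxiliary function of $t$, and then conclude existence and uniqueness via the intermediate value theorem combined with the sign information already established in Proposition \ref{propRn1}.

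Starting from \eqref{dRn}, factoring out $t^{1-q}/\|u\|_{q,a}^q$ and recognizing the combination $(2-q)\varphi(s)+\varphi'(s)s=:\Theta(s)$, I would verify directly that, for every $t>0$,
\begin{equation*}
\frac{d}{dt}R_n(tu) = \frac{t^{1-q}}{\|u\|_{q,a}^q}\,G(t),
\end{equation*}
where
\begin{equation*}
G(t):=\iint_{\R^N\times\R^N}\Theta(t|D_su|)|D_su|^2\,d\nu + \int_{\R^N}V(x)\Theta(t|u|)|u|^2\,dx + \lambda(p-q)\,t^{p-2}\|u\|_p^p.
\end{equation*}
Consequently, the zeros of $G$ on $(0,\infty)$ are exactly the critical points sought in \eqref{pcRn}.

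Next, I would divide through by $t^{p-2}$ and rewrite the equation $G(t)=0$ as $H(t)=-\lambda(p-q)\|u\|_p^p$, where
\begin{equation*}
H(t):=\iint_{\R^N\times\R^N}\frac{\Theta(t|D_su|)}{(t|D_su|)^{p-2}}|D_su|^p\,d\nu + \int_{\R^N}V(x)\frac{\Theta(t|u|)}{(t|u|)^{p-2}}|u|^p\,dx,
\end{equation*}
with the integrand understood as zero where its argument vanishes (this is harmless since the original integrand $\Theta(t|\cdot|)|\cdot|^2$ already vanishes there). The key input is Lemma \ref{Sinc}: the scalar map $s\mapsto\Theta(s)/s^{p-2}$ is \emph{strictly} increasing on $(0,\infty)$. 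Since $u\neq 0$ in $X$, the set $\{x\in\R^N:|u(x)|>0\}$ has positive Lebesgue measure and $V(x)\geq V_0>0$, so the pointwise strict monotonicity of the integrand survives integration and $H$ is strictly increasing on $(0,\infty)$. Continuity of $H$ follows from continuity of $\Theta$ combined with a standard dominated-convergence argument employing the growth bounds in \ref{Hip3}.

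Finally, Proposition \ref{propRn1}(i)--(ii) gives $\frac{d}{dt}R_n(tu)<0$ for small $t>0$ and $\frac{d}{dt}R_n(tu)>0$ for large $t$, hence $H(t)<-\lambda(p-q)\|u\|_p^p$ near $0$ and $H(t)>-\lambda(p-q)\|u\|_p^p$ near infinity. By continuity and strict monotonicity of $H$, the intermediate value theorem produces a unique $\mathsf{t}(u)>0$ with $H(\mathsf{t}(u))=-\lambda(p-q)\|u\|_p^p$, which is precisely the unique solution of \eqref{pcRn}. The main obstacle I anticipate is the careful justification that dividing by $t^{p-2}$ is legitimate on $\{u=0\}\cup\{D_su=0\}$ and that the strict pointwise monotonicity of the integrand upgrades to strict monotonicity of $H$; once this is in place, everything else is algebraic bookkeeping plus the sign analysis of Proposition \ref{propRn1}.
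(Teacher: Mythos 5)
Your proof is correct and follows essentially the same route as the paper: both reduce the critical-point equation to the vanishing of a monotone auxiliary function (your $H$ is exactly the paper's $\mathcal{K}_u$ from \eqref{Kfunction}), invoke Lemma \ref{Sinc} for strict monotonicity, and lean on Proposition \ref{propRn1} for the sign information that yields existence. The only cosmetic difference is that you make the intermediate value argument explicit, whereas the paper states existence directly from Proposition \ref{propRn1} and uniqueness from monotonicity of $\mathcal{K}_u$.
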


\begin{proof}
First, according to Proposition \ref{propRn1}, there exists at least one real value $\t(u)>0$ such that the equation \eqref{pcRn} is verified. Furthermore, by expression \eqref{dRn}, we have that 
$$
\frac{d}{dt} R_n(tu)=0 \quad \mbox{for} \quad t>0
$$
is equivalent to following identity

$$
\begin{aligned}
-\lambda(p-q)\|u\|_p^p &=\iint_{\R^N\times\R^N}\frac{\left[ (2-q)\varphi(t|D_su|) + \varphi'(t|D_su|) |tD_su|\right] |D_su|^2}{t^{p-2}}d\nu\\
&\quad+ \int_{\R^{N}}V(x)\frac{\left[ (2-q)\varphi(t|u|) + \varphi'(t|u|) |tu|\right] |u|^2}{t^{p-2}}dx.
\end{aligned}
$$
The last identity is also equivalent to 
\begin{equation}\label{equivR1}
\begin{aligned}
-\lambda(p-q)\|u\|_p^p &=\iint_{\R^N\times\R^N}\frac{(2-q)\varphi(t|D_su|) + \varphi'(t|D_su|) |tD_su| }{|tD_su|^{p-2}}|D_su|^p\,d\nu\\
&\quad+ \int_{\R^{N}}V(x)\frac{(2-q)\varphi(t|u|) + \varphi'(t|u|) |tu|}{|tu|^{p-2}} |u|^p dx.
\end{aligned}
\end{equation}
On the other side, the Lemma \ref{Sinc} and \ref{Hiv0} guarantee us that the function $\mathcal{K}_u\colon (0,+\infty) \to \mathbb{R}$ defined by
\begin{equation}\label{Kfunction}
\begin{aligned}
\mathcal{K}_u(t)& = \iint_{\R^N\times\R^N}\frac{(2-q)\varphi(|tD_su|) + \varphi'(|tD_su|) |tD_su|}{|tD_su|^{p-2}} |D_su|^{p}d\nu  + \int_{\R^{N}}V(x)\frac{(2-q)\varphi(|tu|) + \varphi'(|tu|) |tu|}{|tu|^{p-2}}|u|^p dx 
\end{aligned} 
\end{equation}
is strictly increasing for each $u\in X\setminus\{0\}$ fixed. 
Therefore, the equation \eqref{equivR1} admits only one root $\t(u)>0$ for each $u\in X\setminus\{0\}$.
\end{proof}

\begin{rmk}
In the paper \cite{SCGS}, the function $u\mapsto\t(u)$ is obtained explicitly which allows to prove the continuity directly. However, in the present work, this function is obtained only implicitly, which requires a more delicate approach. In this case,  we shall prove the continuity by taking into account the Implicit Function Theorem. 
\end{rmk}

\begin{proposition}\label{propRn2.1}
Assume that \ref{Hip1}-\ref{Hip3}, \ref{Hi1}-\ref{Hi2}, \ref{Hiv0} and \ref{Hiv1} hold. Let $u\in X\setminus\{0\}$ be fixed. Then, the functional $\mathsf{t}:X\setminus\{0\} \to (0,+\infty)$ is of class $C^1$. Moreover, there exists a constant $c := c(\ell,m,p, q, N, s, V_0, \lambda)>0$ such that $\|\mathsf{t}(u)u\|>c$ for all $u \in X \setminus \{0\}$. \end{proposition}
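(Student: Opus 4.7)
\medskip
\noindent\textbf{Proof plan for Proposition \ref{propRn2.1}.}
The plan is to apply the Implicit Function Theorem to a reformulation of the defining equation for $\t(u)$, and then to exploit a one-sided pointwise bound on $\Theta$ coming from \ref{Hip3} to obtain the uniform lower bound on $\|\t(u)u\|$. Concretely, I will work with the map
\begin{equation*}
F \colon (0,+\infty) \times (X\setminus\{0\}) \to \R, \qquad F(t,u) := \iint_{\R^N\times\R^N}\Theta(t|D_su|)|D_su|^p\,d\nu + \int_{\R^N} V(x)\,\Theta(t|u|)|u|^p\,dx + \lambda(p-q)\|u\|_p^p,
\end{equation*}
where $\Theta(\tau):=[(2-q)\varphi(\tau)+\varphi'(\tau)\tau]/\tau^{p-2}$ is the function studied in Lemma \ref{Sinc}. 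The algebraic manipulation leading to \eqref{equivR1} (together with the identity $\mathcal{K}_u(t)=\iint \Theta(t|D_su|)|D_su|^pd\nu+\int V\Theta(t|u|)|u|^p dx$) shows that $\frac{d}{dt}R_n(tu)=0$ is equivalent to $F(t,u)=0$, and hence by Proposition \ref{propRn2} we have $F(\t(u),u)=0$ with $\t(u)$ the unique zero in $t$.

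For the $C^1$ regularity, the first step is to verify that $F$ is jointly $C^1$ on $(0,+\infty)\times(X\setminus\{0\})$. This reduces to differentiating under the integral sign, which is justified by \ref{Hip1}--\ref{Hip3}: the function $\varphi\in C^1$ together with the $\Delta_2$-type bounds \eqref{delta2} and \eqref{est1-phi3'} provides integrable dominating functions thanks to the embeddings of Lemma \ref{compact1} (recall $m<q<p<\ell_s^*$). The second step is to compute
\begin{equation*}
\partial_t F(t,u) = \iint_{\R^N\times\R^N}\Theta'(t|D_su|)|D_su|^{p+1}\,d\nu + \int_{\R^N}V(x)\,\Theta'(t|u|)|u|^{p+1}\,dx,
\end{equation*}
which is strictly positive for every $u\neq 0$ because Lemma \ref{Sinc} gives $\Theta'(\tau)>0$ for all $\tau>0$. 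Applying the Implicit Function Theorem at each point $(\t(u_0),u_0)$ produces a local $C^1$ selection of solutions which must agree with $\t$ by the uniqueness in Proposition \ref{propRn2}; this delivers $\t\in C^1(X\setminus\{0\},(0,+\infty))$.

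For the uniform bound, set $v:=\t(u)u$, so $F(1,v)=0$, i.e.,
\begin{equation*}
\iint_{\R^N\times\R^N}\Theta(|D_sv|)|D_sv|^p\,d\nu + \int_{\R^N}V(x)\,\Theta(|v|)|v|^p\,dx = -\lambda(p-q)\|v\|_p^p.
\end{equation*}
From \eqref{est1-phi3'} one has $\varphi'(\tau)\tau\le(m-2)\varphi(\tau)$, which rearranges to the pointwise upper bound $\Theta(\tau)\tau^p\le(m-q)\varphi(\tau)\tau^2$ for every $\tau>0$, where $m-q<0$ by \ref{Hi1}. Integrating this inequality and comparing it with the displayed identity yields
\begin{equation*}
(q-m)\,\mathcal{J}_{s,\Phi,V}'(v)v \;\le\; \lambda(p-q)\|v\|_p^p.
\end{equation*}
Using $\mathcal{J}_{s,\Phi,V}'(v)v\ge \ell\,\mathcal{J}_{s,\Phi,V}(v)$ (from \eqref{delta2}), Lemma \ref{lemanaruV}, and the continuous embedding $X\hookrightarrow L^p(\R^N)$ with constant $S_p$, I obtain
\begin{equation*}
\ell\,\xi^-(\|v\|) \;\le\; \frac{\lambda(p-q)}{q-m}\,S_p^p\,\|v\|^p.
\end{equation*}
If $\|v\|\le 1$ then $\xi^-(\|v\|)=\|v\|^m$, and since $p>m$ one deduces $\|v\|\ge\bigl(\ell(q-m)/[\lambda(p-q)S_p^p]\bigr)^{1/(p-m)}$; if $\|v\|>1$ the bound is immediate. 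Taking $c$ equal to half of $\min\{1,(\ell(q-m)/[\lambda(p-q)S_p^p])^{1/(p-m)}\}$ yields the strict inequality with $c$ depending only on $\ell,m,p,q,N,s,V_0,\lambda$, since $S_p$ itself depends only on these data.

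The main obstacle is establishing the joint $C^1$ regularity of $F$: since the paper only records that $R_n\in C^1$, one cannot simply differentiate $\frac{d}{dt}R_n(tu)$ once more, so the verification must be carried out on $F$ directly using dominated convergence together with the $\Delta_2$-growth of $\varphi$ and $\varphi'$ to justify differentiation under the double integral. By contrast, the strict positivity of $\partial_tF$ is essentially free once Lemma \ref{Sinc} is in hand, and the lower bound on $\|\t(u)u\|$ is a direct consequence of the sign structure forced by $m<q<p$.
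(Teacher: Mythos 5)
Your proof is correct, and the overall strategy — apply the Implicit Function Theorem to a reformulation of the defining equation $\frac{d}{dt}R_n(tu)=0$, then extract the uniform lower bound on $\|\t(u)u\|$ from the sign structure coming from \ref{Hip3} and \ref{Hi1} — matches the paper's. The lower bound argument is essentially identical to the paper's: both pass from the criticality identity evaluated at $v=\t(u)u$ (the paper's \eqref{li1}, your $F(1,v)=0$) to $(q-m)\,\mathcal{J}'_{s,\Phi,V}(v)v\leq\lambda(p-q)\|v\|_p^p$ via the pointwise estimate $\varphi'(\tau)\tau\leq(m-2)\varphi(\tau)$, and then invoke \eqref{delta2}, Lemma \ref{lemanaruV}, and the embedding $X\hookrightarrow L^p(\R^N)$ to conclude.

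Where you genuinely diverge is in the IFT step, and your version is the cleaner one. The paper applies the IFT to $\mathcal{F}(v,t)=R_n'(tv)(tv)$, computes $\partial_t\mathcal{F}$ in terms of $R_n''$, and argues strict positivity by remarking that $\t(u_0)$ is a global minimum of $t\mapsto R_n(tu_0)$ — a property that, on its face, only gives $\geq 0$, so the strictness needs the additional input that the minimum is non-degenerate (which is where Lemma \ref{Sinc} would have to enter anyway). It also uses $R_n''$ even though the paper only recorded $R_n\in C^1$. You instead work with the rescaled ``numerator'' $F(t,u)=\mathcal{K}_u(t)+\lambda(p-q)\|u\|_p^p$, for which $\partial_tF$ is exactly the integral of $\Theta'(t\cdot)$ against $|D_su|^{p+1}d\nu$ and $V|u|^{p+1}dx$; the strict positivity then follows immediately and transparently from Lemma \ref{Sinc}. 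Both approaches share the one real technical burden — justifying differentiation under the integral sign (joint $C^1$ regularity of the relevant map), which you correctly flag as requiring the $\Delta_2$-type growth bounds and the Sobolev embeddings, and which the paper leaves implicit.
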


\begin{proof}
Firstly, we will prove that $\|\t(u)u\|\geq c$ for some positive constant $c$.
By using Proposition \ref{propRn2}, we have that
\begin{equation}\label{li1}
\begin{aligned}
0&=\|\t(u)u\|_{q,a}^q \t(u) R'_n(\t(u)u)u = \iint_{\R^N\times\R^N} \left[ (2-q)\varphi(|\t(u)D_su|)  +\varphi'(|\t(u)D_su|) |\t(u)D_su|\right]|\t(u)D_su|^2   \, d\nu\\ 
& \quad+ \int_{\R^N} V(x)\left[ (2-q)\varphi(|\t(u)u|)+ \varphi'(|\t(u)u|) |\t(u)u|\right]|\t(u)u|^2\,dx
+\lambda(p-q)\|\t(u)u\|_p^p.
\end{aligned}
\end{equation}
The identity \eqref{li1} combined with \ref{Hip3} yield that
\begin{equation*}
\begin{aligned}
0&\leq (m-q)\left(\iint_{\R^N\times\R^N} \varphi(|\t(u)D_su|) |\t(u)D_su|^2\, d\nu + \int_{\R^N} V(x)\varphi(|\t(u)u|)|\t(u)u|^2\, dx \right) + \lambda(p-q)\|\t(u)u\|_p^p\\ 
& = (m-q) \mathcal{J}_{s,\Phi, V}'(\t(u)u)(\t(u)u)+ \lambda(p-q)\|\t(u)u\|_p^p.
\end{aligned}
\end{equation*}
Combining this inequality with the embedding $X\hookrightarrow L^p(\mathbb{R}^N)$, \ref{Hip3} and Lemma \ref{lemanaruV}, we obtain that
\begin{equation}\label{li2}
\|\t(u)u\|^p \geq \frac{\ell(q-m)}{ \lambda(p-q)S_p^p} \min\{\|\t(u)u\|^\ell,\|\t(u)u\|^m\}. 
\end{equation}
Since $\lambda>0$, the assumption \ref{Hi1} and estimates \eqref{li2} gives us the desire result.

Now, we will prove that $\mathsf{t}:X\setminus\{0\} \to (0,+\infty)$ is of class $C^1$. For this end, let $u_0\in X\setminus\{0\}$ be fixed. Then, $\t(u_0)>0$ is well-defined and $R'_n(\t(u_0)u_0)u_0=0$. Considering the function $\mathcal{F}\colon X\setminus\{0\} \times (0,+\infty) \to \mathbb{R}$ defined by $\mathcal{F}(v,t)= R'_n(tv)(tv)$, it follows that
$$
\begin{aligned}
    \frac{\partial}{\partial t}\mathcal{F}(v,t)\big{|}_{(v,t)=(u_0, \t(u_0))}= R''_n(\t(u_0)u_0)(\t(u_0)u_0,u_0) + R'_n(\t(u_0)u_0)u_0
     = \frac{R''_n(\t(u_0)u_0)(\t(u_0)u_0,\t_0(u_0)u_0)}{\t(u_0)}>0.
\end{aligned}
$$
In the last inequality was used that $\t(u_0)$ is a global minimum point of $R_n(tu_0)$. Hence, by Implicit Function Theorem (see \cite{drabek}), there exists a neighborhood $\mathcal{U}$ of $u_0$ and a  $C^1$-functional $\eta\colon \mathcal{U} \to (0,+\infty)$ such that
$$
\mathcal{F}(v,\eta(v))= R'_n(\eta(v)v)(\eta(v)v)=0, \quad\mbox{for all}\quad v\in \mathcal{U}. 
$$
Since $\t(v)$ is the unique real value that satisfies $R'_n(\t(v)v)(\t(v)v)=0$, we deduce that $\eta(v)=\t(v)$ for all $v\in \mathcal{U}$. Finally, we conclude from  arbitrariness of $u_0$ that $\t:X\setminus\{0\} \to (0,+\infty)$ is a functional of class $C^1$.
This ends the proof.
\end{proof}

\begin{rmk}\label{rmk1}
Let $t > 0$ and $u \in X \setminus \{0\}$ be fixed. Then, by using \eqref{Rn}, we obtain that $R_n(t u)=\mu$ if and only if $\mathcal{I}_{\lambda,\mu}'(t u) t u = 0$. Furthermore, we have that $R_n(tu) > \mu$ if and only if $\mathcal{I}_{\lambda,\mu}'(tu) tu > 0$. Finally, we deduce that $R_n(tu) < \mu$ if and only if $\mathcal{I}_{\lambda,\mu}'(tu) tu < 0$.
\end{rmk}

According to Propositions \ref{propRn1} and \ref{propRn2} we deduce that the function $Q_n\colon (0,+\infty) \to \mathbb{R}$ defined by $Q_n(t)=R_n(tu)$ satisfies $Q_n'(t)=0$ if and only if $t=\mathsf{t}(u)$. Furthermore, $Q_n'(t)<0$ if and only if $t\in(0,\mathsf{t}(u))$ and $Q_n'(t)>0$ if and only if  $ t>\mathsf{t}(u)$. In other words, $\t(u)$ is a global minimum point for $Q_n$. Hence, we can consider the auxiliary functional $\Lambda_n : X \setminus \{0\} \to \mathbb{R}$ defined by 
\begin{equation}\label{m}
\Lambda_n(u) = \min_{t>0} Q_n(tu) = R_n(\mathsf{t}(u)u).
\end{equation}
Now, we consider the following extreme parameter
$$
\mu_n(\lambda)= \inf_{u\in X\setminus\{0\}} \Lambda_n(u).
$$ 
Under these conditions, we have the following result:
\begin{prop}\label{N0}
Assume that \ref{Hip1}-\ref{Hip3}, \ref{Hi1}-\ref{Hi2}, \ref{Hiv0} and \ref{Hiv1} hold. Then, the functional $\Lambda_n$ satisfies the following properties:
\begin{itemize}
    \item[$(i)$] $\Lambda_n$ is $0$-homogeneous, that is, $\Lambda_n(tu)=\Lambda_n(u)$ for all $t>0, \, u\in X\setminus\{0\}$;
    \item[$(ii)$] $\Lambda_n$ is continuous and weakly lower semicontinuous;
    \item[$(iii)$] There exists $C \colon= C(\ell,m,p, q, N, s, V_0, a, \lambda) > 0$ such that $\Lambda_n(u) \geq C > 0$ for all $u\in X\setminus\{0\}$. Furthermore, the function $\Lambda_n$ is unbounded from above.
    \item[$(iv)$] There exists $u^\ast\in X\setminus\{0\}$ such that $\displaystyle\mu_n(\lambda)=\Lambda_n(u^\ast) >0$. 
\end{itemize}	
\end{prop}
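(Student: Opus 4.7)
Item $(i)$ is immediate from the change of variable $r=s\tau$ in $\Lambda_n(\tau u)=\inf_{s>0}R_n(s\tau u)=\inf_{r>0}R_n(ru)=\Lambda_n(u)$. Continuity in $(ii)$ follows from the representation $\Lambda_n(u)=R_n(\mathsf{t}(u)u)$ together with $\mathsf{t}\in C^1(X\setminus\{0\},(0,+\infty))$ (Proposition \ref{propRn2.1}) and $R_n\in C^1(X\setminus\{0\},\mathbb{R})$; the composition is then continuous.

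For the weak lower semicontinuity in $(ii)$, take $u_k\rightharpoonup u$ in $X$ with $u\neq 0$ and set $t_k:=\mathsf{t}(u_k)$. Since $(u_k)$ is bounded and $\|t_ku_k\|\ge c>0$ by Proposition \ref{propRn2.1}, $t_k$ is bounded away from zero. If a subsequence has $t_k\to+\infty$, the compact embeddings $X\hookrightarrow L^p,L^q_a$ (Lemma \ref{compact1} combined with H\"older and \ref{Hi2}) together with $u\neq 0$ force $R_n(t_ku_k)\ge\lambda t_k^{p-q}\|u_k\|_p^p/\|u_k\|_{q,a}^q\to+\infty$, and the inequality is automatic. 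Otherwise $t_k\to t^*\in(0,+\infty)$ up to subsequence and $t_ku_k\rightharpoonup t^*u$ in $X$. Writing the numerator of $R_n$ as $\iint\Phi^{\#}(|D_sv|)\,d\nu+\int V\Phi^{\#}(|v|)\,dx+\lambda\|v\|_p^p$ with $\Phi^{\#}(t):=\varphi(t)t^2$, a direct computation yields $(\Phi^{\#})''(t)=2(\varphi(t)t)'+t(\varphi(t)t)''\ge\ell(\varphi(t)t)'>0$ by \ref{Hip3}. Thus $\Phi^{\#}$ is convex, the numerator is weakly lower semicontinuous, while the denominator converges strongly to $\|t^*u\|_{q,a}^q>0$ by compactness; these combine to $\liminf_k R_n(t_ku_k)\ge R_n(t^*u)\ge\Lambda_n(u)$.

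For $(iii)$, by $0$-homogeneity normalize $\|u\|_{q,a}=1$. Then $\|u\|\ge\sigma_0>0$ (continuous embedding) and $\|u\|_p\ge\|a\|_r^{-1/q}=:\tau_0$ (H\"older with \ref{Hi2}). Using Lemma \ref{lemanaruV} and \ref{Hip3}, for $0<t\le 1/\sigma_0$ we have $R_n(tu)\ge\ell\sigma_0^m t^{m-q}\ge\ell\sigma_0^q$ (since $m<q$), and for $t\ge 1/\sigma_0$ we have $R_n(tu)\ge\lambda t^{p-q}\tau_0^p\ge\lambda\sigma_0^{q-p}\tau_0^p$ (since $p>q$). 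Hence $\Lambda_n(u)\ge\min\{\ell\sigma_0^q,\lambda\sigma_0^{q-p}\tau_0^p\}=:C>0$ uniformly in $u$. For unboundedness from above, set $u_n(x):=\phi(x-x_n)$ with $\phi\in C_0^\infty(\R^N)\setminus\{0\}$ and $|x_n|\to+\infty$; since $a^r\in L^1$, dominated convergence on $x_n+\operatorname{supp}\phi$ and H\"older give $\|u_n\|_{q,a}\to 0$, while $\|u_n\|_p=\|\phi\|_p$ is constant. The balancing estimate in the one-variable minimization of $R_n(tu_n)$, using the competing exponents $t^{m-q}$ and $t^{p-q}$, then forces $\Lambda_n(u_n)\to+\infty$.

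Item $(iv)$: let $(u_k)$ be a minimizing sequence for $\mu_n(\lambda)$, and replace it by $v_k:=\mathsf{t}(u_k)u_k$, which is still minimizing by $0$-homogeneity and satisfies the Nehari identity $\mathcal{J}'_{s,\Phi,V}(v_k)v_k+\lambda\|v_k\|_p^p=\mu_k\|v_k\|_{q,a}^q$ with $\mu_k:=R_n(v_k)=\Lambda_n(v_k)\to\mu_n(\lambda)$. H\"older with \ref{Hi2} yields $\lambda\|v_k\|_p^{p-q}\le\mu_k\|a\|_r$, so $(\|v_k\|_p)$ is bounded; then \ref{Hip3} together with Lemma \ref{lemanaruV} bound $\mathcal{J}_{s,\Phi,V}(v_k)$ and hence $\|v_k\|$. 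Up to a subsequence, $v_k\rightharpoonup v^*$ in $X$, and the compact embeddings give $\|v_k\|_{q,a}\to\|v^*\|_{q,a}$ and $\|v_k\|_p\to\|v^*\|_p$. If $v^*=0$, the Nehari identity forces $\mathcal{J}_{s,\Phi,V}(v_k)\to 0$ and hence $\|v_k\|\to 0$, contradicting $\|v_k\|\ge c>0$ from Proposition \ref{propRn2.1}; thus $v^*\neq 0$. Applying the weak lower semicontinuity of $(ii)$ yields $\Lambda_n(v^*)\le\liminf_k\Lambda_n(v_k)=\mu_n(\lambda)$, so $\Lambda_n(v^*)=\mu_n(\lambda)$, which is strictly positive by $(iii)$. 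The chief obstacle is the weak lower semicontinuity in $(ii)$: it relies simultaneously on the dichotomy of $t_k=\mathsf{t}(u_k)$ (controlled by Proposition \ref{propRn2.1} and the coercivity of $R_n$ as $t\to\infty$) and on the convexity of $\Phi^{\#}$, which in turn hinges on the sharp bound $\ell-2>-1$ in \ref{Hip3}.
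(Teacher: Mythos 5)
Your proof is correct, and items (ii) and (iii) take a genuinely different route from the paper.

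\textbf{Weak lower semicontinuity in (ii).} The paper establishes this indirectly: it shows $v\mapsto R_n'(tv)(tv)$ is weakly upper semicontinuous (via Fatou applied to the nonpositive integrand $H(t)=(2-q)\varphi(|t|)|t|^2+\varphi'(|t|)|t|^3$) and from this deduces that $u\mapsto\mathsf{t}(u)$ is weakly lower semicontinuous, then chains with the weak l.s.c.\ of $v\mapsto R_n(tv)$. Your argument is more direct: you split on the behaviour of $t_k=\mathsf{t}(u_k)$, dispose of the escape $t_k\to+\infty$ by the compact embeddings and the coercivity $R_n(tu)\gtrsim t^{p-q}$, and in the bounded case identify the numerator of $R_n$ as a modular built from $\Phi^{\#}(t)=\varphi(t)t^2$, whose convexity is a clean consequence of $(\varphi_3)$ (indeed $(\Phi^{\#})''(t)\ge\ell\,(\varphi(t)t)'>0$). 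Weak l.s.c.\ of a convex modular plus strong convergence of the denominator then does the job. This route is self-contained and avoids any delicate manipulation of the map $\mathsf{t}$; it is arguably tighter, since it does not need to compare $\mathsf{t}(u)$ with $\mathsf{t}(u_k)$ for large $k$ at all.

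\textbf{Lower bound and unboundedness in (iii).} For the uniform lower bound you normalise $\|u\|_{q,a}=1$ and bound $R_n(tu)$ separately for $t\le 1/\sigma_0$ (using $\mathcal J'(tu)(tu)\ge\ell\mathcal J(tu)$ and Lemma~\ref{lemanaruV}) and $t\ge 1/\sigma_0$ (using the $\lambda\|tu\|_p^p$ term and $\|u\|_p\ge\tau_0$); the paper instead splits on $\|\mathsf t(u)u\|\lessgtr 1$ and, in the second regime, extracts a lower bound on $\mathsf t(u)$ from the critical-point identity. Both give the same type of constant; yours is marginally more transparent. One small remark: in the regime $t\le1/\sigma_0$ your intermediate estimate $\mathcal J(tu)\ge t^m\sigma_0^m$ requires a one-line check when $\|tu\|>1$ (where $\min\{\|tu\|^\ell,\|tu\|^m\}=\|tu\|^\ell$); it does hold because then $\|tu\|^\ell>1\ge(t\sigma_0)^m$, but this should be said explicitly. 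For unboundedness the paper uses a normalised weakly null sequence and the compact embedding $X\hookrightarrow L^q$; you use translated bumps $u_n=\phi(\cdot-x_n)$ and the tail decay of $a\in L^r$. Your construction is valid (translations live in $X$ since $V$ is continuous, and $\|u_n\|_{q,a}\to0$ by H\"older and $\int_{x_n+\mathrm{supp}\,\phi}a^r\to0$), but the final ``balancing estimate'' is compressed: it would be cleaner to quote the explicit lower bound $\Lambda_n(u)\gtrsim \lambda^{\frac{q-\ell}{p-\ell}}\|u\|^{\ell\frac{p-q}{p-\ell}}\|u\|_p^{p\frac{q-\ell}{p-\ell}}/\|u\|_{q,a}^q$ already derived in your lower-bound step (or the analogous one for the $m$-exponent) and observe that the numerator stays bounded below while $\|u_n\|_{q,a}\to0$.

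\textbf{Items (i) and (iv)} follow the same scheme as the paper: $0$-homogeneity via $\mathsf t(su)=\mathsf t(u)/s$ (or the equivalent change of variable), and in (iv) a rescaled minimising sequence on the Nehari set, boundedness via H\"older and Lemma~\ref{lemanaruV}, nonvanishing of the weak limit through the Nehari identity and Proposition~\ref{propRn2.1}, then weak l.s.c.\ of $\Lambda_n$ to close the argument.
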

\begin{proof}
$(i)$ Let $s>0$ and $u\in X\setminus\{0\}$ be fixed. Notice that 
$$
R_n'\left(\frac{\t(u)}{s}(su)\right)\left(\frac{\t(u)}{s}(su)\right)=0.
$$
It follows from Proposition \ref{propRn2} that $\t(su)=\dis\frac{\t(u)}{s}$. Therefore, 
$
\Lambda_n(su)=R_n(\t(su)su)=R_n(\t(u)u)=\Lambda_n(u).
$
This proves $(i)$.

$(ii)$ First, the continuity of $\Lambda_n$ follows from Proposition \ref{propRn2.1} and the continuous embedding $X\hookrightarrow L^{q}(\mathbb{R}^N)$. Now, we consider a sequence $(u_k)_{k\in\mathbb{N}}$ in $X$ such that $u_k \rightharpoonup u\neq 0$. Furthermore, by using compactness of the embedding $X\hookrightarrow L^{q}(\mathbb{R}^N)$, we obtain that $u_k \to u$ a.e. in $\mathbb{R}^n$ and $u_k \to u$ in $L^{q}(\mathbb{R}^N)$. Define the function $H(t)=(2-q)\varphi(|t|)|t|^2 + \varphi'(|t|)|t|^3$. Using the last assertion we obtain that
$$
H(|D_su_k|)|x-y|^{-N} \to H(|D_su|)|x-y|^{-N} \quad \mbox{a.e. in} \quad \mathbb{R}^n\times\mathbb{R}^N.
$$
We also have that $H(t)\leq m(m-q)\Phi(|t|)\leq0$ by assumptions \ref{Hip3} and \ref{Hi1}. In view of by Fatou's Lemma we deduce that
$$
\iint_{\R^{N}\times\R^N} H(t|D_su|)\, d\nu \geq -\liminf_{k\to+\infty} \left(-\iint_{\R^{N}\times\R^N} H(t|D_su_k|)\, d\nu\right)=\limsup_{k\to +\infty} \iint_{\R^{N}\times\R^N} H(t|D_su_k|)\, d\nu.
$$
Analogously, we observe that
$$
\int_{\R^{N}} H(t|u|)\, dx \geq \limsup_{k\to +\infty} \int_{\R^{N}} H(t|u_k|)\, dx
$$
holds for each $t>0$ fixed. This statement and the compact embedding $X\hookrightarrow L^{r}(\mathbb{R})$, for each $r\in(m,\ell_s^\ast)$, show that 
$$
\dis v\mapsto R'_n(tv)(tv) = \displaystyle\frac{\iint_{\R^{N}\times\R^N} H(t|D_sv|)\, d\nu +\int_{\R^{N}} V(x)H(t|v|)\, dx + \lambda(p-q)\|tv\|_p^p}{\|tv\|_{q,a}^q}
$$
is weakly upper semicontinuous for each $t>0$ fixed. Hence,
$$
0=R'_n(\t(u)u)(\t(u)u)\geq R'_n(\t(u)u_k)(\t(u)u_k), \quad\mbox{for} \quad k\gg 1,
$$
which implies that $\t(u)\leq \t(u_k)$ for $k\gg1$, that is, $u\mapsto\t(u)$ is weakly lower semicontinuous. Using the last assertion and the fact that $v\mapsto R_n(tv)$ is also weakly lower semicontinuous, we conclude that
$$
\begin{aligned}
\Lambda_n(u)=R_n(\t(u)u)\leq\liminf_{k\to+\infty} R_n(\t(u)u_k)\leq \liminf_{k\to+\infty} R_n(\t(u_k)u_k)
=\liminf_{k\to+\infty} \Lambda_n(u_k).
\end{aligned}
$$ 
The last assertion shows that $\Lambda_n$ is weakly lower semicontinuous.

$(iii)$ Assume first that $\|\t(u)u\|\leq 1$. Hence, by \eqref{Rn}, \ref{Hip3} and Lemma \ref{lemanaruV}, we obtain that
\begin{equation}\label{est1-lambda-n}
\begin{aligned}
\Lambda_n(u)=R_n(\t(u)u) 
\geq \frac{\mathcal{J}_{s,\Phi,V}'(\t(u)u)(\t(u)u)}{\|\t(u)u\|_{q,a}^q}
\geq \ell \frac{\min\{\|\t(u)u\|^\ell, \|\t(u)u\|^m\}}{\|\t(u)u\|_{q,a}^q}= \ell \frac{\|\t(u)u\|^m}{\|\t(u)u\|_{q,a}^q}.
\end{aligned}
\end{equation}
By using assumption \ref{Hi2} and H\"{o}lder inequality, we have that
\begin{equation}\label{interpolation}
    \|v\|_{q,a}^q\leq \|a\|_{r} \|v\|_p^{q},\quad \mbox{for all  } v\in L^{p}(\R^N), 
\end{equation}
where $r=(p/q)'$. Putting together \eqref{est1-lambda-n}, \eqref{interpolation} and the embedding $X\hookrightarrow L^{p}(\mathbb{R}^N)$, we deduce that
$$
\begin{aligned}
    \Lambda_n(u)=R_n(\t(u)u)& \geq \frac{\ell}{S_p^q\|a\|_r} \|\t(u)u\|^{m-q}\geq \frac{\ell}{S_p^q\|a\|_r} =: C>0.
\end{aligned}
$$
Suppose now that $\|\t(u)u\|>1$. Arguing as in \eqref{li2} we see that
\begin{equation}\label{li3}
\t(u)\geq \left[\frac{(q-m)}{\lambda(p-q)}\frac{\|u\|^\ell}{\|u\|_p^p}\right]^{\frac{1}{p-\ell}}.
\end{equation}

Moreover, by using \eqref{li1} and \ref{Hip3}, we have that
\begin{equation}\label{li4}
0\geq  (\ell-q) \mathcal{J}_{s,\Phi, V}'(\t(u)u)(\t(u)u)+ \lambda(p-q)\|\t(u)u\|_p^p.
\end{equation}
Thence, by using \eqref{li3} and \eqref{li4}, we infer that 
$$
\begin{aligned}
\Lambda_n(u)&=R_n(\t(u)u) 
=\frac{\mathcal{J}_{s,\Phi, V}'(\t(u)u)(\t(u)u)+\lambda\|\t(u)u\|_p^p }{\|\t(u)u\|_{q,a}^q}
\geq \frac{\lambda\frac{(p-q)}{(q-\ell)}\|\t(u)u\|_p^p+\lambda\|\t(u)u\|_p^p }{\|\t(u)u\|_{q,a}^q}\\
&\geq \displaystyle\frac{ \lambda \frac{p-\ell}{q-\ell} \left[\frac{(q-m)}{\lambda(p-q)}\frac{\|u\|^\ell}{\|u\|_p^p}\right]^{\frac{p-q}{p-\ell}} \|u\|_p^p}{\|u\|_{q,a}^q}= \frac{ \frac{p-\ell}{q-\ell} \left[\frac{(q-m)}{(p-q)}\right]^{\frac{p-q}{p-\ell}}  \lambda^{\frac{q-\ell}{p-\ell}} \|u\|^{\ell\frac{p-q}{p-\ell}} \|u\|_p^{p\frac{q-\ell}{p-\ell}} }{\|u\|_{q,a}^q} = C_{\ell, m, p,q} \lambda^{\frac{q-\ell}{p-\ell}} \frac{ \|u\|^{\ell\frac{p-q}{p-\ell}} \|u\|_p^{p\frac{q-\ell}{p-\ell}} }{\|u\|_{q,a}^q}.
\end{aligned}
$$
Since the embedding $X\hookrightarrow L^{p}(\R^N)$ is continuous, we deduce from inequality \eqref{interpolation} that
$$
\begin{aligned}
    \Lambda_n(u)&\geq \frac{ C_{\ell,m,p,q}\lambda^{\frac{q-\ell}{p-\ell}} }{ \|a\|_{r}} \|u\|_p^{\left[\ell\frac{p-q}{p-\ell}+p\frac{q-\ell}{p-\ell} - q\right]}= \frac{C_{\ell,m,p,q} \lambda^{\frac{q-\ell}{p-\ell}} }{ \|a\|_r}=\colon C>0,\\
\end{aligned}
$$ 
where have used that $\ell\frac{p-q}{p-\ell} + p\frac{q-\ell}{p-\ell} -q=0$. It remains to prove that the function $\Lambda_n$ is unbounded from above. Since $X$ is a reflexive space, there exists a sequence $(w_k)_{k\in \mathbb{N}}$ in $X$ such that $\|w_k\|=1$ for all $k\in\mathbb{N}$ and $w_k \rightharpoonup 0$. By item $(i)$, we can assume without loss of generality that $\t(w_k)=1$. Then, proceeding as in \eqref{est1-lambda-n} and using the compact embedding  $X \hookrightarrow L^q(\R^n)$, we conclude that
$$
\Lambda_n(w_k)\geq \ell \frac{\|\t(w_k)w_k\|^m}{\|\t(w_k)w_k\|_{q,a}^q}\geq \frac{\ell}{\|a\|_\infty}\frac{1}{\|w_k\|_q^q} \to +\infty, \quad\mbox{as}\; k\to +\infty.
$$
This proves $(iii)$.

$(iv)$ Since $\Lambda_n$ is $0$-homogeneous, we chose a sequence $(u_k)_{k\in\mathbb{N}}$ in $X$ such that
$$
\Lambda_n(u_k) \to \mu_n(\lambda) \quad \mbox{as} \quad k\to+\infty, \quad \|u_k\|=1 \quad \mbox{and}\quad \t(u_k)=1 \quad\mbox{for all} \quad k\in\mathbb{N}.
$$
Due to reflexivity of $X$, there exists $u\in X$ such that $u_k \rightharpoonup u$ in $X$. Then, by compact embedding $X\hookrightarrow L^{q}(\mathbb{R}^N)$ and the fact that $\|\cdot\|$ is weakly lower semicontinuous, we have that
$$
\|u\|\leq \liminf_{k\to+\infty} \|u_k\| \quad \mbox{and} \quad \lim_{k\to+\infty}\|u_k\|_{q}^q= \|u\|_{q}^q.
$$ 
Now, we claim that $u\neq 0$. Otherwise, by using the last assertion and \eqref{est1-lambda-n}, we infer that
$$
\Lambda_n(u_k)\geq \frac{\ell}{\|a\|_\infty} \frac{1}{\|u_k\|_q^q} \to +\infty.	
$$ 
This contradiction implies that $u\neq0$, which shows the claim above. Hence, by using the fact that $\Lambda_n$ is weakly lower semicontinuous, we conclude that
$$
\mu_n(\lambda)= \lim_{k\to+\infty}\Lambda_n(u_k) \geq \Lambda_n(u)\geq \mu_n(\lambda),
$$
which proves $(iv)$.
\end{proof}

In the sequel, we present similar results for the functional $R_e$.
\begin{prop}\label{propRe1}
Assume that \ref{Hip1}-\ref{Hip3}, \ref{Hi1}-\ref{Hi2}, \ref{Hiv0} and \ref{Hiv1} hold. Then, the fibering map $t\mapsto R_e(tu)$ satisfies the following properties:
\begin{itemize}
\item[(i)] There holds that
$$
\lim_{t \to 0^+} \frac{R_e(tu)}{t^{m-q}}>0 \quad\mbox{and}\quad \lim_{t \to 0^+} \frac{\frac{d}{dt}R_e(tu)}{t^{m-q-1}}<0. 
$$
\item[(ii)] It holds that
$$
\lim_{t \to +\infty} \frac{R_e(tu)}{t^{p-q}}>0 \quad\mbox{and}\quad \lim_{t \to +\infty} \frac{\frac{d}{dt}R_e(tu)}{t^{p-q-1}}>0. 
$$
\end{itemize}
\end{prop}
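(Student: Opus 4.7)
The plan is to mirror the proof of Proposition \ref{propRn1}, adjusting only what is needed to pass from $R_n$ to $R_e$. First, I would rewrite
$$
R_e(tu) = \frac{q\,\mathcal{J}_{s,\Phi,V}(tu)}{t^q\|u\|_{q,a}^q} + \frac{q\lambda}{p}\cdot\frac{t^{p-q}\|u\|_p^p}{\|u\|_{q,a}^q},
$$
so that the pieces that carry the non-homogeneity of $\Phi$ are isolated from the purely polynomial $\ell^p$-piece. The two essential inputs are the modular sandwich of Lemma \ref{lemanaruV}, $\xi^-(t)\mathcal{J}_{s,\Phi,V}(u) \leq \mathcal{J}_{s,\Phi,V}(tu) \leq \xi^+(t)\mathcal{J}_{s,\Phi,V}(u)$, and the $\Delta_2$-estimate $\ell\Phi(s)\leq \varphi(|s|)s^2\leq m\Phi(s)$ coming from \ref{Hip3}; both have already been used in Proposition \ref{propRn1} and will reappear here.

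For item $(i)$, as $t\to 0^+$ one has $\xi^-(t)=t^m$, so the lower bound $\mathcal{J}_{s,\Phi,V}(tu)\geq t^m\mathcal{J}_{s,\Phi,V}(u)$ gives
$$
\frac{R_e(tu)}{t^{m-q}} \geq \frac{q\,\mathcal{J}_{s,\Phi,V}(u)}{\|u\|_{q,a}^q} + \frac{q\lambda}{p}\cdot\frac{t^{p-m}\|u\|_p^p}{\|u\|_{q,a}^q},
$$
and the second term vanishes since $p>m$, yielding strict positivity of the liminf. For the derivative, a direct quotient-rule computation produces
$$
\frac{d}{dt}R_e(tu) = \frac{q\bigl[\mathcal{J}_{s,\Phi,V}'(tu)(tu) - q\,\mathcal{J}_{s,\Phi,V}(tu)\bigr]}{t^{q+1}\|u\|_{q,a}^q} + \frac{q\lambda(p-q)}{p}\cdot\frac{t^{p-q-1}\|u\|_p^p}{\|u\|_{q,a}^q}.
$$
The bracketed term is $\leq (m-q)\mathcal{J}_{s,\Phi,V}(tu)$ by the upper bound $\varphi(|s|)s^2\leq m\Phi(s)$, and $m-q<0$ by \ref{Hi1}; dividing by $t^{m-q-1}$ and again invoking $\mathcal{J}_{s,\Phi,V}(tu)\geq t^m\mathcal{J}_{s,\Phi,V}(u)$ for small $t$, the first summand is bounded above by the negative constant $q(m-q)\mathcal{J}_{s,\Phi,V}(u)/\|u\|_{q,a}^q$, while the second tends to $0$. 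This gives the negative upper bound required.

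Item $(ii)$ is the symmetric argument at infinity: for $t\geq 1$ we have $\xi^+(t)=t^m$, hence $\mathcal{J}_{s,\Phi,V}(tu)\leq t^m\mathcal{J}_{s,\Phi,V}(u)$, so the first summand in $R_e(tu)/t^{p-q}$ is controlled by $t^{m-p}$ and vanishes (since $p>m$), leaving
$$
\lim_{t\to+\infty}\frac{R_e(tu)}{t^{p-q}} = \frac{q\lambda\|u\|_p^p}{p\,\|u\|_{q,a}^q}>0.
$$
For the derivative, the bracketed term $\mathcal{J}_{s,\Phi,V}'(tu)(tu)-q\mathcal{J}_{s,\Phi,V}(tu)$ lies in the interval $[(\ell-q),(m-q)]\cdot\mathcal{J}_{s,\Phi,V}(tu)$, so in absolute value it is bounded by $(q-\ell)t^m\mathcal{J}_{s,\Phi,V}(u)$; after dividing by $t^p$ this still tends to zero and one is left with the positive constant $q\lambda(p-q)\|u\|_p^p/(p\|u\|_{q,a}^q)$, completing the proof.

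The main obstacle, exactly as in Proposition \ref{propRn1}, is that $\mathcal{J}_{s,\Phi,V}$ is not positively homogeneous, so one cannot extract a simple power of $t$ from $\mathcal{J}_{s,\Phi,V}(tu)$; everything must be run through the two-sided envelope $\xi^-(t)\leq \Phi(\rho t)/\Phi(\rho)\leq \xi^+(t)$ together with the $\ell$/$m$-control on $\varphi(|s|)s^2$. The algebra is routine but one must keep track of the signs of $m-q<0<p-q$ and of which of $\xi^\pm$ equals $t^\ell$ versus $t^m$ according to whether $t<1$ or $t>1$; those are the only places where an error could creep in.
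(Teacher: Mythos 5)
Your proof is correct and follows essentially the approach the paper indicates (the paper just says the proof is "similar to the proof of Proposition \ref{propRn1}"): you separate off the homogeneous $\lambda$-term, control the non-homogeneous modular via the two-sided envelope $\xi^{-}(t)\mathcal{J}_{s,\Phi,V}(u)\le \mathcal{J}_{s,\Phi,V}(tu)\le \xi^{+}(t)\mathcal{J}_{s,\Phi,V}(u)$, and use the $\ell/m$-bounds $\ell\mathcal{J}_{s,\Phi,V}(v)\le\mathcal{J}'_{s,\Phi,V}(v)v\le m\mathcal{J}_{s,\Phi,V}(v)$ together with $\ell\le m<q<p$ to read off the signs of the limits. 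One small citation slip: the scaling inequality you invoke comes from Lemma \ref{lema_naru} applied termwise inside the modular, not from Lemma \ref{lemanaruV}, which bounds $\mathcal{J}_{s,\Phi,V}(u)$ by $\xi^{\pm}(\|u\|)$ — but this does not affect the argument.
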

\begin{proof}
The proof is similar to the proof of Proposition \ref{propRn1}.  We omit the details.
\end{proof}

\begin{lem}\label{strictly-e}
Assume that \ref{Hip1}-\ref{Hip3} and \ref{Hi2} hold. Then the function
$$
t\mapsto \frac{\varphi(t)t^2 -q\Phi(t)}{t^p}, \quad t>0,
$$
is strictly increasing.
\end{lem}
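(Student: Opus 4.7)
The plan is to reduce the monotonicity of $g(t):=[\varphi(t)t^2-q\Phi(t)]/t^p$ to a pointwise inequality for $g'(t)$, in the same spirit as the proof of Lemma \ref{Sinc}. Setting $N(t):=\varphi(t)t^2-q\Phi(t)$ and recalling that $\Phi'(t)=\varphi(t)t$, a direct computation gives
\[
N'(t)=\varphi'(t)t^2+(2-q)\varphi(t)t,\qquad g'(t)=\frac{N'(t)t-pN(t)}{t^{p+1}}.
\]
Hence the conclusion reduces to showing that
\[
\mathcal{A}(t):=\varphi'(t)t^3+(2-q-p)\varphi(t)t^2+pq\Phi(t)>0\quad\text{for all }t>0.
\]

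First, I would extract the one-sided derivative estimate implied by \ref{Hip3}. Integration by parts (exactly as in \eqref{est1-phi3'} in the proof of Lemma \ref{Sinc}) gives $(\ell-1)\varphi(t)t\leq(\varphi(t)t)'t$, which, using $(\varphi(t)t)'=\varphi'(t)t+\varphi(t)$, is equivalent to $(\ell-2)\varphi(t)\leq \varphi'(t)t$. Multiplying by $t^2>0$ yields $\varphi'(t)t^3\geq(\ell-2)\varphi(t)t^2$, regardless of the sign of $\ell-2$. Plugging this into $\mathcal{A}(t)$ gives
\[
\mathcal{A}(t)\geq(\ell-q-p)\varphi(t)t^2+pq\Phi(t).
\]
Since \ref{Hi1} gives $\ell\leq m<q<p$, the coefficient $\ell-q-p$ is strictly negative, so I would then invoke the upper bound $\varphi(t)t^2\leq m\Phi(t)$ from \eqref{delta2} to get
\[
\mathcal{A}(t)\geq \bigl[(\ell-q-p)m+pq\bigr]\Phi(t)=\bigl[p(q-m)-m(q-\ell)\bigr]\Phi(t).
\]

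The final step is to observe that the strict inequality $m(q-\ell)<p(q-m)$ is exactly what \ref{Hi1} provides, so the bracketed constant is strictly positive; combined with $\Phi(t)>0$ for $t>0$, this yields $\mathcal{A}(t)>0$ and hence $g'(t)>0$, establishing strict monotonicity. The main technical obstacle is that $\Phi$ is non-homogeneous, so one has to play off the lower bound from \ref{Hip3} against the upper bound $\varphi(t)t^2\leq m\Phi(t)$ in the right directions (noting the sign flip when multiplying the negative coefficient $\ell-q-p$); it is precisely the structural hypothesis $m(q-\ell)<p(q-m)$ in \ref{Hi1} that ensures these two non-optimal estimates still combine to a strict positive lower bound.
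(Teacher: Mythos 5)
Your proof is correct, but it takes a genuinely different route from the paper's. The paper does not differentiate at all; instead it observes that the cross-multiplied form of Lemma \ref{Sinc}, namely
$\bigl[(2-q)\varphi(\zeta_1)\zeta_1 +\varphi'(\zeta_1)\zeta_1^2\bigr]\zeta_2^{p-1}<\bigl[(2-q)\varphi(\zeta_2)\zeta_2 +\varphi'(\zeta_2)\zeta_2^2\bigr]\zeta_1^{p-1}$
holds for $0<\zeta_1<\zeta_2$, and then integrates this inequality twice: once in $\zeta_2$ over $[0,t]$ (which, using $\Phi'(\tau)=\varphi(\tau)\tau$ and an integration by parts, produces $\varphi(t)t^2-q\Phi(t)$ on the right), and once in $\zeta_1$ over $[0,s]$ (producing the same expression on the left), arriving directly at the statement for $0<s<t$. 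In effect the paper treats Lemma \ref{Sinc} as a monotonicity statement at the ``derivative level'' and anti-differentiates it up to the ``function level'', and for this it only needs the orderings $\ell\le m<q<p$ from \ref{Hi1}. Your argument differentiates $g$ instead, and then plays off the lower bound $\varphi'(t)t\geq(\ell-2)\varphi(t)$ from \ref{Hip3} against the upper bound $\varphi(t)t^2\leq m\Phi(t)$ from \eqref{delta2}. This is shorter and self-contained, but it genuinely needs the strict structural inequality $m(q-\ell)<p(q-m)$ from \ref{Hi1}: the constant you land on, $p(q-m)-m(q-\ell)$, vanishes when that inequality fails, whereas the paper's integration route (through Lemma \ref{Sinc}'s constant $(q-m)(p-m)$) survives without it. Since \ref{Hi1} is a blanket hypothesis throughout the paper, both proofs are valid, and yours has the pedagogical advantage of making explicit where the condition $m(q-\ell)<p(q-m)$ earns its keep.
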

\begin{proof}
Let $0<s<t<\infty$ be fixed. For each $\zeta_1, \zeta_2\in(0,t)$ such that $\zeta_1<\zeta_2$, we deduce from assumption \ref{Hi2} that
$$
\left[(2-q)\varphi(\zeta_1)\zeta_1 +\varphi'(\zeta_1)\zeta_1^2\right]\zeta_2^{p-1}<\left[(2-q)\varphi(\zeta_2)\zeta_2 +\varphi'(\zeta_2)\zeta_2^2\right]\zeta_1^{p-1}.
$$
On the one hand, integrating the last inequality with respect to variable $\zeta_2$ over interval $[0,t]$, we obtain that
$$
\left[(2-q)\varphi(\zeta_1)\zeta_1 +\varphi'(\zeta_1)\zeta_1^2\right]\frac{t^{p}}{p}<\left[-q\Phi(t) + \varphi(t)t^2\right]\zeta_1^{p-1}.
$$
On the other hand, integrating  with respect to variable $\zeta_1$ over interval $[0,s]$, we have that
$$
\left[-q\Phi(s) + \varphi(s)s^2\right]\frac{t^{p}}{p}<\left[-q\Phi(t) + \varphi(t)t^2\right]\frac{s^p}{p}.
$$
This proves the desired result.
\end{proof}

\begin{prop}\label{propRe2}
Assume that \ref{Hip1}-\ref{Hip3}, \ref{Hi1}-\ref{Hi2}, \ref{Hiv0} and \ref{Hi2} hold. Then, for each $u\in X\setminus\{0\}$, there exists a unique $\mathsf{s}(u):= \mathsf{s}_{\lambda}(u)>0$ satisfying
\begin{equation}\label{pcRe}
\frac{d}{dt} R_e(tu)=0 \quad \mbox{for} \quad t=\mathsf{s}(u).
\end{equation}
In addition, the functional $\mathsf{s}:X\setminus\{0\} \to (0,+\infty)$ is of class $C^1$ and there exists $c>0$ such that $\|\s(u)u\|>c$ for all $u \in X \setminus \{0\}$. 
\end{prop}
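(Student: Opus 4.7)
The proof will mirror the arguments used for Propositions \ref{propRn2} and \ref{propRn2.1}, with Lemma \ref{strictly-e} replacing Lemma \ref{Sinc}. For existence of a critical point, Proposition \ref{propRe1} already guarantees that $\frac{d}{dt}R_e(tu)$ is negative as $t\to 0^+$ and positive as $t\to +\infty$, so by continuity there exists at least one $\s(u)>0$ at which this derivative vanishes.

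For uniqueness, my plan is to compute $\frac{d}{dt}R_e(tu)$ directly from
$$
R_e(tu) = \frac{\mathcal{J}_{s,\Phi,V}(tu) + \frac{\lambda}{p}t^p\|u\|_p^p}{\frac{t^q}{q}\|u\|_{q,a}^q}
$$
using the quotient rule. A short manipulation shows that $\frac{d}{dt}R_e(tu)=0$ is equivalent, after multiplying by a positive factor, to
$$
\iint_{\R^N\times\R^N}\frac{\varphi(|tD_su|)|tD_su|^2 - q\Phi(tD_su)}{|tD_su|^p}|D_su|^p\,d\nu + \int_{\R^N} V(x)\frac{\varphi(|tu|)|tu|^2 - q\Phi(tu)}{|tu|^p}|u|^p\,dx = \lambda\frac{p-q}{p}\|u\|_p^p.
$$
Since the right-hand side is a fixed positive constant (because $p>q$ and $u\neq 0$), and since Lemma \ref{strictly-e} asserts that $\tau\mapsto (\varphi(\tau)\tau^2-q\Phi(\tau))/\tau^p$ is strictly increasing on $(0,\infty)$, the left-hand side is a strictly increasing function of $t>0$ (the integrand increases pointwise and the measures $d\nu$ and $V(x)\,dx$ are positive by \ref{Hiv0}). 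Hence the equation has at most one root, yielding uniqueness of $\s(u)$.

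For the $C^1$ regularity of $u\mapsto \s(u)$, I will apply the Implicit Function Theorem exactly as in Proposition \ref{propRn2.1}. Define $\mathcal{G}:X\setminus\{0\}\times(0,\infty)\to\R$ by $\mathcal{G}(v,t) = R_e'(tv)(tv)$. Fix $u_0 \in X\setminus\{0\}$. Then $\mathcal{G}(u_0,\s(u_0))=0$, and since $\s(u_0)$ is the unique critical point and global minimum of the $C^2$ map $t\mapsto R_e(tu_0)$, we have $R_e''(\s(u_0)u_0)(\s(u_0)u_0,\s(u_0)u_0)>0$, which gives $\partial_t\mathcal{G}(u_0,\s(u_0))>0$. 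The Implicit Function Theorem then furnishes a $C^1$ map $\eta$ defined in a neighborhood of $u_0$ with $\mathcal{G}(v,\eta(v))=0$; uniqueness forces $\eta\equiv \s$ locally, so $\s\in C^1(X\setminus\{0\},(0,+\infty))$.

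Finally, for the uniform lower bound $\|\s(u)u\|\geq c$, I will use the characterization above evaluated at $t=\s(u)$:
$$
q\mathcal{J}_{s,\Phi,V}(\s(u)u) - \mathcal{J}_{s,\Phi,V}'(\s(u)u)(\s(u)u) = -\lambda\frac{p-q}{p}\|\s(u)u\|_p^p.
$$
By \ref{Hip3} we have $\varphi(\tau)\tau^2\leq m\Phi(\tau)$, so the left-hand side is bounded above by $(q-m)\mathcal{J}_{s,\Phi,V}(\s(u)u)$. Combining with Lemma \ref{lemanaruV} and the continuous embedding $X\hookrightarrow L^p(\R^N)$ yields
$$
(q-m)\min\{\|\s(u)u\|^\ell,\|\s(u)u\|^m\} \leq \lambda\frac{p-q}{p} S_p^p \|\s(u)u\|^p,
$$
from which $\|\s(u)u\|\geq c$ for some constant $c=c(\ell,m,p,q,N,s,V_0,\lambda)>0$ follows since $\ell\leq m<p$. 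The main obstacle, namely the strict monotonicity needed for uniqueness and for the invertibility required by the Implicit Function Theorem, is already handled by Lemma \ref{strictly-e}; the remaining steps are essentially parallel to those in Propositions \ref{propRn2} and \ref{propRn2.1}.
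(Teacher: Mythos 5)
Your proof follows the paper's argument essentially line by line: existence of a critical point from Proposition \ref{propRe1}, uniqueness via the strict monotonicity of Lemma \ref{strictly-e} applied to the rearranged equation, $C^1$-regularity of $\s$ via the Implicit Function Theorem as in Proposition \ref{propRn2.1}, and the lower bound on $\|\s(u)u\|$ from the $\Delta_2$-type inequality \eqref{delta2}, Lemma \ref{lemanaruV}, and the Sobolev embedding. The structure is correct; the only issues are two sign slips that happen to cancel.

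First, the equivalent form of $\frac{d}{dt}R_e(tu)=0$ should read
\begin{equation*}
\iint_{\R^N\times\R^N}\frac{\varphi(|tD_su|)|tD_su|^2 - q\Phi(|tD_su|)}{|tD_su|^p}|D_su|^p\,d\nu + \int_{\R^N} V(x)\frac{\varphi(|tu|)|tu|^2 - q\Phi(|tu|)}{|tu|^p}|u|^p\,dx = -\lambda\frac{p-q}{p}\|u\|_p^p,
\end{equation*}
with a minus sign on the right. Indeed, by \eqref{delta2} and $m<q$ every integrand on the left is nonpositive, so as you wrote it (with a strictly positive right-hand side) the equation would have no solution. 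This does not damage your uniqueness argument, since strict monotonicity of the left side against a constant right side gives at most one root regardless of sign, but the claim that the right-hand side is \textquotedblleft a fixed positive constant\textquotedblright{} is incompatible with the left side. Second, in the lower-bound paragraph the correct identity is
\begin{equation*}
q\,\mathcal{J}_{s,\Phi,V}(\s(u)u) - \mathcal{J}_{s,\Phi,V}'(\s(u)u)(\s(u)u) = +\lambda\frac{p-q}{p}\|\s(u)u\|_p^p,
\end{equation*}
and since $\varphi(\tau)\tau^2\leq m\Phi(\tau)$ forces $\mathcal{J}_{s,\Phi,V}'(w)(w)\leq m\,\mathcal{J}_{s,\Phi,V}(w)$, the left side is bounded \emph{below} by $(q-m)\mathcal{J}_{s,\Phi,V}(\s(u)u)$, not above. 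The two errors offset and your final displayed inequality $(q-m)\min\{\|\s(u)u\|^\ell,\|\s(u)u\|^m\}\le \lambda\frac{p-q}{p}S_p^p\|\s(u)u\|^p$ is correct, so the conclusion stands after these corrections.
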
	
\begin{proof}
Firstly, thanks to the Proposition \ref{propRe1},  identity \eqref{pcRe} admits at least one root $\s(u)>0$. Recall also that
$$
R_e(tu)=\frac{t^{-q}\mathcal{J}_{s,\Phi,V}(tu) +\lambda (\frac{t^{p-q}}{p})\|u\|_p^p}{\frac{1}{q}\|u\|_{q,a}^q}
$$
Then, the derivative is given by
$$
\begin{aligned}
\frac{d}{dt}R_e(tu)&= \frac{-qt^{-q-1}\mathcal{J}_{s,\Phi,V}(tu) + t^{-q}\mathcal{J}_{s,\Phi,V}'(tu)u+\lambda(p-q) (\frac{t^{p-q-1}}{p})\|u\|_p^p}{\frac{1}{q}\|u\|_{q,a}^q}\\
& = \frac{-qt^{-q-1}\left(\iint_{\R^{N}\times\R^N}\Phi(t|D_su|)\, d\nu + \int_{\R^{N}}V(x)\Phi(t|u|)\, dx\right)}{\frac{1}{q}\|u\|_{q,a}^q}\\
&\quad + \frac{t^{-q}\left(\iint_{\R^{N}\times\R^N}\varphi(t|D_su|)|D_su|^2\, d\nu +\int_{\R^{N}}V(x)\varphi(t|u|)|u|^2\, dx\right) + \lambda(p-q) (\frac{t^{p-q-1}}{p})\|u\|_p^p}{\frac{1}{q}\|u\|_{q,a}^q}.
\end{aligned}
$$
Hence, \eqref{pcRe} is equivalent to 
$$
-\lambda\frac{(p-q)}{p}\|u\|_p^p= \iint_{\R^{N}\times\R^N}\frac{\varphi(|tD_su|)|tD_su|^2 -q\Phi(|tD_su|)}{|tD_su|^p}|D_su|^p\, d\nu
+ \int_{\R^{N}}V(x)\frac{\varphi(|tu|)|tu|^2 -q\Phi(|tu|)}{|tu|^p}|u|^p\, dx.
$$
According to Lemma \ref{strictly-e} and \ref{Hiv0}, the function $\mathcal{L}_u: (0,+\infty)\to \mathbb{R}$ defined by
$$
\mathcal{L}_u(t)= \iint_{\R^{N}\times\R^N}\frac{\varphi(|tD_su|)|tD_su|^2 -q\Phi(|tD_su|)}{|tD_su|^p}|D_su|^p\, d\nu
+ \int_{\R^{N}}V(x)\frac{\varphi(|tu|)|tu|^2 -q\Phi(|tu|)}{|tu|^p}|u|^p\, dx
$$
is strictly increasing. Therefore, $\s(u)>0$ is unique for each $u\in X\setminus\{0\}$. The last assertion about $\s(u)$ follows using the same ideas discussed in the proof of Proposition \ref{propRn2}. This ends the proof. 
\end{proof}	 

\begin{rmk}\label{rmk11} Let $u\in X\setminus\{0\}$ and $t > 0$ be fixed. Then, taking into account \eqref{Re}, we have ${R}_e(t u)=\mu$ if and only if $\mathcal{I}_{\lambda,\mu}(tu)=0$. Moreover, $R_e(tu) > \mu$ if and only if $\mathcal{I}_{\lambda,\mu}(tu) > 0$. Finally, $R_e(tu) < \mu$ if and only if $\mathcal{I}_{\lambda,\mu}(tu) < 0$.
\end{rmk}

According to Proposition \ref{propRe1} and Proposition \ref{propRe2}, we obtain that the function $Q_e\colon (0,+\infty) \to \mathbb{R}$ defined by $Q_e(t)=R_e(tu)$ satisfies $Q_e'(t)=0$ if and only if $t=\mathsf{s}(u)$. Moreover, $Q_e'(t)<0$ if and only if $t\in(0,\mathsf{s}(u))$ and $Q_e'(t)>0$ if and only if  $ t>\mathsf{s}(u)$, that is, $\s(u)$ is a global minimum point for $Q_e$. Therefore, we can also consider the auxiliary functional $\Lambda_e : X \setminus \{0\} \to \mathbb{R}$ given by 
\begin{equation}\label{me}
\Lambda_e(u) = \min_{t>0} Q_e(t) = R_e(\mathsf{s}(u)u).
\end{equation}
As consequence, the following extreme is well defined
$$
\mu_e(\lambda)= \inf_{u\in X\setminus\{0\}} \Lambda_e(u).
$$ 
Under these conditions, a version of Proposition \ref{N0} for $\Lambda_e$ can be stated as follows:
\begin{prop}\label{Lambdae}
Assume that \ref{Hip1}-\ref{Hip3}, \ref{Hi1}-\ref{Hi2}, \ref{Hiv0} and \ref{Hiv1} hold. Then, the functional $\Lambda_e$ satisfies the following properties:
\begin{itemize}
\item[$(i)$] $\Lambda_e$ is $0$-homogeneous, that is, $\Lambda_e(tu)=\Lambda_e(u)$ for all $t>0, \, u\in X\setminus\{0\}$.
\item[$(ii)$] $\Lambda_e$ is differentiable and weakly lower semicontinuous.
\item[$(iii)$] There exists $C \colon= C(\ell,m,p, q, s, V_0, a, \lambda) > 0$ such that $\Lambda_e(u) \geq C > 0$ for all $u\in X\setminus\{0\}$. 
\item[$(iv)$] There exists $u_\ast\in X\setminus\{0\}$ such that $\displaystyle\mu_n(\lambda)=\Lambda_e(u_\ast) >0$. In particular, $u_\ast$ is a critical point for the functional $\Lambda_e$.
\end{itemize}	
\end{prop}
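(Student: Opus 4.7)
The four items follow the template of Proposition \ref{N0}, with $R_n, \t, \Lambda_n$ replaced by $R_e, \s, \Lambda_e$, and with two technical inputs changed: Lemma \ref{strictly-e} replaces Lemma \ref{Sinc} (guaranteeing uniqueness of $\s(u)$), and the integrand that drives weak lower semicontinuity is now $\varphi(|tD_sv|)|tD_sv|^2-q\Phi(|tD_sv|)$ rather than the one appearing in $R_n'$.

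For item $(i)$, fix $t>0$ and $u\in X\setminus\{0\}$. A direct computation shows that $R_e'\!\bigl((\s(u)/t)(tu)\bigr)(\s(u)/t)(tu)=R_e'(\s(u)u)(\s(u)u)=0$, so by the uniqueness part of Proposition \ref{propRe2} one gets $\s(tu)=\s(u)/t$, and hence $\Lambda_e(tu)=R_e(\s(tu)\,tu)=R_e(\s(u)u)=\Lambda_e(u)$. For the differentiability half of $(ii)$, I would combine the fact that $R_e\in C^2(X\setminus\{0\},\R)$ (already noted after \eqref{mun}) with the $C^1$ regularity of $\s$ from Proposition \ref{propRe2}, so that $\Lambda_e(u)=R_e(\s(u)u)$ is a $C^1$ composition.

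For the weak lower semicontinuity in $(ii)$, I would take $u_k\rightharpoonup u\neq 0$ in $X$ and use the compact embedding $X\hookrightarrow L^r(\R^N)$ for $r\in(m,\ell_s^*)$ (Lemma \ref{compact1}) to obtain a.e.\ convergence. The key observation is that by \ref{Hip3} and Lemma \ref{lema_naru}, $\varphi(|t|)t^2\le m\Phi(|t|)<q\Phi(|t|)$ since $q>m$, so the integrand in the function $\mathcal L_{u}$ displayed in the proof of Proposition \ref{propRe2} is pointwise non-positive. Fatou's lemma then yields, for each fixed $t>0$,
\[
\iint_{\R^N\times\R^N}\!\!\!\bigl[\varphi(|tD_su|)|tD_su|^2-q\Phi(|tD_su|)\bigr]d\nu\ \ge\ \limsup_{k\to\infty}\iint_{\R^N\times\R^N}\!\!\!\bigl[\varphi(|tD_su_k|)|tD_su_k|^2-q\Phi(|tD_su_k|)\bigr]d\nu,
\]
and similarly for the potential term (using \ref{Hiv0}). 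Combined with strong $L^p$ and $L^q_a$ convergence, this makes $v\mapsto R_e'(tv)(tv)$ weakly upper semicontinuous, so $R_e'(\s(u)u_k)(\s(u)u_k)\le 0$ for large $k$, which by monotonicity of $\mathcal L_u$ forces $\s(u)\le \s(u_k)$. Applying this and the weak lower semicontinuity of $v\mapsto R_e(tv)$ for fixed $t$ yields $\Lambda_e(u)\le\liminf_k\Lambda_e(u_k)$.

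For $(iii)$, I would exploit the identity
\[
\Lambda_e(u)=R_e(\s(u)u)=\frac{\mathcal{J}_{s,\Phi,V}(\s(u)u)+\tfrac{\lambda}{p}\|\s(u)u\|_p^p}{\tfrac{1}{q}\|\s(u)u\|_{q,a}^q},
\]
apply Lemma \ref{lemanaruV} to bound the numerator from below by $\min\{\|\s(u)u\|^\ell,\|\s(u)u\|^m\}$, and use the interpolation $\|v\|_{q,a}^q\le\|a\|_r\|v\|_p^q$ together with the embedding $X\hookrightarrow L^p(\R^N)$, exactly splitting into the cases $\|\s(u)u\|\le 1$ and $\|\s(u)u\|>1$ as in Proposition \ref{N0}(iii); the lower bound on $\s(u)$ analogous to \eqref{li3} is obtained from the equation $R_e'(\s(u)u)u=0$ combined with \ref{Hip3}. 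For $(iv)$, pick a minimizing sequence $(u_k)$ normalized (by $0$-homogeneity) so that $\|u_k\|=1$ and $\s(u_k)=1$; extract $u_k\rightharpoonup u_\ast$ by reflexivity; rule out $u_\ast=0$ via the blow-up of $\Lambda_e(u_k)\ge \ell/(\|a\|_\infty\|u_k\|_q^q)$ that would follow from compactness in $L^q$; then weak lower semicontinuity gives $\mu_e(\lambda)\le\Lambda_e(u_\ast)\le\liminf\Lambda_e(u_k)=\mu_e(\lambda)$. Since $\Lambda_e$ is $C^1$ and attains its minimum at $u_\ast\in X\setminus\{0\}$, we have $\Lambda_e'(u_\ast)=0$.

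\textbf{Main obstacle.} The delicate point is the weak upper semicontinuity of $v\mapsto R_e'(tv)(tv)$: one must verify that the pointwise sign condition forced by $q>m$ together with \ref{Hip3} is enough to apply Fatou on both the nonlocal and the weighted local terms, and then conclude the monotone comparison $\s(u)\le\s(u_k)$ from the strict monotonicity of $\mathcal L_u$ in Lemma \ref{strictly-e}. The rest of the argument is a direct transcription of the proof of Proposition \ref{N0}.
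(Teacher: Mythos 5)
Your proposal is correct and takes essentially the same route as the paper, which for this proposition simply states that the proof follows the arguments of Proposition \ref{N0} and omits the details. You have accurately identified the two places where the template must be adapted — Lemma \ref{strictly-e} in place of Lemma \ref{Sinc} for uniqueness of $\s(u)$, and the sign observation $\varphi(|t|)t^2 - q\Phi(|t|) \le (m-q)\Phi(|t|) < 0$ (from \eqref{delta2} with $q>m$) in place of $H(t)\le m(m-q)\Phi(|t|)\le 0$ for the Fatou/weak-upper-semicontinuity step — and you correctly noticed the additional conclusion in item $(iv)$ that $u_\ast$ is a critical point, which follows from the $C^1$ regularity and $0$-homogeneity of $\Lambda_e$ making $X\setminus\{0\}$ an open domain on which the minimum is interior.
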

\begin{proof}
The proof of items $(i)-(iv)$ follow with the same arguments discussed in the proof of Proposition \ref{N0}. The details are omitted.
\end{proof}

By standard calculation, we obtain that 
\begin{equation}\label{save}
R_n'(t u)u = \dfrac{d}{d t} R_n(tu) = \frac{1}{t} \dfrac{\mathcal{I}''_{\lambda,\mu}(tu) (tu, tu)}{\|tu\|_{q,a}^{q}},\quad\mbox{for}\quad t > 0, 
\end{equation}
for all $u \in X \setminus \{0\}$ such that $R_n(tu) = \mu$. Analogously, we also have the following identity 
\begin{equation}\label{aeeii}
R_e'(tu)u = \dfrac{d}{d t} R_e(tu) =\dfrac{1}{t} \dfrac{\mathcal{I}'_{\lambda,\mu}(tu) tu}{\frac{1}{q}\|tu\|_{q,a}^{q}},\quad\mbox{for}\quad t > 0,
\end{equation}
for all $u \in X \setminus \{0\}$ such that $R_e(tu) = \mu$. As a consequence, by using \eqref{save} and \eqref{aeeii}, we obtain the following auxiliary results:
\begin{lem} \label{der-Rn}
Assume that \ref{Hip1}-\ref{Hip3}, \ref{Hi1}-\ref{Hi2}, \ref{Hiv0} and \ref{Hiv1} hold. Suppose also that $R_n(tu) = \mu$ for some $t > 0$ and  $u \in X \setminus \{0\}$. Then, we obtain that $R_n'(t u)u  > 0$ if and only if $\mathcal{I}''_{\lambda,\mu}(tu) (tu, tu) > 0$. Furthermore,  $R_n'(t u)u   < 0$ if and only if $\mathcal{I}''_{\lambda,\mu}(tu) (tu, tu) < 0$. Finally, $R_n'(t u) u = 0$ if and only if $\mathcal{I}''_{\lambda,\mu}(tu) (tu, tu) = 0$.
\end{lem}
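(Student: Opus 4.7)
The plan is essentially to read off the conclusion from the identity \eqref{save} that the authors have just displayed, so the entire content of the proof reduces to justifying that identity and then examining the signs of the factors involved. I would not re-derive the full expressions for $\mathcal{I}'_{\lambda,\mu}$ and $\mathcal{I}''_{\lambda,\mu}$; instead I would work through the fibering map $\gamma_u(t)=\mathcal{I}_{\lambda,\mu}(tu)$, whose derivatives $\gamma_u'(t)=\mathcal{I}'_{\lambda,\mu}(tu)u$ and $\gamma_u''(t)=\mathcal{I}''_{\lambda,\mu}(tu)(u,u)$ are related to $R_n$ in a very transparent way.

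First I would rewrite the Nehari condition in the form
\[
t\gamma_u'(t)=\mathcal{I}'_{\lambda,\mu}(tu)(tu)=\bigl(R_n(tu)-\mu\bigr)\|tu\|_{q,a}^{q},
\]
which follows directly from the definition \eqref{Rn} of $R_n$ together with the expression of $\mathcal{I}'_{\lambda,\mu}$. Differentiating with respect to $t$ yields
\[
\gamma_u'(t)+t\gamma_u''(t)=R_n'(tu)u\,\|tu\|_{q,a}^{q}+\bigl(R_n(tu)-\mu\bigr)\frac{d}{dt}\|tu\|_{q,a}^{q}.
\]
At a point $t>0$ where $R_n(tu)=\mu$ the term with $(R_n(tu)-\mu)$ vanishes, and the first identity also gives $\gamma_u'(t)=0$. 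Using $\mathcal{I}''_{\lambda,\mu}(tu)(tu,tu)=t^{2}\gamma_u''(t)$, this collapses to
\[
R_n'(tu)u=\frac{1}{t}\,\frac{\mathcal{I}''_{\lambda,\mu}(tu)(tu,tu)}{\|tu\|_{q,a}^{q}},
\]
which is precisely \eqref{save}.

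With the identity in hand, the three equivalences are immediate. The prefactor $1/t$ is strictly positive because $t>0$, and the denominator $\|tu\|_{q,a}^{q}=t^{q}\int_{\R^{N}}a(x)|u|^{q}\,dx$ is strictly positive since $u\neq 0$ and, by assumption \ref{Hi2}, $a(x)>0$ almost everywhere on $\R^{N}$. Hence $R_n'(tu)u$ has the same sign as $\mathcal{I}''_{\lambda,\mu}(tu)(tu,tu)$, and vanishes exactly when the latter does, proving the three stated equivalences simultaneously.

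I do not expect any serious obstacle here: the content of the lemma is purely the sign analysis of a product of a strictly positive factor and the quantity $\mathcal{I}''_{\lambda,\mu}(tu)(tu,tu)$. The only point that merits care is the justification of \eqref{save} on the level of the fibering map (ensuring one is at a point where $R_n(tu)=\mu$ so that the boundary term drops), but this is the standard computation sketched above.
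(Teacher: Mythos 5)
Your argument is correct and follows the same route the paper intends: the lemma is a direct consequence of identity \eqref{save}, whose prefactor $1/t$ and denominator $\|tu\|_{q,a}^q = t^q\|u\|_{q,a}^q$ are strictly positive by $t>0$, $u\neq0$, and \ref{Hi2}. The only difference is that you spell out the ``standard calculation'' behind \eqref{save} via the fibering map relation $\mathcal{I}'_{\lambda,\mu}(tu)(tu)=(R_n(tu)-\mu)\|tu\|_{q,a}^q$, which the paper states without derivation; that is a harmless elaboration, not a different method.
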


\begin{lem}\label{importante} 	
Assume that \ref{Hip1}-\ref{Hip3}, \ref{Hi1}-\ref{Hi2}, \ref{Hiv0} and \ref{Hiv1} hold.  Suppose also that $R_e(tu) = \mu$ for some $t > 0$ and  $u \in X \setminus \{0\}$. Then, $R_e'(tu)u  > 0$ if and only if $\mathcal{I}'_{\lambda,\mu}(tu) tu > 0$. Moreover, $R_e'(tu)u  < 0$ if and only if $\mathcal{I}'_{\lambda,\mu}(tu) tu < 0$. Finally, $R_e'(tu)u  = 0$ if and only if $\mathcal{I}'_{\lambda,\mu}(tu) tu = 0$.
\end{lem}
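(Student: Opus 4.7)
My plan is to derive all three biconditionals at once from the identity \eqref{aeeii}, which expresses $R_e'(tu)u$ as a strictly positive scalar multiple of $\mathcal{I}'_{\lambda,\mu}(tu)(tu)$ whenever $R_e(tu)=\mu$. So the first step is to verify \eqref{aeeii}, and the three equivalences will then be a direct consequence of the sign of the multiplicative factor.

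To verify \eqref{aeeii}, I would write $R_e(tu) = f(t)/g(t)$ with
\[
f(t) := \mathcal{J}_{s,\Phi,V}(tu) + \tfrac{\lambda}{p}\|tu\|_p^p, \qquad g(t) := \tfrac{1}{q}\|tu\|_{q,a}^q,
\]
and apply the quotient rule. The hypothesis $R_e(tu) = \mu$ gives $f(t) = \mu g(t)$, so $\frac{d}{dt}R_e(tu)$ reduces to $(f'(t) - \mu g'(t))/g(t)$. Multiplying numerator and denominator by $t$ and invoking the chain-rule identities
\[
tf'(t) = \mathcal{J}'_{s,\Phi,V}(tu)(tu) + \lambda\|tu\|_p^p, \qquad \mu t g'(t) = \mu\|tu\|_{q,a}^q,
\]
the numerator collapses to exactly $\mathcal{I}'_{\lambda,\mu}(tu)(tu)$, while the denominator becomes $t \cdot \tfrac{1}{q}\|tu\|_{q,a}^q$. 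This is precisely \eqref{aeeii}.

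The conclusion then follows immediately: since $t>0$, and since \ref{Hi2} gives $a(x)>0$ a.e.\ in $\R^N$ while $u \in X\setminus\{0\}$ implies $u\not\equiv 0$ on a set of positive measure, we have $\|tu\|_{q,a}^q > 0$, so the factor $\frac{1}{t}\cdot\frac{q}{\|tu\|_{q,a}^q}$ appearing in \eqref{aeeii} is strictly positive. Therefore $R_e'(tu)u$ and $\mathcal{I}'_{\lambda,\mu}(tu)(tu)$ share the same sign, yielding the three equivalences simultaneously. There is no real obstacle here; the only mildly delicate step is the strict positivity of $\|tu\|_{q,a}^q$, which is handled by the a.e.\ positivity of $a$ from \ref{Hi2}. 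The argument mirrors exactly that of Lemma \ref{der-Rn} just above, with $R_e$ and $\mathcal{I}'_{\lambda,\mu}$ in place of $R_n$ and $\mathcal{I}''_{\lambda,\mu}$.
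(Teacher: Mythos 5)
Your proof is correct and follows the same route as the paper: the lemma is read off immediately from identity \eqref{aeeii} once one observes that the multiplicative factor $\frac{q}{t\,\|tu\|_{q,a}^q}$ is strictly positive. You merely spell out the quotient-rule verification of \eqref{aeeii} and the positivity of $\|tu\|_{q,a}^q$ (via \ref{Hi2}), both of which the paper leaves implicit as "standard calculation."
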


\begin{lem}\label{monotonic}
Assume that \ref{Hip1}-\ref{Hip3}, \ref{Hi1}-\ref{Hi2}, \ref{Hiv0} and \ref{Hiv1} hold. Then we obtain that $\t(u)<\s(u)$ holds for all $u\in X\setminus\{0\}$. Furthermore, $\Lambda_n(u)<\Lambda_e(u)$ holds for all $u\in X\setminus\{0\}$. As a consequence, $0<\mu_n(\lambda)<\mu_e(\lambda)$ is verified for all $\lambda>0$.
\end{lem}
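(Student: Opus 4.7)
\medskip

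The plan is to fix $u \in X\setminus\{0\}$ and compare the fibering maps of $R_n$ and $R_e$ by exploiting two different fiber versions of $\mathcal{I}_{\lambda,\mu}$, with $\mu$ chosen equal to $\Lambda_n(u)$ in one case and $\Lambda_e(u)$ in the other. The basic algebraic identities are
$$
\mathcal{I}_{\lambda,\mu}(tu) = \tfrac{\|tu\|_{q,a}^q}{q}(R_e(tu)-\mu), \qquad t\,\tfrac{d}{dt}\mathcal{I}_{\lambda,\mu}(tu) = (R_n(tu)-\mu)\|tu\|_{q,a}^q,
$$
valid for $t>0$. Together with Propositions \ref{propRn1}, \ref{propRe1}, \ref{propRn2} and \ref{propRe2} (which guarantee that $t\mapsto R_n(tu)$ and $t\mapsto R_e(tu)$ blow up at $0^+$ and $+\infty$ and each has a unique positive critical point, which is a global minimizer), these identities translate algebraic information about $R_n,R_e$ into monotonicity/positivity information about the one-dimensional map $t\mapsto \mathcal{I}_{\lambda,\mu}(tu)$.

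First I would prove $\Lambda_n(u) < \Lambda_e(u)$. Setting $\mu_1 := \Lambda_n(u) = R_n(\mathsf{t}(u)u)$ and $\gamma^1(t) := \mathcal{I}_{\lambda,\mu_1}(tu)$, the identity for the derivative gives $(\gamma^1)'(t) = t^{-1}(R_n(tu)-\mu_1)\|tu\|_{q,a}^q \geq 0$ for every $t>0$, with equality only at $t=\mathsf{t}(u)$, since $\mathsf{t}(u)$ is the unique global minimizer of $t\mapsto R_n(tu)$. Thus $\gamma^1$ is non-decreasing on $[0,+\infty)$, and since $\gamma^1(0)=0$ and the zero of $(\gamma^1)'$ is isolated, in fact $\gamma^1(t)>0$ for all $t>0$. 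Evaluating at $t = \mathsf{s}(u)$ gives $\mathcal{I}_{\lambda,\mu_1}(\mathsf{s}(u)u)>0$, which via the first identity above rewrites as $R_e(\mathsf{s}(u)u) > \mu_1$, i.e. $\Lambda_e(u) > \Lambda_n(u)$.

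Next I would prove the sharper statement $\mathsf{t}(u) < \mathsf{s}(u)$. Set $\mu_2 := \Lambda_e(u)$ and $\gamma^2(t) := \mathcal{I}_{\lambda,\mu_2}(tu) = \frac{\|tu\|_{q,a}^q}{q}(R_e(tu)-\mu_2)$. Since $\mathsf{s}(u)$ is the unique global minimizer of $R_e(\cdot\,u)$, we have $\gamma^2(t)\geq 0$ for all $t>0$ with equality only at $t=\mathsf{s}(u)$; in particular $\mathsf{s}(u)$ is a strict local minimum of $\gamma^2$ and $(\gamma^2)'(\mathsf{s}(u))=0$, which by the derivative identity means $R_n(\mathsf{s}(u)u)=\mu_2$. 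Because $\mu_2 > \mu_1 = \min_{t>0} R_n(tu)$, the equation $R_n(tu)=\mu_2$ has exactly two positive roots $t_1 < \mathsf{t}(u) < t_2$, and $(\gamma^2)'(t)$ has the sign of $R_n(tu)-\mu_2$, so $t_1$ is a local maximum of $\gamma^2$ and $t_2$ is the only local minimum in $(0,+\infty)$. Therefore $\mathsf{s}(u)=t_2 > \mathsf{t}(u)$.

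Finally, to conclude $0 < \mu_n(\lambda) < \mu_e(\lambda)$, positivity of $\mu_n(\lambda)$ is Proposition \ref{N0}(iii), and taking $u=u_\ast$ from Proposition \ref{Lambdae}(iv) yields $\mu_n(\lambda) \leq \Lambda_n(u_\ast) < \Lambda_e(u_\ast) = \mu_e(\lambda)$. The step I expect to be the most delicate is the "two-roots" argument in the $\mathsf{t}(u)<\mathsf{s}(u)$ part: one must be careful to invoke the \emph{uniqueness} part of Proposition \ref{propRn2} to see that $\{t:R_n(tu)=\mu_2\}$ consists of exactly two points that straddle $\mathsf{t}(u)$, and then the sign analysis of $(\gamma^2)'$ combined with the global non-negativity $\gamma^2 \geq 0$ (inherited from the variational characterization of $\mathsf{s}(u)$) is what forces $\mathsf{s}(u)$ to be the \emph{larger} of the two roots rather than the smaller one.
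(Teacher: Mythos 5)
Your proof is correct, and it reaches the result by a genuinely different route than the paper, though both ultimately rest on the same structural facts. The paper computes the clean calculus identity $R_n(tu)-R_e(tu)=\frac{t}{q}\frac{d}{dt}R_e(tu)$ directly, reads off that $R_n<R_e$ on $(0,\mathsf{s}(u))$, $R_n>R_e$ on $(\mathsf{s}(u),\infty)$ and $R_n=R_e$ at $t=\mathsf{s}(u)$, then argues by contradiction (splitting into $\mathsf{t}>\mathsf{s}$ and $\mathsf{t}=\mathsf{s}$) to get $\mathsf{t}(u)<\mathsf{s}(u)$, and only afterwards deduces $\Lambda_n(u)<\Lambda_e(u)$ via $R_n(\mathsf{t}(u)u)<R_n(\mathsf{s}(u)u)=R_e(\mathsf{s}(u)u)$. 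You instead work through the fibering map $\gamma_u(t)=\mathcal{I}_{\lambda,\mu}(tu)$ at the two special parameter values $\mu=\Lambda_n(u)$ and $\mu=\Lambda_e(u)$, which amounts to using the relations of Remarks \ref{rmk1} and \ref{rmk11}; differentiating your first identity and subtracting your second in fact reproduces the paper's $R_n-R_e$ identity, so the two arguments are algebraically equivalent at the core. The two genuine differences are the order of deductions (you establish $\Lambda_n<\Lambda_e$ first and then use the strict gap $\mu_2>\mu_1$ to run the "two-roots" analysis of the critical points of $\gamma^2$, whereas the paper proves $\mathsf{t}<\mathsf{s}$ first) and the structure of the $\mathsf{t}(u)<\mathsf{s}(u)$ step: your argument identifies $\mathsf{s}(u)$ as the unique interior global minimum of $\gamma^2\geq 0$ and matches it with the larger root $t_2>\mathsf{t}(u)$ of $R_n(tu)=\mu_2$, which is cleaner than the paper's two-case contradiction and in particular avoids the slightly delicate borderline case $\mathsf{t}(u)=\mathsf{s}(u)$. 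What the paper's route buys is economy — the $R_n$-versus-$R_e$ identity yields all the comparison statements in one line; what your route buys is that it phrases everything in the Nehari-manifold geometry (where the two roots straddling $\mathsf{t}(u)$ are exactly $\mathsf{t}_\mu^\pm(u)$ from Proposition \ref{compar}), which makes the result easier to visualize and sidesteps the degenerate case.
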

\begin{proof}
By straightforward calculations, we deduce that
$$
R_n(tu)-R_e(tu)=\frac{t}{q}\frac{d}{dt}R_e(tu), \quad\mbox{for all}\quad t>0 \quad \mbox{and}\quad u\in X\setminus\{0\}.
$$
This identity implies that $R_n(tu)< R_e(tu)$ for all $t\in (0,\s(u))$ and $R_n(tu)>R_e(tu)$ for all $t\in (\s(u), +\infty)$. Moreover, $R_n(tu)=R_e(tu)$ if and only if $t=\s(u)$. As consequence, we obtain that $\t(u)<\s(u)$ for all $u\in X\setminus\{0\}$. Indeed, assuming that $\t(u) > \s(u)$ hold for some $u\in X\setminus\{0\}$, we obtain that  
$$
R_n(\s(u)u)\geq R_n(\t(u)u) > R_e(\t(u)u)\geq R_e(\s(u)u).
$$
In the case which $\t(u)=\s(u)$, we obtain that $R_n(\t(u)u)=R_e(\t(u)u)>R_n(tu)$ for all $t<\t(u)$.
In both cases we have a contradiction, which proves the item $(i)$.
Now, by using item $(i)$, we obtain that
$$
\Lambda_n(u)=R_n(\t(u)u) <R_n(\s(u)u)=R_e(\s(u)u)=\Lambda_e(u), \quad \mbox{for all}\quad u\in X\setminus\{0\}.
$$
This proves the item $(ii)$. In view to Proposition \ref{Lambdae} there exists $u_\ast \in X\setminus\{0\}$ such that $\Lambda_n(u_\ast)=\mu_e(\lambda)$. It follows from $(ii)$ that 
$
\mu_n(\lambda)\leq \Lambda_n(u_\ast)< \Lambda_e(u_\ast)=\mu_e(\lambda).
$
This finishes the proof.  
\end{proof}

In the next results we describe the behavior of the fibering map $\gamma_u(t)=\mathcal{I}_{\lambda,\mu}(tu)$ according to the paramater $\mu$. Firstly, we obtain that for each fixed function $u\in X\setminus \{0\}$ the map $\gamma$ admits exactly two distinct critical points whenever $\mu \in (\Lambda_n(u),+\infty)$.

\begin{prop} \label{compar}
Assume that \ref{Hip1}-\ref{Hip3}, \ref{Hi1}-\ref{Hi2}, \ref{Hiv0} and \ref{Hiv1} hold. Suppose also that  $\mu >\Lambda_n(u)$ for each  $u\in X\setminus \{0\}$.  Then, the fibering map $\gamma_u(t)=\mathcal{I}_{\lambda,\mu} (tu)$ has exactly two critical points $\t_\mu^-(u), \t_\mu^+(u)>0$ such that $\t_\mu^-(u)<\t(u)<\t_\mu^+(u)$. Furthermore, we have the following properties:
\begin{itemize}
    \item[(i)] It holds that $t_\mu^{-}(u)$ is a local maximum point for the fibering map $\gamma_u$ and $\t_\mu^-(u)u\in \mathcal{N}_{\lambda,\mu}^-$. 
    \item[(ii)]  It holds that $\t_\mu^{+}(u)$ is a local minimum point for the fibering map $\gamma_u$ and $\t_\mu^+(u)u\in \mathcal{N}_{\lambda,\mu}^+$. Furthermore, if $\mu > \Lambda_e(u)$, then $\t_\mu^{+}(u)$ is a global minimum point for $\gamma_u$. 
    \item[(iii)] The functionals $u \mapsto \t_\mu^+(u)$ and $u \mapsto \t_\mu^{-}(u)$ belong to $C^1( \mathcal{U}_{\lambda,\mu}, \mathbb{R})$.
\end{itemize}
\end{prop}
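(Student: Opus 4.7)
The plan is to reduce the two-sided problem on $\gamma_u$ to the one-variable study of the Rayleigh quotient $t\mapsto R_n(tu)$, already carried out in Propositions~\ref{propRn1}, \ref{propRn2} and \ref{propRn2.1}. By Remark~\ref{rmk1}, the critical points of $\gamma_u$ on $(0,+\infty)$ are in bijection with the positive roots of the equation $R_n(tu)=\mu$. Proposition~\ref{propRn1} gives $R_n(tu)\to+\infty$ as $t\to 0^+$ (since $m-q<0$) and as $t\to+\infty$ (since $p-q>0$), while Proposition~\ref{propRn2} provides the unique critical point $\t(u)$ of $t\mapsto R_n(tu)$. Combining these facts, $R_n(\cdot u)$ must be strictly decreasing on $(0,\t(u))$ and strictly increasing on $(\t(u),+\infty)$, with global minimum value $\Lambda_n(u)$. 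Hence, whenever $\mu>\Lambda_n(u)$, the equation $R_n(tu)=\mu$ has exactly two roots, which I label $\t_\mu^-(u)<\t(u)<\t_\mu^+(u)$.

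To prove (i) and the first half of (ii), the strict monotonicity yields $\frac{d}{dt}R_n(tu)|_{t=\t_\mu^-(u)}<0<\frac{d}{dt}R_n(tu)|_{t=\t_\mu^+(u)}$. Applying Lemma~\ref{der-Rn} at both roots transfers these signs to $\mathcal{I}''_{\lambda,\mu}(\t_\mu^\mp(u)u)(\t_\mu^\mp(u)u,\t_\mu^\mp(u)u)$, placing $\t_\mu^-(u)u\in\mathcal{N}^-_{\lambda,\mu}$ and $\t_\mu^+(u)u\in\mathcal{N}^+_{\lambda,\mu}$. Since $\gamma_u''(t)=\mathcal{I}''_{\lambda,\mu}(tu)(u,u)=t^{-2}\mathcal{I}''_{\lambda,\mu}(tu)(tu,tu)$, the same signs classify $\t_\mu^-(u)$ as a local maximum and $\t_\mu^+(u)$ as a local minimum of $\gamma_u$.

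For the global-minimum claim in (ii), I would invoke the parallel analysis of $R_e$ from Propositions~\ref{propRe1}, \ref{propRe2} together with Remark~\ref{rmk11}: when $\mu>\Lambda_e(u)$, the equation $R_e(tu)=\mu$ has roots $s^-<\s(u)<s^+$ and $\gamma_u(t)<0$ precisely for $t\in(s^-,s^+)$. The key observation is that $\t_\mu^+(u)>\s(u)$: by Lemma~\ref{monotonic} one has $R_n(\s(u)u)=R_e(\s(u)u)=\Lambda_e(u)<\mu=R_n(\t_\mu^+(u)u)$ with $\s(u)>\t(u)$, and the strict monotonicity of $R_n(\cdot u)$ on $(\t(u),+\infty)$ forces $\s(u)<\t_\mu^+(u)$. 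Thus $\gamma_u(\t_\mu^+(u))<0$. Since $\gamma_u(0)=0$, $\gamma_u(t)>0$ for small $t>0$ (the modular bound from Lemma~\ref{lemanaruV} gives a leading $t^m$ behaviour which dominates the $t^q$ and $t^p$ subcritical corrections since $m<q<p$), $\gamma_u(t)\to+\infty$ as $t\to+\infty$ (driven by $\frac{\lambda}{p}t^p\|u\|_p^p$), and the only critical points are the local maximum $\t_\mu^-(u)$ and the local minimum $\t_\mu^+(u)$, the latter is the global minimum of $\gamma_u$ on $[0,+\infty)$.

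For (iii), define $\mathcal{U}_{\lambda,\mu}:=\{u\in X\setminus\{0\}:\Lambda_n(u)<\mu\}$, which is open by continuity of $\Lambda_n$ (Proposition~\ref{N0}(ii)). Fix $u_0\in\mathcal{U}_{\lambda,\mu}$ and apply the Implicit Function Theorem to $\mathcal{F}(v,t):=R_n(tv)-\mu$ at $(u_0,\t_\mu^\pm(u_0))$: the partial derivative $\partial_t\mathcal{F}(u_0,\t_\mu^\pm(u_0))=\frac{d}{dt}R_n(tu_0)|_{t=\t_\mu^\pm(u_0)}$ is nonzero thanks to the strict monotonicity obtained above. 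This produces $C^1$ local branches which coincide with $\t_\mu^\pm$ by the uniqueness of the two roots, exactly as in the proof of Proposition~\ref{propRn2.1}. The main obstacle in this plan is the bookkeeping around the localization $\s(u)<\t_\mu^+(u)$ for the global-minimum claim, since it couples the two independent Rayleigh analyses through Lemma~\ref{monotonic}; every other step is a clean consequence of the previously established properties of $R_n$ and $R_e$.
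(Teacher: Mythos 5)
Your proposal is correct and takes essentially the same approach as the paper: reduce to the monotonicity structure of $Q_n(t)=R_n(tu)$ (Propositions~\ref{propRn1}--\ref{propRn2}), read off the sign of $Q_n'$ at the two roots via Remark~\ref{rmk1} and Lemma~\ref{der-Rn}, use Lemma~\ref{monotonic} and Remark~\ref{rmk11} to show $\gamma_u(\t_\mu^+(u))<0$ when $\mu>\Lambda_e(u)$, and apply the Implicit Function Theorem for item~(iii). The only step stated a bit loosely is the deduction of $\gamma_u(\t_\mu^+(u))<0$ from $\s(u)<\t_\mu^+(u)$ together with ``$\gamma_u<0$ precisely on $(s^-,s^+)$''; the cleaner (and paper's) route is to note directly that $t>\s(u)$ forces $R_e(tu)<R_n(tu)$, so $R_e(\t_\mu^+(u)u)<\mu$, and then invoke Remark~\ref{rmk11} — this avoids having to verify separately that $\t_\mu^+(u)<s^+$.
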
 
\begin{proof} 
$(i)$ Let $u\in X\setminus \{0\}$ be a fixed. Since $\mu>\Lambda_n(u)$, we have that
\begin{equation}\label{muGamma}
\mu>\Lambda_n(u)=\displaystyle\min_{t>0}Q_n(t)=R_n(\t(u)u).
\end{equation}
According to Proposition \ref{propRn1} we obtain the following limits
\begin{equation}\label{limites}
\lim_{t\to0^+} Q_n(t)=\lim_{t\to+\infty} Q_n(t)=+\infty.
\end{equation}
Hence, taking into account \eqref{muGamma} and \eqref{limites}, we deduce that the identity $Q_n(t)=R_n(tu)=\mu$ admits exactly two roots in the following form $0<\t_\mu^{-}(u)<\t(u)<\t_\mu^{+}(u)$. Now, by using Remark \ref{rmk1}, the roots $\t_\mu^{-}(u)$ and $\t_\mu^{+}(u)$ are critical points for the fibering map $\gamma$. Moreover, we mention that
\begin{equation}\label{Q'}
Q'_n(\t_\mu^{-})<0 \quad\mbox{and}\quad Q'_n(\t_\mu^{+})>0.
\end{equation} 
Hence, by using \eqref{Q'} and Lemma \ref{der-Rn}, we also have that
\begin{equation}\label{gamma'}
\mathcal{I}''(\t^-(u)u)(\t^-(u)u,\t^-(u)u)<0\quad \mbox{and}\quad \mathcal{I}''(\t^+(u)u)(\t^+(u)u,\t^+(u)u)>0.
\end{equation} 
which proves that $t_\mu^{n,+}(u) u \in \mathcal{N}_{\lambda,\mu}^+$ and $t_\mu^{n,-}(u) u \in \mathcal{N}_{\lambda,\mu}^-$.

$(ii)$ The first inequality in \eqref{gamma'} implies that $\gamma''(\t_\mu^{-}(u))<0$. As a consequence, $\t^-(u)$ is a local maximum point for $\gamma$. Similarly, the second inequality in \eqref{gamma'} proves that $\t^+(u)$ is a local minimum point for $\gamma$. Assume that  $\mu > \Lambda_e(u)$. Then, 
$$
Q_n(\t_\mu^+(u))=\mu>\Lambda_e(u)=R_e(\s(u)u)=R_n(\s(u)u)=Q_n(\s(u)).
$$
This fact and Lemma \ref{monotonic} imply that $\s(u)<\t_\mu^+(u)$ for all $u\in X\setminus\{0\}$ since $Q_n$ is strictly increasing in $(\t(u),+\infty)$. Thus, $R_e(\t_\mu^+(u)u)<R_n(\t_\mu^+(u)u)=\mu$. It is not hard to see that $\mu \mapsto \mathcal{I}_{\lambda,\mu}(u)$ is a strictly decreasing for each $\lambda>0$ and $u \in X \setminus \{0\}$ fixed. Hence, these statements and Remark \ref{rmk11} give us
$$
\mathcal{I}_{\lambda,\mu}(\t_\mu^+(u)u)< \mathcal{I}_{\lambda, R_e(\t_\mu^+(u)u)}(\t_\mu^+(u)u)=0.
$$
Under these conditions, $\t_\mu^+(u)$ is a global minimum point for $\gamma$ for each $\mu >\Lambda_e(u)$. This ends the proof of item $(ii)$.

$(iii)$ We consider the function $\mathcal{F}^{\pm} : (0, +\infty) \times (X \setminus \{0\}) \rightarrow  \mathbb{R}$ defined by $\mathcal{F}^{\pm}(t, u) = \mathcal{I}'_{\lambda,\mu}(tu) tu$. Notice also that $\mathcal{F}^{\pm}(t, u) = 0$ if and only if $t u \in \mathcal{N}_{\lambda,\mu}$. Furthermore, we have that 
$$\frac{\partial}{\partial t} \mathcal{F}^{\pm}(t, u) =\frac{1}{t}(\mathcal{I}_{\lambda,\mu}''(tu)(tu,tu) + \mathcal{I}_{\lambda,\mu}'(tu) tu)\neq 0,
$$ 
for all $(t, u) \in (0, +\infty) \times (X \setminus \{0\})$ such that $tu \in \mathcal{N}_{\lambda,\mu}^{\pm}$. It follows from Implicit Function Theorem \cite{drabek} that functions $u \mapsto \t_\mu^{+}(u)$ and $u \mapsto \t_\mu^{-}(u)$ belong to $C^1( \mathcal{U}_{\lambda,\mu}, \mathbb{R})$. This finishes the proof. 
\end{proof}

\begin{rmk}
It is important to mention that Proposition \ref{muGamma} provides us that $\mathcal{N}_{\lambda,\mu} =\mathcal{N}_{\lambda,\mu}^+\cup\mathcal{N}_{\lambda,\mu}^-\cup \mathcal{N}_{\lambda,\mu}^0$ where $\mathcal{N}_{\lambda,\mu}^+$ and $\mathcal{N}_{\lambda,\mu}^-$ are nonempty sets whenever $\mu>\mu_n(\lambda)$ and $\lambda>0$. In the other words, the fibering map $t\mapsto\gamma(t)=\mathcal{I}(tu)$ intersects the Nehari set in two distinct points whenever $\mu > \Lambda_n(u)$. 
\end{rmk}

In the next result it is established that the fibering map $\gamma$ does not intercept the Nehari set in the case $\mu < \Lambda_n(u)$ and intercept it only in a point of $\mathcal{N}_{\mu,\lambda}^0$ taking into account the extreme case $\mu=\Lambda_n(u)$.

\begin{prop}\label{Gamma-n}
Assume that \ref{Hip1}-\ref{Hip3}, \ref{Hi1}-\ref{Hi2}, \ref{Hiv0} and \ref{Hiv1} hold. Suppose also that  $\mu >\mu_n(\lambda)$ and $\lambda>0$.  Then, the following assertions are verified:
\begin{itemize}
\item[(i)] Assume that $\mu<\Lambda_n(u)$. Then, for each $u\in X\setminus\{0\}$ the fibering map $\gamma$ does not admit any critical point, that is, $tu \notin \mathcal{N}_{\lambda,\mu}$ for all $t>0$.
\item[(ii)] Assume that $\mu=\Lambda_n(u)$. Then, for each $u\in X\setminus\{0\}$  the fibering map $\gamma(t)=\mathcal{I}_{\lambda,\mu}(tu) $ has a unique critical point $\t(u)>0$ such that $\t(u)u\in\mathcal{N}_{\lambda,\mu}^0$. 
\end{itemize}
\end{prop}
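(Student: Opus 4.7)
The plan is to reduce both statements to the behavior of the auxiliary function $Q_n(t)=R_n(tu)$ (for fixed $u\in X\setminus\{0\}$) using the correspondence in \eqref{a0i}: namely, $tu\in\mathcal{N}_{\lambda,\mu}$ is equivalent to $Q_n(t)=\mu$. Combined with Propositions \ref{propRn1} and \ref{propRn2}, we know that $Q_n$ blows up at $0^+$ and at $+\infty$, is strictly decreasing on $(0,\mathsf{t}(u))$, strictly increasing on $(\mathsf{t}(u),+\infty)$, and attains its unique global minimum $\Lambda_n(u)=Q_n(\mathsf{t}(u))$ at $\mathsf{t}(u)$.

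For item $(i)$, the argument is immediate: if $\mu<\Lambda_n(u)$, then for every $t>0$ we have
\[
Q_n(t)=R_n(tu)\geq \Lambda_n(u)>\mu,
\]
so the equation $Q_n(t)=\mu$ has no solution. By \eqref{a0i}, this means $tu\notin\mathcal{N}_{\lambda,\mu}$ for every $t>0$, i.e.\ the fibering map $\gamma_u$ has no critical point on $(0,+\infty)$.

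For item $(ii)$, assume $\mu=\Lambda_n(u)$. The strict monotonicity of $Q_n$ on either side of its unique minimum forces $Q_n(t)=\mu$ if and only if $t=\mathsf{t}(u)$. Therefore, by \eqref{a0i}, $\mathsf{t}(u)u$ is the unique element of the ray $\{tu:t>0\}$ lying in $\mathcal{N}_{\lambda,\mu}$, i.e.\ $\gamma_u$ has exactly one critical point at $t=\mathsf{t}(u)$. It remains to show that $\mathsf{t}(u)u\in\mathcal{N}_{\lambda,\mu}^0$. For this I use identity \eqref{save}, valid precisely when $R_n(tu)=\mu$:
\[
R_n'(tu)u=\frac{1}{t}\,\frac{\mathcal{I}''_{\lambda,\mu}(tu)(tu,tu)}{\|tu\|_{q,a}^{q}}.
\]
Since $t=\mathsf{t}(u)$ is a critical point of $Q_n$, we have $Q_n'(\mathsf{t}(u))=R_n'(\mathsf{t}(u)u)u=0$, and consequently $\mathcal{I}''_{\lambda,\mu}(\mathsf{t}(u)u)(\mathsf{t}(u)u,\mathsf{t}(u)u)=0$. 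By the definition of $\mathcal{N}_{\lambda,\mu}^0$, this yields $\mathsf{t}(u)u\in\mathcal{N}_{\lambda,\mu}^0$.

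Neither step involves a serious obstacle, since all heavy lifting (existence, uniqueness and variational characterization of $\mathsf{t}(u)$, the monotonicity structure of $Q_n$, and the bridge formula \eqref{save} between $R_n'$ and $\mathcal{I}''_{\lambda,\mu}$) has already been established earlier. The only point that requires a line of care is emphasizing that $\mathsf{t}(u)$ is the \emph{unique} solution of $Q_n(t)=\mu$ when $\mu=\Lambda_n(u)$, which is exactly where the strict monotonicity of $Q_n$ on each side of its global minimum, guaranteed by Proposition \ref{propRn2}, is used.
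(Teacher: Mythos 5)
Your proof is correct and follows essentially the same route as the paper: both arguments reduce to the global-minimum structure of $Q_n(t)=R_n(tu)$ at $t=\mathsf{t}(u)$ established in Propositions \ref{propRn1}--\ref{propRn2}, use the equivalence in \eqref{a0i} (equivalently Remark \ref{rmk1}) to rule out or pin down critical points of $\gamma_u$, and then invoke the bridge between $R_n'$ and $\mathcal{I}''_{\lambda,\mu}$ to conclude $\mathsf{t}(u)u\in\mathcal{N}_{\lambda,\mu}^0$. The only cosmetic difference is that you cite \eqref{save} directly where the paper cites its corollary, Lemma \ref{der-Rn}.
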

\begin{proof} 
$(i)$ Since $\mu <\Lambda_n(u)=Q(\t(u))=R_n(\t(u)u)$ and $\t(u)$ is a global minimum point for $Q_n$, it follows that $Q_n(t)=\mu$ does not admit any root, which is equivalent to say that $\gamma'(t)=\mathcal{I}'_{\lambda, \mu}(tu)u\neq 0$ for all $t>0$ and for each $u\in X\setminus\{0\} $ by Remark \ref{rmk1}. Consequently, $tu\notin \mathcal{N}_{\lambda,\mu}$ for all $t>0$, proving the item $(i)$.

$(ii)$ Assume  that $\mu=\Lambda_n(u)=Q_n(\t(u))$. Then, $\gamma'(\t(u))=\mathcal{I}'_{\lambda, \mu}(\t(u)u)u = 0$ by Remark \ref{rmk1}. Moreover, for each  $u\in X\setminus\{0\}$, we also have that $Q_n'(\t(u))=0$ since $\t(u)>0$ is the unique critical point of $Q_n$. Due to Lemma \ref{der-Rn} we conclude that $\mathcal{I}''_{\lambda, \mu}(\t(u)u)(\t(u)u,\t(u)u)=0$, that is, $\t(u)u\in \mathcal{N}_{\lambda,\mu}^0$. This finishes the proof. 
\end{proof}

\begin{lem}\label{problema}
Assume that \ref{Hip1}-\ref{Hip3}, \ref{Hi1}-\ref{Hi2}, \ref{Hiv0} and \ref{Hiv1} hold. Suppose also that  $\mu >\mu_n(\lambda)$ and $\lambda>0$.  Then, the sets $\mathcal{N}_{\lambda,\mu}^+$ and $\mathcal{N}_{\lambda,\mu}^-$ are $C^1$-submanifolds in $X$.	
\end{lem}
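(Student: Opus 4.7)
The plan is to realize $\mathcal{N}_{\lambda,\mu}$ as the zero-level set of a single $C^1$-functional on $X\setminus\{0\}$ and then to invoke the Implicit Function Theorem at each point of $\mathcal{N}_{\lambda,\mu}^{+}\cup\mathcal{N}_{\lambda,\mu}^{-}$. Concretely, I would introduce the map $J\colon X\setminus\{0\}\to\mathbb{R}$ defined by $J(u)=\mathcal{I}'_{\lambda,\mu}(u)u$. Because $m<q<p<\ell_s^\ast$ and hypotheses \ref{Hip1}-\ref{Hip3} are in force, the energy functional $\mathcal{I}_{\lambda,\mu}$ lies in $C^2(X,\mathbb{R})$ as already stated in the excerpt, so $J$ is of class $C^1$ on $X\setminus\{0\}$, and by construction $\mathcal{N}_{\lambda,\mu}=J^{-1}(0)\cap(X\setminus\{0\})$.

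Next, I would compute the Fr\'echet derivative
\begin{equation*}
J'(u)v=\mathcal{I}''_{\lambda,\mu}(u)(u,v)+\mathcal{I}'_{\lambda,\mu}(u)v,\qquad u,v\in X,
\end{equation*}
and test it along the direction $u$ itself. Since $\mathcal{I}'_{\lambda,\mu}(u)u=0$ on the Nehari set, this collapses to the key identity
\begin{equation*}
J'(u)u=\mathcal{I}''_{\lambda,\mu}(u)(u,u),\qquad u\in\mathcal{N}_{\lambda,\mu}.
\end{equation*}
By the very definition of $\mathcal{N}_{\lambda,\mu}^{\pm}$, the right-hand side is strictly positive on $\mathcal{N}_{\lambda,\mu}^{+}$ and strictly negative on $\mathcal{N}_{\lambda,\mu}^{-}$. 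In particular, the continuous linear functional $J'(u)\in X^{\ast}$ is nontrivial at every $u\in\mathcal{N}_{\lambda,\mu}^{+}\cup\mathcal{N}_{\lambda,\mu}^{-}$, so it is automatically surjective onto $\mathbb{R}$. The Implicit Function Theorem (see \cite{drabek}) then provides a $C^1$ local graph representation of $J^{-1}(0)$ around each such $u$, giving the $C^1$-submanifold structure of codimension one.

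To finish, I would observe that $\mathcal{N}_{\lambda,\mu}^{+}$ and $\mathcal{N}_{\lambda,\mu}^{-}$ are open subsets of $\mathcal{N}_{\lambda,\mu}$, because the map $u\mapsto\mathcal{I}''_{\lambda,\mu}(u)(u,u)$ is continuous on $X$ and these two sets are precisely the preimages of $(0,+\infty)$ and $(-\infty,0)$ inside $\mathcal{N}_{\lambda,\mu}$. An open subset of a $C^1$-submanifold is itself a $C^1$-submanifold, which closes the argument. I do not foresee any genuine technical obstacle here; the entire argument is a routine application of the Implicit Function Theorem, and the only delicate point is the systematic exclusion of $\mathcal{N}_{\lambda,\mu}^{0}$, which is exactly the locus where the non-degeneracy $J'(u)u\neq 0$ fails and the submanifold structure would break down.
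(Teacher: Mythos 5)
Your proposal is correct and is exactly the standard submersion-theorem argument that the paper invokes by citing \cite[Proposition 2.9]{SCGS}: realize the Nehari set as the zero level set of $J(u)=\mathcal{I}'_{\lambda,\mu}(u)u$, observe that $J'(u)u=\mathcal{I}''_{\lambda,\mu}(u)(u,u)\neq 0$ on $\mathcal{N}_{\lambda,\mu}^{\pm}$ so that $J'(u)$ is a nonzero (hence surjective) element of $X^{\ast}$, and conclude via the Implicit Function Theorem together with the openness of $\mathcal{N}_{\lambda,\mu}^{\pm}$ in $\mathcal{N}_{\lambda,\mu}$. No gaps.
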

\begin{proof}
The proof follows by using an standard argument, see for instance \cite[Propostion 2.9]{SCGS}. 
\end{proof}

Using the uniqueness of projection on the Nehari manifold we mention that $\mathcal{N}_{\lambda,\mu}^0$ is nonempty for all $\mu \geq \mu_n(\lambda)$. Moreover, the Nehari manifold $\mathcal{N}_{\lambda,\mu}$ is empty for all $\mu \in (- \infty, \mu_n(\lambda))$. Summarizing, we have the following result:

\begin{lem} \label{naovazio}	
Assume that \ref{Hip1}-\ref{Hip4}, \ref{Hi1}-\ref{Hi2} and \ref{Hiv0}-\ref{Hiv1} hold. Then, we have the following assertions: 
\begin{itemize}
\item[(i)] The Nehari set $\mathcal{N}_{\lambda,\mu}^0$ is nonempty for all $\mu \in [\mu_n(\lambda), +\infty)$ and $\lambda>0$. \item[(ii)] Furthermore, $\mathcal{N}_{\lambda,\mu}$ is empty for all $\mu \in (-\infty, \mu_n(\lambda))$ and $\lambda>0$.  
\end{itemize}
\end{lem}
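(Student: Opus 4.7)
The plan is to leverage the nonlinear Rayleigh quotient machinery developed in Propositions \ref{N0}--\ref{Gamma-n}, specifically the characterization $u \in \mathcal{N}_{\lambda,\mu} \iff R_n(u) = \mu$ from \eqref{a0i} together with the projection identification coming from $\Lambda_n(u) = R_n(\t(u)u)$.

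For part (ii), the argument is a one-line contradiction. Assume some $u \in \mathcal{N}_{\lambda,\mu}$ with $\mu < \mu_n(\lambda)$. By \eqref{a0i} we would have $R_n(u) = \mu$, but by definition of $\Lambda_n$ and $\mu_n(\lambda)$,
\[
R_n(u) \geq \inf_{t>0} R_n(tu) = \Lambda_n(u) \geq \mu_n(\lambda) > \mu,
\]
which is absurd. Hence $\mathcal{N}_{\lambda,\mu} = \emptyset$, and automatically $\mathcal{N}_{\lambda,\mu}^{\pm} = \mathcal{N}_{\lambda,\mu}^{0} = \emptyset$.

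For part (i), I split into two cases. In the boundary case $\mu = \mu_n(\lambda)$, Proposition \ref{N0}(iv) furnishes $u^\ast \in X\setminus\{0\}$ with $\Lambda_n(u^\ast) = \mu_n(\lambda)$; then Proposition \ref{Gamma-n}(ii) (the case $\mu = \Lambda_n(u^\ast)$) yields $\t(u^\ast)u^\ast \in \mathcal{N}_{\lambda,\mu_n(\lambda)}^{0}$. For $\mu > \mu_n(\lambda)$, I will realize $\mu$ as a value of $\Lambda_n$ via an intermediate value argument: the functional $\Lambda_n$ is continuous on the path-connected set $X\setminus\{0\}$ (path-connectedness is immediate since $X$ is an infinite-dimensional Banach space), so $\Lambda_n(X\setminus\{0\})$ is a connected subset of $\mathbb{R}$. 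This image attains its minimum $\mu_n(\lambda)$ at $u^\ast$ and is unbounded above by Proposition \ref{N0}(iii), hence equals $[\mu_n(\lambda), +\infty)$. Picking $v \in X\setminus\{0\}$ with $\Lambda_n(v) = \mu$, we have $R_n(\t(v)v) = \mu$, so $\t(v)v \in \mathcal{N}_{\lambda,\mu}$ by \eqref{a0i}; moreover $\t(v)$ is the critical point of $t \mapsto R_n(tv)$, so $R_n'(\t(v)v)v = 0$, and Lemma \ref{der-Rn} then forces $\mathcal{I}''_{\lambda,\mu}(\t(v)v)(\t(v)v,\t(v)v) = 0$, i.e.\ $\t(v)v \in \mathcal{N}_{\lambda,\mu}^{0}$.

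The main obstacle is the intermediate value step in the case $\mu > \mu_n(\lambda)$: although both the continuity of $\Lambda_n$ and its unboundedness from above are already established, one must be careful to invoke connectedness of the domain $X\setminus\{0\}$ correctly, and then to correctly translate the minimization identity $\Lambda_n(v) = \mu$ into membership of the projection $\t(v)v$ in the degenerate piece $\mathcal{N}_{\lambda,\mu}^{0}$ (as opposed to $\mathcal{N}_{\lambda,\mu}^{\pm}$), which is exactly the content of Lemma \ref{der-Rn} applied at the unique critical point of $t\mapsto R_n(tv)$.
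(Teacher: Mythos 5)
Your proof is correct and takes the route the paper itself intends (the paper's proof is deferred to the cited reference, but the remark preceding the lemma — ``Using the uniqueness of projection on the Nehari manifold we mention that $\mathcal{N}_{\lambda,\mu}^0$ is nonempty for all $\mu \geq \mu_n(\lambda)$'' — indicates exactly your realize-and-project strategy). One cosmetic point: Proposition \ref{Gamma-n} is stated under the standing hypothesis $\mu > \mu_n(\lambda)$, so you cannot literally quote Proposition \ref{Gamma-n}(ii) at the endpoint $\mu = \mu_n(\lambda)$; but the chain you already spell out in the interior case — $R_n(\mathsf{t}(v)v) = \Lambda_n(v) = \mu$, then $R_n'(\mathsf{t}(v)v)v = 0$ at the unique minimizer, then Lemma \ref{der-Rn} — applies verbatim at $\mu = \mu_n(\lambda)$ with $v = u^\ast$, so the argument is intact. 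Your explicit intermediate-value step (continuity of $\Lambda_n$ from Proposition \ref{N0}(ii), path-connectedness of $X\setminus\{0\}$ since $X$ is infinite-dimensional, attainment of the infimum from Proposition \ref{N0}(iv), unboundedness from above from Proposition \ref{N0}(iii), hence $\Lambda_n(X\setminus\{0\}) = [\mu_n(\lambda),+\infty)$) is the right way to make precise what the paper leaves implicit, and is in fact necessary here precisely because, unlike the model case in the cited reference, $\mathsf{t}(u)$ has no closed form.
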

\begin{proof}
The proof follows the same ideas discussed in the proof of \cite[Lemm 2.10]{SCGS}.  We omit the details. 
\end{proof}

\begin{prop} \label{fechada}	
Assume that \ref{Hip1}-\ref{Hip4}, \ref{Hi1}, \ref{Hi2}, \ref{Hiv0} and \ref{Hiv1} hold. Suppose $\mu>\mu_n(\lambda)$ and $\lambda>0$. Then, the following assertions are satisfied: 
\begin{itemize}
\item [(i)] There exist $c_\mu > 0$ such that $\|u\| \geq c_\mu$ for all $u \in \mathcal{N}_{\lambda,\mu}$;
\item[(ii)] The Nehari manifold $\mathcal{N}_{\lambda,\mu}$ and $\mathcal{N}_{\lambda,\mu}^0$ are closed sets.
\end{itemize}
\end{prop}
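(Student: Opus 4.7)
The plan is to handle item (i) first via a modular argument exploiting $(\varphi_3)$, $(H_2)$ and the Sobolev embedding, and then to deduce item (ii) from (i) together with the $C^2$-regularity of $\mathcal{I}_{\lambda,\mu}$.

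For (i), fix $u \in \mathcal{N}_{\lambda,\mu}$. The Nehari identity \eqref{nehari} together with $\lambda\|u\|_p^p \geq 0$ gives
\[
\mu \|u\|_{q,a}^q = \mathcal{J}_{s,\Phi,V}'(u)u + \lambda\|u\|_p^p \geq \mathcal{J}_{s,\Phi,V}'(u)u.
\]
Using $(\varphi_3)$ in the form $\ell\Phi(t) \leq \varphi(|t|)t^2$ for all $t\neq 0$, and Lemma \ref{lemanaruV}, I obtain
\[
\mathcal{J}_{s,\Phi,V}'(u)u \;\geq\; \ell\, \mathcal{J}_{s,\Phi,V}(u) \;\geq\; \ell \min\{\|u\|^{\ell}, \|u\|^{m}\}.
\]
On the other hand, assumption $(H_2)$ and Hölder's inequality with exponents $p/q$ and $r=(p/q)'$ yield $\|u\|_{q,a}^{q} \leq \|a\|_{r}\|u\|_{p}^{q}$, which combined with the embedding $X \hookrightarrow L^{p}(\R^N)$ (Lemma \ref{compact1}) gives $\|u\|_{q,a}^{q}\leq \|a\|_r S_p^q \|u\|^{q}$. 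Putting these together, I will split into the two cases $\|u\|\leq 1$ and $\|u\|> 1$. In the case $\|u\|\leq 1$ one has $\min\{\|u\|^{\ell},\|u\|^{m}\}=\|u\|^{m}$ and the combined inequality becomes $\ell \|u\|^{m}\leq \mu \|a\|_r S_p^q \|u\|^{q}$; since $q>m$ by $(H_1)$, this produces the lower bound
\[
\|u\| \;\geq\; \left( \frac{\ell}{\mu \|a\|_{r} S_p^q} \right)^{\frac{1}{q-m}}.
\]
In the case $\|u\|>1$ the bound $\|u\|\geq 1$ is trivial. Setting
$c_\mu := \min\!\Bigl\{1,\;\bigl(\tfrac{\ell}{\mu \|a\|_r S_p^q}\bigr)^{1/(q-m)}\Bigr\}$ concludes item (i).

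For (ii), I will argue by sequential closedness. Let $(u_k)\subset \mathcal{N}_{\lambda,\mu}$ with $u_k \to u$ strongly in $X$. The key point is that by item (i), $\|u_k\|\geq c_\mu>0$ for every $k$, so $\|u\|\geq c_\mu>0$ and thus $u\neq 0$. Since $\mathcal{I}_{\lambda,\mu}\in C^2(X,\R)$ (as already noted after \eqref{functional}), the maps $v\mapsto \mathcal{I}_{\lambda,\mu}'(v)v$ and $v\mapsto \mathcal{I}_{\lambda,\mu}''(v)(v,v)$ are continuous on $X$. Passing to the limit in the relation $\mathcal{I}_{\lambda,\mu}'(u_k)u_k=0$ yields $\mathcal{I}_{\lambda,\mu}'(u)u=0$, so $u\in \mathcal{N}_{\lambda,\mu}$, proving that $\mathcal{N}_{\lambda,\mu}$ is closed. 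For $\mathcal{N}_{\lambda,\mu}^0$, if additionally $(u_k)\subset\mathcal{N}_{\lambda,\mu}^{0}$, continuity of $v\mapsto \mathcal{I}_{\lambda,\mu}''(v)(v,v)$ gives $\mathcal{I}_{\lambda,\mu}''(u)(u,u)=\lim_{k}\mathcal{I}_{\lambda,\mu}''(u_k)(u_k,u_k)=0$, hence $u\in \mathcal{N}_{\lambda,\mu}^0$.

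The only delicate part is item (i): the modular structure prevents a one-shot lower bound, so the split between $\|u\|\leq 1$ and $\|u\|>1$ is essential, and one must invoke $(\varphi_3)$ and Lemma \ref{lemanaruV} correctly to relate $\mathcal{J}_{s,\Phi,V}'(u)u$ to $\|u\|$. Item (ii) is then a soft consequence of (i) and the $C^2$-regularity, with no further estimates required.
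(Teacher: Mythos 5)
Your proof follows essentially the same route as the paper: for (i), use the Nehari identity, the lower bound $\varphi(|t|)t^2 \geq \ell\Phi(t)$ from $(\varphi_3)$ together with Lemma~\ref{lemanaruV} to bound $\mu\|u\|_{q,a}^q$ from below by $\ell\min\{\|u\|^\ell,\|u\|^m\}$, control $\|u\|_{q,a}^q$ from above by a constant times $\|u\|^q$, and split on $\|u\|\lessgtr 1$; for (ii), use sequential closedness combined with $C^2$-regularity, where item (i) guarantees the limit is nonzero. The one noteworthy difference is in the upper bound for $\|u\|_{q,a}^q$: the paper writes $\|u\|_{q,a}^q\leq \|a\|_\infty S_q^q\|u\|^q$, which tacitly uses $a\in L^\infty(\R^N)$, whereas you use H\"older with $r=(p/q)'$ and the embedding $X\hookrightarrow L^p(\R^N)$ to get $\|u\|_{q,a}^q\leq\|a\|_r S_p^q\|u\|^q$, which is the estimate actually supported by hypothesis~\ref{Hi2}; this is the cleaner choice under the stated assumptions, and both yield an identical structure of lower bound $c_\mu>0$.
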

\begin{proof}
$(i)$ Let $u\in \mathcal{N}_{\lambda,\mu}$ be fixed. Notice that 
$$
\mu||u||_{q,a}^q= \mathcal{J}_{s,\Phi,V}'(u)u +\lambda||u||_p^p, u \in \mathcal{N}_{\lambda, \mu}.
$$
Now, using the Lemma \ref{lemanaruV} and thecontinuous embedding $X\hookrightarrow L^r(\mathbb{R}^N)$ for all $r\in (m,\ell_s^*)$ and the last identity, we have
$$
\ell \min\{\|u\|^\ell, \|u\|^m\}\leq \ell\mathcal{J}_{s,\Phi,V}(u) \leq \mu||u||_{q,a}^q \leq \mu S^q_q \|a\|_\infty \|u\|^q.
$$
As a consequence, we deduce that 
\begin{equation}\label{bacana}
\|u\| \geq c_\mu : = \min \left\lbrace \left(\frac{\ell}{\mu S^q_q \|a\|_\infty }\right)^{\frac{1}{q-\ell}}, \left(\frac{\ell}{\mu S^q_q \|a\|_\infty }\right)^{\frac{1}{q-m}}\right\rbrace .
\end{equation}
This proves item $(i)$.

$(ii)$ According to the previous item the Nehari manifold $\mathcal{N}_{\lambda,\mu}$ is away from zero. Hence, by using a standard argument, $\mathcal{N}_{\lambda,\mu}$ is a closed set. Let us prove that  $\mathcal{N}_{\lambda,\mu}^{0}$ is also closed. Consider a sequence $(u_k)_{k\in \mathbb{N}}$ in $\mathcal{N}_{\lambda,\mu}^0$ such that $u_k \to u$ in $X$ for some $u \in X$. Since $\mathcal{N}_{\lambda,\mu}$ is closed we have that $u \neq 0$ and $u \in \mathcal{N}_{\lambda,\mu}$. Finally, using the strong converge and the fact that $\mathcal{I}_{\lambda,\mu} \in C^2(X,\R)$,  we conclude that 
\begin{equation*}
\mathcal{I}''_{\lambda, \mu}(u)(u,u) =  \mathcal{J}_{s,\Phi,V}''(u)(u,u) - \mu (q - 1) \|u\||_{q,a}^q + \lambda (p -1)\|u\|_p^p = \lim_{k \to +\infty} \mathcal{I}''_{\lambda, \mu}(u_k)(u_k,u_k) = 0.
\end{equation*}
Therefore, $u\in \mathcal{N}_{\lambda,\mu}^0$ which proves that $\mathcal{N}_{\lambda,\mu}^0$ is closed. This ends the proof. 
\end{proof}

\begin{rmk} \label{convzero}
There holds that any sequence $(u_k)_{k\in\mathbb{N}}$ in the Nehari set $\mathcal{N}_{\lambda,\mu}$ such that $u_k \rightharpoonup u$ for some $u \in X$ satisfies $u \neq 0$. Indeed, arguing by contradiction, we assume that $u_k\rightharpoonup 0$. It follows from the compact embeddings $X\hookrightarrow L^r(\mathbb{R}^N), r \in [2, 2^*_s)$ that  
$ \|u_k\|_{q,a}^q, \|u_k\|_p^p \to 0$ as $k\to +\infty$. In view of Lemma \ref{lemanaruV} and Proposition \ref{fechada} $(i)$ we have that
$$
\ell\min\{c_\mu^\ell, c_\mu^m\}\leq\ell\min\{\|u\|^\ell,\|u\|^m\}\ell \mathcal{J}_{s,\Phi,V}(u)\leq \mathcal{J}_{s,\Phi,V}'(u)u= \mu\|u_k\|_{q,a}^q -\lambda\|u_k\|_p^p  \to 0.
$$ 
This is a contradiction proving $u \neq 0$ as was mentioned before.
\end{rmk}

It is important to emphasize that the inclusion $\overline{\mathcal{N}_{\lambda, \mu}^{\pm}} \subseteq \mathcal{N}_{\lambda, \mu}^{\pm} \cup \mathcal{N}_{\lambda, \mu}^0$ must not be strict in order to achieve our proposed objectives. For the reason, the hypothesis \ref{Hip3} is necessary.

\begin{prop} \label{nnfechada}	
Assume that \ref{Hip1}-\ref{Hip3}, \ref{Hi1}-\ref{Hi2}, \ref{Hiv0} and \ref{Hiv1} hold. Suppose also $\mu>\mu_n(\lambda)$ and $\lambda>0$. Then,  $\overline{\mathcal{N}_{\lambda, \mu}^{\pm}} = \mathcal{N}_{\lambda, \mu}^{\pm} \cup \mathcal{N}_{\lambda, \mu}^0$.
\end{prop}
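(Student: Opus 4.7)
The equality decomposes into two inclusions, each treated for both signs.

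\emph{Inclusion} $\overline{\mathcal{N}^\pm_{\lambda,\mu}}\subseteq\mathcal{N}^\pm_{\lambda,\mu}\cup\mathcal{N}^0_{\lambda,\mu}$: take $(u_k)\subset\mathcal{N}^+_{\lambda,\mu}$ with $u_k\to u$ in $X$. By Proposition \ref{fechada}, $\mathcal{N}_{\lambda,\mu}$ is closed and $\|u_k\|\ge c_\mu>0$, so $u\in\mathcal{N}_{\lambda,\mu}$. Because $\mathcal{I}_{\lambda,\mu}\in C^2(X,\R)$, passing to the limit in $\mathcal{I}''_{\lambda,\mu}(u_k)(u_k,u_k)>0$ gives $\mathcal{I}''_{\lambda,\mu}(u)(u,u)\ge 0$, i.e., $u\in\mathcal{N}^+_{\lambda,\mu}\cup\mathcal{N}^0_{\lambda,\mu}$; the case of $\mathcal{N}^-$ is identical with the reversed inequality.

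\emph{Inclusion} $\mathcal{N}^\pm_{\lambda,\mu}\cup\mathcal{N}^0_{\lambda,\mu}\subseteq\overline{\mathcal{N}^\pm_{\lambda,\mu}}$: only the case $u\in\mathcal{N}^0_{\lambda,\mu}$ needs work. For such $u$, $\mathsf{t}(u)=1$ and $R_n(u)=\Lambda_n(u)=\mu$, with $t=1$ the unique critical point of $t\mapsto R_n(tu)$ by Proposition \ref{propRn2}. Since $\mu>\mu_n(\lambda)=\inf\Lambda_n$, the open sublevel set $\mathcal{U}:=\{v\in X\setminus\{0\}:\Lambda_n(v)<\mu\}$ is nonempty. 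My plan is to show $u\in\overline{\mathcal{U}}$; once this is done, pick $(v_k)\subset\mathcal{U}$ with $v_k\to u$. Proposition \ref{compar} then supplies two projections $\mathsf{t}^-_\mu(v_k)<\mathsf{t}(v_k)<\mathsf{t}^+_\mu(v_k)$ with $v_k^\pm:=\mathsf{t}^\pm_\mu(v_k)v_k\in\mathcal{N}^\pm_{\lambda,\mu}$. Since $R_n(t\,\cdot)\to\infty$ as $t\to 0^+$ or $t\to\infty$ uniformly near $u$, the sequences $\mathsf{t}^\pm_\mu(v_k)$ are trapped in a compact subinterval of $(0,\infty)$; combined with $\Lambda_n(v_k)\to\mu$ and the uniqueness of the root $t=1$ of $R_n(tu)=\mu$, this forces $\mathsf{t}^\pm_\mu(v_k)\to 1$. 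Hence $v_k^\pm\to u$ in $X$, placing $u$ simultaneously in $\overline{\mathcal{N}^+_{\lambda,\mu}}$ and $\overline{\mathcal{N}^-_{\lambda,\mu}}$.

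\emph{The main obstacle is establishing $u\in\overline{\mathcal{U}}$.} My approach is a direct first-order perturbation. For $v_\varepsilon=u+\varepsilon\phi$, the envelope identity $\Lambda_n'(u)[\phi]=R_n'(u)[\phi]$—a consequence of the $C^1$ regularity of $\mathsf{t}$ (Proposition \ref{propRn2.1}) combined with $R_n'(u)u=0$, which follows from \eqref{save} and the defining condition $\mathcal{I}''_{\lambda,\mu}(u)(u,u)=0$ of $\mathcal{N}^0_{\lambda,\mu}$—yields the expansion $\Lambda_n(u+\varepsilon\phi)=\mu+\varepsilon R_n'(u)[\phi]+o(\varepsilon)$. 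Any $\phi\in X$ with $R_n'(u)[\phi]<0$ then produces $v_\varepsilon\in\mathcal{U}$ for all small $\varepsilon>0$. Existence of such a descent direction is the delicate step: because $\mu=R_n(u)>\mu_n(\lambda)$, $u$ is not a global minimizer of $\Lambda_n$, and a careful use of the strict monotonicity from \ref{Hip3} and Lemma \ref{Sinc}, together with the balance condition $m(q-\ell)<p(q-m)$ in \ref{Hi1}, should exclude the pathological case in which $R_n'(u)$ vanishes on the entire subspace transversal to $\mathbb{R}u$.
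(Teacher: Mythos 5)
Your first inclusion $\overline{\mathcal{N}^\pm_{\lambda,\mu}}\subseteq\mathcal{N}^\pm_{\lambda,\mu}\cup\mathcal{N}^0_{\lambda,\mu}$ is correct and matches the paper's (outsourced) argument. The envelope identity $\Lambda_n'(u)[\phi]=R_n'(u)[\phi]$ for $u\in\mathcal{N}^0_{\lambda,\mu}$ is also correctly derived from $\t(u)=1$, Proposition \ref{propRn2.1}, and the fact that \eqref{save} and $\mathcal{I}''_{\lambda,\mu}(u)(u,u)=0$ force $R_n'(u)u=0$. Likewise, the trapping-and-uniqueness argument for $\t^\pm_\mu(v_k)\to 1$ is serviceable once a sequence $(v_k)\subset\mathcal{U}_{\lambda,\mu}$ approaching $u$ is in hand.

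The genuine gap is exactly where you flag it, but it is not a removable detail: your plan is strictly first order, and a first-order argument cannot in general produce the required descent direction. The condition $\mu=\Lambda_n(u)>\mu_n(\lambda)$ rules out $u$ being a \emph{global} minimizer of $\Lambda_n$, but it does not rule out $u$ being a critical point, or even a local minimizer, of $\Lambda_n$ (equivalently, of the $0$-homogeneous $R_n$). In that case $R_n'(u)\equiv 0$ on $X$, the expansion $\Lambda_n(u+\varepsilon\phi)=\mu+\varepsilon R_n'(u)[\phi]+o(\varepsilon)$ carries no sign information, and no $\phi$ with $R_n'(u)[\phi]<0$ exists. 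Nothing in \ref{Hip3}, Lemma \ref{Sinc}, or the arithmetic condition $m(q-\ell)<p(q-m)$ excludes this scenario at first order, because those conditions control the one-dimensional map $t\mapsto R_n(tu)$ (they guarantee a unique critical point on each ray), not the transversal Hessian of $R_n$ at $u$.

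The paper's proof attacks precisely this case by going to second order: from $u\in\mathcal{N}^0_{\lambda,\mu}$ it extracts not only $\tfrac{d}{dt}R_n(tu)\big|_{t=1}=0$ (your \eqref{eq2Nn}) but also the strict nondegeneracy $\tfrac{d^2}{dt^2}R_n(tu)\big|_{t=1}>0$ (its \eqref{eq3Nn}), and then invokes \cite[Proposition 3.16]{SLJ}, which uses this second-order information to exhibit, in every ball $B_R(u)$, a point $v\in\mathcal{N}_{\lambda,\mu}\setminus\mathcal{N}^0_{\lambda,\mu}$; the projections $\t^\pm_\mu(v)$ then do the rest. To repair your proof you would need either to import that second-order device, or to prove independently that no $u\in\mathcal{N}^0_{\lambda,\mu}$ with $\Lambda_n(u)>\mu_n(\lambda)$ can be a local minimizer of $\Lambda_n$ — which is exactly the content your argument currently postpones with the word ``should.''
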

\begin{proof}
The inclusion $\overline{\mathcal{N}_{\lambda, \mu}^{\pm}} \subseteq \mathcal{N}_{\lambda, \mu}^{\pm} \cup \mathcal{N}_{\lambda, \mu}^0$ follows the same ideas discussed in the \cite{SCGS}. It remains to prove that $\mathcal{N}_{\lambda, \mu}^0 \subset\overline{\mathcal{N}_{\lambda, \mu}^{\pm}}$. Let $u$ be any fixed function in $\mathcal{N}_{\lambda, \mu}^{0}$. Then, by Lemma \ref{der-Rn}, we have that $\t(u)=1$. In this case, we obtain
$
R_n(u)=\mu \quad \mbox{and} \quad \frac{d}{dt}R_n(tu)\Big|_{t=1}= 0.
$
Moreover, since $\varphi$ is a $C^2$-function and satisfies \ref{Hip3}, we have $\frac{d^2}{dt^2}R_n(tu)\Big|_{t=1}>0$ for each $u \in \mathcal{N}_{\lambda, \mu}^{0}$. Thus, through a direct calculation, we deduce the following equations:
\begin{equation}\label{eq2Nn}
\begin{aligned}
\lambda(p-q)\|u\|_p^p
&=(q-2) \left( \int_{\R^N}\int_{\R^N} \varphi(|D_su|) |D_su|^2 \, d\mu + \int_{\R^N} V(x)\varphi(|u|)|u|^2 \, dx \right)\\
&\quad - \left(\int_{\R^N}\int_{\R^N} \varphi'(|D_su|) |D_su|^3 d\mu + \int_{\R^N} V(x)\varphi'(|u|) |u|^3\, dx\right).
\end{aligned}
\end{equation}
and
\begin{equation}\label{eq3Nn}
\begin{aligned}
0&<{(q-2)(q-1) \left( \int_{\R^N}\int_{\R^N} \varphi(|D_su|) |D_su|^2 \, d\mu + \int_{\R^N} V(x)\varphi(|u|)|u|^2 \, dx \right)}\\
&\quad + {2(2-q) \left( \int_{\R^N}\int_{\R^N} \varphi'(|D_su|) |D_su|^3 \, d\mu + \int_{\R^N} V(x)\varphi'(|u|)|u|^3 \, dx \right)}\\
& \quad+ {\left(\int_{\R^N}\int_{\R^N} \varphi''(|D_su|) |D_su|^4 d\mu + \int_{\R^N} V(x)\varphi''(|u|) |u|^4\, dx\right)} +{\lambda(p-q)(p-q-1)\|u\|_p^p}.
\end{aligned}
\end{equation}
Hence, by using the \eqref{eq2Nn}, \eqref{eq3Nn}, and proceeding as in the proof of \cite[Proposition 3.16]{SLJ}, we can show that there exists $v \in B_R(u) \cap (\mathcal{N}_{\lambda,\mu}\setminus\mathcal{N}_{\lambda,\mu}^{0})$ for each $R>0$, where $B_R(u)$ denotes the open ball centered at $u$ with radius $R > 0$.
Therefore, $v \in \mathcal{N}_{\lambda, \mu}^+$ or $v \in \mathcal{N}_{\lambda, \mu}^-$. This implies that $ v \in \mathcal{U}_{\lambda, \mu}= \left\{v \in X\setminus \{0\}: \Lambda_n(v)<\mu\right\}$. Thence, the  Proposition \ref{compar} guarantees that
there exist $0<\t_\mu^-(v)<\t(v)<\t_\mu^+(v)<\infty$ such that $\t_\mu^{-}(v)(v)\in \mathcal{N}_{\lambda,\mu}^{-}$ and $\t_\mu^{+}(v)(v)\in \mathcal{N}_{\lambda,\mu}^{+}$. Thus, considering the sequence $R_k=\frac{1}{k}$, we obtain a sequence $(v_k)_{k \in \N} \subset B_R(u) \cap (\mathcal{N}_{\lambda,\mu}\setminus\mathcal{N}_{\lambda,\mu}^{0})$ with $\t_\mu^{-}(v_k)(v_k)\in \mathcal{N}_{\lambda,\mu}^{-}$ and $\t_\mu^{+}(v_k)(v_k)\in \mathcal{N}_{\lambda,\mu}^{+}$. Since $R_k \to 0$ as $k \to +\infty$, we infer that $v_k \to u$ in $X$.
In addition, because of the continuity of $u\mapsto\t_\mu^{\pm}(u)$, we conclude that 
$
\t_\mu^{\pm}(v_k) \to \t_\mu^{\pm}(u)=\t(u)=1, \quad \mbox{as} \quad k \to +\infty.
$
This condition enables us to define the sequences 
$(f_k)_{k \in \mathcal{N}}\subset\mathcal{N}_{\lambda,\mu}^{-}$ and $(g_k)_{k \in \mathcal{N}}\subset\mathcal{N}_{\lambda,\mu}^{+}$ given by $f_k:=\t_\mu^{-}(v_k)v_k$ and $g_k:=\t_\mu^{+}(v_k)v_k$ such that
$f_k \to u$ and $g_k \to u$ in $X$ for each $u \in \mathcal{N}_{\lambda,\mu}^{0}$. Therefore, $\mathcal{N}_{\lambda,\mu}^{0}\subset \overline{\mathcal{N}_{\lambda,\mu}^{\pm}}$, which proves that $\overline{\mathcal{N}_{\lambda, \mu}^{\pm}} = \mathcal{N}_{\lambda, \mu}^{\pm} \cup \mathcal{N}_{\lambda, \mu}^0$.
\end{proof}
\ 

\begin{rmk}\label{remark1} By using Proposition \ref{nnfechada}, we deduce that 
\begin{equation*}
\mathcal{E}_{\lambda,\mu}^- = \inf_{u \in \mathcal{N}_{\lambda,\mu}^-} \mathcal{I}_{\lambda, \mu}(u) = \inf_{u \in \overline{\mathcal{N}_{\lambda,\mu}^-}} \mathcal{I}_{\lambda, \mu}(u) = \inf_{u \in \mathcal{N}_{\lambda,\mu}^- \cup \mathcal{N}_{\lambda,\mu}^0 } \mathcal{I}_{\lambda, \mu}(u) \leq \inf_{u \in \mathcal{N}_{\lambda,\mu}^0} \mathcal{I}_{\lambda, \mu}(u) =: \mathcal{E}_{\lambda, \mu}^0.
\end{equation*} 
and
\begin{equation*}
\mathcal{E}_{\lambda,\mu}^+ = \inf_{u \in \mathcal{N}_{\lambda,\mu}^+} \mathcal{I}_{\lambda, \mu}(u) = \inf_{u \in \overline{\mathcal{N}_{\lambda,\mu}^+}} \mathcal{I}_{\lambda, \mu}(u) = \inf_{u \in \mathcal{N}_{\lambda,\mu}^+ \cup \mathcal{N}_{\lambda,\mu}^0 } \mathcal{I}_{\lambda, \mu}(u) \leq \inf_{u \in \mathcal{N}_{\lambda,\mu}^0} \mathcal{I}_{\lambda, \mu}(u) =: \mathcal{E}_{\lambda, \mu}^0.
\end{equation*}
Therefore, the energy levels $\mathcal{E}_{\lambda,\mu}^-, \mathcal{E}_{\lambda,\mu}^+$ and $\mathcal{E}_{\lambda, \mu}^0$ can not be distinct with $\lambda > 0$ and $\mu \geq \mu_n(\lambda)$.
However, in the sequel, we prove that $\mathcal{E}_{\lambda,\mu}^-$ and $\mathcal{E}_{\lambda,\mu}^+$ are attained in the sets $\mathcal{N}_{\lambda,\mu}^-$ and $\mathcal{N}_{\lambda,\mu}^+$, respectively.
\end{rmk}

In the following, we prove that $\mathcal{I}_{\lambda, \mu}$ is coercive and bounded from below on the Nehari manifolds $\mathcal{N}_{\lambda, \mu}^-$ and $\mathcal{N}_{\lambda, \mu}^+$. These assertions allow us to find minimizers for the problem given by \eqref{ee1} and \eqref{ee2}, respectively. More precisely, we obtain the following result: 
\begin{prop} \label{coercive} 
Assume that \ref{Hip1}-\ref{Hip4}, \ref{Hi1}, \ref{Hi2} and \ref{Hiv0}-\ref{Hiv1} hold. Then, $\mathcal{I}_{\lambda,\mu}$ is coercive in the Nehari manifold $\mathcal{N}_{\lambda,\mu}$.
\end{prop}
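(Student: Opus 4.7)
The plan is to exploit the Nehari identity $\mathcal{J}_{s,\Phi,V}'(u)u+\lambda\|u\|_p^p=\mu\|u\|_{q,a}^q$ to rewrite $\mathcal{I}_{\lambda,\mu}$ on $\mathcal{N}_{\lambda,\mu}$ in a form where the leading modular term carries a positive coefficient. Concretely, solving the Nehari identity for $\mu\|u\|_{q,a}^q/q$ and substituting into \eqref{functional} gives, for every $u\in\mathcal{N}_{\lambda,\mu}$,
$$
\mathcal{I}_{\lambda,\mu}(u)=\mathcal{J}_{s,\Phi,V}(u)-\frac{1}{q}\mathcal{J}_{s,\Phi,V}'(u)u+\lambda\left(\frac{1}{p}-\frac{1}{q}\right)\|u\|_p^p.
$$
From hypothesis \ref{Hip3} together with \eqref{delta2} we have the pointwise inequality $\varphi(t)t^2\leq m\,\Phi(t)$, which, applied inside both integrals defining $\mathcal{J}_{s,\Phi,V}'(u)u$, yields $\mathcal{J}_{s,\Phi,V}'(u)u\leq m\,\mathcal{J}_{s,\Phi,V}(u)$. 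Since $q>m$ by \ref{Hi1}, the coefficient $1-m/q$ is strictly positive, so
$$
\mathcal{I}_{\lambda,\mu}(u)\geq\left(1-\frac{m}{q}\right)\mathcal{J}_{s,\Phi,V}(u)-\lambda\left(\frac{1}{q}-\frac{1}{p}\right)\|u\|_p^p.
$$

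The next step is to show that the negative term $\|u\|_p^p$ stays uniformly bounded on $\mathcal{N}_{\lambda,\mu}$. Using the Nehari identity one more time, together with \ref{Hi2} and the H\"older estimate $\|u\|_{q,a}^q\leq\|a\|_r\|u\|_p^q$ from \eqref{interpolation}, we obtain
$$
\lambda\|u\|_p^p\leq\mathcal{J}_{s,\Phi,V}'(u)u+\lambda\|u\|_p^p=\mu\|u\|_{q,a}^q\leq\mu\|a\|_r\|u\|_p^q,
$$
and therefore $\|u\|_p^{p-q}\leq\mu\|a\|_r/\lambda$, a uniform constant depending only on $\mu,\lambda,p,q,\|a\|_r$. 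In particular, there exists $C=C(\lambda,\mu,p,q,\|a\|_r)>0$ with $\|u\|_p^p\leq C$ for every $u\in\mathcal{N}_{\lambda,\mu}$.

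Inserting this bound into the previous estimate produces
$$
\mathcal{I}_{\lambda,\mu}(u)\geq\left(1-\frac{m}{q}\right)\mathcal{J}_{s,\Phi,V}(u)-\lambda\left(\frac{1}{q}-\frac{1}{p}\right)C,
$$
and Lemma \ref{lemanaruV} guarantees $\mathcal{J}_{s,\Phi,V}(u)\geq\xi^-(\|u\|)=\min\{\|u\|^\ell,\|u\|^m\}$. As $\|u\|\to+\infty$, the right-hand side tends to $+\infty$ since $\ell,m>1$, which proves coercivity of $\mathcal{I}_{\lambda,\mu}$ on $\mathcal{N}_{\lambda,\mu}$.

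The only nontrivial step is the absorption of the negative term: because $\mathcal{J}_{s,\Phi,V}$ has growth governed by $\ell$ and $m$ rather than by a single homogeneous power, one cannot directly compare $\|u\|_p^p$ to $\mathcal{J}_{s,\Phi,V}(u)$ via a fixed power relation, so the proof must exploit the Nehari identity together with the integrability assumption \ref{Hi2} on $a$ to get the $L^p$--bound on $\mathcal{N}_{\lambda,\mu}$; this is where the balance $m<q<p$ from \ref{Hi1} is used in a crucial way.
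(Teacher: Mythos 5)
Your proof is correct, but it takes a genuinely different and somewhat longer route than the one in the paper. The paper's proof never invokes the Nehari identity: from the definition of $\mathcal{I}_{\lambda,\mu}$, the H\"older estimate $\|u\|_{q,a}^q\leq\|a\|_r\|u\|_p^q$, and Lemma \ref{lemanaruV}, it obtains
\[
\mathcal{I}_{\lambda,\mu}(u)\geq\min\{\|u\|^\ell,\|u\|^m\}+\frac{\lambda}{p}\|u\|_p^p-\frac{\mu}{q}\|a\|_r\|u\|_p^q,
\]
and then simply observes that, because $p>q$, the scalar function $t\mapsto\frac{\lambda}{p}t^p-\frac{\mu}{q}\|a\|_r t^q$ is bounded from below on $[0,\infty)$; hence the negative contribution is uniformly bounded and coercivity follows from $\min\{\|u\|^\ell,\|u\|^m\}\to+\infty$. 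Notice this estimate does not use $u\in\mathcal{N}_{\lambda,\mu}$ at all, so the paper's argument actually establishes coercivity on all of $X$. Your argument, by contrast, uses the Nehari identity twice --- once to rewrite $\mathcal{I}_{\lambda,\mu}$ in terms of $\mathcal{J}_{s,\Phi,V}$, $\mathcal{J}_{s,\Phi,V}'$ and $\|u\|_p^p$ with the positive coefficient $1-m/q$, and once more to extract the uniform $L^p$ bound $\|u\|_p^{p-q}\leq\mu\|a\|_r/\lambda$ on $\mathcal{N}_{\lambda,\mu}$ --- together with the pointwise bound $\mathcal{J}_{s,\Phi,V}'(u)u\leq m\,\mathcal{J}_{s,\Phi,V}(u)$ coming from \ref{Hip3}. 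This is valid, yields a useful byproduct (a uniform $L^p$ bound on $\mathcal{N}_{\lambda,\mu}$), and is in fact closer in spirit to the computation the paper uses in Proposition \ref{limitacao}. One point to correct: your closing claim that the proof \emph{must} exploit the Nehari identity is not accurate --- the difficulty you identify in comparing $\|u\|_p^p$ directly to $\mathcal{J}_{s,\Phi,V}(u)$ is real, but the paper sidesteps it by letting the $\frac{\lambda}{p}\|u\|_p^p$ term in $\mathcal{I}_{\lambda,\mu}$ itself absorb the $q$-power term, which requires only $p>q$ and \ref{Hi2}, not the constraint $u\in\mathcal{N}_{\lambda,\mu}$.
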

\begin{proof}
Let $u\in {\mathcal{N}_{\lambda,\mu}}$ be fixed. It follows from the assumption \ref{Hi2} and  H\"{o}lder inequality that 
$
||u||^q_{q,a} \leq \|a\|_{r} \|u\|^q_p
$
holds true where $r = (p/q)'$. As a consequence, by using Lemma \ref{lemanaruV}, we deduce that 
\begin{equation}\label{est23}
\mathcal{I}_{\lambda, \mu}(u) \geq \mathcal{J}_{s,\Phi,V}(u)-\dfrac{\mu}{q}\|a\|_{r} \|u\|^q_p +\dfrac{\lambda}{p}||u||^p_p 
\geq	\min\{\|u\|^\ell, \|u\|^m\} - \left[\dfrac{\mu}{q} \|a\|_{r} \|u\|^{q-p}_p - \dfrac{\lambda}{p}\right]  ||u||^p_p. 
\end{equation}
Now, assume that $\|u\|\to +\infty$. On the one hand, if $||u||_p \to +\infty$, then since $m < q < p < \ell_s^*$, the inequality \eqref{est23} implies that $\mathcal{I}_{\lambda,\mu}$ is coercive in the Nehari manifold $\mathcal{N}_{\lambda,\mu}$. On the other hand, if $||u||_p \leq C$ holds true for some constant $C > 0$, then using \eqref{est23} once again, we deduce that 
$$
\mathcal{I}_{\lambda,\mu} \geq \min\{\|u\|^\ell, \|u\|^m\} - \left[\dfrac{\mu}{q} \|a\|_{r} C- \dfrac{\lambda}{p}\right]  C\to +\infty \;\; \mbox{as}\; \;  \|u\|\to +\infty.
$$
Therefore, $\mathcal{I}_{\lambda,\mu}$ is coercive in the Nehari manifold $\mathcal{N}_{\lambda,\mu}$.	
\end{proof}

The next result assures us that $\mathcal{N}_{\lambda,\mu}^-$ and  $\mathcal{N}_{\lambda,\mu}^+$ are natural constraints for our main problem. 

\begin{lem}\label{criticalpoint}
Assume that \ref{Hip1}-\ref{Hip4}, \ref{Hi1}, \ref{Hi2} and \ref{Hiv0}-\ref{Hiv1} hold. Let $u \in \mathcal{N}_{\lambda,\mu}^- \cup \mathcal{N}_{\lambda,\mu}^+$ be a local minimum for $\mathcal{I}_{\lambda,\mu}$ in $ \mathcal{N}_{\lambda,\mu}$. Then, $u$ is a critical point of $\mathcal{I}_{\lambda,\mu}$ in $X$, that is, $\mathcal{I}_{\lambda,\mu}'(u) v = 0$ for all $v\in X$.
\end{lem}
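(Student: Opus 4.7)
The plan is to use a standard Lagrange multiplier argument, exploiting the fact that on $\mathcal{N}_{\lambda,\mu}^{\pm}$ the constraint has a nondegenerate derivative in the direction of $u$ itself. Define the constraint functional $G \colon X \setminus \{0\} \to \mathbb{R}$ by
$$
G(w) := \mathcal{I}_{\lambda,\mu}'(w) w = \mathcal{J}_{s,\Phi,V}'(w)w + \lambda\|w\|_p^p - \mu \|w\|_{q,a}^q,
$$
so that $\mathcal{N}_{\lambda,\mu} = \{w \in X \setminus \{0\}: G(w)=0\}$. By the regularity of $\mathcal{I}_{\lambda,\mu}$ established earlier, $G$ is of class $C^1$ on $X \setminus \{0\}$ and, by Lemma \ref{problema}, $\mathcal{N}_{\lambda,\mu}^{\pm}$ are $C^1$-submanifolds of $X$.

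First, I would compute $G'(u)u$. A direct differentiation gives
$$
G'(u)u = \mathcal{I}_{\lambda,\mu}''(u)(u,u) + \mathcal{I}_{\lambda,\mu}'(u)u.
$$
Since $u \in \mathcal{N}_{\lambda,\mu}$ we have $\mathcal{I}_{\lambda,\mu}'(u)u = 0$, hence $G'(u)u = \mathcal{I}_{\lambda,\mu}''(u)(u,u)$. By hypothesis $u \in \mathcal{N}_{\lambda,\mu}^+ \cup \mathcal{N}_{\lambda,\mu}^-$, so $\mathcal{I}_{\lambda,\mu}''(u)(u,u) \neq 0$, which in particular implies $G'(u) \not\equiv 0$. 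Thus $u$ is a regular point of $G$, and the Lagrange Multipliers Theorem applies: there exists $\theta \in \mathbb{R}$ such that
$$
\mathcal{I}_{\lambda,\mu}'(u) = \theta\, G'(u) \quad \text{in } X^*.
$$

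Now I would test this identity against $u$ itself. Using $\mathcal{I}_{\lambda,\mu}'(u)u = 0$ (as $u \in \mathcal{N}_{\lambda,\mu}$) and the computation above, we obtain
$$
0 = \mathcal{I}_{\lambda,\mu}'(u)u = \theta\, G'(u)u = \theta\, \mathcal{I}_{\lambda,\mu}''(u)(u,u).
$$
Since $\mathcal{I}_{\lambda,\mu}''(u)(u,u) \neq 0$ by the assumption $u \in \mathcal{N}_{\lambda,\mu}^{\pm}$, we conclude $\theta = 0$. Therefore $\mathcal{I}_{\lambda,\mu}'(u) = 0$ in $X^*$, i.e., $\mathcal{I}_{\lambda,\mu}'(u)v = 0$ for every $v \in X$, which is the desired conclusion.

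The only delicate point is ensuring the Lagrange multiplier rule is applicable, and this reduces to verifying $G'(u) \neq 0$; this is precisely guaranteed by the sign condition characterizing $\mathcal{N}_{\lambda,\mu}^{\pm}$ (as opposed to $\mathcal{N}_{\lambda,\mu}^0$, where the method would fail). No further analytical difficulty arises since the smoothness of $G$ and $\mathcal{I}_{\lambda,\mu}$ in $C^2(X,\mathbb{R})$ has already been established via the structural assumptions \ref{Hip1}--\ref{Hip3} and the embeddings from Section~\ref{spaces}.
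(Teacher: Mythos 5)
Your proof is correct and is the standard Lagrange-multiplier argument for showing that $\mathcal{N}_{\lambda,\mu}^{\pm}$ are natural constraints; the paper itself defers to the analogous lemma in \cite{SCGS}, which uses this same mechanism. The key computation $G'(u)u = \mathcal{I}''_{\lambda,\mu}(u)(u,u) + \mathcal{I}'_{\lambda,\mu}(u)u = \mathcal{I}''_{\lambda,\mu}(u)(u,u) \neq 0$ is correct, it justifies both the applicability of the Lagrange multiplier rule (regularity of the constraint at $u$) and the conclusion $\theta = 0$, and you rightly observe that this is exactly where the argument would fail on $\mathcal{N}_{\lambda,\mu}^0$.
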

\begin{proof}
It follows exactly as in the proof of Lemma 2.16 in \cite{SCGS}.
\end{proof}

At this stage, we apply standard arguments used in the Nehari method. 
\begin{prop}\label{limitacao}
Assume that \ref{Hip1}-\ref{Hip4}, \ref{Hi1}, \ref{Hi2}, \ref{Hiv0} and \ref{Hiv1} hold. Let $\lambda>0$ be fixed and $\mu > \mu_n(\lambda)$. Then, there exists a constant $D_{\mu} > 0$ such that $\mathcal{I}_{\lambda, \mu}(u) \geq D_\mu$ holds for all $u \in \mathcal{N}_{\lambda,\mu}^- \cup \mathcal{N}_{\lambda, \mu}^0$. In particular, $\mathcal{E}_{\lambda,\mu}^-\geq D_\mu$.
\end{prop}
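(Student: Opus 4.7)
The plan is to use the Nehari constraint to rewrite the energy, then exploit the sign information $\mathcal{I}''_{\lambda,\mu}(u)(u,u)\leq 0$ on $\mathcal{N}_{\lambda,\mu}^-\cup\mathcal{N}_{\lambda,\mu}^0$ together with the two-sided bounds coming from \ref{Hip3}, and finally invoke Proposition \ref{fechada} to get a uniform lower bound on $\|u\|$. The technical assumption $m(q-\ell)<p(q-m)$ in \ref{Hi1} will be exactly what makes the resulting coefficient positive.

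\textbf{Step 1 (Rewriting $\mathcal{I}_{\lambda,\mu}$).} From $u\in\mathcal{N}_{\lambda,\mu}$ I substitute $\mu\|u\|_{q,a}^q=\mathcal{J}'_{s,\Phi,V}(u)u+\lambda\|u\|_p^p$ in \eqref{functional}, obtaining
$$
\mathcal{I}_{\lambda,\mu}(u)=\mathcal{J}_{s,\Phi,V}(u)-\tfrac{1}{q}\mathcal{J}'_{s,\Phi,V}(u)u-\lambda\tfrac{p-q}{pq}\|u\|_p^p.
$$
By integrating the definition \eqref{delta2} of \ref{Hip3}, I have $\ell\mathcal{J}_{s,\Phi,V}(u)\leq \mathcal{J}'_{s,\Phi,V}(u)u\leq m\mathcal{J}_{s,\Phi,V}(u)$, so
$$
\mathcal{I}_{\lambda,\mu}(u)\geq \tfrac{q-m}{q}\mathcal{J}_{s,\Phi,V}(u)-\lambda\tfrac{p-q}{pq}\|u\|_p^p.
$$

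\textbf{Step 2 (Controlling $\lambda\|u\|_p^p$ by $\mathcal{J}_{s,\Phi,V}(u)$).} Here I use the hypothesis $u\in\mathcal{N}_{\lambda,\mu}^-\cup\mathcal{N}_{\lambda,\mu}^0$, i.e.\ $\mathcal{I}''_{\lambda,\mu}(u)(u,u)\leq 0$. From \ref{Hip3} I derive the pointwise inequality $\varphi'(t)t\geq(\ell-2)\varphi(t)$ for $t>0$, hence
$$
\varphi'(t)t^3+(2-p)\varphi(t)t^2\geq (\ell-p)\varphi(t)t^2.
$$
Plugging this into expression \eqref{impor2} and using $\mathcal{J}'_{s,\Phi,V}(u)u\leq m\mathcal{J}_{s,\Phi,V}(u)$ gives
$$
0\geq \mathcal{I}''_{\lambda,\mu}(u)(u,u)\geq -(p-\ell)\mathcal{J}'_{s,\Phi,V}(u)u+(p-q)\mu\|u\|_{q,a}^q,
$$
and substituting the Nehari identity $\mu\|u\|_{q,a}^q=\mathcal{J}'_{s,\Phi,V}(u)u+\lambda\|u\|_p^p$ yields
$$
(q-\ell)\mathcal{J}'_{s,\Phi,V}(u)u\geq(p-q)\lambda\|u\|_p^p,
$$
so that $\lambda\|u\|_p^p\leq \tfrac{m(q-\ell)}{p-q}\mathcal{J}_{s,\Phi,V}(u)$.

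\textbf{Step 3 (Combining the estimates).} Inserting the bound from Step 2 into the one from Step 1, I get
$$
\mathcal{I}_{\lambda,\mu}(u)\geq \Bigl[\tfrac{q-m}{q}-\tfrac{m(q-\ell)}{pq}\Bigr]\mathcal{J}_{s,\Phi,V}(u)=\frac{p(q-m)-m(q-\ell)}{pq}\,\mathcal{J}_{s,\Phi,V}(u),
$$
and the coefficient is strictly positive by \ref{Hi1}. Finally, Proposition \ref{fechada}(i) gives $\|u\|\geq c_\mu>0$, and Lemma \ref{lemanaruV} yields $\mathcal{J}_{s,\Phi,V}(u)\geq \min\{c_\mu^\ell,c_\mu^m\}$. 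Setting
$$
D_\mu:=\frac{p(q-m)-m(q-\ell)}{pq}\min\{c_\mu^\ell,c_\mu^m\}>0,
$$
I conclude $\mathcal{I}_{\lambda,\mu}(u)\geq D_\mu$ for every $u\in\mathcal{N}_{\lambda,\mu}^-\cup\mathcal{N}_{\lambda,\mu}^0$, and in particular $\mathcal{E}_{\lambda,\mu}^-\geq D_\mu$.

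The main obstacle is Step 2: one has to choose the correct formulation \eqref{impor2} (with the $(2-p)$ factor rather than \eqref{impor} with $(2-q)$) and the correct one-sided bound from \ref{Hip3} so that the resulting inequality survives when one uses $\mathcal{I}''\leq 0$ and then eliminates the $\mu$-term via Nehari. The algebra $(q-\ell)$ appearing after this manipulation is exactly what matches the condition $m(q-\ell)<p(q-m)$ in \ref{Hi1}.
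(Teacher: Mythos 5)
Your proof is correct and is essentially the paper's own argument: Step 1 is identical, Step 3 is identical, and Step 2 arrives at the same intermediate bound $\lambda(p-q)\|u\|_p^p\leq m(q-\ell)\mathcal{J}_{s,\Phi,V}(u)$ via the same pointwise inequality $\varphi'(t)t\geq(\ell-2)\varphi(t)$. The only cosmetic difference is that you start from \eqref{impor2} and then substitute the Nehari identity once more to re-introduce $\lambda\|u\|_p^p$, whereas the paper applies $\mathcal{I}''_{\lambda,\mu}(u)(u,u)\leq 0$ directly in the form \eqref{impor}, which already carries the $\lambda(p-q)\|u\|_p^p$ term; since \eqref{impor} and \eqref{impor2} differ by exactly $(p-q)$ times the Nehari identity, these two routes are algebraically equivalent. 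Your closing remark that one ``has to'' use \eqref{impor2} rather than \eqref{impor} is therefore misguided — the paper uses \eqref{impor} and it works just as well — but this misconception has no bearing on the validity of your proof.
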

\begin{proof}
Let $u\in {\mathcal{N}_{\lambda,\mu}^-}\cup\mathcal{N}_{\lambda,\mu}^0$ be a fixed function. 
As a consequence, by using \eqref{nehari}, we see that
\begin{equation*}
\begin{aligned}
    \mathcal{I}_{\lambda,\mu}(u)=\mathcal{I}_{\lambda,\mu}(u)  -\frac{1}{q} \mathcal{I}_{\lambda,\mu}'(u)u  &\geq \left(1 -\frac{m}{q} \right) \mathcal{J}_{s,\Phi,V}(u) +\lambda \left(\frac{1}{p} -\frac{1}{q}\right)\|u\|_p^p  = \frac{q-m}{q} \mathcal{J}_{s,\Phi,V}(u) + \lambda \frac{q-p}{pq}\|u\|_p^p.
\end{aligned}
\end{equation*}
Taking into account \eqref{impor} and \ref{Hip3}, we have that
$$
\begin{aligned}
   \lambda(q-p)\|u\|_p^p \geq  (\ell-q)\int_{\R^N}\int_{\R^N} \phi(|D_su|) |D_su|^2 \, d\mu  \geq m(\ell-q) \mathcal{J}_{s,\Phi,V}(u). 
\end{aligned}
$$ 
Then, combining the last two inequality and Lemma \ref{lemanaruV}, we deduce that
\begin{equation}\label{coercive1}
\begin{aligned}
  \mathcal{I}_{\lambda,\mu}(u) \geq \left( \frac{q-m}{q}\right)  \mathcal{J}_{s,\Phi,V}(u)  + m\left( \frac{\ell-q}{pq}\right) \mathcal{J}_{s,\Phi,V}(u)
  \geq {\frac{p(q-m)-m(q-\ell)}{pq}}\min\{\|u\|^\ell, \|u\|^m\}.  
\end{aligned}
\end{equation}
In view of Proposition \ref{fechada} and \eqref{coercive1}, we infer that 
$$
\mathcal{I}_{\lambda,\mu}(u)\geq \frac{p(q-m)-m(q-\ell)}{pq}\min\{c_\mu^\ell,c_\mu^m\} =:D_\mu > 0.
$$  
In the last inequality we have used that $m(q-\ell)<p(q-m)$. Therefore, by using \eqref{ee1}, we deduce that $\mathcal{E}_{\lambda,\mu}^-\geq D_\mu >0$. This finishes the proof. 
\end{proof}

\begin{prop}\label{converg}
Assume that \ref{Hip1}-\ref{Hip4}, \ref{Hi1}-\ref{Hi2} and \ref{Hiv0}-\ref{Hiv1} hold. Suppose also that $\lambda>0$ and $\mu>\mu_n(\lambda)$. Let $(u_k)_{k\in\mathbb{N}}\subset\mathcal{N}_{\lambda,\mu}^- $ be a minimizer sequence for $\mathcal{I}_{\lambda,\mu}$ in $\mathcal{N}_{\lambda,\mu}^-$. Then, there exists $u_{\lambda,\mu}\in\mathcal{N}_{\lambda,\mu}^-\cup \mathcal{N}_{\lambda,\mu}^0$ such that, up to a subsequence, $u_k\to u_{\lambda,\mu}$ in $X$. 
Consequently, there exists a constant $D_\mu > 0$ such that $\mathcal{E}_{\lambda,\mu}^-= \mathcal{I}_\lambda(u_{\lambda,\mu})\geq D_\mu$.
\end{prop}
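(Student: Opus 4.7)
I would first produce a weakly convergent subsequence and identify its limit. Since $\mathcal{I}_{\lambda,\mu}$ is coercive on $\mathcal{N}_{\lambda,\mu}$ by Proposition \ref{coercive}, the minimizing sequence $(u_k)$ is bounded in the reflexive space $X$; extracting a subsequence, $u_k\rightharpoonup u_{\lambda,\mu}$ in $X$. Lemma \ref{compact1} then gives $u_k\to u_{\lambda,\mu}$ in $L^r(\mathbb{R}^N)$ for every $r\in(m,\ell_s^*)$, together with a.e.\ convergence. Using $a\in L^{p/(p-q)}$ from hypothesis \ref{Hi2} and H\"older, this yields $\|u_k\|_{q,a}^q\to\|u_{\lambda,\mu}\|_{q,a}^q$ and $\|u_k\|_p^p\to\|u_{\lambda,\mu}\|_p^p$.

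Next I would rule out $u_{\lambda,\mu}=0$. Assuming the contrary, the Nehari identity $\mathcal{J}_{s,\Phi,V}'(u_k)u_k=\mu\|u_k\|_{q,a}^q-\lambda\|u_k\|_p^p$ tends to $0$, so by assumption \ref{Hip3} and Lemma \ref{lemanaruV} we get $\ell\min\{\|u_k\|^\ell,\|u_k\|^m\}\leq \ell\mathcal{J}_{s,\Phi,V}(u_k)\leq \mathcal{J}_{s,\Phi,V}'(u_k)u_k\to 0$, contradicting the uniform lower bound $\|u_k\|\geq c_\mu$ from Proposition \ref{fechada}(i). Hence $u_{\lambda,\mu}\neq 0$.

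The core step is to upgrade weak convergence to strong convergence, and here I would exploit the $(S_+)$-property together with Ekeland's variational principle. Since $\overline{\mathcal{N}_{\lambda,\mu}^-}=\mathcal{N}_{\lambda,\mu}^-\cup\mathcal{N}_{\lambda,\mu}^0$ is closed by Propositions \ref{nnfechada} and \ref{fechada}(ii), and $\mathcal{I}_{\lambda,\mu}$ is continuous and bounded from below on this set, Ekeland's principle produces an auxiliary sequence $(\tilde u_k)\subset\overline{\mathcal{N}_{\lambda,\mu}^-}$ with $\|u_k-\tilde u_k\|\to 0$ (so the weak limit is unchanged) that satisfies the almost-critical condition $\mathcal{I}_{\lambda,\mu}(\tilde u_k)\leq \mathcal{I}_{\lambda,\mu}(w)+\tfrac{1}{k}\|w-\tilde u_k\|$ for every $w\in\overline{\mathcal{N}_{\lambda,\mu}^-}$. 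Because $\mathcal{N}_{\lambda,\mu}^-$ is a $C^1$-submanifold (Lemma \ref{problema}) and the natural constraint property of Lemma \ref{criticalpoint} holds, standard Lagrange-multiplier manipulations with admissible perturbations of the form $t_\mu^-(\tilde u_k+\varepsilon\phi)(\tilde u_k+\varepsilon\phi)$ yield $\mathcal{I}_{\lambda,\mu}'(\tilde u_k)\to 0$ in $X^*$. Iterates that happen to sit in $\mathcal{N}_{\lambda,\mu}^0$ (where the fibering derivative degenerates) are first nudged into $\mathcal{N}_{\lambda,\mu}^-$ via the approximation result in the proof of Proposition \ref{nnfechada}.

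With $\mathcal{I}'_{\lambda,\mu}(\tilde u_k)\to 0$ and $(\tilde u_k-u_{\lambda,\mu})$ bounded, I obtain $\mathcal{I}'_{\lambda,\mu}(\tilde u_k)(\tilde u_k-u_{\lambda,\mu})\to 0$. Applying H\"older with the exponents $r=p/(p-q)$, $p/(q-1)$, $p$ afforded by \ref{Hi2} and the strong $L^p$ convergence, the two nonlinearity integrals $\int a(x)|\tilde u_k|^{q-2}\tilde u_k(\tilde u_k-u_{\lambda,\mu})\,dx$ and $\int |\tilde u_k|^{p-2}\tilde u_k(\tilde u_k-u_{\lambda,\mu})\,dx$ tend to $0$. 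Hence $\langle \mathcal{J}_{s,\Phi,V}'(\tilde u_k),\tilde u_k-u_{\lambda,\mu}\rangle\to 0$, and Proposition \ref{S+} gives $\tilde u_k\to u_{\lambda,\mu}$ in $X$; since $\|u_k-\tilde u_k\|\to 0$, also $u_k\to u_{\lambda,\mu}$. Closedness of $\overline{\mathcal{N}_{\lambda,\mu}^-}$ forces $u_{\lambda,\mu}\in\mathcal{N}_{\lambda,\mu}^-\cup\mathcal{N}_{\lambda,\mu}^0$, continuity of $\mathcal{I}_{\lambda,\mu}$ gives $\mathcal{I}_{\lambda,\mu}(u_{\lambda,\mu})=\mathcal{E}_{\lambda,\mu}^-$, and the lower bound $\mathcal{E}_{\lambda,\mu}^-\geq D_\mu$ comes from Proposition \ref{limitacao}. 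The main technical obstacle throughout is the Lagrange-multiplier step: because $\overline{\mathcal{N}_{\lambda,\mu}^-}$ need not coincide with $\mathcal{N}_{\lambda,\mu}^-$, one must carefully handle points of $\mathcal{N}_{\lambda,\mu}^0$, where the projection along fibers breaks down, and this is exactly where the density information of Proposition \ref{nnfechada} and the monotonicity properties encoded by hypothesis \ref{Hip3} are essential.
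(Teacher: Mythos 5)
Your proposal takes a genuinely different route from the paper: you reach for Ekeland's variational principle together with the $(S_+)$-property and a Lagrange-multiplier argument, whereas the paper never invokes Ekeland or $(S_+)$ at all. Instead, after extracting a weak limit $u_{\lambda,\mu}\neq 0$ (as you do), the paper argues by contradiction directly from the fibering-map geometry: assuming $u_k\not\to u_{\lambda,\mu}$ strongly, the weak lower semicontinuity of $\Lambda_n$, of $u\mapsto R_n(tu)$, and of $u\mapsto\mathcal{I}_{\lambda,\mu}(tu)$ is used to locate a point $\t_\mu^-(u_{\lambda,\mu})u_{\lambda,\mu}\in\mathcal{N}_{\lambda,\mu}^-$ whose energy lies strictly below $\mathcal{E}_{\lambda,\mu}^-$, which is impossible. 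No differentiation on the constraint set is ever performed.

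There is a genuine gap in the Lagrange-multiplier step of your argument, and it is precisely the degenerate scenario the paper's route is designed to bypass. To deduce $\mathcal{I}_{\lambda,\mu}'(\tilde u_k)\to 0$ in $X^*$ from the Ekeland inequality restricted to $\overline{\mathcal{N}_{\lambda,\mu}^-}$, one must bound the Lagrange multiplier $\theta_k$ appearing in $\mathcal{I}_{\lambda,\mu}'(\tilde u_k)=\theta_k G'(\tilde u_k)+o(1)$, where $G(u)=\mathcal{I}_{\lambda,\mu}'(u)u$. Testing against $\tilde u_k$ gives $\theta_k\, \mathcal{I}_{\lambda,\mu}''(\tilde u_k)(\tilde u_k,\tilde u_k)=o(1)$, so controlling $\theta_k$ requires $|\mathcal{I}_{\lambda,\mu}''(\tilde u_k)(\tilde u_k,\tilde u_k)|$ to be bounded away from $0$. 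But at the level of this proposition nothing prevents the Ekeland sequence from accumulating on $\mathcal{N}_{\lambda,\mu}^0$ (and indeed the conclusion of the proposition explicitly allows $u_{\lambda,\mu}\in\mathcal{N}_{\lambda,\mu}^0$), where $\mathcal{I}_{\lambda,\mu}''(u)(u,u)=0$ and the multiplier escapes control. Your proposed fix --- ``nudging'' $\tilde u_k$ back into $\mathcal{N}_{\lambda,\mu}^-$ via the density argument in Proposition \ref{nnfechada} --- does not repair this: the nudged iterates no longer satisfy the Ekeland almost-critical inequality, and the nudge does not improve the lower bound on $|\mathcal{I}_{\lambda,\mu}''(\cdot)(\cdot,\cdot)|$, since the displaced point remains arbitrarily close to $\mathcal{N}_{\lambda,\mu}^0$. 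The exclusion of $\mathcal{N}_{\lambda,\mu}^0$ is deferred in the paper to Proposition \ref{l112b}, and only for a restricted parameter range $\lambda\in(0,\lambda_*)$; trying to establish a Palais--Smale condition already here runs ahead of what the hypotheses permit. Replacing the Ekeland-plus-$(S_+)$ scheme with the paper's direct fibering contradiction would remove the gap.
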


\begin{proof}
Firstly, by Proposition \ref{coercive} the sequence $(u_k)_{k\in\mathbb{N}}$ is bounded. Then, up to a subsequence, we can assume that $u_k \rightharpoonup u_{\lambda,\mu}$ in $X$ as $k \to +\infty$. In virtue of Remark \ref{convzero} we have also that $u_{\lambda,\mu} \neq 0$. Since $\overline{\mathcal{N}_{\lambda,\mu}^-} \subset \mathcal{N}_{\lambda,\mu}^- \cup \mathcal{N}_{\lambda,\mu}^0$ by Proposition \ref{nnfechada}, it is sufficient to prove the strong convergence $u_k \to u_{\lambda,\mu} $ in $X$ as $k \to +\infty$. To this end, suppose by contradiction that $(u_k)$ does not converges to $u_{\lambda,\mu}$ in $X$. Hence, since $\Lambda_n$ is weakly lower semicontinuous (by Proposition \ref{N0}) and $u_k \in \mathcal{N}_{\lambda, \mu}^{-}$, we obtain that 
\begin{equation*}
\Lambda_n(u_{\lambda,\mu}) \leq \liminf_{k \to +\infty} \Lambda_n(u_{k}) <\limsup_{k\to +\infty}  \Lambda_n(u_{k}) = \limsup_{k \to +\infty} R_n(t_n(u_k)u_{k})\leq\liminf_{k \to +\infty} R_n(u_{k})=\mu.
\end{equation*}
Then, by Proposition \ref{compar}, there exist unique $\t_{\mu}^{-}(u_{\lambda,\mu}) < \t(u_{\lambda,\mu}) < \t_{\mu}^{+}(u_{\lambda,\mu})$ such that $\t_{\mu}^{-}(u_{\lambda,\mu})u_{\lambda,\mu} \in \mathcal{N}^-_{\lambda,\mu}$ and $\t_{\mu}^{+}(u_{\lambda,\mu})u_{\lambda,\mu} \in \mathcal{N}^+_{\lambda,\mu}$. 
Now, using that $u\mapsto R_n(tu)$ is lower semicontinuous for each $t>0$, we infer that 
\begin{equation}\label{clau2}
R_n(tu_{\lambda,\mu})\leq\liminf_{k\to+\infty} R_n(tu_k)<\limsup_{k\to +\infty}R_n(tu_k),
\end{equation}
which implies that $R_n(tu_{\lambda,\mu})<R_n(tu_k)$ for each $k$ large enough. This assertion shows that $\t_\mu^{-}(u_{\lambda,\mu})< \t_\mu^{-}(u_k)=1$ for each $k\gg 1$. Indeed, arguing by contradiction, assume that $\t_\mu^{-}(u_{\lambda,\mu})\geq 1$ for some $k\gg1$. Then, since $t\mapsto R_n(tu_{\lambda,\mu})$ is strictly decreasing in $(0,\t(v_{\lambda,\mu}))$, the  assertion above yields $R_n(\t_\mu^{-}(u_{\lambda,\mu})u_{\lambda,\mu})\leq R_n(u_{\lambda,\mu})<R_n(u_k)=\mu$, which is contradiction.

On the other hand, using that the functional $u \mapsto \mathcal{I}_{\lambda, \mu} (tu)$ is also weakly lower semicontinuous for each $t>0$, we obtain the following estimate
\begin{equation}\label{clau3}		
\mathcal{I}_{\lambda,\mu}(tu_{\lambda,\mu})\leq \liminf_{k \to +\infty} \mathcal{I}_{\lambda,\mu}(tu_{k}), \quad \mbox{for all}\quad t > 0 		.
\end{equation}
Since $(u_k)_{k\in\mathbb{N}}$ belongs to $\mathcal{N}_{\lambda,\mu}^-$, we have that $t\mapsto\mathcal{I} _{\lambda,\mu}(tu_k)$ is a strictly increasing function in $(0,1)$. Thence, by using \eqref{clau3}, we conclude that 
\begin{equation*}
\mathcal{I}_{\lambda,\mu}(\t_\mu^{-}(u_{\lambda,\mu})u_{\lambda,\mu}) <	\mathcal{I}_{\lambda,\mu}(u_{\lambda,\mu})  \leq\liminf_{k\to+\infty}	\mathcal{I}_{\lambda,\mu}(u_{k})=\mathcal{E}_{\lambda,\mu}.
\end{equation*}
This is a contradiction, which proves that  $u_k\to u_{\lambda,\mu}$ in $X$ as $k\to+\infty$. Hence, using the strong convergence, we conclude that $\mathcal{E}_{\lambda,\mu}^-= \mathcal{I}_\lambda(u_{\lambda,\mu})$.  Moreover, it follows from Proposition \ref{limitacao} that $\mathcal{E}_{\lambda,\mu}^- = \mathcal{I}_{\lambda,\mu}(u_{\lambda,\mu}) \geq D_\mu > 0$. Finally, by Proposition \ref{nnfechada}, we obtain also that $u_\lambda\in\mathcal{N}_{\lambda,\mu}^-\cup \mathcal{N}_{\lambda,\mu}^0$.  
\end{proof}

\begin{prop}\label{converg1}
Assume that \ref{Hip1}-\ref{Hip4}, \ref{Hi1}-\ref{Hi2} and \ref{Hiv0}-\ref{Hiv1} hold. Suppose also that $\lambda>0$ and $\mu>\mu_n(\lambda)$. Let $(v_k)\subset\mathcal{N}_{\lambda,\mu}^+$ be a minimizer sequence for $\mathcal{I}_{\lambda,\mu}$ in $\mathcal{N}_{\lambda,\mu}^+$. Then, there exists $v_{\lambda,\mu} \in \mathcal{N}_{\lambda,\mu}^+\cup \mathcal{N}_{\lambda,\mu}^0$ such that, up to a subsequence, $v_k\to v_{\lambda,\mu}$ in $X$. Consequently, $\mathcal{E}_{\lambda,\mu}^+=\mathcal{I}_\lambda(v_{\lambda,\mu})$ where $\mathcal{E}_{\lambda,\mu}^+$ was given in \eqref{ee2}.
\end{prop}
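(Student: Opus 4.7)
My plan is to mirror the strategy of Proposition \ref{converg}, carefully adapted to the distinctive feature that for $v\in\mathcal{N}_{\lambda,\mu}^{+}$ the value $t=1$ is a local minimum (rather than a maximum) of the fibering map $\gamma_{v}(t)=\mathcal{I}_{\lambda,\mu}(tv)$. First, I would apply Proposition \ref{coercive} to secure boundedness of $(v_k)$ in $X$, extract a weakly convergent subsequence $v_k\rightharpoonup v_{\lambda,\mu}$, and rule out the trivial limit via Remark \ref{convzero}. Since $\overline{\mathcal{N}_{\lambda,\mu}^{+}}=\mathcal{N}_{\lambda,\mu}^{+}\cup\mathcal{N}_{\lambda,\mu}^{0}$ by Proposition \ref{nnfechada}, the entire remaining task reduces to establishing the strong convergence $v_k\to v_{\lambda,\mu}$ in $X$; from this, the membership $v_{\lambda,\mu}\in\mathcal{N}_{\lambda,\mu}^{+}\cup\mathcal{N}_{\lambda,\mu}^{0}$ and the identity $\mathcal{E}_{\lambda,\mu}^{+}=\mathcal{I}_{\lambda,\mu}(v_{\lambda,\mu})$ both follow by continuity.

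The strong convergence will be obtained by contradiction. Assuming $v_k\not\to v_{\lambda,\mu}$ in $X$, the compact embedding $X\hookrightarrow L^{q}(\R^{N})$ combined with \ref{Hi2} first yields $\|v_k\|_{q,a}^{q}\to\|v_{\lambda,\mu}\|_{q,a}^{q}$. Passing to a further subsequence so that $v_k\to v_{\lambda,\mu}$ a.e. in $\R^{N}$, Fatou's Lemma applied to the nonnegative integrands in $\mathcal{J}_{s,\Phi,V}'(v_k)(v_k)$ and to $|v_k|^{p}$, together with the Nehari identity $\mathcal{J}_{s,\Phi,V}'(v_k)(v_k)+\lambda\|v_k\|_{p}^{p}=\mu\|v_k\|_{q,a}^{q}$, gives
\[
R_n(v_{\lambda,\mu})=\frac{\mathcal{J}_{s,\Phi,V}'(v_{\lambda,\mu})(v_{\lambda,\mu})+\lambda\|v_{\lambda,\mu}\|_{p}^{p}}{\|v_{\lambda,\mu}\|_{q,a}^{q}}\leq\mu.
\]
The decisive step, which I expect to be the main obstacle, is to upgrade this to the strict inequality $R_n(v_{\lambda,\mu})<\mu$: saturation of the Fatou chain would, by the convexity and monotonicity consequences of \ref{Hip3} together with the $(S_+)$-property of Proposition \ref{S+}, force $v_k\to v_{\lambda,\mu}$ strongly in $X$, contradicting the standing assumption.

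Once $R_n(v_{\lambda,\mu})<\mu$ is in hand, Propositions \ref{propRn1} and \ref{propRn2} guarantee that $t=1$ lies strictly inside the interval $(\t_\mu^{-}(v_{\lambda,\mu}),\t_\mu^{+}(v_{\lambda,\mu}))$, hence $\t_\mu^{+}(v_{\lambda,\mu})>1$, and Proposition \ref{compar} places the projection $\t_\mu^{+}(v_{\lambda,\mu})\,v_{\lambda,\mu}$ in $\mathcal{N}_{\lambda,\mu}^{+}$. Since $\gamma_{v_{\lambda,\mu}}$ is strictly decreasing on $(\t_\mu^{-}(v_{\lambda,\mu}),\t_\mu^{+}(v_{\lambda,\mu}))$, I would close the argument with
\[
\mathcal{I}_{\lambda,\mu}(\t_\mu^{+}(v_{\lambda,\mu})v_{\lambda,\mu})<\mathcal{I}_{\lambda,\mu}(v_{\lambda,\mu})\leq\liminf_{k\to\infty}\mathcal{I}_{\lambda,\mu}(v_k)=\mathcal{E}_{\lambda,\mu}^{+},
\]
where the middle inequality uses the weak lower semicontinuity of $\mathcal{I}_{\lambda,\mu}$ (the convex contributions $\mathcal{J}_{s,\Phi,V}$ and $\|\cdot\|_{p}^{p}$ are wlsc while $\|\cdot\|_{q,a}^{q}$ is weakly continuous by the compact embedding and \ref{Hi2}). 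This directly contradicts the definition of $\mathcal{E}_{\lambda,\mu}^{+}$ as an infimum over $\mathcal{N}_{\lambda,\mu}^{+}$, forcing the strong convergence and completing the argument.
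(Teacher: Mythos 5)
Your overall architecture mirrors the paper's exactly: coercivity on $\mathcal{N}_{\lambda,\mu}$ gives boundedness, Remark \ref{convzero} rules out the trivial weak limit, Proposition \ref{nnfechada} reduces everything to strong convergence, and you close by projecting $v_{\lambda,\mu}$ onto $\mathcal{N}_{\lambda,\mu}^{+}$ at time $\t_\mu^{+}(v_{\lambda,\mu})>1$ and contradicting the infimum. That final chain is identical to the paper's.

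Where you diverge is the route to the key inequality $R_n(v_{\lambda,\mu})<\mu$: you apply Fatou directly to the Nehari identity $\mathcal{J}_{s,\Phi,V}'(v_k)(v_k)+\lambda\|v_k\|_p^p=\mu\|v_k\|_{q,a}^q$, while the paper invokes weak lower semicontinuity of $R_n$ (and, in an intermediate step you skip, weak \emph{upper} semicontinuity of $u\mapsto R_n'(u)u$ to place $\t(v_{\lambda,\mu})<1$ — a step that is in fact superfluous, since $R_n(v_{\lambda,\mu})<\mu$ already forces $\t_\mu^{-}(v_{\lambda,\mu})<1<\t_\mu^{+}(v_{\lambda,\mu})$ via Remark \ref{rmk1}). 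Your Fatou route is cleaner and elementary; the paper's route leans on the semicontinuity framework already built in Proposition \ref{N0}.

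The genuine gap is exactly where you flag it: upgrading $R_n(v_{\lambda,\mu})\leq\mu$ to strict inequality. You assert that saturation of Fatou would, ``together with the $(S_+)$-property,'' force strong convergence — but this does not follow as stated. Saturation only yields $\mathcal{J}_{s,\Phi,V}'(v_k)(v_k)\to\mathcal{J}_{s,\Phi,V}'(v_{\lambda,\mu})(v_{\lambda,\mu})$, whereas Proposition \ref{S+} requires $\limsup_k\mathcal{J}_{s,\Phi,V}'(v_k)(v_k-v_{\lambda,\mu})\leq 0$. The missing piece is the cross-term $\mathcal{J}_{s,\Phi,V}'(v_k)(v_{\lambda,\mu})\to\mathcal{J}_{s,\Phi,V}'(v_{\lambda,\mu})(v_{\lambda,\mu})$, which is a weak-convergence statement for the nonlinear integrand $\varphi(|D_s v_k|)D_s v_k$ in $L^{\widetilde\Phi}(\R^{2N},d\nu)$ and needs its own argument (a.e.\ convergence of $D_s v_k$ plus boundedness in $L^{\widetilde\Phi}$ and reflexivity under $\Delta_2$); ``convexity and monotonicity consequences of \ref{Hip3}'' does not deliver this by itself. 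To be fair, the paper's own chain at this point contains the logically vacuous assertion $\liminf_k R_n(v_k)<\limsup_k R_n(v_k)$ (both equal $\mu$ for a sequence in $\mathcal{N}_{\lambda,\mu}^{+}$), so the original proof obscures the same difficulty rather than resolving it; your version at least names the obstacle honestly, but it is not closed.
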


\begin{proof}
In virtue of Proposition \ref{coercive} the sequence $(v_k)_{k\in\mathbb{N}}$ is bounded. Hence, up to a subsequence, it follows that $v_k \rightharpoonup v_{\lambda,\mu} \neq 0$ in $X$ as $k \to +\infty$.  Let us prove the strong convergence $v_k\to v_{\lambda, \mu}$ in $X$. Assume by contradiction that $(v_k)_{k\in\mathbb{N}}$ does not converge to $v_{\lambda, \mu}$ in $X$. On the one hand, by using  Proposition \ref{N0} and that $u_k \in \mathcal{N}_{\lambda, \mu}^{+}$,  we obtain that 
\begin{equation*}
\Lambda_n(v_{\lambda,\mu}) \leq \liminf_{k \to +\infty} \Lambda_n(v_{k}) < \limsup_{k\to +\infty}\Lambda_n(v_{k}) = \limsup_{k \to +\infty} R_n(\t(v_k)v_{k})\leq\mu.
\end{equation*}
Then, by Proposition \ref{compar}, there exist unique $\t_{\mu}^{-}(v_{\lambda,\mu}) < \t(v_{\lambda,\mu}) < \t_{\mu}^{+}(v_{\lambda,\mu})$ such that $\t_{\mu}^{-}(u_{\lambda,\mu})u_{\lambda,\mu} \in \mathcal{N}^-_{\lambda,\mu}$ and $\t_{\mu}^{+}(u_{\lambda,\mu})u_{\lambda,\mu} \in \mathcal{N}^+_{\lambda,\mu}$. On the other hand, using that the functionals $u \mapsto R'_n(u)u$ is weakly upper semicontinuous, we deduce from Lemma \ref{der-Rn} that 
\begin{equation}\label{est1+}
\frac{d}{dt}R_n(tv_{\lambda,\mu})\big{|}_{t=1}=R'_n(v_{\lambda,\mu})v_{\lambda,\mu}\geq\limsup_{k\to+\infty}R'_n(v_k)v_k>\liminf_{k \to +\infty} R'_n(v_k)v_k\geq 0,
\end{equation}
Consequently, by using \eqref{est1+}, we also deduce that $\t(v_{\lambda,\mu}) < 1$. Now, using the fact that $R_n$ is weakly lower semicontinuous and $v_k \in \mathcal{N}_{\lambda, \mu}^{+}$, we infer that 
\begin{equation}\label{clau2+}
R_n(v_{\lambda,\mu})\leq\liminf_{k\to+\infty} R_n(v_k)<\limsup_{k\to+\infty}R_n(v_k)=\mu.
\end{equation}
Since $t\mapsto R_n(tv_{\lambda,\mu})$ is strictly increasing in $(\t(v_{\lambda,\mu}), +\infty)$ and $R_n(\t_{\mu}^{+}(v_{\lambda,\mu})v_{\lambda,\mu} )=\mu$, then inequality \eqref{clau2+} implies that $\t_\mu^{-}(v_{\lambda,\mu})<1<\t_\mu^{+}(u_{\lambda,\mu}) $. Hence, using that the fibering map $t\mapsto\mathcal{I} _{\lambda,\mu}(tv_{\lambda,\mu})$ is a strictly decreasing function in $(\t_\mu^{-}(v_{\lambda,\mu}), \t_\mu^{+}(v_{\lambda,\mu}))$, we conclude that 
\begin{equation*}
\mathcal{I}_{\lambda,\mu}(\t_\mu^{+}(v_{\lambda,\mu})v_{\lambda,\mu}) <	\mathcal{I}_{\lambda,\mu}(v_{\lambda,\mu})\leq\liminf_{k\to+\infty}	\mathcal{I}_{\lambda,\mu}(v_{k})=\mathcal{E}_{\lambda,\mu}^+.
\end{equation*}
This is a contradiction, which proves that $v_k\to v_{\lambda,\mu}$ in $X$ as $k\to+\infty$. Therefore, using the strong convergence, we conclude that $\mathcal{E}_{\lambda,\mu}^+= \mathcal{I}_\lambda(v_{\lambda,\mu})$.  It follows from Proposition \ref{limitacao} that $\mathcal{E}_{\lambda,\mu}^+ = \mathcal{I}_{\lambda,\mu}(v_{\lambda,\mu}) \geq D_\mu > 0$. Finally, according to Proposition \ref{nnfechada} we obtain also that $v_{\lambda,\mu}\in\mathcal{N}_{\lambda,\mu}^+\cup \mathcal{N}_{\lambda,\mu}^0$. This finishes the proof.
\end{proof}

From this point forward, our objective is to ensure that the functional $\mathcal{I}_{\lambda,\mu}$ has at least two critical points for each $\lambda > 0$ and $\mu \in (\mu_n(\lambda), +\infty)$. It is already known that $\mathcal{N}_{\lambda,\mu}^0$ is nonempty for each $\lambda >  0$ and $\mu \in (\mu_n(\lambda), + \infty)$. As a result, the minimizing sequences on the Nehari manifolds $\mathcal{N}_{\lambda,\mu}^-$ and $\mathcal{N}_{\lambda,\mu}^+$ may strongly converge to a function within $\mathcal{N}_{\lambda,\mu}^0$ where the Lagrange Multipliers Theorem does not apply. To overcome this phenomena, we explore some fine properties which are crucial in proving  that any minimizers of the functional $\mathcal{I}_{\lambda,\mu}$ when restricted to the Nehari manifold $\mathcal{N}_{\lambda,\mu}^-$ or $\mathcal{N}_{\lambda,\mu}^+$ do not belong to $\mathcal{N}_{\lambda,\mu}^0$.
As the first auxiliary result, we consider the following statememt:

\begin{lem}\label{l112}
Assume that \ref{Hip1}-\ref{Hip4}, \ref{Hi1}, \ref{Hi2},\ref{Hiv0}, \ref{Hiv1} and \ref{Hi2} hold. Let $\lambda>0$ and $\mu \in (\mu_n(\lambda), +\infty)$ be fixed.  Then,  the following assertions are true:
\begin{itemize}
\item[(i)] There holds that
\begin{equation*}
\mathcal{E}_{\lambda,\mu}^-:= \inf_{w \in \mathcal{U}_{\lambda,\mu}} \left[\sup_{t \in [0, \t(w)]} \mathcal{I}_{\lambda,\mu}(t w)\right]
\end{equation*}
\item[(ii)] There holds that
\begin{equation*}
\mathcal{E}_{\lambda,\mu}^+:= \inf_{w \in \mathcal{U}_{\lambda,\mu}} \left[\inf_{t \in [\t(w),+\infty)} \mathcal{I}_{\lambda,\mu}(t w) \right]
\end{equation*} 
\end{itemize}
\end{lem}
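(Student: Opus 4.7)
The plan is to exploit the full geometric picture of the fibering map $\gamma_w(t)=\mathcal{I}_{\lambda,\mu}(tw)$ established in Proposition \ref{compar}. For every $w\in\mathcal{U}_{\lambda,\mu}=\{v\in X\setminus\{0\}:\Lambda_n(v)<\mu\}$, the map $\gamma_w$ has exactly two critical points $0<\t_\mu^-(w)<\t(w)<\t_\mu^+(w)$, with $\t_\mu^-(w)w\in\mathcal{N}_{\lambda,\mu}^-$ a local maximum and $\t_\mu^+(w)w\in\mathcal{N}_{\lambda,\mu}^+$ a local minimum. In particular, $\gamma_w$ is strictly increasing on $(0,\t_\mu^-(w))$, strictly decreasing on $(\t_\mu^-(w),\t_\mu^+(w))$, and strictly increasing on $(\t_\mu^+(w),+\infty)$; combined with $\gamma_w(t)\to+\infty$ as $t\to+\infty$ (which follows from $\lambda>0$, $p>\max\{m,q\}$ and Lemma \ref{lemanaruV}), this fully locates the extrema on the relevant sub-intervals.

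For item (i), since $\t(w)\in(\t_\mu^-(w),\t_\mu^+(w))$, the sup on $[0,\t(w)]$ is attained at $\t_\mu^-(w)$. Therefore
$$
\sup_{t\in[0,\t(w)]}\mathcal{I}_{\lambda,\mu}(tw)=\mathcal{I}_{\lambda,\mu}(\t_\mu^-(w)w)\geq \mathcal{E}_{\lambda,\mu}^-,\qquad w\in\mathcal{U}_{\lambda,\mu},
$$
which gives the $\geq$ direction of the infimum identity. For the reverse, given any $u\in\mathcal{N}_{\lambda,\mu}^-$, I observe that $R_n(u)=\mu$ while $\Lambda_n(u)=\min_{t>0}R_n(tu)=R_n(\t(u)u)<\mu$, because $u$ corresponds to $\t_\mu^-(u)=1\neq\t(u)$ (cf.\ Propositions \ref{propRn2} and \ref{compar}). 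Hence $u\in\mathcal{U}_{\lambda,\mu}$ with $\t(u)>1$, so the sup on $[0,\t(u)]$ equals $\gamma_u(1)=\mathcal{I}_{\lambda,\mu}(u)$. Taking infimum over $u\in\mathcal{N}_{\lambda,\mu}^-$ yields the matching $\leq$ inequality and completes (i).

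Item (ii) is dealt with in a symmetric fashion. Because $\gamma_w$ decreases on $(\t(w),\t_\mu^+(w))$ and then strictly increases to $+\infty$ on $(\t_\mu^+(w),+\infty)$, the infimum on $[\t(w),+\infty)$ is realised at $\t_\mu^+(w)$, so
$$
\inf_{t\in[\t(w),+\infty)}\mathcal{I}_{\lambda,\mu}(tw)=\mathcal{I}_{\lambda,\mu}(\t_\mu^+(w)w)\geq \mathcal{E}_{\lambda,\mu}^+.
$$
The opposite inequality is obtained by choosing $w=v$ for $v\in\mathcal{N}_{\lambda,\mu}^+$, in which case $\t_\mu^+(v)=1>\t(v)$ (again because $R_n(v)=\mu>\Lambda_n(v)$, so $v\in\mathcal{U}_{\lambda,\mu}$), and the infimum reduces to $\mathcal{I}_{\lambda,\mu}(v)$. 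Passing to the infimum over $\mathcal{N}_{\lambda,\mu}^+$ closes the argument.

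I expect the routine obstacles to be small. The only points demanding care are: (a) verifying that every $u\in\mathcal{N}_{\lambda,\mu}^\pm$ lies in $\mathcal{U}_{\lambda,\mu}$ — this reduces to noting that on $\mathcal{N}_{\lambda,\mu}^\pm$ one has $R_n(u)=\mu$ but $u$ is not at the unique minimiser $\t(u)u$ of the fibering Rayleigh quotient, so $\Lambda_n(u)<\mu$; and (b) the coercivity statement $\gamma_w(t)\to+\infty$ needed to localise the minimum of $\gamma_w$ on $[\t(w),+\infty)$ at $\t_\mu^+(w)$ rather than at $+\infty$, which is controlled uniformly by the $\Delta_2$-estimate in Lemma \ref{lemanaruV} against the dominant $\frac{\lambda}{p}t^p\|w\|_p^p$ term.
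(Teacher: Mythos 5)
Your argument is correct and is precisely the fibering-map reasoning the paper outsources to \cite[Proposition 2.20]{SCGS}: using the three-piece monotonicity of $\gamma_w$ furnished by Proposition \ref{compar}, you collapse the inner $\sup$ (resp.\ $\inf$) to the value at $\t_\mu^-(w)$ (resp.\ $\t_\mu^+(w)$), and then recover the reverse inequality by observing that any $u\in\mathcal{N}_{\lambda,\mu}^\pm$ already lies in $\mathcal{U}_{\lambda,\mu}$ with normalisation $\t_\mu^\mp(u)=1$. Two tiny points worth polishing: the verification that $\mathcal{N}_{\lambda,\mu}^\pm\subset\mathcal{U}_{\lambda,\mu}$ is cleanest via Lemma \ref{der-Rn}, since $\mathcal{I}''_{\lambda,\mu}(u)(u,u)\neq 0$ forces $Q_n'(1)\neq 0$, hence $1\neq\t(u)$ and $\Lambda_n(u)=Q_n(\t(u))<Q_n(1)=\mu$; and the coercivity remark is dispensable, because strict increase on $(\t_\mu^+(w),\infty)$ already follows from having exactly two critical points, without needing $\gamma_w(t)\to+\infty$.
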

\begin{proof}

This follows the same ideas discussed in \cite[Proposition 2.20]{SCGS}.
\end{proof}

It is important to emphasize that the function $\Lambda_n$ defined in \eqref{m} depends of the parameter $\lambda$. In this case, we can consider the function $\lambda \mapsto \Lambda_{n,\lambda}(u)$ given by
$
\Lambda_{n,\lambda}(u)=R_{n,\lambda}(\t_{\lambda}(u)u)
$
for each $u\in X\setminus\{0\}$ fixed, where $\t_\lambda(u)$ is obtained in Proposition \ref{propRn2}. 
\begin{rmk}
A fundamental property for obtaining the results is the continuity and monotonicity of the function $\lambda \mapsto \t_\lambda(u)$. In \cite{SCGS}, this property is easily observed due to the availability of an explicit expression for this function. However, in the present work, we cannot derive such an expression explicitly. Therefore, by using the Implicit Function Theorem and more careful analysis, we prove that $\lambda \mapsto \t_\lambda(u)$ is also continuous for our setting. 
\end{rmk}
For this reason, we introduce the following result:
\begin{prop}\label{monotonic1}
Assume that \ref{Hip1}-\ref{Hip4}, \ref{Hi1}-\ref{Hi2}, \ref{Hiv0} and \ref{Hiv1} hold. Let $u\in X\setminus\{0\}$ be fixed. Then, the following assertions hold:

\begin{itemize}
\item[(i)] $\lambda \mapsto \Lambda_{n,\lambda}(u)$ is an increasing function.
\item[(ii)] $ \lambda \mapsto \t_\lambda(u)$ is a decreasing continuous function.
\item[(iii)] For each $\mu\in(\Lambda_{n,\lambda}(u), +\infty)$ fixed, the functions $\lambda \mapsto \t_{\lambda,\mu}^{\pm}(u)$ are of class $C^1$. Furthermore, $\lambda \mapsto \t_{\lambda,\mu}^{-}(u)$ is increasing while $\lambda \mapsto \t_{\lambda,\mu}^{+}(u)$ is decreasing.
\item[(iv)] For each $\lambda>0$ and $u\in\mathcal{U}_{\lambda,\mu}$ fixed, the functions $\mu\mapsto \t_{\lambda,\mu}^{\pm}(u)$ are class $C^1$. Moreover, $\mu \mapsto \t_{\lambda,\mu}^{-}(u)$ is decreasing while $\mu \mapsto \t_{\lambda,\mu}^{+}(u)$ is increasing.
\end{itemize}
\end{prop}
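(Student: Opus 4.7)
The overall strategy is to view all four statements as consequences of the monotone dependence of $R_{n,\lambda}(tu)$ on $\lambda$ and $\mu$, combined with the Implicit Function Theorem applied to the critical equations that define $\t_\lambda(u)$ and $\t_{\lambda,\mu}^{\pm}(u)$. The sign of the partial derivatives entering the IFT will be read off directly from the definition of $\mathcal{N}_{\lambda,\mu}^{\pm}$ and from the strict positivity underlying Lemma \ref{Sinc}.

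For item (i), the plan is to observe that for each fixed $t>0$ and $u\in X\setminus\{0\}$, the map $\lambda \mapsto R_{n,\lambda}(tu)$ is affine with strictly positive slope $\|tu\|_p^p/\|tu\|_{q,a}^q$. Hence, given $\lambda_1 < \lambda_2$, an elementary comparison with the test value $t=\t_{\lambda_2}(u)$ yields
$$\Lambda_{n,\lambda_1}(u) \leq R_{n,\lambda_1}(\t_{\lambda_2}(u)u) < R_{n,\lambda_2}(\t_{\lambda_2}(u)u) = \Lambda_{n,\lambda_2}(u).$$
For item (ii) I will apply the IFT to the defining equation obtained in the proof of Proposition \ref{propRn2}, namely
$$F(\lambda,t) := \mathcal{K}_u(t) + \lambda(p-q)\|u\|_p^p = 0,$$
with $\mathcal{K}_u$ as in \eqref{Kfunction}. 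A closer reading of Lemma \ref{Sinc} actually gives the sharper estimate $\Theta'(s) \geq (q-m)(p-m)\varphi(s)/s^{p-1} > 0$, so $\mathcal{K}_u'(t)>0$ for every $t>0$. Since also $\partial_\lambda F = (p-q)\|u\|_p^p > 0$, the IFT delivers $\lambda \mapsto \t_\lambda(u) \in C^1$ with
$$\frac{d}{d\lambda}\t_\lambda(u) = -\frac{(p-q)\|u\|_p^p}{\mathcal{K}_u'(\t_\lambda(u))} < 0,$$
giving both continuity and strict decrease.

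For items (iii) and (iv) the natural object is
$$G(\lambda,\mu,t) := \mathcal{I}_{\lambda,\mu}'(tu)(tu) = \mathcal{J}_{s,\Phi,V}'(tu)(tu) + \lambda \|tu\|_p^p - \mu\|tu\|_{q,a}^q,$$
whose positive zeros are exactly $\t_{\lambda,\mu}^{-}(u)$ and $\t_{\lambda,\mu}^{+}(u)$ whenever $\mu > \Lambda_{n,\lambda}(u)$. Since $G(\lambda,\mu,t) = t\gamma_u'(t)$ with $\gamma_u(t)=\mathcal{I}_{\lambda,\mu}(tu)$, at each zero $t=\t^{\pm}$ we have $\partial_t G = \t^{\pm} \gamma_u''(\t^{\pm})$, whose sign is strictly positive at $\t^{+}$ and strictly negative at $\t^{-}$ by the very definition of $\mathcal{N}_{\lambda,\mu}^{\pm}$. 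Coupling this with $\partial_\lambda G = \|tu\|_p^p > 0$ and $\partial_\mu G = -\|tu\|_{q,a}^q < 0$, the IFT provides the $C^1$-regularity together with
$$\frac{\partial \t_{\lambda,\mu}^{\pm}(u)}{\partial \lambda} = -\frac{(\t^{\pm})^{p-1}\|u\|_p^p}{\gamma_u''(\t^{\pm})}, \qquad \frac{\partial \t_{\lambda,\mu}^{\pm}(u)}{\partial \mu} = \frac{(\t^{\pm})^{q-1}\|u\|_{q,a}^q}{\gamma_u''(\t^{\pm})},$$
from which the four monotonicity assertions in (iii) and (iv) follow at once.

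The main obstacle, recurring throughout, is to ensure that the partial derivatives used in IFT are strictly nonzero. For $\gamma_u''(\t^{\pm})$ this comes for free from the definition of $\mathcal{N}_{\lambda,\mu}^{\pm}$, but for $\mathcal{K}_u'(t)$ one must upgrade the conclusion of Lemma \ref{Sinc} from bare strict monotonicity of $\Theta$ to strict positivity of $\Theta'$. This is exactly what the computation in the proof of that lemma provides, thanks to the strict inequalities $q>m$ and $p>m$ in \ref{Hi1}.
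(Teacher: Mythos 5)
Your proposal is correct and follows essentially the same route as the paper: item (i) by comparing at the test value $t=\t_{\lambda_2}(u)$ using the affine-in-$\lambda$ structure of $R_{n,\lambda}(tu)$, item (ii) via the strict monotonicity of $\mathcal{K}_u$ from Lemma \ref{Sinc}, and items (iii)--(iv) by the Implicit Function Theorem applied to $G(\lambda,\mu,t)=\mathcal{I}_{\lambda,\mu}'(tu)(tu)$ with the sign of $\partial_t G$ read off from the definition of $\mathcal{N}_{\lambda,\mu}^{\pm}$. The only (minor) variation is in (ii): the paper establishes continuity of $\lambda\mapsto\t_\lambda(u)$ by IFT and deduces monotone decrease separately from the identity $\mathcal{K}_u(\t_\lambda(u))=-\lambda(p-q)\|u\|_p^p$ together with the strict increase of $\mathcal{K}_u$, whereas you apply the IFT directly to this identity, which additionally requires noting (as you correctly do) that Lemma \ref{Sinc}'s proof actually yields $\Theta'(t)\geq (q-m)(p-m)\varphi(t)t^{1-p}>0$, so $\mathcal{K}_u'>0$; this gives $C^1$ regularity and strict decrease in one stroke and is a clean upgrade over the paper's phrasing.
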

\begin{proof}
$(i)$ Let $0<\lambda<\lambda_0$ be fixed. By direct calculation, we obtain that
$$
R_{n,\lambda}(tu)=R_{n,\lambda_0}(tu) + (\lambda-\lambda_0)\frac{\|tu\|_p^p}{\|tu\|_{q,a}^q}<R_{n,\lambda_0}(tu).
$$
for all $t>0$. In particular, $\Lambda_{n,\lambda}(u)< \Lambda_{n,\lambda_0}(u)$, which proves that $\lambda \mapsto \Lambda_{n,\lambda}(u)$ is an increasing function.

$(ii)$ The continiuty of $\lambda \mapsto t_\lambda(u)$ is obtained using the same ideas discussed in Proposition \ref{propRn2.1}  while the monotonicity it follows directly from  identity \eqref{equivR1} and hypothesis \ref{Hi2}.

$(iii)$ We consider the function $\mathcal{F}_u^{\pm}\colon (0,+\infty) \times (0,+\infty) \to \mathbb{R}$ defined by $\mathcal{F}_u^{\pm}(\lambda,t)= \mathcal{I}_{\lambda,\mu}'(tu)(tu)$. Then, since $\t_{\lambda,\mu}(u)u\in \mathcal{N}_{\lambda,\mu}$ for each $\lambda>0$ and $\mu>\Lambda_{n,\lambda}(u)$ fixed, we have that $\mathcal{F}_u^{\pm}(\lambda, \t_{\lambda,\mu}^{\pm}(u))=0$. Moreover, by Lemma \ref{der-Rn}, we obtain that
$$
\frac{\partial \mathcal{F}_u^{\pm}}{\partial t} (\lambda, \t_{\lambda,\mu}^{\pm}(u))= \mathcal{I}_{\lambda,\mu}''(\t_{\lambda,\mu}^{\pm}(u)u)(\t_{\lambda,\mu}^{\pm}(u)u,u)\neq0.
$$
Hence, it follows from Implicit Function Theorem that $(0,+\infty) \ni \lambda\mapsto\t_{\lambda,\mu}^{\pm}(u)$ is of class $C^1$ and 

\begin{equation*}
\frac{\partial}{\partial \lambda}\t_{\lambda,\mu}^{\pm}(u)= -\frac{\frac{\partial \mathcal{F}_u^{\pm}}{\partial \lambda}(\lambda, \t_{\lambda,\mu}^{\pm}(u))}{\frac{\partial \mathcal{F}_u^{\pm}}{\partial t}(\lambda,\t_{\lambda,\mu}^{\pm}(u))}= -\frac{\t_{\lambda,\mu}^{\pm}(u)\|\t_{\lambda,\mu}^{\pm}(u)u\|_p^p}{\mathcal{I}_{\lambda,\mu}''((\t_{\lambda,\mu}^{\pm}(u)u)(\t_{\lambda,\mu}^{\pm}(u)u, \t_{\lambda,\mu}^{\pm}(u)u)}, \quad \mbox{for all}\quad \lambda>0.
\end{equation*}
Therefore, $\frac{\partial}{\partial \lambda}\t_{\lambda,\mu}^{-}(u)>0$ and $\frac{\partial}{\partial \lambda}\t_{\lambda,\mu}^{+}(u)<0$ for all $\lambda>0$.  This proves the item $(iii)$.

$(iv)$ In the same way we can prove that the functions $\mu\mapsto \t_{\lambda,\mu}^{\pm}(u)$ are class $C^1$ and 
\begin{equation*}
\frac{\partial}{\partial \mu}\t_{\lambda,\mu}^{\pm}(u)=\frac{\t_{\lambda,\mu}^{\pm}(u)\|\t_{\lambda,\mu}^{\pm}(u)u\|_{q,a}^q}{\mathcal{I}_{\lambda,\mu}''((\t_{\lambda,\mu}^{\pm}(u)u)(\t_{\lambda,\mu}^{\pm}(u)u, \t_{\lambda,\mu}^{\pm}(u)u)}, \quad \mbox{for all}\quad \mu>\Lambda_{n,\lambda}(u),
\end{equation*}
which implies that $\frac{\partial}{\partial \mu}\t_{\lambda,\mu}^{-}(u)<0$ and $\frac{\partial}{\partial \mu}\t_{\lambda,\mu}^{+}(u)>0$ for all $\mu>\Lambda_{n,\lambda}(u)$. This finishes the proof.
\end{proof}
It follows from the Proposition \ref{monotonic1} the following result:
\begin{cor}\label{l44eu}
Assume that \ref{Hip1}-\ref{Hip4}, \ref{Hi1}, \ref{Hi2},\ref{Hiv0}, \ref{Hiv1} and \ref{Hi2} hold. Then, the functions $\mathcal{E}_{\lambda,\mu}^{\pm}: [0, +\infty) \times (\mu_n(\lambda), +\infty) \to \mathbb{R}$ defined in \eqref{ee1} satisfy the following properties:
\begin{itemize}
\item[(ii)] The functions $\lambda\mapsto\mathcal{E}_{\lambda,\mu}^{\pm}$ are non-decreasing, that is, it holds that  $\mathcal{E}_{\lambda_1,\mu}^{\pm} \leq \mathcal{E}_{\lambda_2, \mu}^{\pm}$ for each $\lambda_1 \in (0,\lambda_2)$ and $\mu > \mu_n(\lambda_2)$ fixed.
\item[(iii)] For each $\lambda>0$ fixed, the function $\mu\mapsto\mathcal{E}_{\lambda,\mu}^{\pm}$ are non-increasing, that is, it holds that  $\mathcal{E}_{\lambda, \mu_2}^{\pm} \leq \mathcal{E}_{\lambda, \mu_1}^\pm$ for each $\mu_1 <\mu_2$.
\end{itemize}
\end{cor}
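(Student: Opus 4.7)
The plan is to transport each minimizer between the Nehari submanifolds at different parameter values using the projection maps $\t_{\lambda,\mu}^\pm$, and to combine this with the two elementary pointwise monotonicities
\[
\frac{\partial \mathcal{I}_{\lambda,\mu}}{\partial \lambda}(u)=\tfrac{1}{p}\|u\|_p^p\geq 0,\qquad \frac{\partial \mathcal{I}_{\lambda,\mu}}{\partial \mu}(u)=-\tfrac{1}{q}\|u\|_{q,a}^q\leq 0.
\]
The key inputs are Propositions \ref{converg} and \ref{converg1}, which guarantee that $\mathcal{E}_{\lambda,\mu}^\pm$ is attained; Proposition \ref{monotonic1}(i),(iii)--(iv), giving the $\lambda$-/$\mu$-monotonicity of $\Lambda_{n,\lambda}$ and of $\t_{\lambda,\mu}^\pm$; and Proposition \ref{compar}, whose classification of the critical points of the fibering $\gamma_u(t)=\mathcal{I}_{\lambda,\mu}(tu)$ forces monotonicity of $\gamma_u$ on the interval between the local maximum at $\t_{\lambda,\mu}^-(u)$ and the local minimum at $\t_{\lambda,\mu}^+(u)$.

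I begin with the $\lambda$-monotonicity of $\mathcal{E}^-$. Fix $0<\lambda_1<\lambda_2$ and $\mu>\mu_n(\lambda_2)$. By Proposition \ref{monotonic1}(i), $\mu_n(\lambda_1)\leq \mu_n(\lambda_2)<\mu$, so both problems are well-posed and $\mathcal{U}_{\lambda_2,\mu}\subseteq \mathcal{U}_{\lambda_1,\mu}$. Choose $u_2\in \mathcal{N}_{\lambda_2,\mu}^-$ realizing $\mathcal{E}_{\lambda_2,\mu}^-$ (Proposition \ref{converg}), so that $\t_{\lambda_2,\mu}^-(u_2)=1$, and set $t_1:=\t_{\lambda_1,\mu}^-(u_2)$. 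By Proposition \ref{monotonic1}(iii), $t_1\leq 1$ and $t_1 u_2\in \mathcal{N}_{\lambda_1,\mu}^-$. The fibering $\gamma_2(t)=\mathcal{I}_{\lambda_2,\mu}(t u_2)$ satisfies $\gamma_2(0)=0$ and, by Proposition \ref{compar}, has $t=1$ as its unique critical point on $[0,1]$, which is a local maximum; hence $\gamma_2$ is strictly increasing on $[0,1]$. Combining this with the pointwise $\lambda$-monotonicity of $\mathcal{I}$ gives
\[
\mathcal{E}_{\lambda_1,\mu}^-\leq \mathcal{I}_{\lambda_1,\mu}(t_1 u_2)\leq \mathcal{I}_{\lambda_2,\mu}(t_1 u_2)=\gamma_2(t_1)\leq \gamma_2(1)=\mathcal{E}_{\lambda_2,\mu}^-.
\]
For $\mathcal{E}^+$ at the same $\lambda$'s, pick $v_2\in \mathcal{N}_{\lambda_2,\mu}^+$ realizing $\mathcal{E}_{\lambda_2,\mu}^+$ (Proposition \ref{converg1}) and set $t_1^+:=\t_{\lambda_1,\mu}^+(v_2)\geq 1$ by Proposition \ref{monotonic1}(iii). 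Since $\t_{\lambda_1,\mu}^-(v_2)\leq \t_{\lambda_2,\mu}^-(v_2)<1\leq t_1^+$, the value $t=1$ lies in the interval on which $\gamma_1(t)=\mathcal{I}_{\lambda_1,\mu}(t v_2)$ decreases from its local maximum at $\t_{\lambda_1,\mu}^-(v_2)$ to its local minimum at $t_1^+$ (Proposition \ref{compar}), so
\[
\mathcal{E}_{\lambda_1,\mu}^+\leq \gamma_1(t_1^+)\leq \gamma_1(1)=\mathcal{I}_{\lambda_1,\mu}(v_2)\leq \mathcal{I}_{\lambda_2,\mu}(v_2)=\mathcal{E}_{\lambda_2,\mu}^+.
\]

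The $\mu$-monotonicity statements are proved in exactly the same fashion. For fixed $\lambda>0$ and $\mu_n(\lambda)<\mu_1<\mu_2$, take minimizers $u_1\in \mathcal{N}_{\lambda,\mu_1}^-$ and $v_1\in \mathcal{N}_{\lambda,\mu_1}^+$ of $\mathcal{E}_{\lambda,\mu_1}^\pm$ and project them by setting $t_2:=\t_{\lambda,\mu_2}^-(u_1)\leq 1$ and $t_2^+:=\t_{\lambda,\mu_2}^+(v_1)\geq 1$, where the required signs now come from Proposition \ref{monotonic1}(iv). The very same chains of inequalities---this time using that $\mu\mapsto \mathcal{I}_{\lambda,\mu}(u)$ is nonincreasing---yield $\mathcal{E}_{\lambda,\mu_2}^\pm\leq \mathcal{E}_{\lambda,\mu_1}^\pm$.

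There is no genuine obstacle in this proof; it is a direct combination of tools already established in the preceding section. The only bookkeeping is to confirm on which side of $t=1$ each projection lands and to invoke the monotonicity of the fibering map on the corresponding interval between its local extrema, both of which are immediate consequences of Propositions \ref{compar} and \ref{monotonic1}.
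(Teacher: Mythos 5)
Your proof is correct and is essentially the expected argument (the paper itself only cites \cite{SCGS} and omits the details). The transport-and-compare scheme — take a minimizer at one parameter value, project it to the corresponding Nehari submanifold at the other parameter value via $\t_{\lambda,\mu}^\pm$, use the monotonicity of the projections from Proposition \ref{monotonic1} and the structure of the fibering map from Proposition \ref{compar} to control the intermediate comparison, and finish with the pointwise monotonicity of $\mathcal{I}_{\lambda,\mu}$ in $\lambda$ and $\mu$ — is the standard and correct route; all the inequalities in your chains check out, including the verification that the projections are well defined because $\Lambda_{n,\lambda}$ moves in the right direction so the minimizer at one parameter value lies in $\mathcal{U}_{\lambda',\mu'}$ at the other.
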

\begin{proof} 
The proof follows the same ideas discussed in \cite{SCGS}. For this reason, we omit the details.
\end{proof}

Now, we ensure that any minimizers for the functional $\mathcal{I}_{\lambda,\mu}$ is in the Nehari manifold $\mathcal{N}_{\lambda,\mu}^-$ or $\mathcal{N}_{\lambda,\mu}^+$. Hence, the energy functional $\mathcal{I}_{\lambda,\mu}$ does not admit any minimizer in $\mathcal{N}_{\lambda,\mu}^0$.

\begin{prop}\label{l112b}
Assume that \ref{Hip1}-\ref{Hip4}, \ref{Hi1}-\ref{Hi2}, \ref{Hiv0} and \ref{Hiv1} hold. Then, there exists $\lambda_* > 0$ such that the minimizer $u_{\lambda,\mu} \in \mathcal{N}_{\lambda,\mu}^- \cup \mathcal{N}_{\lambda,\mu}^0$  obtained in Proposition \ref{converg} belongs to $\mathcal{N}_{\lambda,\mu}^-$ for each $\lambda \in (0, \lambda_*)$ and $\mu \in (\mu_n(\lambda), +\infty)$ be fixed. Furthermore, $u_{\lambda,\mu}$ is a critical point for the functional $\mathcal{I}_{\lambda,\mu}$. 
\end{prop}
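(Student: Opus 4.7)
The argument proceeds by contradiction. Suppose that $u_{\lambda,\mu}\in\mathcal{N}_{\lambda,\mu}^{0}$. Then Remark \ref{remark1} forces
$\mathcal{E}_{\lambda,\mu}^{-}=\mathcal{I}_{\lambda,\mu}(u_{\lambda,\mu})\geq \mathcal{E}_{\lambda,\mu}^{0}$, where $\mathcal{E}_{\lambda,\mu}^{0}:=\inf_{v\in\mathcal{N}_{\lambda,\mu}^{0}}\mathcal{I}_{\lambda,\mu}(v)$. The plan is to rule this out for small $\lambda$ by proving that $\mathcal{E}_{\lambda,\mu}^{0}\to+\infty$ as $\lambda\to 0^{+}$ while $\mathcal{E}_{\lambda,\mu}^{-}$ stays uniformly bounded above. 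Once the contradiction is reached, the conclusion that $u_{\lambda,\mu}$ is a critical point of $\mathcal{I}_{\lambda,\mu}$ on $X$ follows immediately from Lemma \ref{criticalpoint}.

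For the divergence of $\mathcal{E}_{\lambda,\mu}^{0}$, I start from \eqref{impor}: for $u\in\mathcal{N}_{\lambda,\mu}^{0}$ one has $\mathcal{I}''_{\lambda,\mu}(u)(u,u)=0$. Applying the upper estimate $\varphi'(t)t\leq(m-2)\varphi(t)$ coming from \ref{Hip3} to both integral terms, and then the lower estimate $\ell\Phi(t)\leq\varphi(t)t^{2}$, leads to
$$
\ell(q-m)\,\mathcal{J}_{s,\Phi,V}(u)\;\leq\;\lambda(p-q)\,\|u\|_{p}^{p}.
$$
By Proposition \ref{fechada}(i), $\|u\|\geq c_{\mu}>0$ uniformly in $\lambda$, so Lemma \ref{lemanaruV} gives $\mathcal{J}_{s,\Phi,V}(u)\geq\min\{c_{\mu}^{\ell},c_{\mu}^{m}\}=:A_{\mu}>0$ independently of $\lambda$. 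Inserting the Sobolev estimate $\|u\|_{p}^{p}\leq S_{p}^{p}\|u\|^{p}$ from Lemma \ref{compact1} yields $\|u\|^{p}\geq \frac{\ell(q-m)A_{\mu}}{\lambda(p-q)S_{p}^{p}}$, so $\|u\|\to+\infty$ as $\lambda\to 0^{+}$ uniformly over $\mathcal{N}_{\lambda,\mu}^{0}$. The coercivity estimate from Proposition \ref{limitacao} then produces $\mathcal{I}_{\lambda,\mu}(u)\geq\frac{p(q-m)-m(q-\ell)}{pq}\min\{\|u\|^{\ell},\|u\|^{m}\}\to+\infty$, and hence $\mathcal{E}_{\lambda,\mu}^{0}\to+\infty$.

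For the uniform upper bound on $\mathcal{E}_{\lambda,\mu}^{-}$, I fix some $\lambda_{0}>0$ and, using $\mu>\mu_{n}(\lambda_{0})$, pick $w\in X\setminus\{0\}$ with $\Lambda_{n,\lambda_{0}}(w)<\mu$. The monotonicity $\lambda\mapsto\Lambda_{n,\lambda}(w)$ established in Proposition \ref{monotonic1}(i) implies $w\in\mathcal{U}_{\lambda,\mu}$ for every $\lambda\in(0,\lambda_{0}]$. Proposition \ref{compar} then produces $t_{\lambda,\mu}^{-}(w)w\in\mathcal{N}_{\lambda,\mu}^{-}$, and Proposition \ref{monotonic1}(iii) guarantees $t_{\lambda,\mu}^{-}(w)\leq t_{\lambda_{0},\mu}^{-}(w)$ for all such $\lambda$. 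Consequently
$$
\mathcal{E}_{\lambda,\mu}^{-}\;\leq\;\mathcal{I}_{\lambda,\mu}\bigl(t_{\lambda,\mu}^{-}(w)w\bigr)\;\leq\;C_{0}=C_{0}(w,\lambda_{0},\mu),
$$
uniformly for $\lambda\in(0,\lambda_{0})$.

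Combining the two estimates, there exists $\lambda_{*}\in(0,\lambda_{0})$ with $\mathcal{E}_{\lambda,\mu}^{0}>C_{0}\geq\mathcal{E}_{\lambda,\mu}^{-}$ for every $\lambda\in(0,\lambda_{*})$, contradicting $u_{\lambda,\mu}\in\mathcal{N}_{\lambda,\mu}^{0}$. Therefore $u_{\lambda,\mu}\in\mathcal{N}_{\lambda,\mu}^{-}$ for $\lambda\in(0,\lambda_{*})$, and Lemma \ref{criticalpoint} upgrades this constrained minimizer to a genuine critical point of $\mathcal{I}_{\lambda,\mu}$. The main technical obstacle is the divergence step: because \ref{Hip3} provides only implicit two-sided control on $\varphi'(t)t/\varphi(t)$, the blow-up $\|u\|\to+\infty$ on $\mathcal{N}_{\lambda,\mu}^{0}$ as $\lambda\to 0^{+}$ cannot be read off any explicit formula and must be extracted purely through modular estimates combined with the uniform bound $\|u\|\geq c_{\mu}$; the structural inequality $m(q-\ell)<p(q-m)$ in \ref{Hi1} is precisely the ingredient that makes the coercivity constant in Proposition \ref{limitacao} strictly positive, which is essential for the argument to close.
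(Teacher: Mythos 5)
Your proposal is correct and follows the same basic strategy as the paper: argue by contradiction, exploit the fact that functions in $\mathcal{N}_{\lambda,\mu}^0$ must have large norm once $\lambda$ is small, and play that off against the coercivity estimate from Proposition \ref{limitacao} combined with a $\lambda$-uniform upper bound on $\mathcal{E}_{\lambda,\mu}^-$. The key inequality you extract from \eqref{impor} together with \ref{Hip3}, namely $\ell(q-m)\,\mathcal{J}_{s,\Phi,V}(u)\leq\lambda(p-q)\|u\|_p^p$ on $\mathcal{N}_{\lambda,\mu}^0$, is exactly the engine behind the paper's estimate \eqref{au}.

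The main difference is one of bookkeeping. The paper keeps the quantity $\min\{\|u_{\lambda,\mu}\|^\ell,\|u_{\lambda,\mu}\|^m\}$ on both sides: it bounds it from \emph{above} by $\frac{pq}{p(q-m)-m(q-\ell)}\mathcal{E}_{\lambda_0,\mu_0}^-$ using coercivity and Corollary \ref{l44eu}, then combines with \eqref{au} to produce the explicit inequalities \eqref{cnt1}--\eqref{cnt2}, from which $\lambda_*$ can be written down in closed form. You instead bound $\mathcal{J}_{s,\Phi,V}(u)$ from \emph{below} by the constant $A_\mu=\min\{c_\mu^\ell,c_\mu^m\}$ (via Proposition \ref{fechada}(i) and Lemma \ref{lemanaruV}) and then argue asymptotically that $\mathcal{E}_{\lambda,\mu}^0\to+\infty$. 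This is slightly cleaner conceptually but yields $\lambda_*$ only implicitly rather than via a formula; for the upper bound on $\mathcal{E}_{\lambda,\mu}^-$ you reconstruct what the paper already has packaged as Corollary \ref{l44eu} by testing with a fixed $w$. Both routes give a $\lambda_*$ that, strictly speaking, depends on the auxiliary pair $(\lambda_0,\mu_0)$ (or on $w$ and $\mu$ in your version); that dependency on $\mu$ is present in the paper's argument too and is not something introduced by your approach. Two small notational quibbles: the inequality $\mathcal{E}_{\lambda,\mu}^-\geq\mathcal{E}_{\lambda,\mu}^0$ comes from the hypothesis $u_{\lambda,\mu}\in\mathcal{N}_{\lambda,\mu}^0$ together with Proposition \ref{converg}, not from Remark \ref{remark1} (which gives the reverse inequality); and when you invoke Lemma \ref{lemanaruV} you should note that $t\mapsto\min\{t^\ell,t^m\}$ is increasing so that $\|u\|\geq c_\mu$ really does propagate. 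Neither affects correctness.
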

\begin{proof}
By Proposition \ref{converg}, there exists $u_{\lambda,\mu} \in \mathcal{N}_{\lambda,\mu}^- \cup \mathcal{N}_{\lambda,\mu}^0$ such that 
$$
\mathcal{E}_{\lambda, \mu}^- = \mathcal{I}_{\lambda,\mu}(u_{\lambda,\mu}) =\inf_{w \in\mathcal{N}_{\lambda,\mu}^-}\mathcal{I}_{\lambda,\mu}(w).
$$ 
Arguing by contradiction, we assume that $u_{\lambda,\mu} \in\mathcal{N}_{\lambda,\mu}^0$ for $\lambda > 0$ and $\mu > \mu_n(\lambda)$. It follows from Remark \ref{remark1} that 
$$
\mathcal{I}_{\lambda,\mu}(u_{\lambda,\mu}) = \inf_{w \in\mathcal{N}_{\lambda,\mu}^-}\mathcal{I}_{\lambda,\mu}(w) \leq \inf_{w \in\mathcal{N}_{\lambda,\mu}^0}\mathcal{I}_{\lambda,\mu}(w) \leq \mathcal{I}_{\lambda,\mu}(u_{\lambda,\mu}).
$$
As a consequence, we obtain that 
\begin{equation*}
\mathcal{I}_{\lambda,\mu}(u_{\lambda,\mu}) = \inf_{w \in\mathcal{N}_{\lambda,\mu}^-}\mathcal{I}_{\lambda,\mu}(w) = \inf_{w \in\mathcal{N}_{\lambda,\mu}^0} \mathcal{I}_{\lambda,\mu}(w).
\end{equation*} 
On the other hand, by using the fact that $u_{\lambda,\mu} \in \mathcal{N}_{\lambda,\mu}^0$ and arguing as in \eqref{coercive1}, we also have that  
\begin{equation}\label{ma4}
\mathcal{I}_{\lambda,\mu}(u_{\lambda,\mu}) \geq {\frac{p(q-m)-m(q-\ell)}{pq}}	\min\{\|u_{\lambda,\mu}\|^\ell, \|u_{\lambda,\mu}\|^m\}.
\end{equation}
Let $\lambda_0 > 0$ and $\mu_0 \in(\mu_n(\lambda_0), \mu)$ be fixed. Since $\mathcal{E}_{\lambda_0,\mu_0}^- \geq \mathcal{E}_{\lambda,\mu}^-$ for all $\mu \geq \mu_0$ and $\lambda \in (0, \lambda_0)$ by Corollary \ref{l44eu}, we deduce that 
\begin{equation}\label{limitation}
\min\{\|u_{\lambda,\mu}\|^\ell, \|u_{\lambda,\mu}\|^m\} \leq {\dfrac{pq}{p(q -m)-m(q -\ell)} \mathcal{E}_{\lambda_0,\mu_0}^-}.
\end{equation}
Moreover, by using \eqref{impor} and \ref{Hip3} together with the Lemma \ref{lemanaruV}, we obtain that
$$
\begin{aligned}
\lambda(p-q)\|w\|^p_p &\geq \iint_{\R^{N}\times\R^N}(q-m) \phi(|D_sw|) |D_sw|^2  d\nu+ \int_{\R^{N}} V(x) (q-m) \phi(|w|) |w|^2  dx\\
& \geq  (q-m)\min\{\|w\|^\ell,\|w\|^m\},
\end{aligned}
$$
for all $w \in \mathcal{N}^0_{\lambda,\mu}$. Now, using the Sobolev embedding $X\hookrightarrow L^p(\mathbb{R}^N)$, we can see that 
$
\lambda (p-q) ||w||_p^p \leq \lambda (p-q) S_p^p ||w||^p.
$
The two last inequalities give us  
\begin{equation}\label{au}
||w||^p \geq \dfrac{q - m}{\lambda (p-q)S_p^p} \min\{\|w\|^\ell,\|w\|^m\}, \quad \mbox{for all}\;\; w \in \mathcal{N}_{\lambda,\mu}^0.
\end{equation}
Hence, by using \eqref{limitation} and \eqref{au}, we deduce that
\begin{equation}\label{cnt1}
\left[\dfrac{q - m}{\lambda (p-q)S_p^p}\right]^{1/(p -\ell)}\leq \left[ \dfrac{pq}{p(q -m)-m(q -\ell)} \mathcal{E}_{\lambda_0,\mu_0}^-\right]^{\frac{1}{\ell}}.
\end{equation}
or 
\begin{equation}\label{cnt2}
\left[\dfrac{q - m}{\lambda (p-q)S_p^p}\right]^{1/(p -m)}\leq \left[ \dfrac{pq}{p(q -m)-m(q -\ell)} \mathcal{E}_{\lambda_0,\mu_0}^-\right]^{\frac{1}{m}}
\end{equation}
hold true for $\lambda \in (0, \lambda_0)$. In particular,  inequalities \eqref{cnt1} and \eqref{cnt2} are satisfied for all $\lambda \in (0, \lambda_*)$ with $\lambda_* > 0$ given by
\begin{equation*}
\lambda_* = \min \left\lbrace\left[\dfrac{p(q-m)-m(q-\ell)}{pq \mathcal{E}_{\lambda_0,\mu_0}^-}\right]^{(p-\ell)/\ell} \dfrac{(q- m)}{(p - q)S_p^p}, \left[\dfrac{p(q-m)-m(q-\ell)}{pq \mathcal{E}_{\lambda_0,\mu_0}^-}\right]^{(p-m)/m} \dfrac{(q- m)}{(p - q)S_p^p}, \lambda_0\right\rbrace,
\end{equation*}
and we have a contradiction establishing that $u_{\lambda,\mu}$ belongs to $\mathcal{N}_{\lambda,\mu}^-$ for each $\lambda \in (0, \lambda_*)$  and $\mu > \mu_n(\lambda)$. Therefore, by using Lemma \ref{criticalpoint}, we conclude that  $u_{\lambda,\mu}$ is a critical point for $\mathcal{I}_{\lambda, \mu}$ whenever $\lambda \in (0, \lambda_*)$ and $\mu \in (\mu_n(\lambda), +\infty)$. 
\end{proof}

The next results states that, depending on the values of the parameters $\lambda$ and $\mu$, the energy level $\mathcal{E}_{\lambda,\mu}$ can be negative, zero or positive. 

\begin{prop}\label{l1123}
Assume that \ref{Hip1}-\ref{Hip4}, \ref{Hi1}-\ref{Hi2}, \ref{Hiv0} and \ref{Hiv1} hold. Suppose also that $\lambda>0$ and $\mu > \mu_n(\lambda)$. Let $v_{\lambda,\mu} \in X$ be solution obtained in the Proposition \ref{converg1}. Then, the following assertions are satisfied: 

\begin{itemize}
\item[(i)] Assume that $\mu \in (\mu_n(\lambda), \mu_e(\lambda))$. Then, $\mathcal{E}_{\lambda,\mu}^+ = \mathcal{I}_{\lambda, \mu}(v_{\lambda,\mu}) > 0$.
\item[(ii)] Assume that $\mu = \mu_e(\lambda)$. Then, $\mathcal{E}_{\lambda,\mu}^+ = \mathcal{I}_{\lambda, \mu}(v_{\lambda,\mu}) = 0$.
\item[(iii)] Assume that $\mu \in (\mu_e(\lambda), +\infty)$. Then, $\mathcal{E}_{\lambda,\mu}^+ = \mathcal{I}_{\lambda, \mu}(v_{\lambda, \mu}) < 0$.
\end{itemize}
\end{prop}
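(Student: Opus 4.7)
The proof naturally splits into three cases, each driven by the correspondence in Remark \ref{rmk11} between the sign of $\mathcal{I}_{\lambda,\mu}(tu)$ and the comparison of $R_e(tu)$ with $\mu$, combined with the variational characterization $\Lambda_e(u)=\min_{t>0}R_e(tu)$ and $\mu_e(\lambda)=\inf_{u\neq 0}\Lambda_e(u)$. Throughout, I use the fact established in Proposition \ref{converg1} that $\mathcal{E}_{\lambda,\mu}^{+}=\mathcal{I}_{\lambda,\mu}(v_{\lambda,\mu})$ with $v_{\lambda,\mu}\in\mathcal{N}_{\lambda,\mu}^{+}\cup\mathcal{N}_{\lambda,\mu}^{0}$.

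For item $(i)$, the argument is immediate: since $v_{\lambda,\mu}\neq 0$, we obtain
$$
R_e(v_{\lambda,\mu})\geq \Lambda_e(v_{\lambda,\mu})\geq \mu_e(\lambda)>\mu,
$$
and Remark \ref{rmk11} then yields $\mathcal{I}_{\lambda,\mu}(v_{\lambda,\mu})>0$.

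For item $(iii)$, I invoke the minimizer $u_\ast$ produced by Proposition \ref{Lambdae}(iv), for which $\Lambda_e(u_\ast)=\mu_e(\lambda)<\mu$. By Lemma \ref{monotonic} one has $\Lambda_n(u_\ast)<\Lambda_e(u_\ast)<\mu$, so Proposition \ref{compar}(ii) applies and furnishes $\t_\mu^{+}(u_\ast)$ with $\t_\mu^{+}(u_\ast)u_\ast\in\mathcal{N}_{\lambda,\mu}^{+}$ satisfying $\mathcal{I}_{\lambda,\mu}(\t_\mu^{+}(u_\ast)u_\ast)<0$ (this strict inequality is precisely the part of Proposition \ref{compar}(ii) that requires $\mu>\Lambda_e(u_\ast)$). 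Using this as a test element in the infimum defining $\mathcal{E}_{\lambda,\mu}^{+}$ gives $\mathcal{I}_{\lambda,\mu}(v_{\lambda,\mu})=\mathcal{E}_{\lambda,\mu}^{+}\leq \mathcal{I}_{\lambda,\mu}(\t_\mu^{+}(u_\ast)u_\ast)<0$.

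For item $(ii)$, the lower bound $\mathcal{E}_{\lambda,\mu}^{+}\geq 0$ follows exactly as in $(i)$: for any $u\in\mathcal{N}_{\lambda,\mu}^{+}\cup\mathcal{N}_{\lambda,\mu}^{0}$ we have $R_e(u)\geq\Lambda_e(u)\geq\mu_e(\lambda)=\mu$, hence $\mathcal{I}_{\lambda,\mu}(u)\geq 0$ by Remark \ref{rmk11}. For the matching upper bound I again take $u_\ast$ with $\Lambda_e(u_\ast)=\mu_e(\lambda)=\mu$, and exploit the identity $R_n(tu)-R_e(tu)=\tfrac{t}{q}\tfrac{d}{dt}R_e(tu)$ derived at the start of Lemma \ref{monotonic}. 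Evaluating at $t=\s(u_\ast)$ (where $\tfrac{d}{dt}R_e(tu_\ast)$ vanishes by Proposition \ref{propRe2}) gives $R_n(\s(u_\ast)u_\ast)=R_e(\s(u_\ast)u_\ast)=\Lambda_e(u_\ast)=\mu$, so $\s(u_\ast)u_\ast\in\mathcal{N}_{\lambda,\mu}$ and $\mathcal{I}_{\lambda,\mu}(\s(u_\ast)u_\ast)=0$ by Remark \ref{rmk11}. Since $\s(u_\ast)>\t(u_\ast)$ by Lemma \ref{monotonic}, the uniqueness statement of Proposition \ref{compar} forces $\s(u_\ast)=\t_\mu^{+}(u_\ast)$, and therefore $\s(u_\ast)u_\ast\in\mathcal{N}_{\lambda,\mu}^{+}$. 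This yields $\mathcal{E}_{\lambda,\mu}^{+}\leq 0$, closing the argument.

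The only delicate step is the boundary case in $(ii)$: one needs the projection $\s(u_\ast)u_\ast$ to land in $\mathcal{N}_{\lambda,\mu}^{+}$ (and not in $\mathcal{N}_{\lambda,\mu}^{-}$ or merely in the Nehari manifold as a set), since the validity of the upper bound for $\mathcal{E}_{\lambda,\mu}^{+}$ depends on having a genuine competitor in $\overline{\mathcal{N}_{\lambda,\mu}^{+}}$. The identification $\s(u_\ast)=\t_\mu^{+}(u_\ast)$ via the strict ordering $\t(u)<\s(u)$ from Lemma \ref{monotonic}, together with the vanishing of $\tfrac{d}{dt}R_e$ at $\s(u_\ast)$ that forces $R_n=R_e=\mu$ there, is exactly what resolves this point.
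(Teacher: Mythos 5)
Your proposal is correct and follows essentially the same strategy as the paper: case-split on the position of $\mu$ relative to $\mu_e(\lambda)$, translate sign conditions on $\mathcal{I}_{\lambda,\mu}$ into comparisons of $R_e$ with $\mu$ via Remark \ref{rmk11}, and use the $\Lambda_e$-minimizer from Proposition \ref{Lambdae} as a competitor in $\mathcal{N}_{\lambda,\mu}^+$. A few execution differences are worth flagging. In item (i), you read the sign directly off $R_e(v_{\lambda,\mu})\geq\Lambda_e(v_{\lambda,\mu})\geq\mu_e(\lambda)>\mu$ and Remark \ref{rmk11}; the paper instead proves the stronger uniform estimate $\mathcal{I}_{\lambda,\mu}(u)=\tfrac{R_e(u)-\mu}{q}\|u\|_{q,a}^q\geq\tfrac{\mu_e(\lambda)-\mu}{\mu q}\min\{\|u\|^{\ell},\|u\|^{m}\}\geq C_\mu>0$ for all $u\in\mathcal{N}_{\lambda,\mu}$. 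That extra work is not needed for the statement as phrased, but the resulting estimate is reused verbatim in the proof of Proposition \ref{l1122}(iii), so your leaner argument would require the paper to re-derive the bound later. In item (ii), the paper first invokes differentiability of $\Lambda_e$ to get $R_e'(u_e)=0$, concluding $u_e$ is a critical point of $\mathcal{I}_{\lambda,\mu}$ with zero energy, and only then shows $u_e\in\mathcal{N}_{\lambda,\mu}^+$; you bypass the criticality detour and go directly to the identification $\s(u_\ast)=\t_\mu^{+}(u_\ast)$ via the identity $R_n(tu)-R_e(tu)=\tfrac{t}{q}\tfrac{d}{dt}R_e(tu)$ and the strict ordering $\t(u)<\s(u)$ from Lemma \ref{monotonic} — which is precisely the second half of the paper's argument. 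Both routes yield $\mathcal{E}_{\lambda,\mu}^+\leq 0$, and your version is shorter because the criticality of $u_e$ is not in fact needed for this item. Item (iii) is essentially identical to the paper's.
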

\begin{proof}
$(i)$ Assume that $\mu_n(\lambda)\!<\!\mu\!<\!\mu_e(\lambda)$ and let $u\in \mathcal{N}_{\lambda,\mu}$ be fixed. By definition of $\mu_e$, it can be seen that $\mu<\mu_e(\lambda)\leq R_e(u)$. Moreover, for each $ u \in \mathcal{N}_{\lambda, \mu}$, we have by assumption \ref{Hip3} that
$$
\mu\|u\|_{q,a}^q \geq \ell \mathcal{J}_{s,\Phi,V}(u) + \lambda \|u\|_p^p >\min\{\|u\|^\ell,\|u\|^m\}. 
$$
On the other hand, since $\mathcal{I}_{\lambda,R_e(u)}(u) = 0$, for each $u \in \mathcal{N}_{\lambda,\mu}$, it follows from Proposition \ref{fechada} that 
$$\mathcal{I}_{\lambda,\mu}(u)=\frac{R_e(u)-\mu}{q}||u||_{q,a}^q\geq\;\frac{\mu_e(\lambda)-\mu}{q}||u||_{q,a}^q\geq\;\frac{\mu_e(\lambda)-\mu}{\mu q}\min\{\|u\|^\ell,\|u\|^m\}\; \geq C_\mu, $$
where $C_\mu > 0$. Therefore, 
$$\mathcal{E}_{\lambda,\mu}^+= \inf\limits_{u \in \mathcal{N}^+_{\lambda, \mu}}\mathcal{I}_{\lambda, \mu}(u)\;>\; 0.$$
This ends the proof for the item $(i)$. 

$(ii)$ Assume that $\mu=\mu_e(\lambda)$. Using the Proposition \ref{Lambdae} we can consider $u_e\in X$ such that $\Lambda_e(u_e)=\mu_e(\lambda)$. Since $\Lambda_e(tu_e)=\Lambda_e(u_e)$ for all $t>0$, we can suppose without any loss of generality that $\s(u_e)=1$, that is, 
$$
\Lambda_e(u_e)=R_e(u_e)=\inf\limits_{w \in X \setminus \{0\}}\Lambda_e(w).
$$ 
Then, since $\Lambda_e$ is differentiable, we have that $R_e^{\prime}(u_e)v=0$ for all $v\in X$. By using $(\ref{a11i})$ and Lemma \ref{importante},  we obtain that $u_e$ is a critical point for $\mathcal{I}_{\lambda, \mu}$ with zero energy.
Furthermore, the Lemma \ref{monotonic} give us that 
$$
\Lambda_n(u_e)<\Lambda_e(u_e)=R_e(u_e)=\mu_e(\lambda) =\mu,
$$
which implies that $u_e \in \mathcal{U}_{\lambda,\mu}$. Thus, by Proposition \ref{compar}, there exist $0 < \t_{\mu}^{-}(u_e) < \t(u_e) < \t_{\mu}^{+}(u_e)$ such that $\t_{\mu}^{-}(u_e) u_e  \in \mathcal{N}_{\lambda,\mu}^-$ and $\t_{\mu}^{+}(u_e) u_e \in \mathcal{N}_{\lambda,\mu}^+$. Since $\t_{\mu}^{-}(u_e)$ and $\t_{\mu}^{+}(u_e)$ are the only roots of the equation $R_n(tu)=\mu$ and $ R_n(u_e) = R_e(u_e) =\mu_e(\lambda) = \mu$, it follows from Lemma \ref{monotonic} that $\t_{\mu_{e}}^{-}(u_e)< \t_{\mu_{e}}^{+}(u_e) = \s(u_e) = 1$.  As a consequence, we obtain that $u_e \in \mathcal{N}_{\lambda,\mu}^+$. Thence, using this fact we deduce that  $\mathcal{E}_{\lambda,\mu}^+ = \mathcal{I}_{\lambda,\mu}(v_{\lambda,\mu})\leq \mathcal{I}_{\lambda,\mu}(u_e)= 0$. On the other side, thanks to Remark \ref{rmk11} and the fact that  $\mu=\mu_e(\lambda)\leq R_e(v)$, we also have that $\mathcal{I}_{\lambda,\mu}(v) \geq 0$, for all $v\in X\setminus\{0\}$. This finishes the proof of item $(ii)$. 

$(iii)$ Assume that $\mu>\mu_e=R_e(u_e)$. Since $\mu \mapsto \mathcal{I}_{\lambda,\mu}(u)$ is a strictly decreasing function for each $\lambda>0$ and $u \in X \setminus \{0\}$ fixed, then $\mathcal{I}_{\lambda,\mu}(u_e)<\mathcal{I}_{\lambda,R_e(u_e)}(u_e)=0$. On the other hand, since $\Lambda_n(u_e) = R_n(u_e) = R_e(u_e) = \mu_e(\lambda) < \mu$, we infer that $u_e \in \mathcal{U}_{\lambda,\mu}$. Hence, by Proposition \ref{compar} there exists $\t_{\mu}^{+}(u_e) \in (0, +\infty)$ such that $\t_{\mu}^{+}(u_e)u_e \in \mathcal{N}_{\lambda,\mu}^+$ and  
\begin{equation*}
\mathcal{E}_{\lambda,\mu}^+ \leq \mathcal{I}_{\lambda,\mu}(\t_{\mu}^{+}(u_e) u_e) = \inf_{t>0} \mathcal{I}_{\lambda,\mu}(tu_e) \leq \mathcal{I}_{\lambda,\mu}(u_e) < 0.
\end{equation*}
This finishes the proof.
\end{proof}

\begin{prop}\label{l1122}
Assume that \ref{Hip1}-\ref{Hip4}, \ref{Hi1}-\ref{Hi2}, \ref{Hiv0} and \ref{Hiv1} hold. Let $\lambda>0$ and $\mu > \mu_n(\lambda)$ be fixed.  Then, there exists $\lambda^* > 0$ such that the minimizer $v_{\lambda,\mu} \in \mathcal{N}_{\lambda,\mu}^+ \cup \mathcal{N}_{\lambda,\mu}^0$ obtained in Proposition \ref{converg1} belongs to $\mathcal{N}_{\lambda,\mu}^+$. Furthermore, $v_{\lambda,\mu}$ is a critical point for the functional $\mathcal{I}_{\lambda,\mu}$ if one of the following conditions is satisfied: 
\begin{itemize}
\item[(i)] $\mu \in [\mu_e(\lambda), +\infty)$ and $\lambda > 0$;
\item[(ii)] $\mu \in (\mu_n(\lambda), \mu_e(\lambda))$ and  $\lambda \in  (0, \lambda^*)$.
\item[(iii)] $\lambda >0$ and $ \mu \in (\mu_e(\lambda)-\varepsilon, \mu_e(\lambda))$, where $\varepsilon>0$ is small enough.
\end{itemize}
\end{prop}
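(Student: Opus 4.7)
The plan is to argue by contradiction in each of the three regimes by assuming that the minimizer $v_{\lambda,\mu}$ produced by Proposition \ref{converg1} lies in $\mathcal{N}_{\lambda,\mu}^0$. The observation common to all three cases is that Proposition \ref{limitacao}, applied to $\mathcal{N}_{\lambda,\mu}^0 \subset \mathcal{N}_{\lambda,\mu}^- \cup \mathcal{N}_{\lambda,\mu}^0$, yields
\[
\mathcal{E}_{\lambda,\mu}^+ = \mathcal{I}_{\lambda,\mu}(v_{\lambda,\mu}) \geq D_\mu > 0
\]
under this contradiction hypothesis. The strategy in each case is then to exhibit a competitor whose energy is strictly below $D_\mu$.

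Case (i) is immediate: for $\mu \geq \mu_e(\lambda)$ Proposition \ref{l1123}(ii)--(iii) gives $\mathcal{E}_{\lambda,\mu}^+ \leq 0 < D_\mu$, a contradiction. Case (ii) mirrors the proof of Proposition \ref{l112b}. From the identity $\mathcal{I}''_{\lambda,\mu}(v_{\lambda,\mu})(v_{\lambda,\mu}, v_{\lambda,\mu}) = 0$, formula \eqref{impor} together with \ref{Hip3} and the embedding $X \hookrightarrow L^p(\mathbb{R}^N)$ yield
\[
\|v_{\lambda,\mu}\|^p \geq \frac{q-m}{\lambda(p-q)\,S_p^p}\,\min\bigl\{\|v_{\lambda,\mu}\|^\ell,\|v_{\lambda,\mu}\|^m\bigr\},
\]
while the coercivity computation behind Proposition \ref{limitacao} gives
\[
\min\bigl\{\|v_{\lambda,\mu}\|^\ell,\|v_{\lambda,\mu}\|^m\bigr\} \leq \frac{pq}{p(q-m) - m(q-\ell)}\,\mathcal{E}_{\lambda,\mu}^+.
\]
Fixing reference values $\lambda_0 > 0$ and $\mu_0 > \mu_n(\lambda_0)$ as in the proof of Proposition \ref{l112b}, the monotonicity provided by Corollary \ref{l44eu} bounds $\mathcal{E}_{\lambda,\mu}^+ \leq \mathcal{E}_{\lambda_0,\mu_0}^+$ in the admissible parameter range. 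Combining the three displayed estimates and splitting according to whether $\|v_{\lambda,\mu}\| \leq 1$ or $\|v_{\lambda,\mu}\| \geq 1$ produces, verbatim as in Proposition \ref{l112b}, an explicit $\lambda^* > 0$ forcing $\lambda \geq \lambda^*$ under the contradiction hypothesis; hence $v_{\lambda,\mu} \in \mathcal{N}_{\lambda,\mu}^+$ whenever $\lambda \in (0, \lambda^*)$.

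Case (iii) will be handled via a continuity argument anchored at $\mu = \mu_e(\lambda)$. Let $u_e \in X \setminus \{0\}$ be the minimizer of $\Lambda_e$ constructed in the proof of Proposition \ref{l1123}(ii), normalized so that $\mathsf{s}(u_e) = 1$ and hence $\mathcal{I}_{\lambda,\mu_e(\lambda)}(u_e) = 0$. Since $\Lambda_n(u_e) < \Lambda_e(u_e) = \mu_e(\lambda)$ by Lemma \ref{monotonic}, Proposition \ref{monotonic1}(iv) implies that $\mu \mapsto \mathsf{t}_{\lambda,\mu}^+(u_e)$ is of class $C^1$ on a left neighborhood of $\mu_e(\lambda)$ and tends to $\mathsf{t}_{\lambda,\mu_e(\lambda)}^+(u_e) = 1$ as $\mu \to \mu_e(\lambda)^-$. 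Consequently
\[
\mathcal{E}_{\lambda,\mu}^+ \leq \mathcal{I}_{\lambda,\mu}\bigl(\mathsf{t}_{\lambda,\mu}^+(u_e)\,u_e\bigr) \longrightarrow \mathcal{I}_{\lambda,\mu_e(\lambda)}(u_e) = 0 \quad \text{as } \mu \to \mu_e(\lambda)^-.
\]
Since $\mu \mapsto D_\mu$ from Proposition \ref{limitacao} is continuous and strictly positive at $\mu_e(\lambda)$, there exists $\varepsilon > 0$ such that $\mathcal{E}_{\lambda,\mu}^+ < D_\mu$ for $\mu \in (\mu_e(\lambda) - \varepsilon, \mu_e(\lambda))$, again contradicting $v_{\lambda,\mu} \in \mathcal{N}_{\lambda,\mu}^0$. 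Once $v_{\lambda,\mu} \in \mathcal{N}_{\lambda,\mu}^+$ is established in each of the three cases, Lemma \ref{criticalpoint} gives the criticality of $v_{\lambda,\mu}$ for $\mathcal{I}_{\lambda,\mu}$ on $X$. The main obstacle will be case (ii): the non-homogeneous nature of the fractional $\Phi$-Laplacian splits the norm estimate into two subcases according to the size of $\|v_{\lambda,\mu}\|$, and the explicit threshold $\lambda^*$ must be calibrated simultaneously against both subcases and against the monotonicity bound on $\mathcal{E}^+_{\lambda,\mu}$.
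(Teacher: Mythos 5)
Your argument is correct, and in cases (i) and (ii) it coincides with the paper's: case (i) is the same direct comparison between Proposition \ref{l1123} and the lower bound on the energy over $\mathcal{N}_{\lambda,\mu}^0$, and case (ii) mirrors the paper's pointer to Proposition \ref{l112b} (and the paper's own commented-out version of that argument), producing the explicit $\lambda^*$ from the coercivity estimate \eqref{coercive1} and the norm estimate coming from $\mathcal{I}''_{\lambda,\mu}(v)(v,v)=0$.

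Case (iii) is where you deviate, in a way that improves on the paper's write-up. The paper also reduces to showing $\mathcal{E}_{\lambda,\mu}^{+}<\mathcal{E}_{\lambda,\mu}^{0}$ and gets the lower bound $\mathcal{E}_{\lambda,\mu}^{0}>C_{\mu_e(\lambda)}$ from \eqref{bacana} and \eqref{coercive1} — exactly the quantity you call $D_\mu$ — but the display it invokes to control $\mathcal{E}_{\lambda,\mu}^{+}$ is a \emph{lower} bound (inherited from the proof of Proposition \ref{l1123}(i)) tending to $0$, which does not by itself force $\mathcal{E}_{\lambda,\mu}^{+}<C_{\mu_e(\lambda)}$; the missing continuity of $\mu\mapsto\mathcal{E}_{\lambda,\mu}^{+}$ at $\mu_e(\lambda)^{-}$ is not established there. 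Your argument supplies exactly what is needed: the competitor $\mathsf{t}_{\lambda,\mu}^{+}(u_e)u_e$, with $\mathsf{t}_{\lambda,\mu}^{+}(u_e)\to\mathsf{t}_{\lambda,\mu_e(\lambda)}^{+}(u_e)=\mathsf{s}(u_e)=1$ from Proposition \ref{monotonic1}(iv), produces the genuine upper bound $\mathcal{E}_{\lambda,\mu}^{+}\le\mathcal{I}_{\lambda,\mu}(\mathsf{t}_{\lambda,\mu}^{+}(u_e)u_e)\to\mathcal{I}_{\lambda,\mu_e(\lambda)}(u_e)=0$, which together with the strictly positive, continuous lower bound $D_\mu$ gives the contradiction cleanly. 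So the route is the same in spirit but your handling of the upper estimate on $\mathcal{E}_{\lambda,\mu}^{+}$ near $\mu_e(\lambda)$ is more explicit and closes a small gap in the paper's presentation.
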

\begin{proof}
By Proposition \ref{converg}, there exists $v_{\lambda,\mu} \in \mathcal{N}_{\lambda,\mu}^+ \cup \mathcal{N}_{\lambda,\mu}^0$ such that 
$$
\mathcal{E}_{\lambda, \mu}^+ = \mathcal{I}_{\lambda,\mu}(v_{\lambda,\mu}) =\inf_{w \in\mathcal{N}_{\lambda,\mu}^+}\mathcal{I}_{\lambda,\mu}(w).
$$ 
Firstly, let us assume that $(i)$ holds, that is, $\mu \in [\mu_e(\lambda), +\infty)$ and $\lambda > 0$. Then, by Proposition \ref{l1123}, we have that $\mathcal{I}_{\lambda,\mu}(v_{\lambda,\mu}) \leq 0$. On the other hand, by using the same ideas discussed in the proof of Proposition \ref{limitacao}, we deduce that the inequality
\begin{equation*}
\inf_{w \in \mathcal{N}_{\lambda,\mu}^0} \mathcal{I}_{\lambda, \mu}(w) > 0 \geq \mathcal{I}_{\lambda,\mu} (v_{\lambda, \mu})=\inf_{w \in \mathcal{N}_{\lambda,\mu}^+} \mathcal{I}_{\lambda, \mu}(w) 
\end{equation*}
holds true for all $\lambda > 0$ and $\mu \geq \mu_e(\lambda)$.
This implies that $v_{\lambda,\mu}$ is in $\mathcal{N}_{\lambda,\mu}^+$. Therefore, by using Lemma \ref{criticalpoint}, we conclude that $v_{\lambda,\mu}$ is a critical point for $\mathcal{I}_{\lambda, \mu}$ for all $\mu \in  [\mu_e(\lambda), +\infty)$ and $\lambda > 0$. 

For item $(ii)$, the proof follows similarly to that of Proposition \ref{l112b}, with $\mathcal{E}_{\lambda,\mu}^{+}$ used in place of $\mathcal{E}_{\lambda,\mu}^{-}$.

It remains to consider  item $(iii)$. In this case, it is sufficient to prove that $\mathcal{E}_{\lambda,\mu}^+ <\mathcal{E}_{\lambda,\mu}^0$. Since $\mathcal{N}_{\lambda,\mu}^0$ is closed and $\mathcal{I}_{\lambda,\mu}|_{\mathcal{N}_{\lambda,\mu}^0}$ is coercive, there exists $w_{\lambda,\mu}\in \mathcal{N}_{\lambda,\mu}^0$ such that $\mathcal{I}_{\lambda,\mu}(w_{\lambda,\mu})=\mathcal{E}_{\lambda,\mu}^0$. By using the estimates \eqref{bacana} and \eqref{coercive1}, we deduce the following inequality
$$
\mathcal{E}_{\lambda,\mu}^0=\mathcal{I}_{\lambda,\mu}(w_{\lambda,\mu})\geq \frac{p(q-m)-m(q-\ell)}{pq}\min \left\lbrace \left(\frac{\ell}{\mu S^q_q \|a\|_\infty }\right)^{\frac{\ell}{q-\ell}}, \left(\frac{\ell}{\mu S^q_q \|a\|_\infty }\right)^{\frac{m}{q-m}}\right\rbrace. 
$$
Assuming that $\mu<\mu_e(\lambda)$, we obtain that
$$
\mathcal{E}_{\lambda,\mu}^0> \frac{p(q-m)-m(q-\ell)}{pq}\min \left\lbrace \left(\frac{\ell}{\mu_e(\lambda) S^q_q \|a\|_\infty }\right)^{\frac{\ell}{q-\ell}}, \left(\frac{\ell}{\mu_e(\lambda) S^q_q \|a\|_\infty }\right)^{\frac{m}{q-m}}\right\rbrace =: C_{\mu_e(\lambda)}>0.
$$
Now, using the Proposition \ref{l1123}, we have that $\mathcal{I}_{\lambda,\mu}(v_{\lambda,\mu})=\mathcal{E}_{\lambda,\mu}^+=0$ when $\mu=\mu_e$. Moreover, $\mathcal{I}_{\lambda,\mu}(v_{\lambda,\mu})=\mathcal{E}_{\lambda,\mu}^+>0$ for each $\mu\in(\mu_n(\lambda),\mu_e(\lambda))$. 
More precisely, we employ the same estimates used in the proof of Proposition \ref{l1123} $(i)$, we obtain that
$$
\mathcal{E}_{\lambda,\mu}^+\geq\frac{\mu_e(\lambda)-\mu}{\mu q}\min \left\lbrace \left(\frac{\ell}{\mu_e(\lambda) S^q_q \|a\|_\infty }\right)^{\frac{\ell}{q-\ell}}, \left(\frac{\ell}{\mu_e(\lambda) S^q_q \|a\|_\infty }\right)^{\frac{m}{q-m}}\right\rbrace. 
$$
Hence, by two last inequality, there exists $\varepsilon>0$ small
enough such that 
$\mathcal{E}_{\lambda,\mu}^+<C_{\mu_e(\lambda)} <\mathcal{E}_{\lambda,\mu}^0$ for each $\mu \in (\mu_e(\lambda)-\varepsilon, \mu_e(\lambda))
$.
This ends the proof.
\end{proof}

\section{The proof of main results}\label{section3}
The purpose of this section is to present the proof of our main results.

\begin{proof}[\textbf{The proof of Theorem \ref{theorem1}}] 
According to Proposition \ref{monotonic} we have that $0<\mu_n(\lambda)<\mu_e(\lambda)$ for all $\lambda>0$.
Now, by using Proposition \ref{converg} we find a function $u_{\lambda,\mu}\in \mathcal{N}_{\lambda,\mu}^- \cup \mathcal{N}_{\lambda,\mu}^0 $ that solve the minimization problem given by \eqref{ee1}, that is,
$$
\mathcal{E}_{\lambda,\mu}^-=\mathcal{I}_{\lambda,\mu}(u_{\lambda,\mu})= \inf_{w \in \mathcal{N}_{\lambda,\mu}^-} \mathcal{I}_{\lambda,\mu}(w).
$$
Thanks to Proposition \ref{l112b} there exists $\lambda_* > 0$ such that the function $u_{\lambda,\mu}\in \mathcal{N}_{\lambda,\mu}^+$ for each $\lambda \in (0, \lambda_*)$ and $\mu \in (\mu_n(\lambda), +\infty)$. Therefore, it follows from Lemma \ref{criticalpoint} that $u_{\lambda,\mu}$ is a weak solution for problem \eqref{eq1} 
Moreover, according to Proposition \ref{limitacao} we also obtain that $\mathcal{I}_{\lambda,\mu}(u_{\lambda,\mu})\geq D_\mu$ holds true for some $D_\mu > 0$. This finishes the proof. 
\end{proof}

\begin{proof}[\textbf{The proof of Theorem \ref{theorem2}}] 
Firstly, using the Proposition \ref{converg1}, we find a minimizer $v_{\lambda,\mu} \in \mathcal{N}_{\lambda,\mu}^+\cup \mathcal{N}_{\lambda,\mu}^0$ for the minimization problem given by \eqref{ee2}. However, by Proposition \ref{l1122}, there exists $\lambda^\ast>0$ in such way that $v_{\lambda,\mu} \notin \mathcal{N}_{\lambda,\mu}^0$ for each $\lambda \in (0, \lambda^*)$ and $\mu > \mu_n(\lambda)$. More preciselly, we obtain that $v_{\lambda,\mu}$ is a weak solution of problem \eqref{eq1} if one of the following conditions is satisfied: 
\begin{itemize}
\item[(i)] $\mu \in [\mu_e(\lambda), +\infty)$ and $\lambda > 0$;
\item[(ii)] $\mu \in (\mu_n(\lambda), \mu_e(\lambda))$ and  $\lambda \in  (0, \lambda^*)$.
\item[(iii)] $\lambda >0$ and $ \mu \in (\mu_e(\lambda)-\varepsilon, \mu_e(\lambda))$, where $\varepsilon>0$ is small enough.
\end{itemize}
In order to prove that $v_{\lambda,\mu}$ is a ground state solution, it is sufficient to verify that $\mathcal{E}_{\lambda,\mu}^+< \mathcal{E}_{\lambda,\mu}^-$. Let $u_{\lambda,\mu}$ be the weak solution obtained in Theorem \ref{theorem1}. We know that $1=\t_\mu^-(u_{\lambda,\mu})< \t_\mu^+(u_{\lambda,\mu})$ with $\t_\mu^+(u_{\lambda,\mu})u_{\lambda,\mu} \in \mathcal{N}_{\lambda,\mu}^+$. Hence, since $t\mapsto \mathcal{I}_{\lambda,\mu}(tu_{\lambda,\mu})$ is a decreasing function on $[1, \t_\mu^+(u_{\lambda,\mu})]$, we conclude that 
$$
\mathcal{E}_{\lambda,\mu}^+= \mathcal{I}_{\lambda,\mu}(v_{\lambda,\mu}) \leq \mathcal{I}_{\lambda,\mu}(\t_\mu^+(u_{\lambda,\mu})u_{\lambda,\mu})< \mathcal{I}_{\lambda,\mu}(u_{\lambda,\mu})=\mathcal{E}_{\lambda,\mu}^-.
$$
Therefore, $v_{\lambda,\mu}$ is a ground state solution.
Finally, according to Proposition \ref{l1123} we also obtain that 
$\mathcal{I}_{\lambda,\mu}(v_{\lambda,\mu}) > 0$ whenever $\mu \in (\mu_e, +\infty)$. 
Furthermore, by using Proposition \ref{l1123}, we observe that $\mathcal{I}_{\lambda,\mu}(v_{\lambda,\mu}) = 0$ for $\mu = \mu_e$. In the same way, assuming that $\mu \in (\mu_n, \mu_e)$, the solution $v_{\lambda,\mu}$ satisfies $\mathcal{I}_{\lambda,\mu}(v_{\lambda,\mu}) < 0$, see Proposition \ref{l1123}. This ends the proof.
\end{proof}


\begin{proof}[\textbf{The proof of Corollary \ref{cor}}]
The proof is an immediate consequence of the Theorems \ref{theorem1} and \ref{theorem2}.
\end{proof}

\begin{proof}[\textbf{The proof of Theorem \ref{theorem4}}]
The proof it follows directly from Lemma \ref{naovazio}, which states that the Nehari set $\mathcal{N}_{\lambda,\mu}$ is empty for each $ \mu \in (-\infty, \mu_n(\lambda))$ and $\lambda>0$. 
\end{proof}


\section{The asymptotic behavior of solutions}\label{section4}
In the section, we study the asymptotic behavior of the solutions $u_{\lambda, \mu}$ and $v_{\lambda, \mu}$ obtained in the Theorem \ref{theorem1} and Theorem \ref{theorem2} as $\lambda \to 0$ or $\mu \to +\infty$. We start proving some properties on the energy $\mathcal{E}_{\lambda,\mu}^-$ as well as the solution $u_{\lambda, \mu}$ when the parameter $\mu$ is fixed.

\begin{proposition}\label{l4}
Assume that \ref{Hip1}-\ref{Hip4}, \ref{Hi1}-\ref{Hi2}, \ref{Hiv0} and \ref{Hiv1} hold.  Then, the function $\mathcal{E}_{\lambda,\mu}^-: [0, \lambda_\ast) \times (\mu_n(\lambda), +\infty) \to \mathbb{R}$ given by \eqref{ee1} possesses the following properties:
\begin{itemize}
\item[(i)] It holds that $\lambda \mapsto \mathcal{E}_{\lambda,\mu}^{-}$ is a continuous function for each $\mu > \mu_n(\lambda)$ fixed.
    \item[(ii)] For each $\lambda_0$ and $\mu >\mu_n(\lambda_0)$ fixed, there exists $C> 0$ independent on $\lambda$ such that $ 0 < \mathcal{E}_{\lambda,\mu}^- \leq C$ for all $\lambda \in [0, \lambda_0)$.
    \item[(iii)] For each $\lambda_0>0$ and $\mu > \mu_n(\lambda_0)$ fixed, the sequence $(u_{\lambda,\mu})_{\lambda < \lambda_0}$ obtained as minimizer in $\mathcal{N}_{\lambda, \mu}^-$ is bounded in $X$. 
    \item[(iv)] In addition, $u_{\lambda,\mu} \to u_\mu$ in $X$ for some $u_{\mu} \in X\setminus\{0\}$  as $\lambda \to 0$. Consequently, we obtain that $\mathcal{E}_{\lambda,\mu}^- \to \mathcal{E}_{0,\mu}^- $ as $\lambda \to 0$ and $\mathcal{E}_{0,\mu}^- = \mathcal{I}_{0,\mu}(u_\mu)$, where $\mathcal{E}_{0,\mu}^-:=\inf_{u\in\mathcal{N}_{0,\mu}}\mathcal{I}_{0,\mu}(u)$.  
\end{itemize}
\end{proposition}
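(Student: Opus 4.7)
The plan is to treat the items in the order (ii)$\to$(iii)$\to$(iv)$\to$(i), relying on three structural ingredients: the monotonicity of $\lambda\mapsto\Lambda_{n,\lambda}(w)$ and of $\lambda\mapsto t_{\lambda,\mu}^{-}(w)$ from Proposition \ref{monotonic1}, the coercivity estimate inside the proof of Proposition \ref{limitacao}, and the $(S_+)$-property of Proposition \ref{S+}. Since $\lambda\mapsto\Lambda_{n,\lambda}$ is increasing we have $\mu_n(\lambda)\le\mu_n(\lambda_0)$ for $\lambda\le\lambda_0$, so fixing $\mu>\mu_n(\lambda_0)$ keeps every $\mathcal{N}_{\lambda,\mu}^{-}$ nonempty for $\lambda\in[0,\lambda_0)$.

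For item (ii) the lower bound $\mathcal{E}_{\lambda,\mu}^{-}>0$ is immediate from Proposition \ref{limitacao}. For the upper bound I pick a test function $w\in X\setminus\{0\}$ with $\Lambda_{n,\lambda_0}(w)<\mu$, which exists because $\mu>\mu_n(\lambda_0)=\inf_w\Lambda_{n,\lambda_0}(w)$. Monotonicity gives $\Lambda_{n,\lambda}(w)<\mu$ for every $\lambda\in[0,\lambda_0]$, so Proposition \ref{compar} furnishes a projection $t_{\lambda,\mu}^{-}(w)w\in\mathcal{N}_{\lambda,\mu}^{-}$ with $t_{\lambda,\mu}^{-}(w)\le t_{\lambda_0,\mu}^{-}(w):=T$ by Proposition \ref{monotonic1}(iii); hence $\mathcal{E}_{\lambda,\mu}^{-}\le\mathcal{I}_{\lambda,\mu}(t_{\lambda,\mu}^{-}(w)w)$ is bounded above by a constant depending only on $w$, $T$, and $\lambda_0$. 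Item (iii) is then immediate from the chain of inequalities in the proof of Proposition \ref{limitacao}: for every $u\in\mathcal{N}_{\lambda,\mu}^{-}$ one has $\mathcal{I}_{\lambda,\mu}(u)\ge\frac{p(q-m)-m(q-\ell)}{pq}\min\{\|u\|^{\ell},\|u\|^{m}\}$, and inserting $u=u_{\lambda,\mu}$ together with item (ii) yields uniform boundedness of $\|u_{\lambda,\mu}\|$.

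For item (iv), take $\lambda_k\to 0$ and extract a subsequence with $u_{\lambda_k,\mu}\rightharpoonup u_\mu$ in $X$ and $u_{\lambda_k,\mu}\to u_\mu$ in $L^q$ and $L^p$ via Lemma \ref{compact1}. The decisive step is promoting this to strong convergence in $X$ via the $(S_+)$-property. The Nehari identity yields $\mathcal{J}_{s,\Phi,V}'(u_{\lambda_k,\mu})u_{\lambda_k,\mu}=\mu\|u_{\lambda_k,\mu}\|_{q,a}^{q}-\lambda_k\|u_{\lambda_k,\mu}\|_p^{p}\to\mu\|u_\mu\|_{q,a}^{q}$, while testing the critical-point equation $\mathcal{I}_{\lambda_k,\mu}'(u_{\lambda_k,\mu})=0$ against $u_\mu$ gives $\mathcal{J}_{s,\Phi,V}'(u_{\lambda_k,\mu})u_\mu\to\mu\|u_\mu\|_{q,a}^{q}$ by strong $L^q$ and $L^p$ convergence. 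Subtracting, $\mathcal{J}_{s,\Phi,V}'(u_{\lambda_k,\mu})(u_{\lambda_k,\mu}-u_\mu)\to 0$, and Proposition \ref{S+} delivers $u_{\lambda_k,\mu}\to u_\mu$ strongly in $X$. Non-triviality $u_\mu\neq 0$ follows from the $\lambda$-independent lower bound $\|u_{\lambda_k,\mu}\|\ge c_\mu$ of Proposition \ref{fechada}(i). Passing to the limit in the Nehari identity places $u_\mu\in\mathcal{N}_{0,\mu}$ and gives $\mathcal{E}_{\lambda_k,\mu}^{-}=\mathcal{I}_{\lambda_k,\mu}(u_{\lambda_k,\mu})\to\mathcal{I}_{0,\mu}(u_\mu)$; combined with the continuity in item (i) this common limit equals $\mathcal{E}_{0,\mu}^{-}$.

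For item (i), I prove upper and lower semicontinuity separately at each $\lambda_0\in[0,\lambda_*)$. Upper semicontinuity: given $\varepsilon>0$, pick $w\in\mathcal{N}_{\lambda_0,\mu}^{-}$ with $\mathcal{I}_{\lambda_0,\mu}(w)<\mathcal{E}_{\lambda_0,\mu}^{-}+\varepsilon$, note $t_{\lambda_0,\mu}^{-}(w)=1$, and use the $C^1$ continuity of $\lambda\mapsto t_{\lambda,\mu}^{-}(w)$ from Proposition \ref{monotonic1}(iii) to get $\mathcal{E}_{\lambda,\mu}^{-}\le\mathcal{I}_{\lambda,\mu}(t_{\lambda,\mu}^{-}(w)w)\to\mathcal{I}_{\lambda_0,\mu}(w)$. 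Lower semicontinuity: for $\lambda_k\to\lambda_0$ the minimizers $u_{\lambda_k,\mu}$ are bounded by item (iii), and the same $(S_+)$ argument from item (iv) produces strong convergence to some $u^*\in\mathcal{N}_{\lambda_0,\mu}^{-}\cup\mathcal{N}_{\lambda_0,\mu}^{0}$; Remark \ref{remark1} then yields $\mathcal{E}_{\lambda_0,\mu}^{-}\le\mathcal{I}_{\lambda_0,\mu}(u^*)=\lim\mathcal{E}_{\lambda_k,\mu}^{-}$. The principal obstacle throughout is the strong convergence step in item (iv): without it we cannot place $u_\mu$ in $\mathcal{N}_{0,\mu}$, cannot pass to the limit in the energy, and cannot exclude $u_\mu=0$; the $(S_+)$-property combined with testing the critical-point equation against $u_\mu$ is what makes the whole scheme work, and it is the same mechanism that powers the lower semicontinuity in item (i).
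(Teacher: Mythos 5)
Your proposal is correct and follows essentially the same strategy as the paper: monotonicity of $\lambda\mapsto\Lambda_{n,\lambda}(w)$ and of the projection $\mathsf{t}^{-}_{\lambda,\mu}(w)$ (Proposition \ref{monotonic1}), the coercivity estimate \eqref{coercive1}, the $\lambda$-uniform nontriviality bound $\|u\|\geq c_\mu$ from Proposition \ref{fechada}, and the $(S_+)$-property of Proposition \ref{S+} to upgrade weak to strong convergence. The only point worth tidying is a presentational circularity: you invoke item (i) inside the proof of item (iv), yet the lower-semicontinuity half of item (i) borrows the $(S_+)$ step from item (iv); the circle dissolves once you observe that $\mathcal{E}_{0,\mu}^{-}=\mathcal{I}_{0,\mu}(u_\mu)$ can be obtained directly, without invoking (i), by projecting an arbitrary $w\in\mathcal{N}_{0,\mu}^{-}$ to $\mathsf{t}^{-}_{\lambda_k,\mu}(w)w\in\mathcal{N}_{\lambda_k,\mu}^{-}$ and passing to the limit exactly as in your upper-semicontinuity step, while item (i) only re-uses the self-contained $(S_+)$ mechanism and not the conclusion of (iv).
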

\begin{proof}
$(i)$ It is sufficient to prove that the function $\lambda \mapsto \mathcal{E}_{\lambda,\mu}^-$ is sequentially continuous. To this end, let $\lambda \in [0,\lambda_*)$ and $\mu>\mu_n(\lambda)$ be fixed and consider a sequence $(\lambda_k)_{k\in\mathbb{N}}$ in $\mathbb{R}$ such that $\lambda_k \to \lambda$. First, we know from Proposition \ref{N0} that, for each $\lambda>0$, it holds $\mu_n(\lambda)=\inf_{u\in X\setminus\{0\}} \Lambda_{n,\lambda}(u)=R_{n,\lambda}(\t_\lambda(u_\lambda)u_\lambda)$ for some $u_\lambda \in X\setminus\{0\}$. It follows from Proposition \ref{monotonic1} that
$$
\limsup_{k \to +\infty} \mu_n(\lambda_k)\leq \limsup_{k \to +\infty} R_{n,\lambda_k}(\t_{\lambda_k}(u_\lambda)u_\lambda)= R_{n,\lambda}(\t_\lambda(u_\lambda)u_\lambda)=\mu_n(\lambda)<\mu,
$$
which implies that $\mu_n(\lambda_k)<\mu$ for all $k\in\mathbb{N}$ enough large. According to Proposition \ref{converg} and Proposition \ref{l112b} we have that $\mathcal{E}_{\lambda_k,\mu}^-$ is attained by a function $u_k \in \mathcal{N}_{\lambda_k, \mu}^- \subset X$ which is a critical point for the functional $\mathcal{I}_{\lambda_k, \mu}$ for all $k\in\mathbb{N}$ enough large. Consequently, we obtain that 
\begin{equation}\label{buu}
\mathcal{E}_{\lambda_k,\mu}^- = \mathcal{I}_{\lambda_k, \mu}(u_k) \leq \mathcal{I}_{\lambda_k, \mu}(w),  \quad \mbox{for all}\; w \in \mathcal{N}_{\lambda_k, \mu}^-,
\end{equation}

\begin{equation}\label{bu}
\mathcal{I}'_{\lambda_k, \mu_k}(u_k) w = 0, \quad \mbox{for all}\; w \in X \quad\mbox{and}\quad  \mathcal{I}''_{\lambda_k, \mu_k}(u_k)(u_k, u_k) < 0,
\end{equation}
for all $k\in\mathbb{N}$  large enough. Now, we will show that $(u_k)_{k\in\mathbb{N}}$ is bounded. Indeed, by Proposition \ref{monotonic1}, 
$$
\limsup_{k \to +\infty} \mathcal{E}_{\lambda_k,\mu}^- \leq\limsup_{k \to +\infty} \mathcal{I}_{\lambda_k,\mu}(\t_{\lambda_k,\mu}^-(u_{\lambda,\mu})u_{\lambda,\mu}) =\mathcal{I}_{\lambda, \mu}(\t_{\lambda,\mu}^-(u_{\lambda,\mu})u_{\lambda,\mu})= \mathcal{E}_{\lambda_,\mu}^-, 
$$
where $u_{\lambda,\mu}$ is obtained in Proposition \ref{converg}. By applying the same ideas used in the proof of \eqref{coercive1}, we deduce that 
$$
\mathcal{E}_{\lambda_,\mu}^-\geq 	\limsup_{k \to +\infty} \mathcal{E}_{\lambda_k,\mu}^- \geq  \limsup_{k \to +\infty} \left[\frac{p(q-m)-m(q-\ell)}{pq}	\min\{\|u_k\|^\ell, \|u_k\|^m\}\right],
$$
that is, $(u_k)_{k\in\mathbb{N}}$ is a bounded sequence in $X$. Thence, there exists $u \in X$ such that $u_k \rightharpoonup u$ in $X$. It is not hard to verify that $u \neq 0$, see Remark \ref{convzero}. Moreover, by using \eqref{bu}, H\"older inequality  and compact embedding $X\hookrightarrow L^r(\mathbb{R}^N)$ for each $r\in (m,\ell_s^*)$, we have that
$$
\mathcal{J}_{s,\Phi,V}(u_k)(u_k-u)= \mu_k\int_{\R^{N}}a(x)|u_k|^{q-2}u_k(u_k -u) dx -\lambda_k\int_{\R^{N}}|u_k|^{p-2}u_k(u_k -u) dx=o_k(1).
$$
Then, by $(S_+)$-condition (see Proposition \ref{S+}), we infer that $u_k \to u$ in $X$. This fact combined with \eqref{bu} implies that
$
\mathcal{I}'_{\lambda, \mu}(u) w = 0
$
for all $w\in X$ and $\mathcal{I}''_{\lambda, \mu}(u)(u, u) \leq 0,$ that is, $u\in\mathcal{N}_{\lambda,\mu}^-\cup \mathcal{N}_{\lambda,\mu}^0$. Now, proceeding as in proof of Proposition \ref{l112b}, we obtain that $u \notin \mathcal{N}_{\lambda,\mu}^0$. Finally, by using \eqref{buu} and strong converge, we have also that 
$
\mathcal{I}_{\lambda, \mu}(u) \leq \mathcal{I}_{\lambda, \mu}(w)
$
for all $w \in \mathcal{N}_{\lambda, \mu}^-$, which implies that 
$$\lim_{k \to +\infty} \mathcal{E}_{\lambda_k,\mu}^-=\lim_{k \to +\infty}  \mathcal{I}_{\lambda_k, \mu}(u_k) = \mathcal{I}_{\lambda, \mu}(u)=\mathcal{E}_{\lambda,\mu}^- . 
$$ 
This finishes the proof of item $(i)$. 

$(ii)$ Let $\lambda_0$ and $\mu>\mu_n(\lambda_0)$ be fixed. It follows from Proposition \ref{l44eu} that
\begin{equation*}
\mathcal{E}_{\lambda,\mu}^- \leq \mathcal{E}_{\lambda_0,\mu}^- = \inf_{w \in \mathcal{N}_{\lambda_0,\mu}^-} \mathcal{I}_{\lambda_0, \mu}(w) = C < +\infty,\quad \mbox{for all}\,\; \lambda\in (0,\lambda_0),
\end{equation*}	
where $C:=C(q,p,\lambda_0, \mu,N)$ is independent on $\lambda$.

$(iii)$ Let $(u_{\lambda,\mu})$ be the sequence obtained as minimizers in $\mathcal{N}_{\lambda, \mu}^-$ for the functional $\mathcal{I}_{\lambda,\mu}$, see Proposition \ref{converg} and Proposition \ref{l112b}. Following the same arguments as in \eqref{coercive1}, we deduce also that
\begin{equation*}
C \geq \mathcal{E}_{\lambda,\mu}^- =  \mathcal{I}_{\lambda, \mu}(u_{\lambda,\mu}) \geq  \frac{p(q-m)-m(q-\ell)}{pq}	\min\{\|u_{\lambda,\mu}\|^\ell, \|u_{\lambda,\mu}\|^m\},
\end{equation*}
for all $\lambda \in [0, \lambda_0)$, where $C> 0$ is independent on $\lambda$. The last assertion implies that 
\begin{equation*}
\|u_{\lambda,\mu}\|\leq \min\left\{\left[\frac{pqC }{p(q-m)-m(q-\ell)}\right]^{\frac{1}{\ell}}, \left[\frac{pqC }{p(q-m)-m(q-\ell)}\right]^{\frac{1}{m}}\right\} .
\end{equation*}
Consequently, the sequence $(u_{\lambda,\mu})$ is bounded in $X$ with respect to $\lambda > 0$. Therefore, there exists $u_{\mu} \in X$ such that $u_{\lambda, \mu} \rightharpoonup u_{\mu}$ in $X$ as $\lambda \to 0$. Arguing as in the proof of Proposition \ref{l4} $(i)$, we obtain also that $u_{\lambda, \mu} \to u_{\mu}$ in $X$ as $\lambda \to 0$. By using  Proposition \ref{limitacao}, we have that 
$\|u_{\mu}\| \geq D_\mu>0$, that is,  $u_{\mu} \neq 0$. Finally, by using item $(i)$, we conclude that 
$
0< \mathcal{E}_{\lambda,\mu}^- \to  \mathcal{E}_{0,\mu}^-
$
as $\lambda \to 0$,  where $\mathcal{E}_{0,\mu}^- = \mathcal{I}_{0,\mu}(u_\mu)$. This finishes the proof.   
\end{proof}

The next result  is useful in order to study the asymptotic behavior of solutions obtained from Theorems \ref{theorem1} and \ref{theorem2} as $\lambda \to 0$ .
\begin{proposition}
Assume that \ref{Hip1}-\ref{Hip4}, \ref{Hi1}- \ref{Hi2}, \ref{Hiv0} and \ref{Hiv1} hold. Then, $\mu_e(\lambda) \to 0$ and $\mu_n(\lambda) \to 0$ as $\lambda \to 0$.
\end{proposition}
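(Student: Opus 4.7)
The plan is to exploit the inequality $0 < \mu_n(\lambda) < \mu_e(\lambda)$ from Lemma \ref{monotonic} to reduce the statement to showing $\mu_e(\lambda) \to 0$ as $\lambda \to 0^+$; a squeeze argument then delivers $\mu_n(\lambda) \to 0$ for free.

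To bound $\mu_e(\lambda)$ from above, I would fix an arbitrary $u_0 \in X \setminus \{0\}$, chosen once and independently of $\lambda$, and estimate $\inf_{t>0} R_e(t u_0)$ by a scaling computation. Applying Lemma \ref{lema_naru} pointwise and integrating (as in Lemma \ref{lemanaruV}), one obtains $\mathcal{J}_{s,\Phi,V}(t u_0) \leq t^{m}\, \mathcal{J}_{s,\Phi,V}(u_0)$ for every $t \geq 1$. Inserting this into the definition \eqref{Re} of $R_e$ gives
\begin{equation*}
R_e(t u_0) \leq A\, t^{m-q} + B\, \lambda\, t^{p-q}, \quad t \geq 1,
\end{equation*}
with $A := q\,\mathcal{J}_{s,\Phi,V}(u_0)/\|u_0\|_{q,a}^q$ and $B := q\,\|u_0\|_p^p/(p\,\|u_0\|_{q,a}^q)$ positive constants independent of $\lambda$.

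Since $m < q < p$, the two exponents have opposite signs, so the right-hand side is minimized at the explicit value $t(\lambda) := \bigl(A(q-m)/(B(p-q))\bigr)^{1/(p-m)} \lambda^{-1/(p-m)}$, which satisfies $t(\lambda) \geq 1$ for all sufficiently small $\lambda > 0$. A direct substitution shows that both terms have the same order $\lambda^{(q-m)/(p-m)}$; since $q > m$ this exponent is strictly positive, yielding
\begin{equation*}
0 < \mu_n(\lambda) < \mu_e(\lambda) \leq R_e(t(\lambda)\, u_0) \leq C\, \lambda^{(q-m)/(p-m)} \longrightarrow 0 \quad \text{as } \lambda \to 0^+,
\end{equation*}
where $C$ depends only on $u_0$. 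This establishes both limits simultaneously.

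I do not anticipate any substantive obstacle. The argument is essentially a scaling computation; the only mild care needed is to verify that the chosen $t(\lambda)$ lies in the regime $t \geq 1$, so that the correct branch $\xi^+(t) = t^m$ of the $\Delta_2$-type estimate in Lemma \ref{lema_naru} applies, and to fix $u_0$ at the outset so that the constants $A$, $B$ do not depend on $\lambda$.
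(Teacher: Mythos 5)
Your proof is correct and, in fact, cleaner than the paper's. The reduction via $0<\mu_n(\lambda)<\mu_e(\lambda)$ is exactly what the paper does, and both arguments obtain a power-law bound $\mu_e(\lambda)\leq C\lambda^{\alpha}$ with $\alpha>0$. The difference is in how that bound is obtained. The paper works with the actual minimizer $\s(u)$ of $t\mapsto R_e(tu)$: it exploits the criticality equation $R_e'(\s(u)u)\,(\s(u)u)=0$ together with \ref{Hip3} to derive the pair of inequalities $(q-m)\mathcal{J}_{s,\Phi,V}(\s(u)u)\le \lambda\frac{p-q}{p}\|\s(u)u\|_p^p$ and $\lambda\frac{p-q}{p}\|\s(u)u\|_p^p\le(q-\ell)\mathcal{J}_{s,\Phi,V}(\s(u)u)$, and then bounds $\s(u)$ from these, splitting into the two cases $\|\s(u)u\|\le1$ and $\|\s(u)u\|>1$ since $\s(u)$ depends on $\lambda$. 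You instead never touch $\s(u)$: you plug an explicit, non-optimal test scale $t(\lambda)\sim\lambda^{-1/(p-m)}$ into $R_e$ and use only the one-sided modular bound $\Phi(\rho t)\le t^m\Phi(\rho)$ ($t\ge1$) from Lemma \ref{lema_naru}. Because $\mu_e(\lambda)\le\min_{t>0}R_e(tu_0)\le R_e(t(\lambda)u_0)$, this is all that is needed, and it eliminates both the case split and the reliance on the derivative identity. The trade-off is essentially aesthetic: your rate $\lambda^{(q-m)/(p-m)}$ versus the paper's $\lambda^{(q-\ell)/(p-\ell)}$ (or $\lambda^{(q-m)/(p-m)}$, depending on the case) — both go to zero, and since only the limit is claimed, either suffices. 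Your route is shorter and avoids a small ambiguity in the paper's treatment of the $\lambda$-dependence of $\s(u)$.
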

\begin{proof}
Let $u \in X\setminus\{0\}$ be fixed. First, by using assumption \ref{Hip3}, we obtain that
\begin{align}\label{ab0}
0= \frac{1}{q}\|\s(u)u\|_{q,a}^q \s(u)R_e (\s(u)u) u
&= -q \mathcal{J}_{s,\Phi,V}(\s(u)u)+ \mathcal{J}_{s,\Phi,V}'(\s(u)u)(\s(u)u) +\lambda \frac{p-q}{p}\|\s(u)u\|_{p}^p\\
& \geq (\ell -q)\mathcal{J}_{s,\Phi,V}(\s(u)u) +\lambda \frac{p-q}{p}\|\s(u)u\|_{p}^p\nonumber
\end{align}
The last inequality combined with Lemma \ref{lemanaruV} implies that
\begin{equation}\label{ab1}
\|\s(u)u\|_{p}^p\leq \frac{p(q-\ell)}{\lambda (p-q)}\max\{\|\s(u)u\|^\ell,\|\s(u)u\|^m\}
\end{equation}
On the other hand, combining \eqref{ab0} and \ref{Hip3} once more, we deduce that
\begin{equation}\label{ab2}
0\leq (m-q)\mathcal{J}_{s,\Phi,V}(\s(u)u) +\lambda \frac{p-q}{p}\|\s(u)u\|_{p}^p.
\end{equation}
Assume that $\|\s(u)u\|\leq 1$. Hence, combining \eqref{ab1} and \eqref{ab2}, we infer that
$$
\begin{aligned}
0<\mu_e(&\lambda)\leq \Lambda_e(u)  =\frac{\mathcal{J}_{s,\Phi,V}(\s(u)u)+\frac{\lambda}{p}\|\s(u)u\|_p^p}{\frac{1}{q}\|\s(u)u\|_{q,a}^{q}}
\leq\frac{\lambda\frac{p-q}{p(q-m)}\|\s(u)u\|_p^p + \frac{\lambda}{p}\|\s(u)u\|_p^p}{\frac{1}{q}\|\s(u)u\|_{q,a}^{q}}\\
& \leq  \displaystyle \frac{ \lambda\frac{(p-m) }{p(q-m)} \left[\frac{p(q-\ell)}{\lambda(p-q)}\frac{\|u\|^\ell}{\|u\|_p^p}\right]^{\frac{p-q}{p-\ell}} \|u\|_p^p}{\frac{1}{q}\|u\|_{q,a}^q} = \frac{\frac{q(p-m) }{p(q-m)}\left[\frac{p(q-\ell)}{(p-q)}\right]^{\frac{p-q}{p-\ell}}  \lambda^{\frac{q-\ell}{p-\ell}} \|u\|^{\ell\frac{p-q}{p-\ell}} \|u\|_p^{p\frac{q-\ell}{p-\ell}} }{\|u\|_{q,a}^q} = C_{\ell, m,q,p} \lambda^{\frac{q-\ell}{p-\ell}} \frac{ \|u\|^{\ell\frac{p-q}{p-\ell}} \|u\|_p^{p\frac{q-\ell}{p-\ell}} }{\|u\|_{q,a}^q}. 
\end{aligned}
$$
In the case of $\|\s(u)u\|> 1$, we obtain that
$$
0<\mu_e(\lambda) \leq C_{\ell, m,q,p} \lambda^{\frac{q-m}{p-q}} \frac{\|u\|^{m\frac{p-q}{p-m}} \|u\|_p^{p\frac{q-m}{p-m}}}{\|u\|_{q,a}^q}. 
$$
Summarizing, we have that 
$$
0<\mu_e(\lambda)\leq \Lambda_e(u) \leq C_{\ell,m,q,p} \max\left\lbrace \lambda^{\frac{q-\ell}{p-q}} \frac{\|u\|^{\ell\frac{p-q}{p-\ell}} \|u\|_p^{p\frac{q-\ell}{p-\ell}}}{\|u\|_{q,a}^q}, \lambda^{\frac{q-m}{p-q}} \frac{\|u\|^{m\frac{p-q}{p-m}} \|u\|_p^{p\frac{q-m}{p-m}}}{\|u\|_{q,a}^q}\right\rbrace .
$$
The last inequality implies that $\mu_e(\lambda) \to 0$ as $\lambda \to 0$. Finally, by using Proposition \ref{monotonic}, we also obtain that $\mu_n(\lambda) \to 0$ as $\lambda \to 0$.
\end{proof}

As a consequence of previously results, we have the following asymptotic behavior for solution $u_{\lambda,\mu}$ and $v_{\lambda,\mu}$ as $\lambda \to 0$.

\begin{proposition}\label{l5}
Assume that \ref{Hip1}-\ref{Hip4}, \ref{Hi1}-\ref{Hi2}, \ref{Hiv0} and \ref{Hiv1} hold. Then, the weak solutions  $u_{\lambda, \mu}$ and $v_{\lambda, \mu}$ obtained respectively in Propositions \ref{converg} and \ref{converg1} has the following asymptotic behavior:
\begin{itemize}
\item[(i)] It holds that $u_{\lambda, \mu} \to u_\mu$ in $X$ as $\lambda \to 0$ where $u_\mu$ is ground state solution to the following nonlocal elliptic problem 
\begin{equation}\label{eq1aai}
(-\Delta_\Phi)^s w  +V(x)\varphi(|w|)w  = \mu a(x)|w|^{q-2}w \mbox{  in  }\;    \mathbb{R}^N,\;  w \in W^{s,\Phi}(\mathbb{R}^N).
\end{equation}
\item[(ii)] It holds that $\|v_{\lambda, \mu}\| \to +\infty$ as $\lambda \to 0$.	 
\end{itemize}
\end{proposition}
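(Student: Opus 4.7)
\smallskip
\noindent\textbf{Plan of proof of Proposition \ref{l5}.}

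For item (i), the plan is to combine the strong convergence already supplied by Proposition \ref{l4}(iv) with a passage to the limit in the Euler-Lagrange equation for $u_{\lambda,\mu}$. Proposition \ref{l4}(iv) delivers a nontrivial function $u_\mu\in X$ such that $u_{\lambda,\mu}\to u_\mu$ strongly in $X$ and $\mathcal{E}_{\lambda,\mu}^-\to\mathcal{E}_{0,\mu}^-=\mathcal{I}_{0,\mu}(u_\mu)$ as $\lambda\to 0$. Since each $u_{\lambda,\mu}$ is a critical point of $\mathcal{I}_{\lambda,\mu}$, I would pass to the limit in the identity
$$
\mathcal{J}_{s,\Phi,V}'(u_{\lambda,\mu})v-\mu\int_{\R^N} a(x)|u_{\lambda,\mu}|^{q-2}u_{\lambda,\mu}v\,dx+\lambda\int_{\R^N}|u_{\lambda,\mu}|^{p-2}u_{\lambda,\mu}v\,dx=0,
$$
valid for every $v\in X$. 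The nonlinear fractional term converges thanks to the strong $X$-convergence and continuity of $\mathcal{J}_{s,\Phi,V}'$; the $a$-weighted $L^q$ integral converges by the compact embedding $X\hookrightarrow L^q(\R^N)$; and the last term vanishes since $\lambda\to 0$ while $\|u_{\lambda,\mu}\|_p$ stays uniformly bounded. Consequently $u_\mu$ is a weak solution of \eqref{eq1aai}, i.e.\ $u_\mu\in\mathcal{N}_{0,\mu}$. The ground state property follows from the identification $\mathcal{I}_{0,\mu}(u_\mu)=\inf_{w\in\mathcal{N}_{0,\mu}}\mathcal{I}_{0,\mu}(w)$ given by Proposition \ref{l4}(iv), together with the observation that, under \ref{Hip3} and $m<q$, the fibering map $t\mapsto\mathcal{I}_{0,\mu}(tw)$ of the purely superlinear limit problem admits a unique critical point, so every nontrivial weak solution of \eqref{eq1aai} belongs to $\mathcal{N}_{0,\mu}$.

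For item (ii), the plan is to argue by contradiction using the sign structure of $\mathcal{I}''_{\lambda,\mu}$ on $\mathcal{N}_{\lambda,\mu}^+$. Assume the existence of a sequence $\lambda_k\to 0^+$ along which $(v_{\lambda_k,\mu})$ is bounded in $X$. From \eqref{impor} and the consequence of \ref{Hip3} that $\varphi'(t)t^3\le(m-2)\varphi(t)t^2$ for $t>0$ (which follows from \eqref{est1-phi3'}), together with $m<q$, I would estimate
$$
\mathcal{I}''_{\lambda_k,\mu}(v_{\lambda_k,\mu})(v_{\lambda_k,\mu},v_{\lambda_k,\mu})\le (m-q)\mathcal{J}_{s,\Phi,V}'(v_{\lambda_k,\mu})v_{\lambda_k,\mu}+\lambda_k(p-q)\|v_{\lambda_k,\mu}\|_p^p.
$$
Combining this estimate with the strict positivity of the left-hand side (since $v_{\lambda_k,\mu}\in\mathcal{N}_{\lambda_k,\mu}^+$) yields
$$
\lambda_k(p-q)\|v_{\lambda_k,\mu}\|_p^p>(q-m)\mathcal{J}_{s,\Phi,V}'(v_{\lambda_k,\mu})v_{\lambda_k,\mu}.
$$
The right-hand side is bounded below by $(q-m)\ell\min\{c_\mu^\ell,c_\mu^m\}>0$ uniformly in $k$, thanks to \ref{Hip3}, Lemma \ref{lemanaruV}, and the $\lambda$-independent lower bound $\|v_{\lambda_k,\mu}\|\ge c_\mu$ from Proposition \ref{fechada}(i). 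The left-hand side, on the other hand, vanishes as $k\to\infty$ because $\|v_{\lambda_k,\mu}\|_p$ is uniformly controlled via the continuous embedding $X\hookrightarrow L^p(\R^N)$ and the assumed boundedness in $X$, while $\lambda_k\to 0$. This contradiction gives $\|v_{\lambda,\mu}\|\to+\infty$ as $\lambda\to 0$.

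The principal obstacle lies in item (i), specifically in upgrading $u_\mu$ from a mere weak solution of \eqref{eq1aai} to a ground state. This requires both the strong $X$-convergence (so that $\mathcal{J}_{s,\Phi,V}'$ passes to the limit — a nontrivial point due to the non-homogeneity of $\Phi$, handled via Proposition \ref{l4}(iv) and the $(S_+)$ property of Proposition \ref{S+}) and the structural fact that in the purely superlinear limit the Nehari set exhausts every nontrivial solution. Item (ii), while simpler in spirit, also rests crucially on the quantitative sign information extracted from \ref{Hip3}, which produces a uniform positive lower bound that a vanishing $\lambda$-term cannot balance.
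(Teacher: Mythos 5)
Your proof is correct and follows essentially the same route as the paper's. For item (i), both arguments invoke the strong $X$-convergence $u_{\lambda,\mu}\to u_\mu$ and the energy identification from Proposition \ref{l4}(iv), then pass to the limit in the weak formulation using compact embeddings and continuity of $\mathcal{J}_{s,\Phi,V}'$, noting that the $\lambda$-term drops out. For item (ii), both rest on the inequality
$$
\lambda(p-q)\|v_{\lambda,\mu}\|_p^p \,>\, (q-m)\,\mathcal{J}_{s,\Phi,V}'(v_{\lambda,\mu})v_{\lambda,\mu}\,\geq\,\ell(q-m)\min\{\|v_{\lambda,\mu}\|^\ell,\|v_{\lambda,\mu}\|^m\},
$$
obtained from \eqref{impor}, \ref{Hip3}, and Lemma \ref{lemanaruV}; you package it as a contradiction against an assumed bounded subsequence, while the paper solves it directly, together with the embedding $X\hookrightarrow L^p(\R^N)$, to produce the explicit lower bound $\|v_{\lambda,\mu}\|\geq\min\bigl\{[\tfrac{q-m}{\lambda(p-q)S_p^p}]^{1/(p-\ell)},[\tfrac{q-m}{\lambda(p-q)S_p^p}]^{1/(p-m)}\bigr\}\to+\infty$. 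Both packagings are valid; the paper's is slightly more quantitative.

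One small logical slip in item (i) is worth flagging: you assert that every nontrivial weak solution of \eqref{eq1aai} lies in $\mathcal{N}_{0,\mu}$ \emph{because} the fibering map $t\mapsto\mathcal{I}_{0,\mu}(tw)$ admits a unique critical point. That observation is true but irrelevant, and the implication is a non sequitur. The correct (and much simpler) reason is that any nontrivial critical point $w$ of $\mathcal{I}_{0,\mu}$ satisfies $\mathcal{I}_{0,\mu}'(w)w=0$ by testing the weak formulation with $w$ itself, which is precisely membership in $\mathcal{N}_{0,\mu}$. The fibering-map uniqueness concerns whether rays through the origin meet the Nehari manifold exactly once, which is a different matter. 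Fortunately, this misattribution does not affect the conclusion that $u_\mu$ is a ground state.
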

\begin{proof}
$(i)$ In view from Proposition \ref{l4}, there exists $u_{\mu} \in X\setminus\{0\}$ such that $u_{\lambda, \mu} \to u_\mu$ in $X$ as $\lambda \to 0$ and $\mathcal{E}_{0,\mu}^- = \mathcal{I}_{0,\mu}(u_\mu)$. Since $\mathcal{I}'_{\lambda, \mu}(u_{\lambda, \mu}) w= 0$ for all $w \in X$,  using the compact embedding $X\hookrightarrow L^r(\mathbb{R}^N)$ for each $r\in (m,\ell_s^*)$, we conclude that $\mathcal{I}'_{0,\mu}(u_{\mu})w= 0$ for all $w \in X$. Therefore, the function $u_\mu$ is a weak nontrivial solution with minimal energy level for the nonlocal elliptic problem \eqref{eq1aai}.  This proves the item $(i)$.

$(ii)$ Firstly, it follows from \eqref{impor}, \ref{Hip3} and Lemma \ref{lemanaruV} that
$$
\begin{aligned}
\lambda(p-q)\|v_{\lambda, \mu}\|^p_p \geq \ell(q-m)\mathcal{J}_{s,\Phi,V}(v_{\lambda, \mu}) \geq  (q-m)\min\{\|v_{\lambda, \mu}\|^\ell,\|v_{\lambda, \mu}\|^m\},
\end{aligned}
$$
Now, combining the last assertion with Sobolev embedding $X\hookrightarrow L^p(\mathbb{R}^N)$, we infer that 
\begin{equation*}
\|v_{\lambda, \mu}\| \geq \min\left\{\left[\frac{q -m}{ \lambda (p - q)S_p^p}\right]^{\frac{1}{p-\ell}}, \left[\frac{q -m}{ \lambda (p - q)S_p^p}\right]^{\frac{1}{p-m}}\right\} \to +\infty \,\; \mbox{as} \,\; \lambda \to 0.
\end{equation*}
This ends the proof. 
\end{proof}

In the sequel we study the behavior of solutions $u_{\lambda,\mu}$ and $v_{\lambda,\mu}$ as $\mu \to +\infty$. The idea is to guarantee that the sequences $(u_{\lambda, \mu})$ and $(v_{\lambda,\mu})$ remain bounded in $X$ for each $\mu > 0$ large enough. As a starting point, we consider the following result:

\begin{lemma}\label{l44}
Assume that \ref{Hip1}-\ref{Hip4}, \ref{Hi1}-\ref{Hi2}, \ref{Hiv0} and \ref{Hiv1}  hold. Then, the function $\mathcal{E}_{\lambda,\mu}^-: [0, \lambda_\ast) \times (\mu_n(\lambda), +\infty) \to \mathbb{R}$ given by \eqref{ee1} possesses the following properties:
\begin{itemize}
\item[(i)] For each $\lambda > 0$ and $\mu_0>\mu_n(\lambda)$ fixed, there exists $C>0$ independent on $\mu$ such that $0 < \mathcal{E}_{\lambda, \mu}^- \leq C$ for all $\mu > \mu_0$.

\item[(ii)] For each $\lambda > 0$ and $\mu_0>\mu_n(\lambda)$ fixed, the sequence $(u_{\lambda,\mu})_{\mu> \mu_0}$ obtained as minimizer in $\mathcal{N}_{\lambda, \mu}^-$ is bounded in $X$.  

\end{itemize}
\end{lemma}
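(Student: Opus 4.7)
The plan is to obtain item $(i)$ from the monotonicity of the map $\mu \mapsto \mathcal{E}_{\lambda,\mu}^{-}$ already established, and to deduce item $(ii)$ as an immediate consequence of $(i)$ together with the energy coercivity estimate on $\mathcal{N}_{\lambda,\mu}^{-}$ used in the proof of Proposition \ref{limitacao}. The whole argument should be short and largely a repackaging of estimates we already have.

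For $(i)$, fix $\lambda>0$ and $\mu_{0}>\mu_{n}(\lambda)$. First I would apply Propositions \ref{converg} and \ref{l112b} at the parameter $\mu_{0}$ to obtain a genuine minimizer in $\mathcal{N}_{\lambda,\mu_{0}}^{-}$, so that $\mathcal{E}_{\lambda,\mu_{0}}^{-}$ is finite. Next, Corollary \ref{l44eu}$(iii)$ asserts that $\mu\mapsto \mathcal{E}_{\lambda,\mu}^{-}$ is non-increasing on $(\mu_{n}(\lambda),+\infty)$; applied at any $\mu>\mu_{0}$ this yields
$$
\mathcal{E}_{\lambda,\mu}^{-}\le \mathcal{E}_{\lambda,\mu_{0}}^{-}=:C,
$$
where $C$ depends only on $\lambda$ and $\mu_{0}$. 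The strict positivity $\mathcal{E}_{\lambda,\mu}^{-}>0$ is exactly the content of Proposition \ref{limitacao}, so item $(i)$ follows.

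For $(ii)$, I would reuse the key inequality derived in the proof of Proposition \ref{limitacao}: combining the identity $\mathcal{I}_{\lambda,\mu}(u)=\mathcal{I}_{\lambda,\mu}(u)-\tfrac{1}{q}\mathcal{I}_{\lambda,\mu}'(u)u$ for $u\in\mathcal{N}_{\lambda,\mu}$ with hypothesis \ref{Hip3}, \eqref{impor} and Lemma \ref{lemanaruV} gives
$$
\mathcal{I}_{\lambda,\mu}(u)\;\ge\;\frac{p(q-m)-m(q-\ell)}{pq}\min\{\|u\|^{\ell},\|u\|^{m}\},\qquad u\in\mathcal{N}_{\lambda,\mu}^{-}.
$$
By hypothesis \ref{Hi1} the constant $p(q-m)-m(q-\ell)$ is strictly positive, so plugging the minimizer $u_{\lambda,\mu}$ and invoking $(i)$ yields
$$
\frac{p(q-m)-m(q-\ell)}{pq}\min\{\|u_{\lambda,\mu}\|^{\ell},\|u_{\lambda,\mu}\|^{m}\}\;\le\; \mathcal{I}_{\lambda,\mu}(u_{\lambda,\mu})\;=\;\mathcal{E}_{\lambda,\mu}^{-}\;\le\; C,
$$
which furnishes a bound on $\|u_{\lambda,\mu}\|$ depending only on $\lambda$ and $\mu_{0}$, independent of $\mu>\mu_{0}$.

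The argument is essentially book-keeping, so no genuine obstacle should arise; the only delicate point worth double-checking is that the energy lower bound indeed propagates uniformly in $\mu$, which is guaranteed by the fact that the constant $\tfrac{p(q-m)-m(q-\ell)}{pq}$ is a structural quantity independent of $\mu$ (only the hypotheses \ref{Hip3} and \ref{Hi1} are used). Thus both items follow without any new estimate.
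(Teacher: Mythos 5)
Your proposal is correct and takes essentially the same route as the paper: item $(i)$ comes from the monotonicity of $\mu\mapsto\mathcal{E}_{\lambda,\mu}^-$ in Corollary \ref{l44eu} together with the lower bound $\mathcal{E}_{\lambda,\mu}^->0$ from Proposition \ref{limitacao}, and item $(ii)$ comes from feeding the uniform upper bound from $(i)$ into the coercivity estimate \eqref{coercive1}, exactly as the paper does by invoking the proof of Proposition \ref{l4}$(iii)$. You have merely spelled out details the paper leaves implicit.
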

\begin{proof}
$(i)$ Let $\lambda>0$ and $\mu_0>\mu_n(\lambda)$ be fixed. By Proposition \ref{l44eu}, it follows that 
\begin{equation*}
\mathcal{E}_{\lambda,\mu}^- \leq \mathcal{E}_{\lambda,\mu_0}^- = \inf_{w \in \mathcal{N}_{\lambda,\mu_0}^-} \mathcal{I}_{\lambda, \mu}(w) = C < \infty, \quad \mbox{for all}\,\; \mu>\mu_0,
\end{equation*}
where $C := C(p,q, \lambda, \mu_0,N) > 0$ is independent on $\mu$. This proves the item $(i)$. The proof of item $(ii)$ follows the same ideas as Proposition \ref{l4} $(iii)$.
\end{proof}

\begin{lemma}\label{l445}
Assume that \ref{Hip1}-\ref{Hip4}, \ref{Hi1}-\ref{Hi2}, \ref{Hiv0} and \ref{Hiv1} hold.  Then, the function $\mathcal{E}_{\lambda,\mu}^+: [0, \lambda^\ast) \times (\mu_n(\lambda), +\infty) \to \mathbb{R}$ given by \eqref{ee1} possesses the following properties:
\begin{itemize}
\item[(i)] It holds that $\mu \mapsto \mathcal{E}_{\lambda,\mu}^+$ is a continuous function for $\lambda>0$ fixed.
\item[(ii)] For each $\lambda> 0$ and $\mu_0> \mu_n(\lambda)$, there exists $C > 0$ independent on $\mu$ such that $\mathcal{E}_{\lambda, \mu}^+ \leq C$ for all $\mu > \mu_0$.
\item[(iii)] It holds that $\mathcal{E}_{\lambda, \mu}^+ \to -\infty$ as $\mu \to +\infty$ for each $\lambda > 0$ fixed.

\end{itemize}
\end{lemma}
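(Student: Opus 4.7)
Items (ii) and (iii) should follow readily from earlier tools. For (ii), I would invoke Corollary \ref{l44eu} directly: since $\mu \mapsto \mathcal{E}_{\lambda,\mu}^+$ is non-increasing, one has $\mathcal{E}_{\lambda,\mu}^+ \leq \mathcal{E}_{\lambda,\mu_0}^+ =: C < +\infty$ for every $\mu > \mu_0$, where $C$ depends only on $\lambda$ and $\mu_0$. For (iii), I plan a test-function argument: fix any $u \in X \setminus \{0\}$; since $\Lambda_e(u)$ is independent of $\mu$, for all $\mu > \Lambda_e(u)$ Proposition \ref{compar}(ii) ensures that $\t_\mu^+(u)$ is a global minimum point of $\gamma_u$, hence
$$
\mathcal{E}_{\lambda,\mu}^+ \leq \min_{t > 0}\mathcal{I}_{\lambda,\mu}(tu) \leq \mathcal{I}_{\lambda,\mu}(u) = \mathcal{J}_{s,\Phi,V}(u) - \frac{\mu}{q}\|u\|_{q,a}^q + \frac{\lambda}{p}\|u\|_p^p,
$$
and since $\|u\|_{q,a}^q > 0$ by \ref{Hi2}, the right-hand side tends to $-\infty$ as $\mu \to +\infty$.

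Item (i) is the principal work. Fix $\mu > \mu_n(\lambda)$ and a sequence $\mu_k \to \mu$; since $\mu_n(\lambda)$ is independent of $k$, eventually $\mu_k > \mu_n(\lambda)$, so Propositions \ref{converg1} and \ref{l1122} supply minimizers $v_k := v_{\lambda,\mu_k} \in \mathcal{N}_{\lambda,\mu_k}^+$. I would establish upper and lower semicontinuity separately. For the upper bound, I use $v := v_{\lambda,\mu}$ as a test function: because $\Lambda_n(v) < R_n(v) = \mu$ (as $\t(v) < 1$), the projection $\t_{\lambda,\mu_k}^+(v)$ is well defined for $k$ large and, by Proposition \ref{monotonic1}(iv), converges to $\t_{\lambda,\mu}^+(v) = 1$. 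Joint continuity of $(\mu,t) \mapsto \mathcal{I}_{\lambda,\mu}(tv)$ then gives $\limsup_k \mathcal{E}_{\lambda,\mu_k}^+ \leq \lim_k \mathcal{I}_{\lambda,\mu_k}(\t_{\lambda,\mu_k}^+(v)v) = \mathcal{E}_{\lambda,\mu}^+$. For the lower bound, item (ii) together with the coercivity estimate \eqref{coercive1} shows that $(v_k)$ is bounded in $X$; extracting $v_k \rightharpoonup w$ and using that $\|v_k\| \geq c_{\mu_k}$ from Proposition \ref{fechada}(i) stays away from zero (since $\mu_k \to \mu$), one shows $w \neq 0$ as in Remark \ref{convzero}. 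Passing $\mathcal{I}'_{\lambda,\mu_k}(v_k)(v_k - w) \to 0$ via the compact embeddings $X \hookrightarrow L^r(\R^N)$ for $r \in (m, \ell_s^*)$ and applying the $(S_+)$-property (Proposition \ref{S+}) yields strong convergence $v_k \to w$ in $X$. Thus $w \in \overline{\mathcal{N}_{\lambda,\mu}^+} = \mathcal{N}_{\lambda,\mu}^+ \cup \mathcal{N}_{\lambda,\mu}^0$ by Proposition \ref{nnfechada}, and Remark \ref{remark1} ensures $\mathcal{I}_{\lambda,\mu}(w) \geq \mathcal{E}_{\lambda,\mu}^+$, closing the argument.

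The main obstacle lies in the lower semicontinuity step: one must guarantee uniformly in $k$ both that $v_k$ stays bounded away from zero in $X$ and that the $(S_+)$-condition can be invoked after passing to the limit in $\mathcal{I}'_{\lambda,\mu_k}$. The first point reduces to checking continuity of $\mu \mapsto c_\mu$ from \eqref{bacana}, while the second relies on the fact that the nonlinear terms $\mu_k a(x)|v_k|^{q-2}v_k$ and $\lambda|v_k|^{p-2}v_k$ converge strongly in $X^\ast$ thanks to the compact embeddings of Section \ref{spaces}. Both verifications are technical but routine once the framework of the preceding sections is in place.
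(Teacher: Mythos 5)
Your proposal is correct. Items (i) and (ii) follow essentially the same route the paper sketches (it defers to the proof of Proposition \ref{l4}): for (ii), the monotonicity from Corollary \ref{l44eu}(iii) gives the uniform bound $\mathcal{E}_{\lambda,\mu}^+\leq\mathcal{E}_{\lambda,\mu_0}^+$, and for (i) you reproduce the standard scheme — projection continuity from Proposition \ref{monotonic1} for the upper bound, and boundedness, weak convergence, non-vanishing, $(S_+)$ and the closure $\overline{\mathcal{N}_{\lambda,\mu}^+}=\mathcal{N}_{\lambda,\mu}^+\cup\mathcal{N}_{\lambda,\mu}^0$ for the lower bound. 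Splitting explicitly into $\limsup$ and $\liminf$ is a clean way to organize the same argument.

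Your proof of (iii) is genuinely different from the paper's, and simpler. The paper studies the projection $\t_{\lambda,\mu}^+(w)$: it first derives, from the Nehari identity and \eqref{impor}, that $\t_{\lambda,\mu}^+(w)\geq\delta>0$ uniformly in $\mu$, then that $\mu\leq C\,\t_{\lambda,\mu}^+(w)^{p-q}$, so $\t_{\lambda,\mu}^+(w)\to+\infty$, and finally combines $\mathcal{I}_{\lambda,\mu}-\tfrac{1}{p}\mathcal{I}'_{\lambda,\mu}$ estimates with $m<q$ to force the energy to $-\infty$. You instead observe that for $\mu>\Lambda_e(u)$ the global-minimum property of $\t_{\mu}^+(u)$ from Proposition \ref{compar}(ii) gives the direct bound $\mathcal{E}_{\lambda,\mu}^+\leq\min_{t>0}\mathcal{I}_{\lambda,\mu}(tu)\leq\mathcal{I}_{\lambda,\mu}(u)$, which tends to $-\infty$ linearly in $\mu$ because $\|u\|_{q,a}^q>0$. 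This bypasses the entire asymptotic analysis of the projection and only uses already-established structure. Both arguments are valid; yours is more economical, while the paper's gives quantitative information about the blow-up rate of $\t_{\lambda,\mu}^+(w)$ that you do not need here.
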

\begin{proof}
The proof of item $(i)$ and $(ii)$ follows using the same ideas discussed in the proof of Proposition \ref{l4}.  

$(iii)$ Let $w\in X\setminus\{0\}$ be fixed. Taking $\mu > 0$ large enough, we can assume that $\mu > \Lambda_n(w)$. Then, by Proposition \ref{compar}, there exists $\t_{\lambda,\mu}:=\t_{\lambda,\mu}^{+}(w) > 0$ such that $\t_{\lambda,\mu} w\in \mathcal{N}_{\lambda,\mu}^+$. Consequently, it follows from assumption \ref{Hip3} and Lemma \ref{lemanaruV} that
\begin{equation}\label{ab3}
\begin{aligned}
\mu t_{\lambda,\mu} ^q \|w\|_{q,a}^q  = \mathcal{J}_{s,\Phi,V}'(\t_{\lambda,\mu} w)(\t_{\lambda,\mu} w)+ \lambda t_{\lambda,\mu} ^p \|w\|_p^p
\leq m\max\{\|\t_{\lambda,\mu} w\|^\ell, \|\t_{\lambda,\mu} w\|^m\} + \lambda t_{\lambda,\mu} ^p \|w\|_p^p.
\end{aligned}
\end{equation}
On the other hand, by using \eqref{impor} combined with assumption \ref{Hip3} and Lemma \ref{lemanaruV}, we obtain that 
$$
\lambda(p-q) \|\t_{\lambda,\mu} w\|_p^p > (q-m)\min\{\|\t_{\lambda,\mu} w\|^\ell, \|\t_{\lambda,\mu} w\|^m\} 
$$
We consider the proof for the case $\|\t_{\lambda,\mu} w\|>1$. The proof for the case $\|\t_{\lambda,\mu} w\|\leq1$ is analogous. In view of last estimate, we have that
$
\lambda(p-q)\t_{\lambda,\mu}^p  \|w\|_p^p > (q-m) \t_{\lambda,\mu}^m \| w\|^m.
$
This inequality implies that $t_{\lambda,\mu} \geq \delta$ for some $\delta:=\delta(m,q,p,\lambda,w) > 0$ independent on $\mu > 0$. Thus, by inequality \eqref{ab3} and the continuous embedding $X\hookrightarrow L^p(\mathbb{R}^N)$, we deduce that
\begin{equation*}
\mu t_{\lambda,\mu} ^{q -m} \|w\|_{q,a}^q \leq m \|w\|^m  + \lambda t_{\lambda,\mu}^{p -2}\|w\|^p \leq  t_{\lambda,\mu}^{p -m} \left(m\dfrac{\|w\|^m}{\delta^{p-m}}+ \lambda S_p^p \|w\|^p  \right).
\end{equation*}
As a consequence, we infer that
$
\mu  \leq  t_{\lambda,\mu}^{p - q} C
$
for some $C:=C(\lambda, p,q, w) > 0$ which is independent on $\mu > 0$. Since $q<p$, we conclude that $t_{\lambda,\mu} \to +\infty$ as $\mu \to +\infty$. Finally, using once more \ref{Hip3}, Lemma \ref{lemanaruV} and the fact that $\t_{\lambda,\mu}w \in \mathcal{N}_{\lambda,\mu}^+$, we obtain that 
\begin{equation}\label{eddd}
\begin{aligned}
\mathcal{E}_{\lambda, \mu}^+ \leq \mathcal{I}_{\lambda,\mu}(\t_{\lambda,\mu} w) =  \mathcal{I}_{\lambda, \mu}(\t_{\lambda,\mu} w) - \frac{1}{p} \mathcal{I}'_{\lambda, \mu}(\t_{\lambda,\mu} w) (\t_{\lambda,\mu} w ) 
\leq \left( 1- \frac{\ell}{p} \right) \t_{\lambda,\mu}^m \|w\|^m + -\mu \t_{\lambda,\mu}^q \left( \frac{1}{q} - \frac{1}{p} \right) \| w\|^q_{q,a}
\end{aligned}
\end{equation}
for all $\mu > \mu_n(\lambda)$ with $\lambda > 0$ fixed. Therefore, since $\ell\leq m < q < p < \ell_s^\ast$ and $t_{\lambda,\mu} \to +\infty$ as $\mu \to +\infty$, we deduce from estimate \eqref{eddd} that $\mathcal{E}_{\lambda, \mu}^+ \to - \infty$ as $\mu \to +\infty$ for each $\lambda > 0$ fixed, which ends the proof of item $(iii)$. 
\end{proof}

Now we are in position to prove the asymptotic behavior for solution $u_{\lambda,\mu}$ and $v_{\lambda,\mu}$ as $\mu \to +\infty$.
\begin{proposition}\label{l5eu}
Assume that \ref{Hip1}-\ref{Hip4}, \ref{Hi1}-\ref{Hi2}, \ref{Hiv0} and  \ref{Hiv1}  hold. Then, the weak solutions  $u_{\lambda, \mu}$ and $v_{\lambda, \mu}$ obtained respectively in Propositions \ref{converg} and \ref{converg1} has the following asymptotic behavior:
\begin{itemize}
\item[(i)] For each $\lambda > 0$ fixed, it holds that $u_{\lambda, \mu} \to 0$ in $X$ as $\mu \to +\infty$. In particular, $\mathcal{E}_{\lambda, \mu}^- \to 0$ as $\mu \to +\infty$. 
\item[(ii)] For each $\lambda > 0$ fixed, it holds that $\|v_{\lambda, \mu}\| \to +\infty$ as $\mu \to +\infty$. 
\end{itemize}
\end{proposition}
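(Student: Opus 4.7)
The plan is to handle the two items separately but with a common thread: control $\mathcal{I}_{\lambda,\mu}(u_{\lambda,\mu})$ and $\mathcal{I}_{\lambda,\mu}(v_{\lambda,\mu})$ and translate these into norm information via the structural estimates already available in Section~\ref{section2}.

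For item $(i)$, I would first establish that $\mathcal{E}_{\lambda,\mu}^-\to 0$ as $\mu\to+\infty$ by producing a convenient competitor. Fix any $w\in X\setminus\{0\}$; for $\mu$ large enough one has $\mu>\Lambda_n(w)$, so by Proposition~\ref{compar} there is a unique $\t_\mu^-(w)>0$ with $\t_\mu^-(w)w\in\mathcal{N}_{\lambda,\mu}^-$. Using the defining identity $R_n(\t_\mu^-(w)w)=\mu$ together with Lemma~\ref{lema_naru} and assumption~\ref{Hip3}, one obtains an upper bound of the form
\begin{equation*}
\mu\,\t_\mu^-(w)^q\|w\|_{q,a}^q\ \le\ m\,\t_\mu^-(w)^\ell\,\|w\|^\ell+\lambda\,\t_\mu^-(w)^p\|w\|_p^p,
\end{equation*}
valid once $\t_\mu^-(w)\|w\|\le 1$, from which it follows that $\t_\mu^-(w)\to 0$ at rate $O(\mu^{-1/(q-\ell)})$ as $\mu\to+\infty$. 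Rewriting $\mathcal{I}_{\lambda,\mu}(\t_\mu^-(w)w)$ as $\mathcal{I}_{\lambda,\mu}(\t_\mu^-(w)w)-\tfrac{1}{q}\mathcal{I}_{\lambda,\mu}'(\t_\mu^-(w)w)(\t_\mu^-(w)w)$ and invoking \ref{Hip3} once more shows that each resulting term is $O(\t_\mu^-(w)^\ell)$, so $\mathcal{I}_{\lambda,\mu}(\t_\mu^-(w)w)\to 0$. Since $\mathcal{E}_{\lambda,\mu}^-\le\mathcal{I}_{\lambda,\mu}(\t_\mu^-(w)w)$ and $\mathcal{E}_{\lambda,\mu}^->0$ by Proposition~\ref{limitacao}, this forces $\mathcal{E}_{\lambda,\mu}^-\to 0$. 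The strong convergence $u_{\lambda,\mu}\to 0$ in $X$ is then immediate from the inequality obtained in the proof of Proposition~\ref{limitacao},
\begin{equation*}
\mathcal{E}_{\lambda,\mu}^-=\mathcal{I}_{\lambda,\mu}(u_{\lambda,\mu})\ \ge\ \frac{p(q-m)-m(q-\ell)}{pq}\min\{\|u_{\lambda,\mu}\|^\ell,\|u_{\lambda,\mu}\|^m\},
\end{equation*}
because assumption \ref{Hi1} guarantees $p(q-m)-m(q-\ell)>0$, so $\|u_{\lambda,\mu}\|\to 0$.

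For item $(ii)$, I would argue by contradiction. Suppose $\|v_{\lambda,\mu_k}\|\le M$ for some sequence $\mu_k\to+\infty$. Since $v_{\lambda,\mu_k}\in\mathcal{N}_{\lambda,\mu_k}$, combining the Nehari identity with Lemma~\ref{lemanaruV}, assumption \ref{Hip3}, and the continuous embedding $X\hookrightarrow L^p(\mathbb{R}^N)$ gives
\begin{equation*}
\mu_k\|v_{\lambda,\mu_k}\|_{q,a}^q = \mathcal{J}_{s,\Phi,V}'(v_{\lambda,\mu_k})v_{\lambda,\mu_k}+\lambda\|v_{\lambda,\mu_k}\|_p^p \le m\,\max\{M^\ell,M^m\}+\lambda S_p^p M^p,
\end{equation*}
so $\mu_k\|v_{\lambda,\mu_k}\|_{q,a}^q$ stays bounded. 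On the other hand, writing $\mathcal{I}_{\lambda,\mu_k}(v_{\lambda,\mu_k})=\mathcal{I}_{\lambda,\mu_k}(v_{\lambda,\mu_k})-\tfrac{1}{p}\mathcal{I}_{\lambda,\mu_k}'(v_{\lambda,\mu_k})v_{\lambda,\mu_k}$ and discarding the nonnegative $\mathcal{J}_{s,\Phi,V}$-terms (using $\mathcal{J}_{s,\Phi,V}'(v)v\le m\mathcal{J}_{s,\Phi,V}(v)$ and $m\le p$) yields
\begin{equation*}
\mathcal{E}_{\lambda,\mu_k}^+=\mathcal{I}_{\lambda,\mu_k}(v_{\lambda,\mu_k})\ \ge\ -\frac{p-q}{pq}\,\mu_k\|v_{\lambda,\mu_k}\|_{q,a}^q,
\end{equation*}
and since $\mathcal{E}_{\lambda,\mu_k}^+\to-\infty$ by Lemma~\ref{l445}$(iii)$, this would force $\mu_k\|v_{\lambda,\mu_k}\|_{q,a}^q\to+\infty$, contradicting the previous bound. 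Hence $\|v_{\lambda,\mu}\|\to+\infty$.

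The main obstacle is the asymptotic analysis of $\t_\mu^-(w)$ and of $\mathcal{I}_{\lambda,\mu}(\t_\mu^-(w)w)$ under the non-homogeneity of $\Phi$: no explicit formula is available for $\t_\mu^-(w)$, so every estimate must be squeezed out of the $\xi^\pm$-bounds of Lemma~\ref{lema_naru} together with the growth encoded in \ref{Hip3}, carefully tracking the leading small-$t$ exponent $\ell$ on both sides of the Nehari identity in order to extract both the rate $\t_\mu^-(w)\to 0$ and the matching decay of the energy.
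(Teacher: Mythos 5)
Your proof is correct, but both items take a genuinely different route from the paper's.

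For item $(i)$, the paper works in the opposite logical direction: it first uses Lemma~\ref{l44}$(ii)$ to get boundedness of $(u_{\lambda,\mu})_{\mu>\mu_0}$, extracts a weak limit $u_\lambda$, upgrades to strong convergence $u_{\lambda,\mu}\to u_\lambda$ via the $(S_+)$-property (as in Proposition~\ref{l4}$(i)$), and then shows $u_\lambda=0$ by observing that the Nehari identity together with \ref{Hip3}, Lemma~\ref{lemanaruV}, Lemma~\ref{l44}$(ii)$ and the embedding $X\hookrightarrow L^p$ yield $\mu\|u_{\lambda,\mu}\|_{q,a}^q\le C_1$ independent of $\mu$, so $\|u_{\lambda,\mu}\|_{q,a}\to 0$; since $a>0$ a.e.\ this forces $u_\lambda=0$, and $\mathcal{E}_{\lambda,\mu}^-\to 0$ then drops out. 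You instead prove $\mathcal{E}_{\lambda,\mu}^-\to 0$ first, by bounding it from above with the energy of the explicit competitor $\t_\mu^-(w)w$ and tracking the small-$t$ exponent $\ell$ to get $\mathcal{I}_{\lambda,\mu}(\t_\mu^-(w)w)=O(\t_\mu^-(w)^\ell)\to 0$, and then deduce $u_{\lambda,\mu}\to 0$ from the coercivity estimate of Proposition~\ref{limitacao}. Your route is more elementary in that it bypasses the $(S_+)$-machinery entirely, at the cost of having to justify the decay of $\t_\mu^-(w)$ (which you do, though it would help to note explicitly that $\t_\mu^-(w)\to 0$ follows from monotonicity in $\mu$ and $R_n(tw)\to+\infty$ as $t\to 0^+$ before the rate estimate can be applied). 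For item $(ii)$, the paper argues directly: from $\mathcal{E}_{\lambda,\mu}^+=\mathcal{I}_{\lambda,\mu}(v_{\lambda,\mu})-\tfrac1q\mathcal{I}'_{\lambda,\mu}(v_{\lambda,\mu})v_{\lambda,\mu}$ it obtains $\mathcal{E}_{\lambda,\mu}^+\ge\lambda S_p^p(\tfrac1p-\tfrac1q)\|v_{\lambda,\mu}\|^p$, and since the right-hand side is $-c\|v_{\lambda,\mu}\|^p$ with $c>0$, Lemma~\ref{l445}$(iii)$ immediately forces $\|v_{\lambda,\mu}\|\to+\infty$. You reach the same conclusion through a contradiction using the $\tfrac1p$-combination and the Nehari bound $\mu\|v_{\lambda,\mu}\|_{q,a}^q\le C$; this is valid but slightly more roundabout, as the paper's one-line lower bound already does the job without assuming boundedness and negating.
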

\begin{proof}
$(i)$ By Proposition \ref{l44} $(ii)$ there exists $u_{\lambda} \in X$ in such that $u_{\lambda, \mu} \rightharpoonup u_{\lambda}$ in $X$. Proceeding as in the proof of Proposition \ref{l4} $(i)$, we also obtain that $u_{\lambda, \mu} \to u_{\lambda}$ in $X$ as $\mu \to +\infty$. But  in view of \ref{Hip3}, Lemma \ref{lemanaruV}, the continuous embedding $X\hookrightarrow L^p(\mathbb{R}^N)$ and Lemma \ref{l44} $(ii)$, we have that	
\begin{equation*}
\mu \|u_{\lambda,\mu}\|_{q,a}^q =\mathcal{J}_{s,\Phi,V}'(u_{\lambda,\mu})u_{\lambda,\mu}+ \lambda\|u_{\lambda,\mu}\|_p^p \leq m \max\{\|u_{\lambda,\mu}\|^\ell,\|u_{\lambda,\mu}\|^m\} + \lambda S_p^p\|u_{\lambda,\mu}\|^p \leq C_1 ,
\end{equation*}
where $C_1 > 0$ is independent on $\mu > \mu_n(\lambda)$. Then, the last inequality gives us that $\|u_{\lambda,\mu}\|_{q,a} \to 0$ as $\mu \to +\infty$, proving  that $u_\lambda=0$. Therefore, $u_{\lambda, \mu} \to 0$ in $X$ as $\mu \to +\infty$, which ends the proof of item $(i)$. 

$(ii)$ By using \ref{Hip3}, Lemma \ref{lemanaruV} and the fact that $v_{\lambda,\mu} \in \mathcal{N}_{\lambda,\mu}^+$, we infer that
\begin{equation}\label{edddi}
\begin{aligned}
\mathcal{E}_{\lambda, \mu}^+=  \mathcal{I}_{\lambda, \mu}(v_{\lambda,\mu}) - \frac{1}{q} \mathcal{I}'_{\lambda, \mu}(v_{\lambda,\mu}) (v_{\lambda,\mu}) \geq \left( 1- \frac{m}{p} \right) \mathcal{J}_{s,\Phi,V}(v_{\lambda,\mu}) + \lambda  \left( \frac{1}{p} - \frac{1}{q} \right) \| v_{\lambda,\mu}\|_p^p,
\end{aligned}
\end{equation}
for all $\mu > \mu_n(\lambda)$ and $\lambda > 0$ fixed. Then, since  $m < q < p < \ell_s^\ast$, it follows from continuous embedding $X\hookrightarrow L^p(\mathbb{R}^N)$ and estimate \eqref{edddi} that
\begin{equation*}
\mathcal{E}_{\lambda, \mu}^+ \geq {\lambda }{S_p^p} \left( \frac{1}{p} - \frac{1}{q} \right) \|v_{\lambda,\mu}\|^p.
\end{equation*}
Therefore, combining the last estimate with the Lemma \ref{l445} $(iii)$ we conclude that $\|v_{\lambda,\mu}\| \to +\infty$ as $\mu \to +\infty$ for each $\lambda > 0$. This finishes the proof. 
\end{proof}

\section{The extremal cases}\label{section5}
In this section, our main goal is study the existence of solution for the problem \eqref{eq1} taking into account the extremal cases $\mu=\mu_n$ and $\lambda=\lambda_\ast(=\lambda^\ast)$. We start recalling that $\mathcal{N}_{\lambda, \mu_n}^0$ is not empty for all $\lambda>0$.

\subsection{The case $\mu=\mu_n$}

Our first existence result when $\mu=\mu_n$ is stated as follows:

\begin{proposition}\label{limite-case1}
Assume that \ref{Hip1}-\ref{Hip4}, \ref{Hi1}-\ref{Hi2}, \ref{Hiv0} and \ref{Hiv1} hold. Suppose also that $\lambda\in(0,\lambda_\ast)$ and $\mu=\mu_n$. Then, the problem\eqref{eq1} has at least one nontrivial solution $u\in\mathcal{N}_{\lambda,\mu_n}^0$.
\end{proposition}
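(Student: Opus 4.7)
The idea is to realize $u$ as a strong limit of the curves $u_{\lambda,\mu_k}$ obtained from Theorem \ref{theorem1} as $\mu_k\searrow\mu_n(\lambda)$ from above. Fix $\lambda\in(0,\lambda_\ast)$ and choose a strictly decreasing sequence $(\mu_k)_{k\in\mathbb{N}}$ with $\mu_k>\mu_n(\lambda)$ and $\mu_k\to\mu_n(\lambda)$. By Theorem \ref{theorem1}, for every $k$ there exists $u_k:=u_{\lambda,\mu_k}\in\mathcal{N}_{\lambda,\mu_k}^{-}$ which is a weak solution of \eqref{eq1} with parameter $\mu_k$ and satisfies $\mathcal{I}_{\lambda,\mu_k}(u_k)=\mathcal{E}_{\lambda,\mu_k}^{-}$.

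First, I would prove that $(u_k)$ is bounded in $X$. The monotonicity in Corollary \ref{l44eu} gives $\mathcal{E}_{\lambda,\mu_k}^{-}\leq \mathcal{E}_{\lambda,\mu_1}^{-}$, and the coercivity argument of Proposition \ref{coercive} (which uses only $\mu_k\leq\mu_1$ as an upper bound on $\mu$) then yields a uniform bound $\|u_k\|\leq C$. Passing to a subsequence we obtain $u_k\rightharpoonup u$ in $X$ for some $u\in X$. Next, using $\mathcal{I}'_{\lambda,\mu_k}(u_k)(u_k-u)=0$, the fact that $a\in L^{r}(\mathbb{R}^N)$ with $r=(p/q)'$, and the compact embeddings $X\hookrightarrow L^q(\mathbb{R}^N)$, $X\hookrightarrow L^p(\mathbb{R}^N)$ from Lemma \ref{compact1}, the two polynomial terms tend to zero, hence $\mathcal{J}'_{s,\Phi,V}(u_k)(u_k-u)\to 0$. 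The $(S_+)$-property recorded in Proposition \ref{S+} then upgrades the convergence to $u_k\to u$ strongly in $X$.

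The main step is to show $u\neq 0$. Proposition \ref{limitacao} gives $\mathcal{I}_{\lambda,\mu_k}(u_k)\geq D_{\mu_k}$, where the explicit formula for $c_\mu$ in \eqref{bacana} and for $D_\mu$ shows that $\mu\mapsto D_\mu$ is continuous and strictly positive on $(0,+\infty)$. Since $\mu_k\to\mu_n(\lambda)>0$, we get $D_{\mu_k}\to D_{\mu_n(\lambda)}>0$. Combining this with strong convergence and $\mu_k\to\mu_n(\lambda)$, we have $\mathcal{I}_{\lambda,\mu_k}(u_k)\to \mathcal{I}_{\lambda,\mu_n(\lambda)}(u)$, and therefore $\mathcal{I}_{\lambda,\mu_n(\lambda)}(u)\geq D_{\mu_n(\lambda)}>0$, forcing $u\neq 0$. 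Passing to the limit in $\mathcal{I}'_{\lambda,\mu_k}(u_k)=0$ shows that $u$ is a weak solution of \eqref{eq1} at $\mu=\mu_n(\lambda)$, so in particular $u\in\mathcal{N}_{\lambda,\mu_n(\lambda)}$ and $R_n(u)=\mu_n(\lambda)$.

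Finally, to locate $u$ inside $\mathcal{N}_{\lambda,\mu_n(\lambda)}^{0}$ I would combine the definition of $\mu_n(\lambda)$ with the uniqueness of the critical point of $t\mapsto R_n(tu)$ (Proposition \ref{propRn2}): since $\mathsf{t}(u)$ is the global minimizer of this map,
\begin{equation*}
\mu_n(\lambda)\;\leq\;\Lambda_n(u)\;=\;R_n(\mathsf{t}(u)u)\;\leq\;R_n(u)\;=\;\mu_n(\lambda),
\end{equation*}
so $R_n(\mathsf{t}(u)u)=R_n(u)$ and hence $\mathsf{t}(u)=1$ by uniqueness. This means $R_n'(u)u=0$, which by Lemma \ref{der-Rn} is equivalent to $\mathcal{I}''_{\lambda,\mu_n(\lambda)}(u)(u,u)=0$, i.e., $u\in\mathcal{N}_{\lambda,\mu_n(\lambda)}^{0}$. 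The hardest part is the nontriviality step: weak convergence alone would only give $\mathcal{I}_{\lambda,\mu_n(\lambda)}(u)\leq\liminf \mathcal{I}_{\lambda,\mu_k}(u_k)$, which is useless for excluding $u=0$; the argument genuinely requires both the continuity (in $\mu$) of the explicit lower bound $D_\mu$ and the strong convergence supplied by the $(S_+)$-condition.
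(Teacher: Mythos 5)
Your proposal follows the paper's strategy: approximate $\mu_n(\lambda)$ from above by $\mu_k\searrow\mu_n(\lambda)$, extract the solutions $u_k\in\mathcal N^-_{\lambda,\mu_k}$ from Theorem \ref{theorem1}, obtain a uniform bound and strong convergence through the $(S_+)$-property, show the limit is nontrivial, and pass the Euler--Lagrange equation to the limit. The only genuine variation is your final localization step: you use $\Lambda_n(u)\le R_n(u)=\mu_n(\lambda)\le\Lambda_n(u)$ together with the uniqueness of $\mathsf t(u)$ to conclude $\mathcal I''_{\lambda,\mu_n}(u)(u,u)=0$ directly, whereas the paper passes the sign condition $\mathcal I''_{\lambda,\mu_k}(u_k)(u_k,u_k)<0$ to the limit to place $u\in\mathcal N^-\cup\mathcal N^0$ and then invokes Proposition \ref{Gamma-n} to note $\mathcal N^-_{\lambda,\mu_n}=\emptyset$; both arguments are correct and lead to the same conclusion.
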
 
\begin{proof}
We consider a sequence $(\mu_k)_{k\in\mathbb{N}} \subset \mathbb{R}$ such that $\mu_k>\mu_n$ for each $k\in\mathbb{N}$ and $\mu_k \to \mu_n$ as $k\to+\infty$. Applying the Theorem \ref{theorem1}, we obtain a sequence $(u_k)_{k\in\mathbb{N}}\subset\mathcal{N}_{\lambda,\mu_k}^-$ which is a critical point for the functional $\mathcal{I}_{\lambda, \mu_k}$ and $\mathcal{E}_{\lambda,\mu_k}^-=\mathcal{I}_{\lambda,\mu_k}(u_k)$ for all $k\in\mathbb{N}$. Precisely, 
\begin{equation}\label{bueu}
\mathcal{I}'_{\lambda, \mu_k}(u_k) w = 0, \quad \mbox{for all}\; w \in X \quad\mbox{and}\quad  \mathcal{I}''_{\lambda, \mu_k}(u_k)(u_k, u_k) < 0,
\end{equation}
for all $k\in\mathbb{N}$. Now, by using Proposition \ref{l44eu} and inequality \eqref{coercive1}, we obtain
$$
\mathcal{E}_{\lambda,\mu_1}^- \geq \mathcal{E}_{\lambda,\mu_k}^- =\mathcal{I}_{\lambda,\mu_k}(u_k) \geq {\frac{p(q-m)-m(q-\ell)}{pq}}	\min\{\|u_k\|^\ell, \|u_k\|^m\}, 
$$
Consequently, $(u_k)_{k\in\mathbb{N}}$ is bounded in $X$. Hence, there exists $u\in X$ such that, up to a subsequence, $u_k \rightharpoonup u$ in $X$. Using the same ideas employed in the Remark \ref{convzero}, we deduce that $u\neq0$. Moreover, by using \eqref{bueu}, H\"older inequality and compact embedding $X\hookrightarrow L^r(\mathbb{R}^N)$ for each $r\in (m,\ell_s^*)$, we have that
$$
\mathcal{J}_{s,\Phi,V}(u_k)(u_k-u)= \mu_k\int_{\R^{N}}a(x)|u_k|^{q-2}u_k(u_k -u) dx -\int_{\R^{N}}|u_k|^{p-2}u_k(u_k -u) dx=o_k(1).
$$
Thence, by $(S_+)$-condition (see Proposition \ref{S+}), we infer that $u_k \to u$ in $X$. Therefore, since $\mathcal{I}_{\lambda,\mu}$ is of class $C^2$, the strong convergence combined with \eqref{bueu} imply that
$
\mathcal{I}''_{\lambda, \mu_n}(u)(u, u) \leq 0
$
and
$
\mathcal{I}'_{\lambda, \mu_n}(u) w = 0
$
for all $w\in X$,  that is, $u \in\mathcal{N}_{\lambda,\mu_n}^-\cup \mathcal{N}_{\lambda,\mu_n}^0$ and is a critical point for the functional $\mathcal{I}_{\lambda, \mu_n}$. According to Proposition \ref{Gamma-n} we know that $\mathcal{N}_{\lambda,\mu_n}^-$ is empty. This finishes the proof.
\end{proof}
Analogously, we also obtain the following result:
\begin{proposition}
Assume that \ref{Hip1}-\ref{Hip4}, \ref{Hi1}-\ref{Hi2}, \ref{Hiv0} and \ref{Hiv1} hold. Suppose also that $\lambda\in(0,\lambda^\ast)$ and $\mu=\mu_n$. Then, the problem \eqref{eq1} has at least one nontrivial solution $v\in\mathcal{N}_{\lambda,\mu_n}^0$.
\end{proposition}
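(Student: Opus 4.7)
The plan is to mirror the argument of Proposition \ref{limite-case1}, but now working inside $\mathcal{N}^+$ and invoking Theorem \ref{theorem2}(i) instead of Theorem \ref{theorem1}. First, I would pick a sequence $(\mu_k)_{k \in \mathbb{N}}$ with $\mu_k \in (\mu_n,\mu_e(\lambda))$ and $\mu_k \searrow \mu_n$; such $\mu_k$ exist by Lemma \ref{monotonic} ($\mu_n < \mu_e$), and the hypothesis $\lambda \in (0,\lambda^\ast)$ allows Theorem \ref{theorem2}(i) to produce, for each $k$, a ground-state weak solution $v_k \in \mathcal{N}^+_{\lambda,\mu_k}$ with $\mathcal{I}_{\lambda,\mu_k}(v_k)=\mathcal{E}^+_{\lambda,\mu_k}$ and $\mathcal{I}'_{\lambda,\mu_k}(v_k)=0$.

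The key step is a uniform upper bound for the energies $\mathcal{E}^+_{\lambda,\mu_k}$. Since the monotonicity of Corollary \ref{l44eu} gives only a lower bound as $\mu\searrow\mu_n$, I would use a fixed test function: by Proposition \ref{N0}(iv) choose $w^\ast\in X\setminus\{0\}$ realizing $\mu_n=\Lambda_n(w^\ast)$. For every $\mu_k > \mu_n=\Lambda_n(w^\ast)$, Proposition \ref{compar} yields a projection $\t^+_{\mu_k}(w^\ast)w^\ast \in \mathcal{N}^+_{\lambda,\mu_k}$, whence $\mathcal{E}^+_{\lambda,\mu_k}\le \mathcal{I}_{\lambda,\mu_k}(\t^+_{\mu_k}(w^\ast)w^\ast)$. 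As $\mu_k\to\mu_n^+$, the two roots $\t^-_{\mu_k}(w^\ast)<\t(w^\ast)<\t^+_{\mu_k}(w^\ast)$ of the fibering equation merge to the unique critical point $\t(w^\ast)$ given by Proposition \ref{Gamma-n}(ii) (this merger follows from the $C^1$-dependence in Proposition \ref{monotonic1}(iv) together with the squeeze between the two roots). Hence $\mathcal{I}_{\lambda,\mu_k}(\t^+_{\mu_k}(w^\ast)w^\ast)\to \mathcal{I}_{\lambda,\mu_n}(\t(w^\ast)w^\ast)<\infty$, giving the uniform bound. Because $\mu_k$ stays bounded, the coercivity estimate of Proposition \ref{coercive} is uniform in $k$, so $(v_k)$ is bounded in $X$.

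Passing to a subsequence, $v_k\rightharpoonup v$ in $X$. The uniform Nehari lower bound of Proposition \ref{fechada}(i) (applied with any fixed $\mu_1\in(\mu_n,\mu_e(\lambda))$, which dominates all $\mu_k$) together with the argument of Remark \ref{convzero} forces $v\neq 0$. Testing $\mathcal{I}'_{\lambda,\mu_k}(v_k)=0$ against $v_k-v$ and invoking Lemma \ref{compact1} (compactness of $X\hookrightarrow L^q,L^p$) gives
\[
\mathcal{J}'_{s,\Phi,V}(v_k)(v_k-v)=\mu_k\!\!\int_{\mathbb{R}^N}\!\!a(x)|v_k|^{q-2}v_k(v_k-v)\,dx-\lambda\!\!\int_{\mathbb{R}^N}\!\!|v_k|^{p-2}v_k(v_k-v)\,dx=o_k(1),
\]
so by the $(S_+)$-property (Proposition \ref{S+}) we obtain strong convergence $v_k\to v$ in $X$. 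Passing to the limit in the equation and in the inequality $\mathcal{I}''_{\lambda,\mu_k}(v_k)(v_k,v_k)\geq 0$, the limit $v$ is a nontrivial critical point of $\mathcal{I}_{\lambda,\mu_n}$ with $v\in\mathcal{N}^+_{\lambda,\mu_n}\cup\mathcal{N}^0_{\lambda,\mu_n}$. Finally, $\mathcal{N}^+_{\lambda,\mu_n}=\emptyset$: if $u\in\mathcal{N}^+_{\lambda,\mu_n}$ then $R_n(u)=\mu_n$ would attain $\inf_{w}\Lambda_n(w)$, forcing $\t(u)=1$ and hence $\mathcal{I}''(u)(u,u)=0$ via identity \eqref{save}, contradicting $u\in\mathcal{N}^+$. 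Therefore $v\in\mathcal{N}^0_{\lambda,\mu_n}$.

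The main obstacle will be justifying the merger $\t^+_{\mu_k}(w^\ast)\to\t(w^\ast)$ as $\mu_k\to\mu_n^+$, since $\t^+_{\lambda,\mu}$ is only a priori $C^1$ on $\{\mu > \Lambda_n(w^\ast)\}=\{\mu>\mu_n\}$ and its behavior at the boundary must be read off from the monotonicity in Proposition \ref{monotonic1}(iv) combined with the collapse of both projections onto the unique root of $Q'_n$ given by Proposition \ref{Gamma-n}(ii). Everything else — coercivity, nonvanishing of the weak limit, $(S_+)$-convergence, and the exclusion of $\mathcal{N}^+_{\lambda,\mu_n}$ — is a routine transcription of the corresponding steps from Proposition \ref{limite-case1}.
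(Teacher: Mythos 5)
Your proposal is correct and follows the same overall skeleton as the paper (take $\mu_k\searrow\mu_n$, extract solutions $v_k\in\mathcal{N}^+_{\lambda,\mu_k}$ from Theorem \ref{theorem2}, derive a uniform energy bound, pass to a strong limit via the $(S_+)$-property, and rule out $\mathcal{N}^+_{\lambda,\mu_n}$). The genuinely different point, which you flag explicitly, concerns the uniform upper bound on $\mathcal{E}^+_{\lambda,\mu_k}$: the paper writes $\mathcal{E}^+_{\lambda,\mu_1}\geq\mathcal{E}^+_{\lambda,\mu_k}$ and attributes it to Corollary \ref{l44eu}, but that corollary asserts $\mu\mapsto\mathcal{E}^+_{\lambda,\mu}$ is non-increasing, so for $\mu_k\leq\mu_1$ the inequality actually points the other way; as $\mu_k\searrow\mu_n$ the monotonicity delivers only a lower bound. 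Your substitute — projecting a fixed extremal element $w^\ast$ with $\Lambda_n(w^\ast)=\mu_n$ through $\t^+_{\mu_k}(w^\ast)$ and using the collapse $\t^\pm_{\mu_k}(w^\ast)\to\t(w^\ast)$ as $\mu_k\to\mu_n^+$ (which follows because $Q_n(t)=R_n(tw^\ast)$ is continuous and strictly increasing on $(\t(w^\ast),+\infty)$, so its restricted inverse is continuous) — is a correct and natural repair, and in fact gives the sharper conclusion $\limsup_k\mathcal{E}^+_{\lambda,\mu_k}\leq\mathcal{I}_{\lambda,\mu_n}(\t(w^\ast)w^\ast)$. Your direct argument that $\mathcal{N}^+_{\lambda,\mu_n}=\emptyset$ via $\t(u)=1$ and \eqref{save} is equivalent to what the paper extracts from Proposition \ref{Gamma-n}(ii). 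Two small remarks: restricting to $\mu_k\in(\mu_n,\mu_e(\lambda))$ is harmless but unnecessary, since any $\mu_k>\mu_n$ eventually falls into one of the regimes of Theorem \ref{theorem2}; and for the nonvanishing of the weak limit, the constant $c_\mu$ in Proposition \ref{fechada}(i) is decreasing in $\mu$, so taking $\mu_1$ as the largest term of the sequence already gives $\|v_k\|\geq c_{\mu_1}>0$ uniformly, as you indicate.
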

\begin{proof}
Let $(\mu_k)_{k\in\mathbb{N}}$ be a sequence such that $\mu_k>\mu_n$ for each $k\in\mathbb{N}$ and $\mu_k \to \mu_n$ as $k\to+\infty$. According to Theorem \ref{theorem2}, there exists a sequence $(v_k)_{k\in\mathbb{N}} \in \mathcal{N}_{\lambda,\mu_k}^-$ which is a critical point for the functional $\mathcal{I}_{\lambda, \mu_k}$ and $\mathcal{E}_{\lambda,\mu_k}^+=\mathcal{I}_{\lambda,\mu_k}(v_k)$ for all $k\in\mathbb{N}$. Using the Proposition \ref{l44eu} and proceeding as \eqref{est23}, we obtain that 
$$
\mathcal{E}_{\lambda,\mu_1}^+\geq \mathcal{E}_{\lambda,\mu_k}^+=\mathcal{I}_{\lambda, \mu_k}(v_k)  
\geq\min\{\|u_k\|^\ell, \|u_k\|^m\} - \left[\dfrac{\mu_k}{q} \|a\|_{r} \|v_k\|^{q-p}_p - \dfrac{\lambda}{p}\right]  \|v_k\|^p_p. 
$$
which proves that $(v_k)_{k\in\mathbb{N}}$ is bounded in $X$. The rest of the proof follows the same ideas discussed in the proof of Proposition 4.2. We omit the details.
\end{proof} 

\subsection{Cases $\lambda=\lambda_\ast$ and $\lambda=\lambda^\ast$}
In this section, we investigate the existence of solution for the problem \eqref{eq1} taking into account the cases $\lambda=\lambda_\ast$ and $\lambda=\lambda^\ast$. 
The main strategy is to consider a sequence $(\mu_k)_{k\in\mathbb{N}} \in \mathbb{R}$ such that $\mu_k>\mu_n(\lambda_\ast)$ for each $k\in\mathbb{N}$ and $\mu_k \to \mu_n$ as $k\to+\infty$, in order to apply Theorem \ref{theorem1}.

\begin{proposition}
Assume that \ref{Hip1}-\ref{Hip4}, \ref{Hi1}-\ref{Hi2}, \ref{Hiv0} and \ref{Hiv1} hold. Suppose also that $\lambda=\lambda_\ast$ and $\mu>\mu_n(\lambda_\ast)$. Then, the problem \eqref{eq1} has at least one nontrivial solution $u\in\mathcal{N}_{\lambda_\ast,\mu}^- \cup\mathcal{N}_{\lambda_\ast,\mu}^0$.
\end{proposition}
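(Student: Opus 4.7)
The plan is to mimic the strategy of Proposition~5.1 by approximating $\lambda_\ast$ from below. First, I would fix a sequence $(\lambda_k)_{k\in\mathbb{N}} \subset (0,\lambda_\ast)$ with $\lambda_k \nearrow \lambda_\ast$. Since by Proposition~\ref{monotonic1} $(i)$ the function $\lambda \mapsto \mu_n(\lambda)$ is non-decreasing, the hypothesis $\mu > \mu_n(\lambda_\ast)$ ensures $\mu > \mu_n(\lambda_k)$ for all $k$ sufficiently large. Hence, Theorem~\ref{theorem1} applies to each $\lambda_k$, producing $u_k \in \mathcal{N}_{\lambda_k,\mu}^-$ which is a critical point of $\mathcal{I}_{\lambda_k,\mu}$ and attains the level $\mathcal{E}_{\lambda_k,\mu}^-=\mathcal{I}_{\lambda_k,\mu}(u_k)$. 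In particular,
$$
\mathcal{I}'_{\lambda_k, \mu}(u_k)w = 0 \quad \text{for all } w\in X, \qquad \mathcal{I}''_{\lambda_k, \mu}(u_k)(u_k,u_k)\leq 0.
$$

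Next, I would establish boundedness of $(u_k)_{k\in\mathbb{N}}$ in $X$. Combining the monotonicity of $\lambda\mapsto \mathcal{E}_{\lambda,\mu}^-$ from Corollary~\ref{l44eu} with the lower estimate derived in the proof of Proposition~\ref{limitacao} (see also \eqref{coercive1}), I obtain
$$
\mathcal{E}_{\lambda_\ast,\mu}^- \geq \mathcal{E}_{\lambda_k,\mu}^- = \mathcal{I}_{\lambda_k,\mu}(u_k) \geq \frac{p(q-m)-m(q-\ell)}{pq}\min\{\|u_k\|^\ell,\|u_k\|^m\},
$$
which gives the required uniform bound. By reflexivity of $X$, up to a subsequence, $u_k \rightharpoonup u$ in $X$ for some $u\in X$. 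To pass from weak to strong convergence, I would use the $(S_+)$-property for $\mathcal{J}_{s,\Phi,V}'$ from Proposition~\ref{S+}: testing $\mathcal{I}'_{\lambda_k,\mu}(u_k) = 0$ against $u_k - u$ and exploiting the compact embeddings $X \hookrightarrow L^r(\R^N)$ for $r\in (m,\ell_s^\ast)$ guaranteed by Lemma~\ref{compact1} (together with $\lambda_k \to \lambda_\ast$), yield
$$
\mathcal{J}_{s,\Phi,V}'(u_k)(u_k - u) = \mu \int_{\R^N}a(x)|u_k|^{q-2}u_k(u_k - u)\,dx - \lambda_k\int_{\R^N}|u_k|^{p-2}u_k(u_k - u)\,dx = o_k(1).
$$
Therefore $u_k \to u$ strongly in $X$.

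Finally, by the $C^2$ regularity of $\mathcal{I}_{\lambda,\mu}$ and the joint continuity in $(\lambda,u)$, passing to the limit yields $\mathcal{I}'_{\lambda_\ast,\mu}(u)w = 0$ for all $w\in X$ and $\mathcal{I}''_{\lambda_\ast,\mu}(u)(u,u) \leq 0$. To conclude that $u \in \mathcal{N}_{\lambda_\ast,\mu}^- \cup \mathcal{N}_{\lambda_\ast,\mu}^0$ it suffices to verify $u\neq 0$: by Proposition~\ref{fechada}~$(i)$ applied to each $\mu_n(\lambda_k)<\mu$, there exists a lower bound $\|u_k\|\geq c_\mu > 0$ uniform in $k$ (since the constant in \eqref{bacana} depends only on $\mu$ and the embedding constants, not on $\lambda_k$); strong convergence then forces $\|u\|\geq c_\mu$, so $u\neq 0$. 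Combined with $\mathcal{I}'_{\lambda_\ast,\mu}(u)=0$ and the closedness of $\mathcal{N}_{\lambda_\ast,\mu}$ (Proposition~\ref{fechada}~$(ii)$), this gives $u \in \mathcal{N}_{\lambda_\ast,\mu}^- \cup \mathcal{N}_{\lambda_\ast,\mu}^0$, completing the proof.

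The main obstacle, as in Proposition~5.1, is the passage to the limit: ensuring the uniform lower bound $\|u_k\|\geq c_\mu$ does \emph{not} deteriorate as $\lambda_k \to \lambda_\ast$, so that the nontriviality of $u$ is preserved. This is the reason one must verify that the constant $c_\mu$ in Proposition~\ref{fechada} depends only on $\mu$, $\|a\|_\infty$, and the embedding constant $S_q$, but is independent of $\lambda$.
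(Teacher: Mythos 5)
Your proof is correct and follows essentially the same route as the paper: approximate $\lambda_\ast$ from below by $\lambda_k\nearrow\lambda_\ast$, invoke Theorem \ref{theorem1} (via Proposition \ref{l112b}) to get critical points $u_k\in\mathcal{N}_{\lambda_k,\mu}^-$, derive a uniform bound from the monotonicity of $\lambda\mapsto\mathcal{E}_{\lambda,\mu}^-$ (Corollary \ref{l44eu}) together with \eqref{coercive1}, upgrade to strong convergence by the $(S_+)$-property, and pass to the limit in $\mathcal{I}'_{\lambda_k,\mu}$ and $\mathcal{I}''_{\lambda_k,\mu}$. The only cosmetic difference is that you deduce $\mu_n(\lambda_k)<\mu$ directly from the monotonicity of $\lambda\mapsto\mu_n(\lambda)$ (inherited from Proposition \ref{monotonic1}(i)), while the paper argues via $\limsup_k\mu_n(\lambda_k)\leq\mu_n(\lambda_\ast)$; and you justify $u\neq0$ by the $\lambda$-independence of the constant $c_\mu$ in \eqref{bacana} rather than by referring to Remark \ref{convzero}, but these are equivalent.
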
 
\begin{proof}
Let $(\lambda_k)_{k\in\mathbb{N}}$ be a sequence such that $\lambda_k<\lambda_\ast$ and $\lambda_k \to \lambda_\ast$ as $k \to +\infty$.
Firstly, arguing as in the proof of Proposition \ref{l4}, we deduce that
$$
\limsup_{k \to +\infty} \mu_n(\lambda_k)\leq\mu_n(\lambda_\ast)<\mu,
$$
which implies that $\mu_n(\lambda_k)<\mu$ for all $k\in\mathbb{N}$ enough large. Then, we can apply the Proposition \ref{l112b} to obtain a sequence $(u_k)_{k\in\mathbb{N}} \in \mathcal{N}_{\lambda_k, \mu}^-$ which is a critical point for the functional $\mathcal{I}_{\lambda_k, \mu}$ and  $\mathcal{E}_{\lambda_k,\mu}^-=\mathcal{I}_{\lambda_k, \mu}(u_k)$
for all $k\in\mathbb{N}$ enough large. 
Now, using once more the Proposition \ref{monotonic1}, we have that
$$
\limsup_{k \to +\infty} \mathcal{E}_{\lambda_k,\mu}^- \leq\limsup_{k \to +\infty} \mathcal{I}_{\lambda_k,\mu}(\t_{\lambda_k,\mu}^-(u_{\lambda_\ast,\mu})u_{\lambda_\ast,\mu}) =\mathcal{I}_{\lambda_\ast, \mu}(\t_{\lambda_\ast,\mu}^-(u_{\lambda_\ast,\mu})u_{\lambda_\ast,\mu})= \mathcal{E}_{\lambda_\ast,\mu}^-, 
$$
where $u_{\lambda_\ast,\mu} \in \mathcal{N}_{\lambda_k, \mu}^- \cup \mathcal{N}_{\lambda_k, \mu}^0$ is obtained in Proposition \ref{converg}. 
Therefore, by using the same ideas employed in the proof of Proposition \ref{limite-case1}, we deduce that there exists $u\in X\setminus\{0\}$ such that $u_k \to u$ in X as $k\to+\infty$. Furthermore, taking into account that $\mathcal{I}_{\lambda,\mu}$ is of class $C^2$, we conclude that $u\in \mathcal{N}_{\lambda_\ast,\mu}^-\cup \mathcal{N}_{\lambda_\ast,\mu_k}^0$ and is a critical point for the functional $\mathcal{I}_{\lambda_\ast, \mu}$. This finishes the proof.
\end{proof}

A similar result is obtained considering the case $\lambda=\lambda^\ast$.
\begin{proposition}
Assume that \ref{Hip1}-\ref{Hip4}, \ref{Hi1}-\ref{Hi2}, \ref{Hiv0} and  \ref{Hiv1} hold. Suppose also that $\lambda=\lambda^\ast$ and $\mu>\mu_n(\lambda^\ast)$. Then, the problem \eqref{eq1} has at least one nontrivial solution $v\in\mathcal{N}_{\lambda^\ast,\mu}^+ \cup\mathcal{N}_{\lambda^\ast,\mu}^0$.
\end{proposition}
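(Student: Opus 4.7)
I plan to mirror the approximation argument used in the preceding proposition for $\lambda = \lambda_\ast$, now applied to the positive branch of the Nehari manifold. Specifically, I would pick a sequence $(\lambda_k)_{k\in\mathbb{N}}$ with $\lambda_k < \lambda^\ast$ and $\lambda_k \to \lambda^\ast$. Arguing as in the proof of Proposition \ref{l4} and using the continuity and monotonicity of $\lambda \mapsto \mu_n(\lambda)$ inferred from Proposition \ref{monotonic1}, we obtain $\limsup_k \mu_n(\lambda_k) \leq \mu_n(\lambda^\ast) < \mu$, so $\mu_n(\lambda_k) < \mu$ for all $k$ large enough. Then Proposition \ref{l1122} produces a sequence of critical points $v_k \in \mathcal{N}^+_{\lambda_k, \mu}$ with $\mathcal{I}_{\lambda_k, \mu}(v_k) = \mathcal{E}^+_{\lambda_k, \mu}$.

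The next step is boundedness. Letting $v_{\lambda^\ast, \mu} \in \mathcal{N}^+_{\lambda^\ast, \mu} \cup \mathcal{N}^0_{\lambda^\ast, \mu}$ be the minimizer from Proposition \ref{converg1}, the $C^1$ dependence $\lambda \mapsto \t^+_{\lambda, \mu}(v_{\lambda^\ast, \mu})$ from Proposition \ref{monotonic1} yields
$$
\limsup_{k \to +\infty} \mathcal{E}^+_{\lambda_k, \mu} \leq \limsup_{k \to +\infty} \mathcal{I}_{\lambda_k, \mu}\bigl(\t^+_{\lambda_k, \mu}(v_{\lambda^\ast,\mu})\, v_{\lambda^\ast, \mu}\bigr) = \mathcal{I}_{\lambda^\ast, \mu}(v_{\lambda^\ast, \mu}) = \mathcal{E}^+_{\lambda^\ast, \mu},
$$
so $\mathcal{I}_{\lambda_k, \mu}(v_k)$ is uniformly bounded above. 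Combined with the coercivity on the Nehari manifold (Proposition \ref{coercive}), this forces $(v_k)$ to be bounded in $X$. Along a subsequence, $v_k \rightharpoonup v$ in $X$; moreover, $\|v_k\| \geq c_\mu$ for some $c_\mu > 0$ depending only on $\mu$ by Proposition \ref{fechada} $(i)$, so once strong convergence is established the limit cannot be trivial. Strong convergence itself follows from testing the critical point identity with $v_k - v$, using the compact embeddings $X \hookrightarrow L^r(\mathbb{R}^N)$ for $r \in (m, \ell_s^\ast)$, and invoking the $(S_+)$-property from Proposition \ref{S+}.

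Finally, the $C^2$-regularity of $\mathcal{I}_{\lambda, \mu}$ lets us pass $\mathcal{I}'_{\lambda_k, \mu}(v_k) = 0$ to $\mathcal{I}'_{\lambda^\ast, \mu}(v) = 0$ and the strict inequality $\mathcal{I}''_{\lambda_k, \mu}(v_k)(v_k, v_k) > 0$ to the weak inequality $\mathcal{I}''_{\lambda^\ast, \mu}(v)(v, v) \geq 0$, so $v \neq 0$ is a weak solution of \eqref{eq1} lying in $\mathcal{N}^+_{\lambda^\ast, \mu} \cup \mathcal{N}^0_{\lambda^\ast, \mu}$. The main obstacle I anticipate is precisely the degeneration of this strict inequality in the limit: one cannot rule out $v \in \mathcal{N}^0_{\lambda^\ast, \mu}$, which is why the conclusion is stated as membership in $\mathcal{N}^+_{\lambda^\ast, \mu} \cup \mathcal{N}^0_{\lambda^\ast, \mu}$ rather than in $\mathcal{N}^+_{\lambda^\ast, \mu}$ alone. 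A minor technical companion point is verifying that $v_{\lambda^\ast, \mu} \in \mathcal{U}_{\lambda_k, \mu}$ for $k$ large so that $\t^+_{\lambda_k, \mu}(v_{\lambda^\ast, \mu})$ is well defined; this follows from the monotonicity of $\lambda \mapsto \Lambda_{n, \lambda}(u)$ in Proposition \ref{monotonic1} $(i)$.
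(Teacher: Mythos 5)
Your argument is correct and mirrors exactly the approach the paper takes for the companion case $\lambda=\lambda_\ast$ (and for the $\mu=\mu_n$ case on $\mathcal{N}^{+}$): approximate along $\lambda_k\nearrow\lambda^\ast$, invoke Proposition \ref{l1122} for the existence of critical points $v_k\in\mathcal{N}^+_{\lambda_k,\mu}$, control $\mathcal{E}^+_{\lambda_k,\mu}$ from above via the monotonicity of $\lambda\mapsto\t^+_{\lambda,\mu}(\cdot)$ in Proposition \ref{monotonic1}, upgrade weak to strong convergence through the $(S_+)$-property, and pass both $\mathcal{I}'_{\lambda_k,\mu}(v_k)=0$ and $\mathcal{I}''_{\lambda_k,\mu}(v_k)(v_k,v_k)>0$ to the limit, correctly accounting for the possible degeneration to $\mathcal{N}^0$. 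You also anticipate the small point (needing $v_{\lambda^\ast,\mu}\in\mathcal{U}_{\lambda_k,\mu}$) that the paper handles implicitly via the monotonicity of $\Lambda_{n,\lambda}$; and for boundedness you correctly rely on the coercivity estimate \eqref{est23} (Proposition \ref{coercive}) valid on all of $\mathcal{N}_{\lambda,\mu}$, rather than on \eqref{coercive1}, which only applies on $\mathcal{N}^{-}\cup\mathcal{N}^0$.
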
 



\bigskip

\end{document}